\documentclass[11pt,reqno]{amsart}

\usepackage{amsmath,amsfonts,amsthm,amssymb,amsxtra}
\usepackage[colorlinks,citecolor=red,hypertexnames=false]{hyperref} 

\usepackage{color}
\usepackage[shortlabels]{enumitem}
\usepackage{bbm} 
\usepackage{stmaryrd}
\usepackage{nicematrix}
\usepackage{mathrsfs} 
\usepackage{float}
\usepackage{graphicx}
\usepackage{todonotes}

\usepackage[margin=1.1in]{geometry}
\usepackage{extarrows}

\usepackage{subcaption}
\usepackage{esint}

\usepackage{tikz}
\usepackage{pgfplots}
\pgfplotsset{compat=1.18} 
\usetikzlibrary{calc}

\newtheorem{theorem}{Theorem}[section]

\newtheorem{proposition}[theorem]{Proposition}
\newtheorem{lemma}[theorem]{Lemma}
\newtheorem{corollary}[theorem]{Corollary}

\theoremstyle{definition}

\theoremstyle{remark}

\newtheorem{remark}{Remark}[section]

\numberwithin{equation}{section}
\allowdisplaybreaks

\renewcommand{\epsilon}{\varepsilon}

\newcommand{\bbP}{{\mathbb{P}}}

\newcommand{\cN}{\mathcal{N}}

\renewcommand{\phi}{\varphi}
\newcommand{\R}{\mathbb{R}}

\newcommand{\T}{\mathbb{T}}

\newcommand{\Z}{\mathbb{Z}}

\newcommand{\E}{\mathbb{E}}

\newcommand{\one}{\mathbbm{1}}

\DeclareFontFamily{U}{mathx}{}
\DeclareFontShape{U}{mathx}{m}{n}{<-> mathx10}{}
\DeclareSymbolFont{mathx}{U}{mathx}{m}{n}
\DeclareMathAccent{\widehat}{0}{mathx}{"70}
\DeclareMathAccent{\widecheck}{0}{mathx}{"71}

  \renewcommand{\mod}{{\rm \, mod\, }}

\newcommand{\braket}[3]{\left \langle #1 \, | #2 |\, #3 \right \rangle }

\newcommand{\norm}[1]{\left\Vert#1\right\Vert}
\newcommand{\abs}[1]{\left\vert#1\right\vert}

\renewcommand{\P}{\mathbb{P}}

\newcommand{\eps}{\varepsilon}

\newcommand{\vertiii}[1]{{\left\vert\kern-0.25ex\left\vert\kern-0.25ex\left\vert #1
    \right\vert\kern-0.25ex\right\vert\kern-0.25ex\right\vert}}

\DeclareMathOperator{\dist}{dist}

\newcommand{\comments}[1]{}

\begin{document}

 \title[Div Grad Doubling]{One-Dimensional Divergence-Type Jacobi Operators Driven by The Doubling Map}

\author[L. Li, W. Wang, S. Zhang ]{ Long Li, Wei Wang, Shiwen Zhang}

\begin{abstract}

We study one-dimensional Jacobi operators of divergence-gradient type on \(\ell^2(\Z_{\ge0})\), with coefficients generated by the doubling map, \(a_n(x)=a(2^n x \mod 1)\). For continuous positive sampling functions, we show that the almost-sure essential spectrum is an interval containing the bottom of the spectrum. Under the assumption \(a\in C^1(\T)\), we prove square-root asymptotics for the integrated density of states and obtain a small-energy expansion for the Lyapunov exponent as \(E\to0^+\). The leading term of the Lyapunov exponent is linear and positive precisely under a nondegeneracy condition on the sampling function. In this case, we prove a large-deviation estimate for the transfer matrices with an explicit rate depending on \(E\). As consequences, we obtain local H\"older continuity of the Lyapunov exponent and the integrated density of states near the bottom of the spectrum, as well as the Anderson localization. Some of these results extend the small-coupling results of Chulaevsky--Spencer and Bourgain--Schlag for Schr\"odinger operators generated by the doubling map to divergence-gradient type operators. The arguments adapt methods from those works, but require uniform control in the small-energy parameter and refined correlation estimates reflecting the divergence-gradient structure.

\end{abstract}

  \maketitle

\tableofcontents

\section{Introduction and Main Results}
In this work, we consider Jacobi operators of divergence-gradient (div-grad) type acting on \(\ell^2(\Z_{\ge 0})\), with Dirichlet boundary condition \(u_{-1}=0\), of the form
\begin{align}\label{eqn:div-grad}
    (Hu)_n=-a_{n+1}u_{n+1}+(a_n+a_{n+1})u_n-a_nu_{n-1}, \quad n\ge 0.
\end{align}
The terminology ``div-grad'' reflects the divergence-form structure of the operator: \eqref{eqn:div-grad} can be rewritten as
\[
    (Hu)_n=-\big[a_{n+1}(u_{n+1}-u_n)-a_n(u_n-u_{n-1})\big].
\]
Thus, \(H\) is the natural discrete analogue of the one-dimensional divergence-form differential operator
\[  \mathcal L\psi(x)=-\frac{d}{dx}\left(a(x)\frac{d\psi}{dx}\right), \quad a:\R\to\R. \]
Operators of this type, together with their higher-dimensional analogues, arise naturally in models of inhomogeneous media, disordered lattices, and elastic or acoustic wave propagation in complex environments; see, for example, \cite{AizenmanMolchanov,FigotinKlein,figotin96loca}. In particular, the spectral properties of one-dimensional div-grad models with random coefficients have been studied in \cite{delyon83,pastur1992book}, while more recently we investigated their quantum transport properties in \cite{li2026upper}. Here, we study a deterministic dynamical version, with coefficients of the form
\begin{align} \label{eqn:an}
  a_n(x)=a(2^n x \mod 1), \quad  x\in \T , \quad n \in \Z_{\ge0} ,  
\end{align}
where \(a:\T\to \R\) is a real-valued sampling function and \(x\mapsto 2x\mod 1\) is the {\it doubling map.} We assume that the sampling function \(a\in L^\infty(\T)\) is strictly positive, and we denote its infimum and supremum by
\begin{align}\label{eqn:sample-bound}
  0<a_-:=  \inf_{x\in \T}a(x)<\sup_{x\in \T}a(x)=a_+<\infty. 
\end{align}
Throughout the paper, the baseline assumption is the uniform ellipticity condition \eqref{eqn:sample-bound}; additional regularity assumptions on \(a\), such as continuity or \(C^1\) regularity, will be imposed in the statements of the corresponding results.

The doubling map preserves the Lebesgue measure on \(\T\) and is ergodic. We denote the Lebesgue measure by \(\P(\cdot)\), and we denote the associated integral by \(\E[\cdot]\):
\begin{align}
\P(A)={\rm Leb}(A),\ A\subset \T, \quad {\rm and} \quad    \E[f] =\int_{\T} f(x) \, dx, \ \ f\in L^1(\T). 
\end{align}

There are several previous results on the spectral properties of discrete one-dimensional Schr\"odinger operators with potentials generated by the doubling map. In particular, Chulaevsky--Spencer \cite{ChulaevskySpencer1995CMP} showed that, away from the spectral edges and the center, the Lyapunov exponent of these Schr\"odinger operators admits an asymptotic expansion in the small coupling constant of the potential. Bourgain--Schlag \cite{BourgainSchlag2000CMP} then established Anderson localization in the same spectral regime. More recently, Damanik--Fillman \cite{DamFil2023CMP} showed that the essential spectrum of these operators is always connected. For important recent progress on Schr\"odinger operators generated by hyperbolic transformations, see Avila-Damanik-Zhang \cite{AvilaDamanikZhang2023Invent,AvilaDamanikZhang2024arXiv}.

In this work, we extend some of these Schr\"odinger-operator results to the div-grad setting. We now describe our main results for the family of ergodic operators \(\{H=H(x)\}_{x\in\T}\) defined by \eqref{eqn:div-grad} and \eqref{eqn:an}. Denote by \(\sigma_{\rm ess}(\cdot)\) and \(\sigma(\cdot)\) the essential and full spectra of an operator, respectively. By ergodicity, there exists a fixed set \(\Sigma_{\rm ess}\subseteq\R\), depending only on \(a\), such that
\begin{equation}
    \Sigma_{\rm ess}
    =
    \sigma_{\rm ess}(H(x)),
    \quad
    \bbP\text{-a.e.\ }x\in\T.
\end{equation}

Our first result identifies the almost-sure essential spectrum.

\begin{theorem}\label{thm:ess-spectrum}
    For any \(a\in C^0(\T,\R)\), the essential spectrum of the associated div-grad model is an interval for almost every realization. In particular, with \(a_+\) as in \eqref{eqn:sample-bound},
    \begin{equation}
        [0,4a(0)]
        \subseteq
        \Sigma_{\rm ess}
        =
        [0,A]
        \subseteq
        [0,4a_+]
    \end{equation}
    for some \(A>0\) that depends only on \(a\).
\end{theorem}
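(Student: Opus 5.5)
The plan is to describe $\Sigma_{\rm ess}$ through a union of periodic and limiting operators. This is the standard strategy for the doubling map; compare Damanik--Fillman. It uses two inclusions, proved in turn below.

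First, the trivial bounds. Write $H=\nabla^* a\nabla$ with $(\nabla u)_n=u_n-u_{n-1}$ and $u_{-1}=0$. Then $\langle Hu,u\rangle=\sum_n a_n|u_n-u_{n-1}|^2\ge 0$, and this quantity is at most $a_+\|\nabla u\|^2\le 4a_+\|u\|^2$. So $\sigma(H(x))\subseteq[0,4a_+]$ for every $x$.

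Second, I characterize $\Sigma_{\rm ess}$ combinatorially. For a continuous sampling function $a$, the standard argument for ergodic operators over a full shift gives
\[
\Sigma_{\rm ess}=\overline{\bigcup_{y\ \text{periodic}}\sigma(H_y^{\Z})},
\]
where $H_y^{\Z}$ is the whole-line operator with coefficients $a(2^ny)$, $n\in\Z$. This is legitimate because periodic points of the doubling map are dense and a.e. $x$ has a dense orbit. The argument has two halves.
\begin{itemize}
\item[$\supseteq$:] For a.e.\ $x$, the orbit of $x$ follows any finite piece of a periodic orbit arbitrarily closely, infinitely often, and far out. By continuity of $a$ the coefficients are then uniformly close on long blocks. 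Cut-off Weyl sequences for $H_y^{\Z}$ therefore transplant to Weyl sequences of $H(x)$ with disjoint supports.
\item[$\subseteq$:] Any right limit of $H(x)$ has coefficients $a(2^n z)$ for some $z$ in the natural extension. Its spectrum is contained in the closure of the periodic spectra, by approximating $z$ by periodic points and using norm continuity in the coefficients.
\end{itemize}

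In particular the fixed point $y=0$ gives the constant-coefficient operator with coefficient $a(0)$, whose spectrum is $[0,4a(0)]$. This yields $[0,4a(0)]\subseteq\Sigma_{\rm ess}$.

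Third, connectedness, which I expect to be the main obstacle. Let $A=\max\Sigma_{\rm ess}$. I need every $E\in(4a(0),A)$ to lie in $\Sigma_{\rm ess}$. Take a periodic $y$ whose spectrum reaches near $A$, with periodic block $w$ of length $p$. Consider the periodic orbits whose binary blocks are $0^k w$ or $0^k w^m$ with $k$ large. By the Damanik--Fillman gluing idea, their spectra interpolate: long runs of zeros produce bands filling $[0,4a(0)]$, and the $w$-blocks produce bands near the spectrum of $H_y^{\Z}$.

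Showing there are no gaps in between is the delicate part. I would try to use that the spectral bands of periodic Jacobi operators with positive off-diagonals vary continuously under inserting and concatenating blocks. Then I would use an oscillation/rotation-number argument in the div-grad setting: the IDS is continuous and the rotation number of concatenated words interpolates between those of its pieces. Together these should show that the closure of the union of bands has no gaps inside $[0,A]$.

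An alternative route, which I would try first, goes through the IDS. If the IDS $k(E)$ of the ergodic family strictly increases on $[0,A]$, then the spectrum has no gaps there. Strict increase would follow by exhibiting, for each $E$, approximate eigenfunctions built from mixing words, using that the full shift allows arbitrary concatenation. Combining all of this gives $\Sigma_{\rm ess}=[0,A]$ with $A\le 4a_+$.
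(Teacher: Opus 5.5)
Steps 1 and 2 are fine and match what the paper uses:
\begin{itemize}
\item $0\le H(x)\le 4a_+$, from the quadratic form $\sum_n a_n|u_n-u_{n-1}|^2$;
\item the description of $\Sigma_{\rm ess}$ through periodic (limiting) operators;
\item the fixed point $x=0$, which gives $[0,4a(0)]\subseteq\Sigma_{\rm ess}$.
\end{itemize}
One small repair: in the ``$\subseteq$'' half there is no norm continuity, because approximating $z$ by a periodic point controls only finitely many coefficients. Use a finitely supported approximate eigenvector of $H_z$ instead.

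The genuine gap is Step~3. Connectedness is the entire nontrivial content of the theorem, and you only sketch it. The IDS route is circular. $\Sigma_{\rm ess}$ is the support of the density of states measure, so ``the IDS strictly increases on $[0,A]$'' just restates ``no gaps''. Producing approximate eigenfunctions at \emph{every} $E\in[0,A]$ is exactly what is in question. The interpolation heuristic produces no spectrum either. At an energy in a gap of every periodic operator, the rotation numbers of all periodic words are rationals $j/p$, and averaging them over concatenations is fully consistent with a gap.

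More seriously, no argument built only from the ingredients you invoke can succeed. Those ingredients are the fixed point, density of periodic orbits, and free concatenation of words $0^kw^m$. None of them uses continuity of $a$ on the circle beyond continuity on the binary symbol space. Here is a concrete example where all of them are available but the conclusion fails.

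Take $\beta>5\alpha>0$ and $a=\alpha+(\beta-\alpha)\one_{[1/4,1/2)}$. Then $a_n=\beta$ exactly when the binary digits of $x$ in positions $n+1,n+2$ are $0,1$, so two $\beta$'s are never adjacent. Hence the vectors $e_n-e_{n-1}$ at $\beta$-sites are pairwise orthogonal. Up to a rank-one boundary term, $H(x)$ is the constant-coefficient operator with coefficient $\alpha$ (norm $\le 4\alpha$) plus $2(\beta-\alpha)$ times an orthogonal projection. This gives $\Sigma_{\rm ess}\subseteq[0,4\alpha]\cup[2\beta-6\alpha,2\beta+2\alpha]$. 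On the other hand, the fixed point $0$ and the period-two point $1/3$ force $[0,4\alpha]\cup[2\beta,2\alpha+2\beta]\subseteq\Sigma_{\rm ess}$; at $1/3$ the coefficients alternate $\beta,\alpha$, with bands $[0,2\alpha]\cup[2\beta,2\alpha+2\beta]$. So $\Sigma_{\rm ess}$ has a genuine gap.

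The missing idea must therefore exploit continuity of $a$ at the dyadic points, where two binary codes are identified. This is a topological, gap-labelling input. The paper does not reprove it. It writes $H(x)$ as a Jacobi matrix with off-diagonal sampling $-a(x)$ and diagonal sampling $a(x)+a(2x)$. It then invokes \cite[Corollary~5.3]{DamEmiFil2023JST}, the Jacobi version of the Damanik--Fillman connectedness theorem, to get that $\Sigma_{\rm ess}$ is an interval. Finally it uses $H\ge 0$, $0\in\sigma(H(x))$, and the fixed point to identify the interval as $[0,A]$ with $4a(0)\le A\le 4a_+$. You need either to do the same or to supply that gap-labelling argument.
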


\begin{remark}
    If \(a_+=a(0)\), then the preceding theorem gives \(\Sigma_{\rm ess}=[0,4a_+]\). Since the spectrum is contained in \([0,4a_+]\), it follows that \(H(x)\) has no discrete spectrum for \(\P\)-a.e. \(x\), and
\begin{equation}
\sigma (H(x))=   \Sigma_{\rm ess} =[0,4a_+], \quad \bbP\text{-a.e.\ } x \in \T.
\end{equation}
Thus, in this special case, the almost-sure spectrum is determined explicitly. Exact spectral descriptions of this type are standard in many random models; see, for example, \cite{pastur1992book,aizenman2015random,DamFil2022ESO1}. For deterministic dynamically generated coefficients, explicit spectral descriptions are typically more delicate.
\end{remark}

Having identified the almost-sure essential spectrum of the div-grad model, we next consider its small-energy behavior near the bottom of the spectrum. For these results, the proofs follow the same broad strategy as in \cite{ChulaevskySpencer1995CMP,BourgainSchlag2000CMP} for the corresponding Schr\"odinger setting, but the role of the small parameter is different. In the Schr\"odinger setting, the energy and the coupling constant are independent parameters, and the analysis is carried out at fixed energy as the coupling tends to zero. In the present div-grad setting, by contrast, the sampling function \(a\) is fixed, and the only small parameter is the energy \(E\to0^+\). Thus, estimates obtained in the Schr\"odinger case at fixed energy and small coupling must be revisited here with explicit control as \(E\to0^+\). One could also consider a regime in which \(a_-\) approaches \(a_+\), which would provide a different analogy with the small-coupling Schr\"odinger setting, but that is not the regime studied here. Some rough estimates that are harmless in the Schr\"odinger setting are no longer sufficient in the present problem, because their constants may depend on \(E\) in a way that obstructs the desired small-energy asymptotics. Keeping track of this energy dependence, especially in the phase iteration and in the correlation estimates generated by the doubling map, is one of the main technical points of the present analysis.

We begin the small-energy analysis with the integrated density of states and its behavior near the bottom of the spectrum. Let \(H_N\) denote the restriction of \(H=H(x)\) to the interval \([0,N-1]\), with Dirichlet boundary conditions \(\psi_{-1}=\psi_N=0\). Let
\[
    E_1^N(x)\le E_2^N(x)\le\cdots\le E_N^N(x)
\]
be the eigenvalues of \(H_N\). The \emph{integrated density of states} (IDS) of \(H\) is then defined by
\begin{align}\label{eqn:ids}
    \mathcal N(E)
    \xlongequal{\text{\rm a.s.}}
    \lim_{N\to\infty}
    \frac{1}{N}
    \#\left\{\,j:E_j^N(x)\le E\right\},
\end{align}
where \(\#\) denotes cardinality. By ergodicity, the limit exists almost surely and is independent of the boundary conditions.

We now state the asymptotic behavior of the IDS near the bottom of the spectrum. 
\begin{theorem} 
\label{thm:NE-root}
 Suppose \(a(x)\in C^1(\T)\). 
\begin{align}\label{eqn:NE-root}
    {\mathcal N}(E) = \frac{1}{\pi \sqrt{\kappa}} \sqrt{E} +  {\mathcal O}(E|\log E|)
\end{align}
as \(E \to 0^+\), where \(\kappa=\big(\E[a^{-1}]\big)^{-1}\).
\end{theorem}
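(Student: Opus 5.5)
We use the Prüfer-phase (rotation number) description of the IDS. Because \(H\) is a Jacobi operator in the real line, oscillation theory gives \(\mathcal N(E)=\lim_{N\to\infty}\frac1N\#\{\text{sign changes of }u_n(E)\text{ on }[0,N]\}\), where \(u\) solves \(Hu=Eu\) with \(u_{-1}=0\). Equivalently, \(\mathcal N(E)=\frac1\pi\lim_N \frac{1}{N}\theta_N\), where \(\theta_n\) is a suitable Prüfer angle. We write the eigenvalue equation as a first-order system in the variables \(u_n\) and the discrete flux \(v_n=a_n(u_n-u_{n-1})\):
\[ u_n=u_{n-1}+a_n^{-1}v_n,\qquad v_{n+1}=v_n-Eu_n. \]
With the rescaling \(u=\rho\cos\theta/\sqrt{\kappa}\,\cdot\) and \(v=\rho\sqrt{E}\sin\theta\) (up to a fixed normalization chosen so that the unperturbed rotation per step is \(\sqrt{E/\kappa}\)), the one-step map becomes a rotation by angle \(\approx \sqrt{E}\,(\kappa^{1/2}a_n^{-1}\sin^2\theta+\kappa^{-1/2}\cos^2\theta)\) plus an \(O(E)\) error. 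This makes the phase increment
\[ \theta_{n}-\theta_{n-1}=\sqrt{E}\,f(a_n,\theta_{n-1})+O(E), \]
where \(f(a,\theta)=\kappa^{1/2}a^{-1}\sin^2\theta+\kappa^{-1/2}\cos^2\theta\) and the \(O(E)\) bound is uniform thanks to \eqref{eqn:sample-bound}. Averaging over \(\theta\) with \(a^{-1}\) replaced by its mean gives \(\frac12(\kappa^{1/2}\kappa^{-1}+\kappa^{-1/2})=\kappa^{-1/2}\), which produces the leading term \(\sqrt E/(\pi\sqrt\kappa)\).

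To make this rigorous, we sum the increments over a block of length \(L\sim E^{-1/2}\) (more precisely, we take \(L\) with \(\sqrt E L\ll1\) later optimized). On such a block, \(\theta\) moves by \(O(\sqrt E L)\). We therefore freeze \(\theta\) at its initial value and write \(a_n^{-1}=\E[a^{-1}]+g(2^n x)\) with \(\E g=0\). The block phase increment is then
\[ \sqrt E\Big(L\kappa^{-1/2}+\kappa^{-1/2}\cos 2\theta\cdot(\text{correction})+\kappa^{1/2}\sin^2\theta\sum_{n\in\text{block}}g(2^nx)\Big)+O(EL^2)+O(EL). \]
A cleaner route is to remove the \(\theta\)-dependence of the leading rotation exactly, by conjugating with the constant matrix that diagonalizes the averaged system (\(a\equiv\kappa\)). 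Then the only fluctuating term is \(\sqrt E\sum g(2^nx)\sin^2\theta_{n-1}\). We would handle this term by a second-order expansion: write \(\sin^2\theta_{n-1}=\sin^2\theta_{n_0}+O(\sqrt E (n-n_0))\) and use that the Birkhoff sums of the \(C^1\) (hence Hölder) mean-zero function \(g\) under the doubling map have exponentially decaying correlations. As a result, \(\E\big[\sum_{n=1}^N g(2^nx)h_{n-1}\big]\) is small whenever \(h_{n-1}\) depends on the past only weakly.

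The key step, and the main obstacle, is to show that the averaged contribution of \(\sqrt E\,g(2^nx)\sin^2\theta_{n-1}\) to \(\lim\frac1N\theta_N\) is \(O(E|\log E|)\) rather than \(O(\sqrt E)\). Here \(\theta_{n-1}\) depends on all of \(a_0,\dots,a_{n-1}\), so \(g(2^nx)\) is not independent of it. However, \(2^nx\) is determined by the digits of \(x\) beyond position \(n\), while \(a_k\) for \(k<n\) depends on digits from \(k\) on; this is the "future" direction, and the correlation decays through the \(C^1\) modulus. Concretely, we would replace \(\theta_{n-1}\) by \(\tilde\theta_{n-1}\) computed with \(a_k\) replaced by \(a\) evaluated at a truncation that forgets the first \(n-m\) binary digits of \(x\) for \(k\le n-m\). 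The error is \(\sqrt E\,m\) from the last \(m\) steps plus \(2^{-m}\) from the truncation, and by the \(C^1\) assumption \(\tilde\theta_{n-1}\) decorrelates from \(g(2^nx)\) up to \(O(2^{-m})\). This is done uniformly in \(E\), which is the technical point emphasized in the introduction. The resulting bound on the correlation is \(\sqrt E(\sqrt E m+2^{-m})\). Choosing \(m\approx|\log_2 E|/2\) gives \(O(E|\log E|)\) per step.

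Summing over \(n\), dividing by \(N\), and using ergodicity to pass from the almost-sure limit to expectations (the rotation number equals \(\lim\frac1N\E\theta_N\)), we collect all pieces. The leading contribution is \(\sqrt E\kappa^{-1/2}\), the \(O(E)\) remainders from the rotation expansion are of lower order, and the fluctuation term is \(O(E|\log E|)\). Dividing by \(\pi\) yields \eqref{eqn:NE-root}.
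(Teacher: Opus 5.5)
Your proposal is correct and is, in substance, the paper's argument. It has the same ingredients: a Pr\"ufer phase whose averaged ($a\equiv\kappa$) dynamics rotates by $\sqrt{E/\kappa}$ per step up to $\mathcal O(E)$; a first-order fluctuation term $\sqrt E\,\kappa^{1/2}g(2^nx)\sin^2\theta_{n-1}$ (your $g$ is $-q$); and a bound on its mean obtained by separating $q_n$ from the phase by $m\sim|\log E|$ steps, at cost $m\sqrt E$, while gaining $2^{-m}$ from the $C^1$ regularity of $a$. This is exactly the structure of Lemma~\ref{lem:Q-cos}.

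The differences are in implementation. You work with $(u_{n-1},v_n/\sqrt{E\kappa})$ and Sturm node counting, whereas the paper conjugates the flux cocycle by $P$ so that the averaged step is an exact rotation by $\eta(E)$, and then uses Lemma~\ref{lem:ids-lyp}; at precision $\mathcal O(E|\log E|)$ this makes no difference. For the decorrelation, you use exact independence of binary digits under Lebesgue measure after truncating the arguments of the $a_k$. The paper instead combines the derivative bound of Lemma~\ref{lem:zetan-prime} with the integration-by-parts estimate \eqref{eqn:doubling-mixing-shifted-zero}. Your version is more elementary but relies on the Bernoulli structure of the digits. The paper's version produces derivative bounds and dyadic approximations that it reuses for the Lyapunov exponent and the large-deviation estimates.

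Two details in your key step need to be stated correctly.

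First, as written, the truncation ``forgets the first $n-m$ binary digits,'' which is the wrong direction. You must discard the digits of $x$ beyond position $n$, i.e.\ replace $2^kx$ by its truncation to $n-k$ binary digits. Then $|a_k-\tilde a_k|\le\|a'\|_\infty 2^{-(n-k)}$, and $\tilde\theta$ is a function of digits $1,\dots,n$ only, hence exactly independent of $q(2^nx)$.

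Second, bounding the propagated truncation error needs the one-step stability of the phase map $\theta_n=\Psi(\theta_{n-1},a_n)$, namely $|\partial_\theta\Psi|\le 1+C\sqrt E<2$ and $|\partial_a\Psi|\le C\sqrt E$. This keeps $\sum_k(1+C\sqrt E)^{n-k}2^{-(n-k)}$ bounded, and it is the same computation as in Lemma~\ref{lem:zetan-prime}.

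Done this way, the propagated error is actually $\mathcal O(\sqrt E\,2^{-m})$ rather than $\mathcal O(2^{-m})$. So truncating all of $a_0,\dots,a_{n-1}$, with no time separation at all, already gives $\E[q_n\sin^2\theta_{n-1}]=\mathcal O(\sqrt E)$ and an $\mathcal O(E)$ remainder. This is consistent with the case $T=0$ of \eqref{eqn:qn-zetan-T} and with Remark~\ref{rem:Qn-refine}.
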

\begin{remark}
   Throughout the paper, \(X={\mathcal O}(Y)\) means that \(|X|\le C |Y|\) for some constant \(C>0\), which may depend on \(a\), but is independent of \(x,E\), and \(n\). We also write \(X\sim Y\) if \(X={\mathcal O}(Y)\) and \(Y={\mathcal O}(X)\). 
\end{remark}

We next turn to the Lyapunov exponent, another central object in one-dimensional ergodic spectral theory. The Lyapunov exponent is closely tied to the IDS through the Thouless formula and, in this sense, may be viewed as its natural dual quantity. In one-dimensional spectral theory, transfer-matrix methods give a powerful dynamical-systems approach through \(SL(2,\R)\) cocycles. The Lyapunov exponent characterizes the exponential growth rate of the transfer matrices and plays a central role in describing the growth or decay of solutions to \(H(x)\varphi=E\varphi\). In physical terminology, it is often interpreted as the inverse localization length, and positivity of the Lyapunov exponent is a key indicator of possible localization.

For the div-grad model considered here, the relevant transfer matrices are defined as follows. Let \(T_0^E=Id\), and for \(n\ge 1\),
\begin{align}\label{eqn:Tn-intro}
  T_n^E= A^E_{n-1} \cdots A^E_0  , \quad 
    A_j^E = \frac{1}{a_j} \begin{pmatrix} a_{j+1} + a_j - E & -a_j^2 \\ 1 & 0 \end{pmatrix}, \quad j\ge 0, \quad E\in \R.
\end{align}

The \emph{Lyapunov exponent} is defined by
\begin{align}\label{eqn:Lyp}
    L(E) = \inf_{n> 0} \frac{1}{n} \mathbb{E}\!\left( \log \|T_n^E(x)\| \right)
    \xlongequal{\text{\rm a.e.}\ x} \lim_{n \to \infty} \frac{1}{n}    \log \|T_n^E(x)\|.
\end{align}

The next result gives the asymptotic behavior of the Lyapunov exponent near the bottom of the spectrum.
\begin{theorem}\label{thm:LE-linear-intro}
Suppose \(a(x)\in C^1(\T)\). 
   Denote by \(q(x)=\kappa^{-1}-a^{-1}(x)\) and by 
\begin{align}\label{eqn:ca}
    c_a=\frac{\kappa}{8}\Big(\E\big[q^2(x) \big]+ 2\sum_{s=1}^\infty\E\big[q(x)q(2^sx)\big] \Big) .
\end{align}
Then \(c_a\ge 0\) and 
    \begin{align}\label{eqn:LE-linear-intro}
        L(E)= c_aE+{\mathcal O}(|\log E|^2E^{3/2}) 
    \end{align}
    as \(E\to 0^+\). 
\end{theorem}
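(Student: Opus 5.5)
Our plan follows the Figotin--Pastur / Chulaevsky--Spencer route: pass to Pr\"ufer-type variables adapted to the small-energy regime, extract an exact formula for $L(E)$ as an ergodic average of a function of the phase, and then expand this average in $\sqrt E$ using decay of correlations for the doubling map.

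\textbf{Step 1: the gradient variable and normal form.} With $v_n=a_n(u_n-u_{n-1})$ the eigenvalue equation becomes the first-order system
\[
v_{n+1}=v_n-Eu_n,\qquad u_n=u_{n-1}+a_n^{-1}v_n .
\]
The transfer matrix in the variables $(u,v)$ is therefore a product of the shears $\begin{pmatrix}1&0\\-E&1\end{pmatrix}$ and $\begin{pmatrix}1&a^{-1}_n\\0&1\end{pmatrix}$. It is conjugate to $T_n^E$ by a bounded matrix, so $L(E)$ is unchanged.

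\textbf{Step 2: rescaling and splitting off a rotation.} Rescale $v=\sqrt{E}\,\kappa^{-1/2}w$ and $u$ suitably. Each step then becomes a rotation by an angle $\omega\approx\sqrt{E/\kappa}$ (the constant-coefficient problem with $a\equiv\kappa$) composed with a perturbation of size $\sqrt{E}\,q(2^n x)$, up to $O(E)$ errors. Writing $(u,w)=r(\cos\theta,\sin\theta)$, we get
\[
\log\frac{r_{n+1}}{r_n}=\sqrt{E}\,\alpha\,q_n\sin 2\theta_n+E\,\beta(q_n,\theta_n)+O(E^{3/2}),
\]
where $\alpha$ is explicit and $q_n=q(2^nx)$. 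The phase evolves by
\[
\theta_{n+1}=\theta_n+\omega+\sqrt E\, q_n\,g(\theta_n)+O(E).
\]
The mean of $q$ is zero by the definition of $\kappa$. This is exactly where the choice of $\kappa$ enters: it removes the first-order drift. Then
\[
L(E)=\lim_N\frac1N\sum_n\log\frac{r_{n+1}}{r_n}.
\]

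\textbf{Step 3: second-order averaging.} The first-order term $\sqrt E\,q_n\sin 2\theta_n$ does not average to zero trivially, because $\theta_n$ depends on $q_0,\dots,q_{n-1}$. Expand $\theta_n$ around $\theta_{n-s}$ over a window of length $s\le M\sim|\log E|$. For the doubling map with $a\in C^1$, the correlations $\E[q(x)q(2^sx)]$ and the conditional expectations of $q_n$ given the "past" decay like $2^{-s}$. Backward, $q(2^nx)$ is nearly independent of $x$'s dyadic future digits; we will use the Chulaevsky--Spencer decomposition: condition on $2^n x$ and use that the preimage branches equidistribute.

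The cross terms $\sqrt E\,q_n\cdot\sqrt E\,q_{n-s}g(\theta_{n-s})\cos 2\theta_{n-s}$ then produce, after averaging $\theta$ (which is nearly uniformly distributed because $\omega$ rotates it), a contribution $E\sum_s\E[q_0q_s]$ times trigonometric averages. Likewise $E\beta$ gives $E\,\E[q^2]$ times an average. Combining these with the normalization from Step 2 yields $c_aE$ with the coefficient $\kappa/8$. Here $\E[q^2]+2\sum_s\E[q_0q_s]$ is the asymptotic variance of the Birkhoff sums of $q$, which is nonnegative; this gives $c_a\ge0$.

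\textbf{Main obstacle: uniform equidistribution of the phase.} The rotation $\omega\sim\sqrt E$ is slow, so the phase is only equidistributed on scales $\gtrsim E^{-1/2}$. We would rather use the invariant measure of the phase Markov-type process (skew product with the doubling map) and show that its density is $1/\pi+O(\sqrt E|\log E|)$. Perturbing from uniform, the correction enters multiplied by $E$ and gives the error $O(|\log E|^2E^{3/2})$. Truncating the correlation sums at $M=C|\log E|$ costs $2^{-M}\le E^{2}$, but the window expansions produce the $|\log E|^2$ factors.

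Proving this density estimate requires summing perturbative corrections over times $\sim E^{-1/2}$ with exponential mixing. We would first try a Fourier-in-$\theta$ transfer-operator argument: the $k$-th mode gains the factor $e^{2ik\omega}$ per step, and the $\sqrt E q$ forcing has summable correlations. This should give the density bound via a Neumann series whose denominators $|1-e^{2ik\omega}|\gtrsim k\sqrt E$ are compensated by the $\sqrt E$ smallness of the forcing.
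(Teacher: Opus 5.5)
Your overall architecture matches the paper's. It has Figotin--Pastur phase variables rotating by $\omega=\eta\approx\sqrt{E/\kappa}$, and a first-order term $\sqrt E\,q_n\sin 2\theta_n$ that must be re-expanded over a window $T\sim|\log E|$. The constant part of that expansion gives $\sum_{s\ge1}e^{2is\eta}\E[q_0q_s]=\tfrac12(\gamma(0)-\E[q^2])+O(\sqrt E)$, which combines with the $\tfrac18\E[Q^2]$ term to produce $\kappa\gamma(0)E/8$. Finally, $c_a\ge0$ comes from the Fej\'er (asymptotic-variance) identity.

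The gap is in the step you single out as the main obstacle. As you describe it, the mechanism does not deliver the needed bound. If the forcing of the $k$-th phase mode is only known to be $O(\sqrt E)$, dividing by $|1-e^{2ik\omega}|\sim k\sqrt E$ leaves only an $O(1)$ bound on the mode. That is useless: you need $O(\sqrt E|\log E|)$, so that after multiplication by the $E$-size prefactors it lands in the error. Moreover, the forcing of mode $k$ is the joint correlation $\sqrt E\,\E[q_n h_k(\theta_n)]$. This is not a functional of the $\theta$-marginal, so there is no closed Neumann series for a phase density to sum. A pointwise density bound $1/\pi+O(\sqrt E|\log E|)$ would also require all modes, including $k\sim E^{-1/2}$ where $e^{2ik\omega}$ is again close to $1$, whereas only $k=1,2$ ever enter.

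The missing idea is that the forcing carries an extra factor $\sqrt E|\log E|$, coming from $\E[q]=0$ and the regularity of the phase in $x$. The paper (Lemma~\ref{lem:expZeta}) proves only $\frac1N\sum_{n<N}\E[\zeta_n^k]=O(\sqrt E|\log E|)$ for $k=1,2$, where $\zeta_n=e^{2i\chi_n}$. Summing \eqref{eqn:zetaIter} gives $(1-\mu)\sum_{n<N}\zeta_n=\zeta_0-\zeta_N+\frac i2\sum_{n<N}Q_n(\mu\zeta_n-1)^2+O(NE)$. One then replaces $\zeta_n$ by $\mu^T\zeta_{n-T}$ at cost $O(T\sqrt E)$ and uses $|\E[Q_n\zeta_{n-T}^k]|\le CE\,2^{-T}$ from \eqref{eqn:qn-zetan-T}. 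So the forcing is $O(TE)$, and after division by $|1-\mu|\sim\sqrt E$ the averages are $O(\sqrt E|\log E|)$ once $N\gtrsim E^{-1}$. No invariant density is needed.

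The decorrelation bound in turn rests on $\sup_x|\partial_x\zeta_n|\le C\sqrt E\,2^n$ (Lemma~\ref{lem:zetan-prime}). This holds because the one-step phase map is $(1+C\sqrt E)$-Lipschitz with $1+C\sqrt E<2$, combined with integration by parts against the mean-zero $q(2^nx)$ (Lemma~\ref{lem:doubling-mixing}).

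You need this same estimate throughout Step~3 as well. The decay of $\E[q(x)q(2^sx)]$ is a statement about the coefficients alone. By contrast, bounding $\E[q_n\sin 2(\theta_{n-T}+T\omega)]$ by $\sqrt E\,2^{-T}$, and factoring $\E[q_nq_{n-s}\phi(\theta_{n-s})]\approx\E[q_nq_{n-s}]\,\E[\phi(\theta_{n-s})]$ up to $O(\sqrt E)$, both require that $\theta_m$ vary by only $O(\sqrt E)$ on dyadic cells of length $2^{-m}$. Your proposal never states or proves this.
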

\begin{remark}
  The sum appearing in the definition of \(c_a\) is related to the so-called spectral density of the process \(\{q(2^sx)\}\). We will discuss related properties in Section~\ref{sec:lyp}. The Lyapunov exponent is asymptotically linear only when \(c_a>0\). In Section~\ref{sec:lyp}, we will also exhibit sampling functions \(a\) for which \(c_a=0\); in these examples, the linear term in \eqref{eqn:LE-linear-intro} vanishes, so the Lyapunov exponent is sublinear in \(E\) as \(E\to 0^+\).    
\end{remark}

The Lyapunov-exponent asymptotics above concern the averaged finite-scale quantities
\[
L_n(E)=\frac{1}{n}\mathbb{E}\!\left(\log\|T_n^E(x)\|\right)
\]
and their convergence to \(L(E)\). A more delicate problem is to control the pointwise fluctuations of the finite-scale transfer-matrix growth around this asymptotic value. We prove the following large-deviation estimate.

\begin{theorem}\label{thm:ldt-intro}
Retain the assumptions of Theorem~\ref{thm:LE-linear-intro}. Let \(c_a\) be as in \eqref{eqn:ca}. If \(c_a>0\), then, for every \(\varepsilon>0\), there exist constants \(c=c(\varepsilon)>0\) and \(E_0=E_0(\varepsilon)>0\) such that for all \(0<E<E_0\), there exists \(N_0=N_0(E,\varepsilon)\) such that for all \(N\ge N_0\),
\begin{align}
      \P\Big(\, x\in \T\, :\, 
      \Big|\frac{1}{N}\log\|T_N^E(x)\|-c_aE\Big|
      >\varepsilon c_aE
      \Big)
      \le
      \exp\Big(-c\frac{E}{|\log E|^3}N\Big).
\label{eqn:ldt-main}
\end{align}
\end{theorem}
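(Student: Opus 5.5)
We follow the Bourgain--Schlag route: small-energy Prüfer-type coordinates, a multi-scale decomposition of the log-norm into a sum of weakly dependent increments, and a Bernstein-type exponential moment bound. The task is to keep every constant explicit in $E$.

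\textbf{Step 1: conjugation and phase dynamics.} First conjugate $A_j^E$ to a near-rotation. Write $E=\kappa k^2$. Since the free div-grad operator with constant coefficient $\kappa$ has rotation angle $\theta\approx k$, conjugate by a diagonal change of variables $(u_{n},a_n(u_n-u_{n-1}))$ and then by $\mathrm{diag}(1,k)$. The one-step matrix becomes $R_{k}+k\,q(2^jx)\,B+O(k^2)$, where $R_k$ is rotation by about $k$ and $B$ is an explicit traceless matrix. In polar coordinates $(r_n,\phi_n)$ this gives
\[
\log r_{n+1}-\log r_n = k\,q_n\,\sin 2\phi_n\cdot c_1 + k^2(\dots)+O(k^3),
\]
with a similar phase recursion $\phi_{n+1}=\phi_n+k+O(k q_n)$. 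This is exactly the form used to derive Theorem~\ref{thm:LE-linear-intro}: the second-order terms average to $c_aE$, and the first-order terms have mean zero.

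\textbf{Step 2: blocks of length $M$.} Take blocks of length $M\sim |\log E|/E^{1/2}$, or a comparable power, and split $\log\|T_N^E\|$ into block increments $X_1,\dots,X_{N/M}$. By the expansion in the proof of Theorem~\ref{thm:LE-linear-intro}, each increment satisfies $\E[X_i\mid\mathcal F_{i-1}]=c_aEM(1+o(1))$, uniformly in the incoming phase, after averaging the phase over a block of length $\gg k^{-1}$. Its fluctuation satisfies $|X_i-\E X_i|\lesssim k\sqrt{M}\,|\log E|$ except on a tiny set.

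Here $\mathcal F_i$ is the information about the $x$-digits seen up to block $i$. The doubling map makes $q(2^jx)$ depend essentially on binary digits $j,\dots,j+|\log E|$ up to $C^1$ error $2^{-|\log E|}$. So blocks separated by $\gtrsim|\log E|$ digits are nearly independent. Discarding a buffer of length $C|\log E|$ between blocks costs a relative error $|\log E|/M=o(1)$.

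\textbf{Step 3: concentration.} Apply an Azuma/Bernstein martingale-difference inequality to $\sum_i (X_i-\E[X_i\mid \mathcal F_{i-1}])$. The deviation threshold is $\varepsilon c_aEN$, and the variance is about $(N/M)\,k^2M|\log E|^2 = NE|\log E|^2$. This gives the bound
\[
\exp\!\big(-c\,\varepsilon^2 E^2N^2/(NE|\log E|^2+\varepsilon EN\cdot k\sqrt M|\log E|)\big)\ge\!\!\!\!\!\!\quad \exp(-c E N/|\log E|^3),
\]
after choosing $M$ appropriately; the extra log comes from the tail truncation. Finally, $N_0(E,\varepsilon)$ is needed so that boundary blocks and the $L_N\to L$ convergence error $O(1/N)$ fall below $\varepsilon c_aE/2$.

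\textbf{Main obstacle.} The hardest step is the uniform conditional-mean estimate in Step 2: the mean growth over a block must be $c_aEM(1+o(1))$ regardless of the incoming phase. This requires controlling the correlations $\E[q(2^sx)q(2^tx)e^{i(\phi_s+\phi_t)}]$ with phase-dependent oscillation, using $C^1$ decay of correlations for the doubling map. The difficulty is that the phase rotates only by $k$ per step. I would first try to prove it by freezing the phase at the start of each sub-block of length $k^{-1/2}$ and bounding the drift error by $O(k^{3/2}|\log E|^2)$, mirroring the error term in \eqref{eqn:LE-linear-intro}.
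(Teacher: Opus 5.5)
\textbf{There is a genuine gap, and it sits exactly where you put your "main obstacle".} The block estimate $\E[X_i\mid\mathcal F_{i-1}]=c_aEM(1+o(1))$, uniform in the incoming phase, is the whole content of the theorem. You assert it rather than prove it, and the mechanism you describe for it is wrong.

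For the doubling map, the first-order term $\frac12Q_n\sin2(\chi_n+\eta)$ in \eqref{eqn:rho-1} is \emph{not} centered. The phase $\chi_n$ is built from $Q_{n-1},Q_{n-2},\dots$, which correlate with $Q_n$. Its mean is $\frac{\kappa E}{8}(\gamma(0)-\E[q^2])$ (see \eqref{eqn:lyp-main1}), whereas the local quadratic terms only produce $\frac{\kappa}{8}\E[q^2]E$, not $c_aE=\frac{\kappa}{8}\gamma(0)E$. In the example of Theorem~\ref{thm:LE-counterexp} these two contributions cancel at order $E$. So any block mean computed with the first-order term treated as conditionally centered has the wrong constant.

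For the same reason, freezing the phase on sub-blocks and bounding a "drift error" cannot work. The phase correction $\chi_n-\chi_{n_0}-(n-n_0)\eta$ is driven by the preceding $Q_{n-s}$. Its correlation with $Q_n$ is precisely the main term $\frac i2\sum_{s}\mu^{s-1}R_s$ of \eqref{eqn:ave-zetaQ-main-contri}, which is of order $E$ per step. It has to be computed, not bounded. Computing it requires three ingredients: the second phase iteration $\zeta_n=\mu^T\zeta_{n-T}+\frac i2\sum_{s=1}^T\mu^{s-1}Q_{n-s}(\mu^{T+1-s}\zeta_{n-T}-1)^2+{\mathcal O}(T^2E)$ at the decorrelation scale $2^{-T/2}\sim E$; the split $Q_nQ_{n-s}=R_s+F_{n,s}$; and Lemma~\ref{lem:Fns-conditional-error}.

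Appealing to the proof of Theorem~\ref{thm:LE-linear-intro} does not supply this. Lemmas~\ref{lem:expZeta} and~\ref{lem:ave-zetanQn} are unconditional averages over length $N\gtrsim E^{-1}$. You need a conditional version on each dyadic atom, over length $M\ll E^{-1}$, uniformly in the atom.

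\textbf{Step 3 has a second, structural problem.} Each step of $\log\rho_n$ is ${\mathcal O}(\sqrt E)$, so the natural almost-sure bound is $|X_i|\le C\sqrt E M$, and Azuma over $N/M$ blocks gives $\exp(-c\varepsilon^2EN/M)$. You are caught between two requirements:
\begin{itemize}
\item A phase-uniform block mean needs $\eta M\gg1$, i.e.\ $M\gg E^{-1/2}$; otherwise terms like $\sum_n Q_n^2\cos2(\chi_n+\eta)$ keep an ${\mathcal O}(EM)$ dependence on the incoming phase. But then Azuma gives a rate $E/M\ll E^{3/2}$; at your $M$ it is $E^{3/2}/|\log E|$.
\item Choosing $M\sim|\log E|^3$ recovers the rate $E/|\log E|^3$ but destroys phase uniformity.
\end{itemize}

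Your way out, truncating at $k\sqrt M|\log E|$ off a "tiny set", fails as stated. The exceptional sets must be unioned over $N/M$ blocks, which gives a bound that grows in $N$ instead of decaying. You would need either a separate deviation estimate for the number of bad blocks, or Freedman's inequality together with a uniform conditional-variance bound $\E[(X_i-\E[X_i\mid\mathcal F_{i-1}])^2\mid\mathcal F_{i-1}]\lesssim EM$. The latter is another phase-dependent correlation estimate you have not proved.

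The buffer accounting is also off. Dropping $C|\log E|$ steps per block costs up to $C\sqrt E|\log E|$ per block, a relative error $|\log E|/(\sqrt EM)$ rather than $|\log E|/M$. At your $M$ this is ${\mathcal O}(1)$. In addition, your final display has the inequality reversed.

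\textbf{How the paper avoids this.} It never forms block means. It treats the summands of \eqref{eqn:rho-0}--\eqref{eqn:rho-4} one by one. It uses \eqref{eqn:sum-zeta} to convert bare phase averages into $Q$-weighted sums and replaces $\zeta_n$ by $\mu^T\zeta_{n-T}$. Each summand is then written as $\E_{n+\frac T2}[\cdot]\,\E_{n-\frac T2}[\cdot]$ or $(\E_{n+\frac T2}-\E_{n-\frac T2})[F_{n,s}]\,\E_{n-\frac T2}[\zeta_{n-T}]$. Along residue classes mod $T$ these are exact martingale differences, with deterministic bounds $C\sqrt E$ or $CE$, and the phase enters only as a predictable multiplier. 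Plain Azuma then yields the rates $E/|\log E|$ and $|\log E|^{-3}$ without any uniformity in the phase.
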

Although \eqref{eqn:ldt-main} is stated with center \(c_aE\), the same estimate can be used with center \(L(E)\) or \(L_N(E)\), after absorbing lower-order deterministic errors. We make this precise in Section~\ref{sec:ldt}; see Remark~\ref{rem:ldt-equivalent-centers}.

The proof is guided by the strategy of Bourgain--Schlag \cite{BourgainSchlag2000CMP} for the Schr\"odinger case. In line with the discussion above, the main additional difficulty in the div-grad setting is that the perturbative analysis must be carried out uniformly in the small-energy parameter. Similar issues already arise in the proof of the Lyapunov-exponent asymptotics, compared with the corresponding analysis of Chulaevsky--Spencer \cite{ChulaevskySpencer1995CMP}. The second technical point is that the Bourgain--Schlag martingale argument was written for the cosine sampling function. The underlying mechanism extends naturally to general \(C^1\) sampling functions, but one has to keep careful track of the conditional-expectation errors that were simpler in the cosine case. We will discuss these points in the proof.

Large-deviation estimates for transfer matrices of the above form have become a powerful tool in the study of one-dimensional Schr\"odinger and Jacobi operators, especially since the work of Bourgain--Goldstein \cite{bourgain2000nonperturbative}; see \cite{bourgain2005greens} for a thorough review. In the present setting, \eqref{eqn:ldt-main} provides quantitative control of finite-scale transfer matrices at most phases, which can then be combined with the Avalanche Principle and finite-volume Green's function estimates.

As a first consequence, we obtain local H\"older regularity of the Lyapunov exponent and the IDS in the small-energy regime.

\begin{theorem}\label{thm:holder}
Let \(c_a\) be as in Theorem~\ref{thm:ldt-intro}. Suppose \(c_a>0\). Then there exists \(E_0>0\) such that both \(L(E)\) and \(\mathcal N(E)\) are locally H\"older continuous on \((0,E_0)\). More precisely, for each \(E\in(0,E_0)\), there exist \(\tau=\tau(E)\in(0,1)\), \(\varepsilon_0=\varepsilon_0(E)>0\), and \(C=C(E)>0\) such that
\begin{align}
    |L(E)-L(E')|+|\mathcal N(E)-\mathcal N(E')|
    \le
    C|E-E'|^\tau
    \label{eqn:holder-local}
\end{align}
whenever \(E'\in(0,E_0)\) and \(|E-E'|<\varepsilon_0\).
\end{theorem}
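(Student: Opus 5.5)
The plan follows the Goldstein--Schlag scheme: combine the large-deviation estimate of Theorem~\ref{thm:ldt-intro} with the Avalanche Principle to get H\"older continuity of \(L\), then transfer it to \(\mathcal N\) through the Thouless formula. Fix \(E\in(0,E_0)\) with \(E_0\) from Theorem~\ref{thm:ldt-intro} (applied with, say, \(\varepsilon=1/10\)), and let \(E'\) be nearby. Since all constants may depend on \(E\), we treat \(\gamma:=c_aE>0\) and the rate \(\delta:=cE/|\log E|^3\) as fixed positive numbers in a neighborhood of \(E\). Theorem~\ref{thm:ldt-intro} only gives a deviation band of width \(\varepsilon\gamma\). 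I would therefore first use it, together with the asymptotics of Theorem~\ref{thm:LE-linear-intro}, to ensure \(L_N(E')\ge \gamma/2\) and \(L_N(E')-L(E')\) small for all \(N\ge N_0\) and all \(E'\) in a small window. Uniformity of \(N_0(E',\varepsilon)\) for \(E'\) near \(E\) needs a short argument: either the proof of Theorem~\ref{thm:ldt-intro} gives local uniformity in \(E\), or one uses the continuity of \(E'\mapsto \frac1N\log\|T^{E'}_N\|\) at fixed \(N\) to perturb.

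\textbf{Step 1: a uniform LDT at the sharper scale.} Following Remark~\ref{rem:ldt-equivalent-centers}, recenter the estimate at \(L_N(E')\) and obtain
\[
\P\Big(\big|\tfrac1N\log\|T_N^{E'}(x)\|-L_N(E')\big|>\tfrac{\gamma}{10}\Big)\le e^{-\delta N}.
\]
For the Avalanche Principle the deviation must be small relative to \(L_N\), so the fixed-proportion bound \(\varepsilon\gamma\) is exactly what is needed.

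\textbf{Step 2: Avalanche Principle.} Take \(N\) large and \(n=kN\) with \(k\approx e^{\delta N/2}\). Split \(T_n\) into blocks \(T_N(2^{jN}x)\). By Step 1 and invariance of Lebesgue measure under the doubling map, outside a set of measure \(\le 2ke^{-\delta N}\) every block and every two-block product has norm \(\ge e^{N(L_N-\gamma/10)}\). The Avalanche Principle then gives
\[
\log\|T_n\|=\sum_j\log\|T_{2N}(2^{jN}x)\|-\sum_j\log\|T_N(2^{jN}x)\|+\mathcal O(ke^{-\gamma N/4}).
\]
Integrating, and controlling the bad set with the trivial bound \(|\log\|T_m\||\le Cm\), we get
\[
\big|L_n+L_N-2L_{2N}\big|\lesssim e^{-\delta' N},
\]
and iterating over scales yields \(|L(E')+L_N(E')-2L_{2N}(E')|\lesssim e^{-\delta'N}\), uniformly for \(E'\) near \(E\). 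The one place the doubling map needs care is the independence of the bad sets. It is not used: a union bound over blocks plus shift-invariance of \(\P\) suffice.

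\textbf{Step 3: H\"older bound for \(L\).} At finite scale, \(\|\partial_E A_j^E\|\le a_-^{-1}\), so
\[
|L_N(E)-L_N(E')|\le e^{CN}|E-E'|.
\]
Choosing \(N\sim \frac{1}{C}\,|\log|E-E'||/2\) and combining with Step 2 gives \(|L(E)-L(E')|\le C|E-E'|^\tau\) with \(\tau\) roughly \(\delta'/C\).

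\textbf{Step 4: transfer to \(\mathcal N\).} For the Thouless formula for this div-grad model (with a \(\log a\) correction term, derived from the standard Jacobi case), \(L\) is the log-potential of \(d\mathcal N\). H\"older continuity of a subharmonic extension's boundary values then gives H\"older continuity of \(\mathcal N\) via the Hilbert transform (Goldstein--Schlag), possibly with a smaller exponent. Alternatively, one can use the relation \(\mathcal N=\frac1\pi\,\mathrm{Im}\) of the Herglotz function \(w(z)\) with \(\mathrm{Re}\,w=L\), together with Privalov-type local estimates.

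I expect the main obstacle to be making Step 1 uniform in \(E'\), that is, controlling \(N_0(E',\varepsilon)\) and the constants of Theorem~\ref{thm:ldt-intro} on a window around \(E\). The local Privalov argument in Step 4 is a secondary technical point.
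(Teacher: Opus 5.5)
Your proposal is correct and follows the paper's argument. The chain is the same: the LDT at a fixed tolerance, then the Avalanche Principle on the blocks $T_N(2^{jN}x)$ (using only a union bound and invariance of Lebesgue measure) to get $|L+L_N-2L_{2N}|\lesssim e^{-\delta' N}$ after iterating scales, then a finite-scale Lipschitz bound in $E$ with balancing $N\sim|\log|E-E'||$, and finally the Thouless formula with the Goldstein--Schlag Hilbert-transform argument for $\mathcal N$.

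There are two minor differences. The paper uses the uniform bound $\frac1n\log\|T_n^E(x)\|\le C\sqrt E$ to get the Lipschitz constant $ne^{Cn\sqrt E}$ and hence $\tau(E)=c\sqrt E/|\log E|^3$, whereas your cruder $e^{CN}$ yields $\tau\sim E/|\log E|^3$, which still suffices. For the uniformity of $N_0(E',\varepsilon)$ near $E$, rely on your first option: the LDT proof gives an explicit, locally bounded $N_0$. Perturbing at fixed $N$ cannot reach the much larger scales $n_s$ needed in the iteration.
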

The proof gives a local H\"older exponent depending explicitly on the energy scale, which degenerates as \(E\to0^+\) and may not be optimal. We discuss this point after the proof; see Remark~\ref{rem:holder-exponent-degenerates}.

The second consequence is Anderson localization at small energies. Here the positivity of \(L(E)\), together with the large-deviation estimate, yields finite-volume Green's function decay; the standard generalized-eigenfunction argument then gives exponential decay of eigenfunctions.
\begin{theorem}\label{thm:AL}
   Retain the assumptions of Theorem~\ref{thm:holder}. Then, for a.e. \(x\in \T\), \(H(x)\) has pure-point spectrum in \([0,E_0]\), and the corresponding eigenfunctions decay exponentially.
\end{theorem}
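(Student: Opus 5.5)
We will follow the Bourgain--Schlag route: first prove finite-volume Green's function decay from the large-deviation estimate, then pass to eigenfunctions through the standard generalized-eigenfunction argument.

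\textbf{Step 1: Green's functions at one energy.} Fix \(E\in(0,E_0]\) with \(E_0\) as in Theorem~\ref{thm:holder}. By Theorem~\ref{thm:LE-linear-intro}, \(L(E)\ge \tfrac12 c_aE>0\). By Theorem~\ref{thm:ldt-intro}, used with center \(L_N(E)\) as in Remark~\ref{rem:ldt-equivalent-centers}, the set of phases with \(\frac1N\log\|T_N^E(x)\|<(1-\varepsilon)L(E)\) has measure at most \(e^{-c(E)N}\), where \(c(E)\sim E|\log E|^{-3}\). Cramer's rule expresses the Green's function \(G_{[m,m+N-1]}(E;j,k)\) as a ratio of Dirichlet determinants \(\det(H_{[\cdot]}-E)\), which are entries of the transfer matrices up to the factors \(\prod a_j\); these factors are harmless since \(a_-\le a_j\le a_+\). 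Since \(\det(H_{[m,m+N-1]}-E)\) is a polynomial in \(E\) of degree \(N\), the usual Cartan/trigonometric-polynomial argument is not available because the dependence on \(x\) is only \(C^1\). Instead we use the avalanche principle to compare \(\log\|T_N\|\) across scales \(N\) and \(2N\). Combining this with the matrix-entry lower bound (one of the four entries of \(T_N\) is at least \(\|T_N\|/2\)) gives the following. Outside an exceptional set \(\Omega_N(E)\) of measure at most \(e^{-c'(E)N}\), the matrix \(H_{\Lambda}(x)-E\) on a window \(\Lambda\) of size \(N\) around the origin satisfies
\[
|G_\Lambda(E;j,k)|\le e^{-(1-2\varepsilon)L(E)|j-k|+\varepsilon L(E)N}.
\]

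\textbf{Step 2: eliminating energies.} We apply the generalized-eigenfunction argument. By Sch'nol's theorem, spectrally almost every \(E\in[0,E_0]\) has a polynomially bounded solution \(\psi\) with \(\psi_0\) or \(\psi_1\) nonzero. Expressing \(\psi_n\) through the Poisson formula on a window of size \(N\) centered at \(n\), it suffices to exclude the double-resonance event: for some \(E\), both the window near \(0\) and a window near \(n\), with \(N^C\le n\le e^{\delta N}\), are bad. The main obstacle is that the exceptional sets depend on \(E\). We handle this as follows:

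\begin{enumerate}[(i)]
\item \textbf{Discretize in energy.} The entries of \(T_N^E\) are Lipschitz in \(E\) with constant at most \(e^{CN}\). It is therefore enough to check energies on a grid \(\{E_k\}\) of mesh \(e^{-C N}\).
\item \textbf{Restrict to eigenvalues of \(H_{[-N,N]}(x)\).} For the relevant energies, resonance at the origin forces \(E\) to lie within \(e^{-\varepsilon N L}\) of an eigenvalue \(E_j(x)\), \(|j|\le 2N+1\).
\item \textbf{Use mixing of the doubling map.} The event ``\(2^n x\in\Omega_N(E_j(x))\)'' involves \(E_j(x)\), which depends only on the first \(\sim N\) binary digits of \(x\) up to an error \(e^{-cN}\), since \(a\) is \(C^1\). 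The event at site \(n\), on the other hand, depends on the digits of \(2^n x\). Replacing \(E_j(x)\) by its value on the dyadic interval of length \(2^{-CN}\), and using that \(x\mapsto 2^n x\) maps each such interval onto \(\T\) whenever \(n> CN\), we obtain the bound
\[
\P(\text{double resonance at }n)\le (2N+1)\,2^{CN}\,e^{-c'(E_0)N}.
\]
\end{enumerate}

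This is the delicate point. The decay rate \(c'(E)\sim E|\log E|^{-3}\) must beat the entropy factor \(2^{CN}\). To make it do so, we split \([0,E_0]\) into dyadic energy shells and, on each shell \([E_*,2E_*]\), take the window scale \(N\) large relative to \(1/c'(E_*)\). The entropy loss then comes only from the Lipschitz approximation of the eigenvalues, which can be taken at precision \(e^{-\tilde c N}\) with \(\tilde c\ll c'(E_*)\), using that \(E_j\) is Lipschitz in \(x\) with constant \(C2^{N}\). If this accounting fails, we would instead use the LDT with \(N\) replaced by a larger multiple \(\bar N=KN\) for the window at \(n\), so that the bad-set measure \(e^{-c'K N}\) beats \(2^{CN}\).

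\textbf{Step 3: conclusion.} Summing over \(n\in[N^C,e^{\delta N}]\) and over \(N\), Borel--Cantelli gives, for a.e.\ \(x\), the following: for all large \(N\), no energy in the shell produces a double resonance. Consequently every generalized eigenfunction satisfies \(|\psi_n|\le e^{-\frac12 L(E)|n|}\) for large \(|n|\). Since \(L(E)\ge \tfrac12 c_aE_*\) on each shell, this gives exponential decay. Taking the countable union of shells yields pure point spectrum in \((0,E_0]\). The point \(E=0\) has spectral measure zero, since \(0\) is not an eigenvalue: \((H\psi)=0\) forces \(a_{n+1}(\psi_{n+1}-\psi_n)\) to be constant, which is incompatible with \(\psi\in\ell^2\) and \(\psi_{-1}=0\) unless \(\psi=0\).
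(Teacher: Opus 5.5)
There is a genuine gap in Step~2, in the passage from (ii) to (iii). To replace \(E\) by the eigenvalue \(E_j(x)\) in the far-window event \(2^nx\in\Omega_N(E)\), that event must be stable under an energy shift of size \(\Delta=|E-E_j(x)|\). The Lipschitz bound you invoke in (i) controls the change of \(\frac1N\log\|T_N^E\|\) only by \(e^{CN}\Delta\), with an absolute \(C\), so you need \(\Delta\lesssim \varepsilon L(E)e^{-CN}\). Even the sharper bound \eqref{eqn:tn-prime}, which is of order \(e^{CN\sqrt E}\), is far larger than \(e^{\varepsilon L(E)N}\sim e^{\varepsilon c_aEN}\). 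A resonance of the origin window at the same scale \(N\) pins \(E\) only to within \(e^{-\varepsilon L(E)N}\), as you say in (ii). So near each of the \(O(N)\) eigenvalues there remain about \(e^{(C-\varepsilon L)N}\) points of your energy grid, and the union bound against \(e^{-c'(E)N}\), with \(c'\sim E|\log E|^{-3}\), fails.

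Step (ii) is also unjustified as stated. Shnol only gives \(|G_{[0,N]}(0,N)|\gtrsim 1/N\), and this does not force \(\|G_{[0,N]}\|\ge e^{\varepsilon LN}\).

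Conversely, the difficulty you single out is not real. If \(J\) is a dyadic interval of length \(2^{-M}\) and \(n\ge M\), then \(|\{x\in J:\,2^nx\in S\}|=|J|\,\P(S)\) exactly. Freezing \(E_j(x)\) on cells therefore costs nothing once you sum \(\sum_J|J|=1\), and the factor \(2^{CN}\) in your bound is spurious. Your remedies aim at the wrong target. The fallback of enlarging the far window to \(KN\) makes the real problem worse, because its energy-Lipschitz constant becomes \(e^{CKN}\).

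The missing idea is to make the resonance at the origin much stronger than the far-window scale. Following Bourgain--Schlag, the paper uses an origin window of length \(N_1\in[N^{12},3N^{12}]\) and the resonance condition \(\|G_{N_1}(x;E)\|>e^{N^2}\). This pins \(E\) within \(e^{-N^2}\ll e^{-CN}\) of one of \(N_1\) eigenvalues. It then freezes these eigenvalues on an \(x\)-mesh of size \(e^{-N_1^2}\). The far sites are \(k\in[\bar N,2\bar N]\) with \(\bar N=e^{(\log N)^2}\gg N_1^2\), which gives \(\P(\mathcal B_N)\le C\bar N N^{24}e^{-cN}\) in Lemma~\ref{lem:bad-set-BN}.

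This strong resonance does not come from a bad window. Lemma~\ref{lem:large-green-some-scale} combines Shnol with a good box of length \(N^3\) near some \(p\in[N^{12},2N^{12}]\). That box yields \(|\xi_{N_1}|\le e^{-2N^2}\), and hence \(1\le a_+\|G_{N_1}\|\,|\xi_{N_1}|\) forces the resonance. Lemma~\ref{lem:Tn-upp-ldt714} produces such a box, together with the upper bounds needed in Cramer's rule, simultaneously for all \(E\in I\). It does so through an averaged sparse LDT over \(k^4\) samples, whose exceptional set has measure \(e^{-ck^4}\), small enough to beat an \(e^{Ck}\) energy net. This uniformity in \(E\) is also missing from your Step~1.

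Finally, a single Poisson window of size \(N\ll n\) only gives \(|\psi_n|\lesssim e^{-cL(E)N}\). To obtain decay in \(n\) you must iterate across consecutive good windows, as in the paper's patching argument. Your treatment of \(E=0\) and the reduction to energy shells are fine.
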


Extending these results beyond the bottom of the spectrum remains an interesting open problem. In particular, for the div-grad model generated by the doubling map, it is natural to ask whether the Lyapunov exponent is positive for all \(E\neq0\), and whether this positivity leads to Anderson localization throughout the spectrum for a.e. \(x\in\T\). Related questions for dynamically generated Schr\"odinger operators and cocycles, including settings in which the underlying dynamics generalizes the doubling map, have been studied in several recent works; see, for example, \cite{AvilaDamanikZhang2023Invent,AvilaDamanikZhang2024arXiv,BourgainBourgainChang2015JST}. These works provide important context for the broader problem of proving positivity and localization beyond perturbative regimes.

The rest of the paper is organized as follows.
\begin{itemize}
    \item In Section~\ref{sec:ess-spectrum}, we prove the statement on the almost-sure essential spectrum.
    \item In Section~\ref{sec:prufer}, we introduce the singular change of frame and the modified Pr\"ufer variables used throughout the small-energy analysis.
    \item In Sections~\ref{sec:ids} and~\ref{sec:lyp}, we derive the asymptotic formulas for the IDS and the Lyapunov exponent, respectively.
    \item In Section~\ref{sec:ldt}, we prove the large-deviation estimate for transfer matrices.
    \item In Section~\ref{sec:applications}, we explain how the large-deviation estimate implies local H\"older regularity of the IDS and the Lyapunov exponent, as well as Anderson localization at small energies.
    \item The appendices contain the auxiliary mixing estimates, the Avalanche Principle argument for H\"older regularity, and the Green's function estimates used in the proof of Anderson localization.
\end{itemize}

Throughout the paper, constants denoted by \(C\), \(c\), \(C_i\), and \(c_i\) may change from line to line. Unless explicitly stated otherwise, these constants are independent of the phase \(x\), the small energy \(E\), and the scale parameters such as \(n\), \(N\), and \(T\). We will also allow constants such as \(E_0\) and \(N_0\) to be decreased or increased, respectively, finitely many times within a proof, without changing the notations. This convention is used to absorb lower-order errors, polynomial prefactors, and logarithmic factors into the main estimates.


\section{Intervals in the Essential Spectrum}\label{sec:ess-spectrum}

We begin with the proof of Theorem~\ref{thm:ess-spectrum}. The fact that the almost-sure essential spectrum contains an interval starting at \(0\) provides the spectral framework for the small-energy analysis of the IDS, the Lyapunov exponent, and the localization results below. This can be shown with the help of gap-labeling theorem \cite{DamFil2023gaplabel,DamFilZha2023JST}. 

\begin{proof}[Proof of Theorem~\ref{thm:ess-spectrum}]
We use the standard description of the almost-sure essential spectrum for continuous ergodic Jacobi families
\begin{align}\label{eqn:Sigma-ess-union}
    \Sigma_{\rm ess}
    =
    \overline{\bigcup_{x\in\T}\sigma_{\rm ess}(H(x))}.
\end{align}
In particular, spectra associated with periodic points of the doubling map are contained in \(\Sigma_{\rm ess}\).

    In the notation of \cite{DamFil2023CMP}, the div-grad models correspond to the Jacobi matrix with choices of sampling functions on the off-diagonal $-a(x)$ and on the diagonal $a(x) + a(2x)$.
    
    In view of \cite[Corollary~5.3]{DamEmiFil2023JST}, it follows that $\Sigma_{\rm ess}$ is an interval.
    Since $H(x) \geq 0$ and $0 \in \sigma(H(x))$ for all $x$, it follows that the interval is in the indicated form.
    The statement $A \geq 4a(0)$ follows from a trial function argument, since $0$ is a periodic point of period one with respect to the doubling map, so one has 
    \[[0,A]
    = \Sigma_{\rm ess} 
    \supseteq \sigma_{\rm ess}(H(0))
    = [0,4a(0)].\]
    Indeed, one has $\Sigma_{\rm ess} \supseteq \sigma_{\rm ess}(H(x))$ for any periodic point $x$ of the doubling map.
\end{proof}



\section{Modified Pr\"ufer Variables} \label{sec:prufer}
We now introduce the main technical tools for the small-energy analysis of the div-grad cocycles in \eqref{eqn:Tn-intro}. These tools will be used to study the IDS, the Lyapunov exponent, and the large-deviation estimates near the bottom of the spectrum. The analysis is based on two key ingredients:
\begin{itemize}
    \item a change of frame that conjugates the divergence-gradient model to an isotopic harmonic chain;
    \item the modified Pr\"ufer variables.
\end{itemize}

These techniques were first developed in \cite{pastur1992book} for random operators and were later used in \cite{li2026upper} to study quantum transport properties. Several of the key constructions are deterministic, or hold in a more general ergodic setting. In what follows, we adopt the framework and notations of \cite{li2026upper} and reformulate the relevant results for the doubling-map model.  

\subsection{A singular change of frame}
We first recall the cocycle formulation of the eigenvalue equation. A sequence \(u = \{u_n\}_{n\ge -1}\) solves
\begin{align}\label{eqn:hu=eu}
    -a_{n+1} u_{n+1} + (a_{n+1} + a_n) u_n - a_n u_{n-1} = E u_n, \quad n \in \mathbb{Z}_{\ge0},
\end{align}
if and only if, for \(n\ge 1\),
\begin{align}\label{eqn:u-cocycle}
    \begin{pmatrix}
        a_{n} u_{n} \\ u_{n-1}
    \end{pmatrix}
    = A_{n-1}^E
    \begin{pmatrix}
        a_{n-1} u_{n-1} \\ u_{n-2}
    \end{pmatrix}
    = T_n^E
    \begin{pmatrix}
        a_0 u_0 \\ u_{-1}
    \end{pmatrix}, 
\end{align}
where \(A_n^E,T_n^E\) are as in \eqref{eqn:Tn-intro}.

For \(E > 0\), if \(u_n\) satisfies \eqref{eqn:hu=eu}, define \(v_n = a_n (u_n - u_{n-1})\) for \(n \in \mathbb{Z}\). Then \(v_n\) satisfies
\begin{align}\label{eqn:hv=ev}
    -v_{n+1} + 2 v_n - v_{n-1} = \frac{E}{a_n} v_n, \quad n \in \Z_+.
\end{align}

The change of variables between \((u_n, u_{n-1})\) and \((v_n, v_{n-1})\), depending on \(E\), can be expressed as
\begin{align} 
    \begin{pmatrix}
        v_n \\
        v_{n-1}
    \end{pmatrix}
    = W_n
    \begin{pmatrix}
        a_n u_n \\
        u_{n-1}
    \end{pmatrix}, \quad W_n =
    \begin{pmatrix}
        1 & -a_n \\
        1 & E - a_n
    \end{pmatrix}. \notag 
\end{align}

Similarly to \eqref{eqn:u-cocycle}, we can rewrite \eqref{eqn:hv=ev} in transfer-matrix form as
\begin{align}\label{eqn:v-cocycle}
    \begin{pmatrix}
        v_n \\ v_{n-1}
    \end{pmatrix}
    = B^E_{n-1}
    \begin{pmatrix}
        v_{n-1} \\ v_{n-2}
    \end{pmatrix}
    = F_n^E
    \begin{pmatrix}
        v_0 \\ v_{-1}
    \end{pmatrix}, \quad n\ge 1,
\end{align}
where \(F_0^E=Id\) and, for \(n\ge 1\),
\begin{align*}
    F_n^E = B^E_{n-1} \cdots B^E_0, \quad  B^E_j =
    \begin{pmatrix}
        2 - \frac{E}{a_j} & -1 \\
        1 & 0
    \end{pmatrix}.
\end{align*}
 
It follows from \eqref{eqn:u-cocycle} and \eqref{eqn:v-cocycle} that
\begin{align} 
    B_n^E = W_{n+1} A_n^E W_n^{-1}, \quad \text{and} \quad F_n^E = W_n T_n^E W_0^{-1}. \notag 
\end{align}
Although this change of frame is singular as \(E\to 0\), it changes the norm of the transfer matrix only by polynomial factors in \(E,E^{-1}\), and therefore preserves the Lyapunov exponent. The following result can be found in \cite{li2026upper}. 
\begin{proposition}[{\cite[Proposition 3.3]{li2026upper}}]
There exists a constant \(c > 0\), depending only on \(a_+\) in \eqref{eqn:sample-bound}, such that for any \(n \ge 0\) and \(0 < E \le 4 a_+\),
\begin{align}\label{eqn:F-Tn-norm}
    \frac{E}{c} \|F_n^E\| \le \|T_n^E\| \le \frac{c}{E} \|F_n^E\|.
\end{align}
\end{proposition}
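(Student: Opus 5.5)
We prove \eqref{eqn:F-Tn-norm} directly from the conjugation identity \(F_n^E = W_n T_n^E W_0^{-1}\), equivalently \(T_n^E = W_n^{-1} F_n^E W_0\). Submultiplicativity of the operator norm gives
\begin{align*}
  \|F_n^E\| \le \|W_n\|\,\|T_n^E\|\,\|W_0^{-1}\|, \qquad
  \|T_n^E\| \le \|W_n^{-1}\|\,\|F_n^E\|\,\|W_0\|.
\end{align*}
So it suffices to bound \(\|W_j\|\) and \(\|W_j^{-1}\|\) uniformly in \(j\) and in \(0<E\le 4a_+\). We aim for \(\|W_j\| \le C\) and \(\|W_j^{-1}\| \le C/E\).

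First, the entries of \(W_j\) are \(1\), \(-a_j\), \(1\) and \(E-a_j\). Since \(0<a_j\le a_+\) and \(0<E\le 4a_+\), each entry is bounded in absolute value by \(\max(1,4a_+)\). Hence \(\|W_j\|\le 2\max(1,4a_+)\), a constant depending only on \(a_+\).

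Second, we compute the determinant:
\[
  \det W_j = (E-a_j) + a_j = E .
\]
Therefore
\[
  W_j^{-1} = \frac{1}{E}\begin{pmatrix} E-a_j & a_j \\ -1 & 1\end{pmatrix},
\]
and the matrix in parentheses has entries bounded by \(\max(1,4a_+)\). This gives \(\|W_j^{-1}\|\le 2\max(1,4a_+)/E\).

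Combining the two bounds, \(\|T_n^E\| \le \frac{C^2}{E}\|F_n^E\|\) and \(\|F_n^E\| \le \frac{C^2}{E}\|T_n^E\|\), with \(C=2\max(1,4a_+)\). This yields \eqref{eqn:F-Tn-norm} with \(c=C^2\), which depends only on \(a_+\).

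The case \(n=0\) needs separate comment, since there \(F_0^E=T_0^E=Id\) trivially. The conjugation identity \(F_n^E=W_nT_n^EW_0^{-1}\) itself follows by telescoping \(B_j^E=W_{j+1}A_j^EW_j^{-1}\), which is stated in the paper. The main (minor) obstacle is that the left inequality only yields a factor \(E^{-1}\) rather than something better, and this is exactly what is claimed. No lower bound on \(a_-\) is needed, since only \(\det W_j=E\) matters.
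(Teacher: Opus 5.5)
Your proof is correct, and it is the argument the paper intends: the paper states $F_n^E=W_nT_n^EW_0^{-1}$ immediately before citing this result from \cite{li2026upper}, and bounding $\|W_j\|\le C(a_+)$ and $\|W_j^{-1}\|\le C(a_+)/E$ via $\det W_j=E$ gives exactly the stated $E^{\pm1}$ loss with a constant depending only on $a_+$. The case $n=0$ needs no separate comment, since the same submultiplicativity estimate already covers it.
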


Since the comparison in \eqref{eqn:F-Tn-norm} is uniform in \(n\), the Lyapunov exponent can equivalently be computed using the transfer matrices associated with \(v_n\), namely, for each fixed $E>0$
\begin{align} 
     L(E) = \lim_{n \to \infty} \frac{1}{n} \mathbb{E}\!\left( \log \|F_n^E(x)\| \right). \notag 
\end{align}

\subsection{Modified Pr\"ufer variables}

We now recall the Figotin--Pastur phase formalism, also known as the modified Pr\"ufer variables.

Recall that
\begin{align} 
    \kappa= \big(\mathbb{E}[a^{-1}]\big)^{-1}. \notag 
\end{align}
For \(E> 0\), let 
\begin{align}\label{eqn:eta}
    P = \begin{pmatrix}
        1 & -\cos \eta \\
        0 & \sin \eta
    \end{pmatrix}, 
      \quad 
    \eta (E) = \cos^{-1}\!\Big(1 - \frac{1}{2\kappa} E\Big) = \frac{\sqrt{E}}{\sqrt{\kappa}} + {\mathcal O}(E^{3/2}).
\end{align}

For \((v_n, v_{n-1})\) satisfying \eqref{eqn:v-cocycle}, define the modified Pr\"ufer variables \(\big(\rho_n(E), \chi_n(E)\big)\) with respect to the matrix \(P\) by
\begin{align}\label{eqn:modi-prufer}
    \rho_n(E)
    \begin{pmatrix}
        \cos \chi_n(E) \\
        \sin \chi_n(E)
    \end{pmatrix}
    = P
    \begin{pmatrix}
        v_n \\ v_{n-1}
    \end{pmatrix}, \quad n \ge 0,
\end{align}
with initial data
 \begin{align}
    \begin{pmatrix}
        v_0 \\ v_{-1}
    \end{pmatrix}
    = W_0
    \begin{pmatrix}
        a_0 u_0 \\ u_{-1}
    \end{pmatrix}, \quad |a_0 u_0|^2 + |u_{-1}|^2 = 1.  \notag 
 \end{align}

The initial variables are determined by 
\begin{align*}
     \rho_{0}(E)\begin{pmatrix}
        \cos \chi_0(E) \\
        \sin \chi_0(E)
    \end{pmatrix} = r_0\begin{pmatrix}
        \cos \theta_0 - \cos \eta \sin \theta_0 \\ \sin \eta \sin \theta_0
    \end{pmatrix}, \ \ {\rm with}\  \chi_0(E) \in [0,\pi].
\end{align*}

Iterating \eqref{eqn:v-cocycle} together with \eqref{eqn:modi-prufer}, we obtain, for \(n \ge 0\),
\begin{align} 
    \rho_n
    \begin{pmatrix}
        \cos \chi_n(E) \\
        \sin \chi_n(E)
    \end{pmatrix}
    = P
    \begin{pmatrix}
        v_n \\ v_{n-1}
    \end{pmatrix}
    = P F_n^E P^{-1} \rho_0
    \begin{pmatrix}
        \cos \chi_0(E) \\
        \sin \chi_0(E)
    \end{pmatrix}. \notag 
\end{align}

For \(E>0\),
\begin{align}\label{eqn:Fn-rhon}
    \|P F_n^E P^{-1}\|
    = \max_{\chi_0 \in [0,\pi)} \Big\| P F_n^E P^{-1}
    \begin{pmatrix}
        \cos \chi_0 \\
        \sin \chi_0
    \end{pmatrix} \Big\|
    = \frac{\rho_n}{\rho_0}.
\end{align}
Combining this with \eqref{eqn:F-Tn-norm}, we obtain
\begin{align}\label{eqn:Tn-rhon}
    \frac{E^{3/2}}{4c\sqrt{2\kappa}} \frac{\rho_n}{\rho_0}  \le  \|T_n^E\|
    \le   \frac{4c\sqrt{2\kappa}}{E^{3/2}} \frac{\rho_n}{\rho_0}.
\end{align}

The modified Pr\"ufer variables translate spectral quantities into asymptotic information about the phase \(\chi_n\) and amplitude \(\rho_n\). The usefulness of these variables is that the phase encodes the IDS, while the amplitude encodes the Lyapunov exponent. In particular, the following identities hold for any ergodic operator of the form \eqref{eqn:div-grad}. Details can be found in \cite{li2026upper}.

\begin{lemma} \label{lem:ids-lyp}
Let \(\cN(E)\) be the IDS and let \(L(E)\) be the Lyapunov exponent of the operator \(H\) defined by \eqref{eqn:div-grad} and \eqref{eqn:an}. For \(E > 0\),
\begin{align}\label{eqn:IDS-chi}
\cN(E) = \lim_{n \to \infty} \frac{1}{\pi n}\E \big[\chi_n(E)\big],
\end{align}
and
\begin{align}\label{eqn:lyp-rhon}
    L(E) = \lim_{n \to \infty} \frac{1}{n} \E \Big[\log \frac{\rho_n}{\rho_0}\Big].
\end{align}
\end{lemma}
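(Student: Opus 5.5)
The plan is to prove the two identities separately. For \eqref{eqn:lyp-rhon} we reduce to the transfer-matrix definition \eqref{eqn:Lyp}; for \eqref{eqn:IDS-chi} we use Sturm oscillation theory on finite boxes, counting sign changes through the rotation of the Pr\"ufer phase.

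\emph{The Lyapunov exponent.} By \eqref{eqn:Tn-rhon}, for fixed \(E>0\) we have
\[
\Big|\log\|T_n^E(x)\|-\log\frac{\rho_n}{\rho_0}\Big|\le C(E),
\]
uniformly in \(n\) and \(x\). Here we read \(\rho_n/\rho_0\) with the maximizing initial phase, as in \eqref{eqn:Fn-rhon}. Dividing by \(n\), taking expectations, and using \eqref{eqn:Lyp} gives \eqref{eqn:lyp-rhon} immediately.

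If \(\rho_n\) instead refers to the specific Dirichlet initial condition \(u_{-1}=0\), one more step is needed. Oseledets' theorem (or Furstenberg--Kesten together with the uniform hyperbolicity-free argument for \(SL(2,\R)\) cocycles) shows that for a.e. \(x\), every initial vector outside a single direction grows at rate \(L(E)\). The Dirichlet vector lies in the exceptional direction only on a null set of \(x\): otherwise that direction would be invariant with positive measure, which can be ruled out. An alternative is to average over \(\theta_0\). In either case the \(L^1\) convergence of \(\frac1n\log\rho_n\) follows from the bound \(\frac1n|\log\rho_n/\rho_0|\le \log\sup_x\|F_1^E\|\) and dominated convergence.

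\emph{The IDS.} The key step is to relate \(\chi_n\) to eigenvalue counting.

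1. Let \(u\) solve \eqref{eqn:hu=eu} with \(u_{-1}=0\). By Sturm oscillation for Jacobi matrices with negative off-diagonals \(-a_n<0\), the number of eigenvalues of \(H_N\) that are \(\le E\) equals the number of sign changes (nodes) of \(u_0,\dots,u_N\), up to \(\pm1\).

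2. Pass to \(v_n=a_n(u_n-u_{n-1})\), which solves the discrete harmonic chain \eqref{eqn:hv=ev} with positive potential \(E/a_n\). Its nodes interlace with those of \(u\), since \(v\) is a discrete derivative. Hence the node counts differ by at most a bounded amount.

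3. In the frame \(P\), the map \(B_j^E\) becomes \(PB_j^EP^{-1}\). This is a rotation by \(\eta\) plus a perturbation proportional to \(E(\kappa^{-1}-a_j^{-1})\), and it is orientation preserving on the lifted phase. One checks that a sign change of \(v\) between \(n-1\) and \(n\) occurs exactly when the lifted phase \(\chi_n\) crosses a multiple of \(\pi\). The reason is that \(\sin\chi_n\propto v_{n-1}\sin\eta\) and \(\chi\) is monotone increasing under each step, since \(\det(PB_jP^{-1})=1\) and the lower-left entry has a fixed sign. Therefore \(\#\{\text{nodes up to }n\}=\chi_n/\pi+O(1)\).

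4. Divide by \(n\), use the definition \eqref{eqn:ids} (independent of boundary conditions by ergodicity), and apply dominated convergence. This is justified because \(\chi_n/n\) is bounded by the maximal one-step phase increment, which is at most \(\pi\). The result is \eqref{eqn:IDS-chi}.

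The main obstacle is step 3: verifying the monotonicity of the lifted phase, i.e. that each step advances \(\chi\) by an amount in \([0,\pi)\), and that crossings of \(\pi\mathbb{Z}\) match sign changes of \(v\). This requires the explicit form of \(PB_j^EP^{-1}\) and the positivity \(0<E/a_j<4\) in the relevant energy range, with a separate remark for large \(E\). Since these are deterministic facts, I would follow \cite{li2026upper} and verify them directly from the matrix entries.
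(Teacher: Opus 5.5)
Your IDS half is the standard Sturm-oscillation argument and is fine; the paper gives no independent proof of this lemma and simply defers to \cite{li2026upper}. Step 3 is easier than you expect. By \eqref{eqn:rho-n-iter}, $PB_n^EP^{-1}=\begin{pmatrix}1&Q_n\\0&1\end{pmatrix}R_\eta$, with $R_\eta$ the rotation by $\eta$, and the shear preserves the upper and lower half-planes. So, with the lift that puts $\chi_{n+1}$ in the same $\pi$-interval as $\chi_n+\eta$, one gets $\lfloor\chi_{n+1}/\pi\rfloor=\lfloor(\chi_n+\eta)/\pi\rfloor$. Its increments equal $1$ exactly at sign changes of $v$, so neither positivity of the phase increment nor the restriction $E<4a_-$ is needed. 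Likewise, if $\rho_n/\rho_0$ is read as the norm in \eqref{eqn:Fn-rhon}, then \eqref{eqn:lyp-rhon} follows at once from \eqref{eqn:Tn-rhon}. That is all the paper's own framework supplies.

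\textbf{The gap.} The problem is the fixed-initial-vector reading, which is the one the paper actually relies on later. Lemma~\ref{lem:zetan-prime} and the expansion \eqref{eqn:rho-0}--\eqref{eqn:rho-4} iterate \eqref{eqn:rho-n-iter} from a single $x$-independent $\chi_0$; for Dirichlet data this is the vector $e_1=(1,0)^T$ in the $(a_nu_n,u_{n-1})$ frame. Let $V^s(x)$ be the Oseledets stable direction and $S=\{x:V^s(x)=\mathrm{span}\,e_1\}$. If $L(E)>0$, then $\frac1n\log\|T_n^E(x)e_1\|\to-L(E)$ on $S$, so the Pr\"ufer average converges to $(1-2\P(S))L(E)$. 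You therefore need exactly $\P(S)=0$, and your argument does not give it:
\begin{itemize}
\item $S$ is not invariant. In fact $x\in S$ forces $2x\notin S$, because $V^s(2x)=A_0^E(x)V^s(x)$ and $A_0^E(x)e_1$ has second entry $1/a_0\neq0$.
\item There is no general principle that a fixed direction is stable only on a null set. For a constant hyperbolic cocycle (constant coefficients, $E$ in a gap) the stable direction is constant, hence exceptional with probability one.
\item Averaging over $\theta_0$ proves the identity only for a.e.\ initial phase, not for the Dirichlet one.
\end{itemize}

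\textbf{A possible route.} Set $S_k=\{x:2^kx\in S\}$, so that $\P(S_k)=\P(S)$. If $x\in S\cap S_k$, then $T_k^E(x)e_1\parallel e_1$, which by \eqref{eqn:Tn-det-relation} means $D^E_{k-1}(x)=0$. Suppose every set $\{x:D^E_m(x)=0\}$, $m\ge1$, is Lebesgue-null. Then, using $\P(S_j\cap S_k)=\P(S\cap S_{k-j})$, the $S_k$ are pairwise a.e.\ disjoint, and hence $\P(S)=0$. However, this null-set property requires some nondegeneracy of $a$. It fails, for example, if $a$ is constant near $0$ and $E$ is a Dirichlet eigenvalue of the constant chain of length $m$. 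So you must either prove $\P(S)=0$ under the paper's hypotheses by an argument of this kind, or reorganize the downstream estimates to be uniform in $\chi_0$ so that only averaged initial phases are needed.
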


\subsection{Iteration of the Pr\"ufer variables}
We now turn to the iteration of the Pr\"ufer variables, which will be used through Lemma~\ref{lem:ids-lyp} to estimate both the IDS and the Lyapunov exponent. The iteration is driven by a small effective potential term \(Q_n\), whose size is \({\mathcal O}(\sqrt E)\) in the small-energy regime; this term plays the role of the potential in the Schr\"odinger case.

We introduce the following sampling functions on \(\T\):
\begin{align}\label{eqn:Q-samp}
  Q(x)=\frac{E}{\sin \eta(E)}q(x), \qquad   q(x)=\kappa^{-1}-a^{-1}(x).
\end{align}
For \(n\in \Z_{\ge 0}\), we write 
\begin{align} \notag 
 Q_n(x)=Q(2^nx) , \qquad q_n(x)=q(2^nx).
\end{align}

The asymptotic expansion of \(\eta\) in \eqref{eqn:eta} implies that, for any \(n\in \Z_{\ge0}\),
\begin{align}\label{eqn:Q-asymp}
    Q_n=\frac{E}{\sin \eta(E)}(\kappa^{-1}-a_n^{-1})= \sqrt{\kappa E}(\kappa^{-1}-a_n^{-1})+{\mathcal O}(E^{\frac{3}{2}}).
\end{align} 

Since \(a(x)\in C^1\) and \(a(x)>0\), for any \(n\in \Z_{\ge0}\), the functions \(q_n(x)\) and \(Q_n(x)\) belong to \(C^1\). Moreover, there exists a constant \(C>0\), independent of \(n\) and \(E\), such that
\begin{align}\label{eqn:Qn-bound}
 \sup_{x\in \T} |q_n(x)|\le C , \qquad    \sup_{x\in \T} |Q_n(x)|\le C\sqrt E  
\end{align}
and 
\begin{align}\label{eqn:Qn-prime}
 \sup_{x\in \T} |q'_n(x)|\le C  2^n, \qquad    \sup_{x\in \T} |Q'_n(x)|\le C\sqrt E 2^n. 
\end{align}

The following iteration formula is obtained by a direct computation from \eqref{eqn:modi-prufer}.
\begin{proposition}[{\cite[Proposition 3.4]{li2026upper}}]\label{prop:rho-chi-Qn}
For any \(n\ge 0 \) and \(0 < E < 2\kappa\),
\begin{align}\label{eqn:rho-n-iter}
\begin{cases}
    \rho_{n+1} \cos \chi_{n+1} = \rho_n \Big[\cos(\chi_n + \eta) + Q_n \sin( \chi_n+\eta)\Big], \\
    \rho_{n+1} \sin \chi_{n+1} = \rho_n \sin(\chi_n + \eta).
\end{cases}
\end{align}
 
\end{proposition}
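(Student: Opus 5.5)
The plan is a direct conjugation computation. By \eqref{eqn:modi-prufer} and \eqref{eqn:v-cocycle},
\[
\rho_{n+1}\begin{pmatrix}\cos\chi_{n+1}\\ \sin\chi_{n+1}\end{pmatrix}=P B_n^E P^{-1}\,\rho_n\begin{pmatrix}\cos\chi_{n}\\ \sin\chi_{n}\end{pmatrix}.
\]
So it suffices to compute the conjugated one-step matrix \(PB_n^EP^{-1}\) and show that it is a rotation by \(\eta\) plus a rank-one correction.

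\emph{Step 1: split off the unperturbed part.} Write \(c=\cos\eta\) and \(s=\sin\eta\). The assumption \(0<E<2\kappa\) gives \(1-\frac{E}{2\kappa}\in(0,1)\), so \(\eta\in(0,\pi/2)\). In particular \(s>0\), hence \(P\) is invertible and \(Q_n\) is well defined. Since \(2c=2-E/\kappa\), we have
\[
2-\frac{E}{a_n}=2c+E\Big(\frac1\kappa-\frac1{a_n}\Big)=2c+Eq_n .
\]
Hence \(B_n^E=B^0+Eq_n\,e_1e_1^{T}\), where
\[
B^0=\begin{pmatrix}2c&-1\\1&0\end{pmatrix}.
\]

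\emph{Step 2: conjugate the unperturbed part.} We have
\[
P^{-1}=\begin{pmatrix}1&c/s\\0&1/s\end{pmatrix},\qquad
PB^0=\begin{pmatrix}c&-1\\ s&0\end{pmatrix}.
\]
Multiplying out gives
\[
PB^0P^{-1}=\begin{pmatrix}c&-s\\ s&c\end{pmatrix}=R_\eta ,
\]
the rotation by \(\eta\), using \(c^2-1=-s^2\). Applied to \((\cos\chi_n,\sin\chi_n)^T\), it returns \((\cos(\chi_n+\eta),\sin(\chi_n+\eta))^T\).

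\emph{Step 3: conjugate the perturbation.} Since \(Pe_1=e_1\) and \(e_1^TP^{-1}=(1,\,c/s)\), we get
\[
P\big(Eq_n e_1e_1^T\big)P^{-1}=Eq_n\begin{pmatrix}1&c/s\\0&0\end{pmatrix}.
\]
This matrix only affects the first component. Applied to \((\cos\chi_n,\sin\chi_n)^T\), it contributes
\[
Eq_n\Big(\cos\chi_n+\frac{c}{s}\sin\chi_n\Big)=\frac{Eq_n}{\sin\eta}\sin(\chi_n+\eta)=Q_n\sin(\chi_n+\eta),
\]
by the addition formula and the definition \eqref{eqn:Q-samp}.

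Adding the contributions of Steps 2 and 3 and multiplying by \(\rho_n\) gives exactly \eqref{eqn:rho-n-iter}. The argument is entirely algebraic. The only point needing care is the observation that \(E<2\kappa\) keeps \(\sin\eta>0\), which makes \(P^{-1}\) and \(Q_n\) legitimate; no step is a genuine obstacle.
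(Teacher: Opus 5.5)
Your computation is correct and matches the paper's approach. The splitting $B_n^E=B^0+Eq_n\,e_1e_1^{T}$, the identity $PB^0P^{-1}=R_\eta$, and the rank-one term contributing $Q_n\sin(\chi_n+\eta)$ in the first component are precisely the direct computation from \eqref{eqn:modi-prufer} that the paper invokes (citing \cite[Proposition 3.4]{li2026upper}), here organized as a rotation plus a rank-one perturbation.
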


It is readily seen from the definition of \(\kappa\) that \(\mathbb{E}Q_n=0\). Unlike in the i.i.d. case, however, \(Q_n\) and \(\chi_n\) are not independent. 
We therefore need to estimate the correlations between \(Q_n\) and \(\chi_n\) generated by the doubling-map dynamics.  

Define
\begin{align} \label{eqn:zeta-mu-def}
    \zeta_j = e^{2i \chi_j}, \quad \mu = e^{2i \eta}, \quad   j\ge 0.
\end{align}
From the recurrence relation \eqref{eqn:rho-n-iter}, we obtain
\[\cot\chi_{j+1}=\cot(\chi_j+\eta)+Q_j,\]
which is equivalent to
\begin{align}\label{eqn:zetaIter}
    \zeta_{j+1} = \mu \zeta_j + \frac{i}{2} Q_j \frac{(\mu \zeta_j - 1)^2}{1 - \frac{i}{2} Q_j (\mu \zeta_j - 1)} .
\end{align}
This iteration formula is the key dynamical relation for the modified Pr\"ufer phase; most of the estimates below are based on its perturbative expansion in the small effective potential \(Q_j\).

Iterating \eqref{eqn:zetaIter} and using \eqref{eqn:Q-asymp}, we get
\begin{align}\label{eqn:zeta-it-diff}
    |\zeta_n-\mu^r\zeta_{n-r}|\leq Cr \sqrt E.
\end{align}
Consequently,
\begin{align} \notag 
    |\zeta^2_n-\mu^{2r}\zeta^2_{n-r}|\le 2|\zeta_n-\mu^r\zeta_{n-r}|  \leq Cr \sqrt E.
\end{align}
Let  
\begin{align}\label{eqn:F-zeta-q}
 F(\zeta,q)=\mu\zeta+\frac{i}{2}\frac{E}{\sin\eta(E)}q\frac{(\mu\zeta-1)^2}{1-\frac{i}{2}\frac{E}{\sin\eta(E)}q(\mu\zeta-1)}.   
\end{align}
There exists a constant \(C>5\sqrt{\kappa}\), together with sufficiently small constants \(E_0>0\) and \(q_0>0\), such that for \(|E|<E_0\) and \(|q|<q_0\),
\begin{align}
 \left|\frac{\partial F}{\partial \zeta}\right|\le 1+C\sqrt E ,\quad \left|\frac{\partial F}{\partial q}\right|\le C\sqrt E, 
\end{align}
which implies
\begin{align}
 |F(\zeta_1,q_1)-F(\zeta_2,q_2)|\leq (1+C\sqrt E)|\zeta_1-\zeta_2|+C\sqrt E|q_1-q_2|. \notag 
\end{align}

If \(\zeta(x),q(x)\in C^1(\T)\), then \(F\big(\zeta(x),q(x)\big)\in C^1(\T)\), and
\begin{align}\label{eqn:F-prime}
    \big|\frac{dF}{dx}\big|\le \left|\frac{\partial F}{\partial \zeta}\right|\cdot |\zeta'| +\left|\frac{\partial F}{\partial q}\right|\cdot |q'|\le (1+C\sqrt E)|\zeta'|+C\sqrt E |q'|.
\end{align}

\begin{lemma}\label{lem:zetan-prime}
    If \(0<E<E_0\), then for all \(n\in \Z_{\ge0}\),
    \begin{align}\label{eqn:zeta-prime-bound}
       \sup_{x\in \T} |\zeta_n'|\le C\sqrt E \, 2^{n}
    \end{align}
\end{lemma}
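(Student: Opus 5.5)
Since $\zeta_{n+1}(x)=F\big(\zeta_n(x),q_n(x)\big)$ with $F$ as in \eqref{eqn:F-zeta-q} and $q_n(x)=q(2^nx)$, the natural approach is induction on $n$ using the chain-rule bound \eqref{eqn:F-prime} together with \eqref{eqn:Qn-prime}. The key point is that the growth of the derivative of $q_n$ is $2^n$, while the factor $(1+C\sqrt E)$ in front of $|\zeta_n'|$ is strictly less than $2$ once $E$ is small. This gap lets the inductive constant close uniformly in $n$.

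First we settle the base case. Here $\zeta_0=e^{2i\chi_0}$, and $\chi_0$ is determined by the initial data $W_0(a_0u_0,u_{-1})^T$ with $a_0=a(x)$. For the quantity in question (a fixed initial boundary condition, e.g.\ the Dirichlet data $u_{-1}=0$) we have $v_0=a_0u_0$ and $v_{-1}=a_0u_0$. Hence $\chi_0$ is independent of $x$ and $\zeta_0'=0$. More generally, if $\zeta_0$ depends on $x$, one checks that $\cot\chi_0$ is a rational function of $a(x)$ with bounded derivative. In that case the bound $|\zeta_0'|\le C\sqrt E$ must be checked, or the induction must be started from $\zeta_1$, using that $\zeta_1$ depends on $x$ only through $Q_0$. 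I will take the former route and check that $\zeta_0'$ vanishes for the initial data used in Lemma~\ref{lem:ids-lyp}.

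Next comes the inductive step. Suppose $\sup|\zeta_n'|\le M\sqrt E\,2^n$. By \eqref{eqn:F-prime} and \eqref{eqn:Qn-bound}–\eqref{eqn:Qn-prime}, where we use that $q_n$ stays in the region $|q|<q_0$ where the derivative bounds on $F$ hold, or more precisely that the relevant quantity $\frac{E}{\sin\eta}q$ is $O(\sqrt E)$ small, we get
\begin{align*}
|\zeta_{n+1}'|\le (1+C\sqrt E)M\sqrt E\,2^n + C\sqrt E\cdot C_1 2^n = \sqrt E\,2^{n+1}\Big(\tfrac{1+C\sqrt E}{2}M+\tfrac{CC_1}{2}\Big).
\end{align*}
Choose $E_0$ so small that $1+C\sqrt{E_0}\le 3/2$, and set $M=4CC_1$. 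The bracket is then at most $\tfrac34M+\tfrac18M\le M$, which closes the induction with a constant independent of $n$ and $E$.

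The main obstacle is justifying the use of the partial-derivative bounds on $F$ uniformly. The bound $|\partial_q F|\le C\sqrt E$ holds because the coefficient $E/\sin\eta\sim\sqrt{\kappa E}$. The bound $|\partial_\zeta F|\le 1+C\sqrt E$ holds since $|\zeta|=1$ and the perturbation is $O(\sqrt E)$. Both require the denominator $1-\frac{i}{2}Q(\mu\zeta-1)$ to stay away from zero, which holds for $E<E_0$ because $|Q|\le C\sqrt E$. The restriction $|q|<q_0$ should really be read as the smallness of $\frac{E}{\sin\eta}q$, since $q$ itself is only bounded. Once this is noted, the argument is purely the contraction-versus-expansion comparison above.
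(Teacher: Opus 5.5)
Your proposal is correct and is essentially the paper's argument: both derive the chain-rule recursion $|\zeta_{n+1}'|\le(1+C\sqrt E)|\zeta_n'|+C\sqrt E\,2^{n}$ from \eqref{eqn:F-prime} and \eqref{eqn:Qn-prime}, start from $\zeta_0'=0$ because $\chi_0$ does not depend on $x$, and close the bound using $1+C\sqrt E<2$. The differences are cosmetic: you close the recursion by induction with the explicit constant $M=4CC_1$ rather than unrolling it into a geometric series, which also makes the paper's separate estimate of $\zeta_1'$ unnecessary, and you are right to read the condition $|q|<q_0$ as smallness of $\frac{E}{\sin\eta}q$.
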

\begin{remark}
   Chulaevsky--Spencer \cite[Lemma 4]{ChulaevskySpencer1995CMP} and Bourgain--Schlag \cite[Lemma 11.1]{BourgainSchlag2000CMP} proved analogous estimates for Schr\"odinger operators in terms of the small coupling constant of the potential. The proof for the div-grad model follows essentially the same strategy. The key difference is that, in the present setting, the estimates must track their explicit dependence on the small energy \(E\). 
\end{remark}

\begin{proof}
 Combining \eqref{eqn:zetaIter} and \eqref{eqn:F-zeta-q}, we can write \(\zeta_n\) as
    \begin{align*}
        \zeta_n=F(\zeta_{n-1},q_{n-1}).
    \end{align*}
   Applying the derivative estimates \eqref{eqn:Qn-prime} and \eqref{eqn:F-prime} gives
    \begin{align*}
        |\zeta_n'|\le (1+C\sqrt E)|\zeta_{n-1}'|+C\sqrt E |q_{n-1}'|\le (1+C\sqrt E)|\zeta_{n-1}'|+C\sqrt E 2^{n-1}.
    \end{align*}
   Iterating this estimate yields
    \begin{align*}
    |\zeta_n'|\le &  (1+C\sqrt E)^{n-1}|\zeta_1'| +\sum_{j=0}^n (1+C\sqrt E)^j C\sqrt E\, 2^{n-j-1}  \\
    \le & (1+C\sqrt E)^{n-1}|\zeta_1'| + C\sqrt E\, 2^{n-1}\sum_{j=0}^\infty \Big(\frac{1+C\sqrt E}{2}\Big)^j. 
    \end{align*}
Since the initial angle \(\chi_0\in[0,\pi]\) is independent of \(x\) and depends only on \(E\), we have \(\zeta_0'=0\). Moreover,
\begin{align*}
 |\zeta'_1|
 =&
 \Bigg|
 \mu\zeta_0'
 +\frac{i}{2}\frac{d}{dx}
 \Bigg[
 Q_0
 \frac{(\mu\zeta_0-1)^2}
 {1-\frac{i}{2}Q_0(\mu\zeta_0-1)}
 \Bigg]
 \Bigg| \\
 \le&
 \Bigg|
 \frac{Q_0'}{2}
 \frac{(\mu\zeta_0-1)^2}
 {1-\frac{i}{2}Q_0(\mu\zeta_0-1)}
 \Bigg|
 +
 \Bigg|
 \frac{Q_0}{2}
 \frac{d}{dx}
 \Bigg[
 \frac{(\mu\zeta_0-1)^2}
 {1-\frac{i}{2}Q_0(\mu\zeta_0-1)}
 \Bigg]
 \Bigg| \\
 \le&
 C\sqrt E,
 \qquad 0<E<E_0.
\end{align*}
Here we used that the two factors following \(Q_0'\) and \(Q_0\), respectively, are analytic functions of \(Q_0\) for sufficiently small \(E\), and hence are uniformly bounded in \(x\). The order \(\sqrt E\) then follows from the bounds on \(Q_0\) and \(Q_0'\) in \eqref{eqn:Q-asymp}.

 Therefore, if \(E_0\) is chosen so that \(1+C\sqrt E<2\), then
    \begin{align*}
    |\zeta_n'|\le 2^{n-1}C\sqrt E+  C \sqrt E\, 2^{n-1}  \le C\sqrt E\, 2^n. 
    \end{align*}
\end{proof}

The derivative bound in Lemma~\ref{lem:zetan-prime} yields the following exponential decay estimate for correlations generated by the doubling map.
\begin{lemma}
For any integers \(n\ge T\ge0\) and for both \(k=1\) and \(k=2\), 
    \begin{align}\label{eqn:qn-zetan-T}
     \Big|   \E\big[q_n \zeta^k_{n-T}\big]\Big|\le C\sqrt E\, 2^{-T}, \ \ \Big|\E\big[Q_n \zeta^k_{n-T}\big]\Big|\le C  E\,  2^{-T};
    \end{align}
    and 
      \begin{align}\label{eqn:qn-squ-zetan-T}
       \Big| \E\big[q^2_n \zeta^k_{n-T}\big]\Big|\le C\sqrt E\, 2^{-T}, \ \ \Big|\E\big[Q^2_n \zeta^k_{n-T}\big]\Big|\le C  E^{3/2}\,  2^{-T}.
    \end{align}
\end{lemma}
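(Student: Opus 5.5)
The plan is to use a Riemann-sum, or ``freezing'', argument on the dyadic partition of scale \(2^{-n}\). The two inputs are the periodicity of \(q_n\) and the derivative bound of Lemma~\ref{lem:zetan-prime}. Partition \(\T\) into the \(2^n\) intervals \(I_j=[j2^{-n},(j+1)2^{-n})\). On each \(I_j\), the function \(q_n(x)=q(2^nx)\) runs through one full period of \(q\), so
\[
\int_{I_j}q_n\,dx=2^{-n}\E[q]=0,
\]
because \(\E[q]=\kappa^{-1}-\E[a^{-1}]=0\) by the choice of \(\kappa\). On the other hand, \(\zeta^k_{n-T}\) is nearly constant on \(I_j\): by Lemma~\ref{lem:zetan-prime} and \(|\zeta|=1\),
\[
\sup_{x}\big|(\zeta^k_{n-T})'\big|\le kC\sqrt E\,2^{n-T},
\]
so its oscillation on \(I_j\) is at most \(2C\sqrt E\,2^{-T}\).

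\textbf{Linear terms.} Let \(x_j\) be the left endpoint of \(I_j\) and write
\[
\E\big[q_n\zeta^k_{n-T}\big]=\sum_j\int_{I_j}q_n(x)\big(\zeta^k_{n-T}(x)-\zeta^k_{n-T}(x_j)\big)\,dx ,
\]
where the frozen terms vanish by the mean-zero property. Each integrand is bounded by \(\|q\|_\infty\cdot 2C\sqrt E\,2^{-T}\) using \eqref{eqn:Qn-bound}. Summing over the \(2^n\) intervals of length \(2^{-n}\) gives
\[
\big|\E[q_n\zeta^k_{n-T}]\big|\le C\sqrt E\,2^{-T}.
\]
Since \(Q_n=\frac{E}{\sin\eta}q_n\) with \(\frac{E}{\sin\eta}=\mathcal O(\sqrt E)\) by \eqref{eqn:eta}, multiplying by this factor yields the bound \(C E\,2^{-T}\) for \(Q_n\).

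\textbf{Quadratic terms.} The same decomposition applied to \(q_n^2\) gives
\[
\E\big[q_n^2\zeta^k_{n-T}\big]=\E[q^2]\,\E\big[\zeta^k_{n-T}\big]+\mathcal O\big(\sqrt E\,2^{-T}\big).
\]
Here \(q^2_n\) is still \(2^{-n}\)-periodic, but it is no longer mean zero. The error term comes from exactly the estimate used above, with \(\|q\|^2_\infty\) in place of \(\|q\|_\infty\), and after multiplying by \(\big(\frac{E}{\sin\eta}\big)^2=\mathcal O(E)\) it becomes \(\mathcal O(E^{3/2}2^{-T})\).

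\textbf{Main obstacle.} To reach \eqref{eqn:qn-squ-zetan-T} as stated, I must show that the mean term \(\E[q^2]\E[\zeta^k_{n-T}]\) is also \(\mathcal O(\sqrt E\,2^{-T})\). I would first try to expand \(\zeta_{n-T}\) via \eqref{eqn:zetaIter} back to \(\zeta_0\), and use the linear bounds to control \(\E[\zeta_{n-T}]-\mu^{n-T}\zeta_0\).

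This cannot succeed in general. Because \(\zeta_0\) is independent of \(x\), taking \(n=T=0\) gives \(\E[q^2\zeta_0^k]=\E[q^2]\zeta_0^k\), which has size \(\E[q^2]>0\) rather than \(\mathcal O(\sqrt E)\), whenever \(a\) is nonconstant. So the uncentered bound can only hold when \(\E[\zeta^k_{n-T}]\) is itself small. Alternatively, it holds in the form actually invoked later, where \(q^2_n\) appears through \(q_n^2-\E[q^2]\) or is multiplied by a mean-zero phase factor.

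My plan is therefore to prove the linear bounds \eqref{eqn:qn-zetan-T} exactly as stated. For the quadratic bounds \eqref{eqn:qn-squ-zetan-T}, I would isolate the term \(\E[q^2]\E[\zeta^k_{n-T}]\) explicitly. I would then check, at each later use, that this term either cancels or is estimated separately. I expect this reconciliation, rather than the oscillation estimate, to be the real difficulty.
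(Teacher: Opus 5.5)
Your argument for the linear bounds \eqref{eqn:qn-zetan-T} is correct and is the paper's argument in substance. The paper applies the mean-zero mixing estimate \eqref{eqn:doubling-mixing-shifted-zero}, which is integration by parts against the periodic primitive of $q$, together with Lemma~\ref{lem:zetan-prime}. You instead freeze $\zeta^k_{n-T}$ on each period of $q_n$. Both give $2^{-n}\|q\|\,\sup|(\zeta^k_{n-T})'|\le C\sqrt E\,2^{-T}$.

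Your objection to \eqref{eqn:qn-squ-zetan-T} is also correct. It is an error in the statement, not a gap in your proposal. The paper's proof dismisses the quadratic bounds as ``proved in the same way''. However, \eqref{eqn:doubling-mixing-shifted-zero} requires a mean-zero function, and $q^2$, $Q^2$ are not mean zero. The general estimate \eqref{eqn:doubling-mixing} gives exactly your identity $\E[q_n^2\zeta^k_{n-T}]=\E[q^2]\,\E[\zeta^k_{n-T}]+\mathcal O(\sqrt E\,2^{-T})$.

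Your counterexample can be strengthened. Since $\zeta_0$ is constant in $x$ and Lebesgue measure is doubling-invariant, $\E[q_T^2\zeta_0^k]=\zeta_0^k\,\E[q^2]$ for every $T$. So the claimed bound fails at $n=T$ for every fixed $E>0$ once $T$ is large. Likewise $|\E[Q_T^2\zeta_0^k]|=\E[Q^2]\sim\kappa\,\E[q^2]\,E$, which is not $\mathcal O(E^{3/2})$.

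The reconciliation you expect to be the real difficulty is in fact short. Lemma~\ref{lem:expZeta} needs only correlations linear in $Q_n$, because the constant term of $(\mu^{T+1}\zeta_{n-T}-1)^2$ is killed by $\E[Q_n]=0$. Its $k=2$ case also needs the power $k=3$, which your freezing argument gives verbatim. The paper already subtracts the mean in \eqref{eqn:cor-est3}--\eqref{eqn:cor-est4} and in \eqref{eqn:523}. The uncentered quadratic bound is therefore used only for \eqref{eqn:Qn-squ-cos-exp} and \eqref{eqn:Qn-squ-cos4-exp}.

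There, after averaging over $T\le n<N$, the additional term is $\mu^{k(T+1)}\E[Q^2]\,\frac1N\sum_{m=0}^{N-T-1}\E[\zeta^k_m]$. Lemma~\ref{lem:expZeta} bounds this by $\mathcal O(E)\cdot\mathcal O(\sqrt E|\log E|)=\mathcal O(E^{3/2}|\log E|)$, which is the claimed order. The price is requiring $N-T\ge E^{-1}$ instead of $N\ge E^{-1/2}$. This is harmless because $N\to\infty$ in Theorem~\ref{thm:LE-linear}, and it is not circular because Lemma~\ref{lem:expZeta} rests only on \eqref{eqn:qn-zetan-T}.

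So the correct form of the lemma is \eqref{eqn:qn-zetan-T} as stated, together with your centered quadratic identity in place of \eqref{eqn:qn-squ-zetan-T}.
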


\begin{proof}
Applying \eqref{eqn:zeta-prime-bound} to \(\zeta_{n-T}\), with \(n\ge T\), gives
\[
    |\zeta_{n-T}'|\le C\sqrt E \, 2^{n-T}.
\]
Since \(\E(q)=0\) by the definition of \(\kappa\), and hence \(\E(q_n)=0\), the exponential mixing estimate for the doubling map, Lemma~\ref{lem:doubling-mixing}, in the form \eqref{eqn:doubling-mixing-shifted-zero}, gives
\begin{align*} 
\abs{ \int_{\T} q(2^n x)\,\zeta_{n-T}\,dx }
\le 2^{-  n}\norm{q}_{L^1(\T)}\sup_{x\in \T}|\zeta_{n-T}'(x)|\le C \sqrt E\, 2^{-T}, 
\end{align*}
which proves \eqref{eqn:qn-zetan-T} for \(q_n\) and \(k=1\). The estimate for \(k=2\) follows from \(|(\zeta_{n-T}^2)'|\le 2|\zeta_{n-T}||\zeta'_{n-T}|\).

The remaining estimates are proved in the same way, using \(|Q_n|\le C\sqrt E\).  
\end{proof}


The next estimate controls the orbit average of \(\zeta_n\), after averaging over the underlying sampling space. It plays an important role in estimating lower-order correction terms for both the IDS and the Lyapunov exponent.
\begin{lemma}\label{lem:expZeta}
For \(0<E<E_0\), if  \(N\ge E^{-1}\), then, for both \(k=1\) and \(k=2\),
  \begin{align}
   \frac{1}{N}\sum_{n=0}^{N-1} \E[\zeta^k_n] = O\Big( \sqrt E |\log E|\Big).
 \end{align}
\end{lemma}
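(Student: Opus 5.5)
We will derive a linear recursion for the averages \(m_n:=\E[\zeta_n^k]\) from the phase iteration \eqref{eqn:zetaIter}, and then exploit the rotation factor \(\mu^k\neq 1\). The key observation is that \(|1-\mu^k|\sim\sqrt E\), since \(\eta\sim\sqrt{E/\kappa}\). Consequently, averaging a recursion of the form \(m_{n+1}=\mu^k m_n+(\text{error})_n\) gains a factor \((1-\mu^k)^{-1}\sim E^{-1/2}\). The target bound \(\sqrt E|\log E|\) therefore requires the errors to be of size \(E|\log E|\) on average.

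Step one is the expansion. We expand \eqref{eqn:zetaIter} to second order in \(Q_n\):
\[
\zeta_{n+1}=\mu\zeta_n+\tfrac{i}{2}Q_n(\mu\zeta_n-1)^2+\mathcal O(Q_n^2),
\]
with \(\mathcal O(Q_n^2)=\mathcal O(E)\) by \eqref{eqn:Qn-bound}. For \(k=2\) we square this, which gives \(\zeta_{n+1}^2=\mu^2\zeta_n^2+iQ_n\mu\zeta_n(\mu\zeta_n-1)^2+\mathcal O(E)\). Taking expectations yields
\[
m_{n+1}=\mu^k m_n+\E[Q_n\,p(\zeta_n)]+\mathcal O(E),
\]
where \(p\) is a polynomial in \(\zeta_n\) of degree at most \(3\).

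Step two controls the correlation term \(\E[Q_n\zeta_n^j]\), which is the main obstacle. The bound \eqref{eqn:qn-zetan-T} with \(T=0\) only gives \(\mathcal O(E)\), and this would suffice, except that it is stated only for powers \(j\le 2\). The cubic term needs the same argument: the mixing lemma combined with \(|(\zeta^j_n)'|\le jC\sqrt E2^n\) gives \(|\E[Q_n\zeta_n^j]|\le C_jE\) for every fixed \(j\). Note that with \(T=0\) the bound is not small relative to \(E\), so we get \(\mathcal O(E)\) errors and no decay. Where the stated \(|\log E|\) comes from: if a cruder approach forces a decoupling at lag \(T\), one uses \eqref{eqn:zeta-it-diff} to replace \(\zeta_n\) by \(\mu^T\zeta_{n-T}\), at cost \(\mathcal O(\sqrt E\cdot T\sqrt E)=\mathcal O(TE)\), and then applies \eqref{eqn:qn-zetan-T} to get \(\mathcal O(E2^{-T})\). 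Choosing \(T\sim|\log E|\) gives an \(\mathcal O(E|\log E|)\) error. I would present the direct \(T=0\) mixing bound if it goes through cleanly, and otherwise fall back on this lag-\(T\) argument, which is robust and matches the claimed rate.

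Step three is the summation. We sum the recursion over \(n=0,\dots,N-1\):
\[
(1-\mu^k)\sum_{n=0}^{N-1}m_n=m_0-m_N+\sum_{n=0}^{N-1}\epsilon_n,\qquad |\epsilon_n|\le CE|\log E|.
\]
Dividing by \(N|1-\mu^k|\ge cN\sqrt E\), and using \(|m_0-m_N|\le 2\) together with \(N\ge E^{-1}\), we obtain
\[
\Big|\frac1N\sum_{n=0}^{N-1} m_n\Big|\le \frac{C}{N\sqrt E}+C\sqrt E|\log E|\le C\sqrt E+C\sqrt E|\log E|.
\]
This is the claimed \(\mathcal O(\sqrt E|\log E|)\). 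Finally, we must check that \(|1-\mu^2|=|1-e^{4i\eta}|\sim\sqrt E\) also holds for \(k=2\), which is fine for small \(E\).
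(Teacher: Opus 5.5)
Your proposal is correct. Its skeleton is the paper's: summing the linearized recursion and dividing by $1-\mu^k$, with $|1-\mu^k|\sim\sqrt E$, is exactly the identity \eqref{eqn:sum-zeta} that the paper uses. Your fallback is precisely the paper's argument: replace $\zeta_n$ by $\mu^T\zeta_{n-T}$ via \eqref{eqn:zeta-it-diff} at cost $\mathcal O(TE)$, then apply \eqref{eqn:qn-zetan-T} to get $\mathcal O(E2^{-T})$, with $T\sim|\log E|$.

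The genuine difference is your primary route, and it does go through. The bound \eqref{eqn:qn-zetan-T} is stated for all $n\ge T\ge 0$, so taking $T=0$ gives $|\E[Q_n\zeta_n^j]|\le CE$ uniformly in $n$. Hence every error $\epsilon_n$ is $\mathcal O(E)$. Your Step three then yields
\[
\frac1N\sum_{n=0}^{N-1}\E[\zeta_n^k]=\mathcal O(\sqrt E)\qquad\text{for } N\ge E^{-1}.
\]
This is stronger than the lemma. The $|\log E|$ comes only from the time-separation step, which buys nothing here, because the $T=0$ correlation bound already has size $E$. What the paper's lag-$T$ route costs is that logarithmic loss; what yours buys is a shorter argument and a sharper bound.

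You are also more careful than the paper on one point. For $k=2$ the recursion produces a term $Q_n\zeta_n^3$, which \eqref{eqn:qn-zetan-T} as stated does not cover; the paper passes over it with ``the same argument.'' Your observation handles it: since $|\zeta_n|=1$, one has $|(\zeta_n^3)'|=3|\zeta_n'|$, and the mixing bound applies verbatim.

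If you present the lag-$T$ version instead, note that it only applies for $n\ge T$. The initial segment $n<T$ must be treated separately, as the paper does, by the trivial bound $|Q_n|\le C\sqrt E$. That segment contributes $CT/N\le CTE$ after division by $N|1-\mu^k|$. With the $T=0$ route this issue does not arise.
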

\begin{remark}
      Chulaevsky--Spencer \cite[Lemma 1]{ChulaevskySpencer1995CMP} proved an analogous estimate for the Schr\"odinger case in the weak-coupling regime. In their setting, the energy is fixed and bounded away from the spectral edges, so constants depending on the coupling constant can be absorbed into the \({\mathcal O}(\cdot)\) term, and the estimate follows directly from the phase iteration \eqref{eqn:zetaIter}.  In the div-grad setting, the energy \(E\) is itself the small parameter. We therefore need to track the dependence of the constants on \(E\) explicitly; otherwise, such constants could obstruct the desired small-energy order. Establishing this uniform small-energy estimate is one of the technical ingredients of the present work. The proof uses the phase iteration to introduce a time separation, then combines the derivative bound for \(\zeta_n\) with the exponential mixing of the doubling map to obtain the required correlation decay. Choosing the separation scale \(T\sim \log(E^{-1})\) then yields the stated \({\mathcal O}(\sqrt E|\log E|)\) bound.
\end{remark}

\begin{proof}
Using the formula obtained in \cite[Lemma C.3]{li2026upper}, we have
 \begin{align}
   \frac{1}{N}\sum_{n=0}^{N-1}\zeta_n
   =& -\frac{e^{-i\eta}}{4N\sin \eta}\sum_{n=0}^{N-1}(\mu \zeta_n - 1)^2 Q_n + \frac{{\mathcal O}(E)}{\sin \eta} + \frac{\zeta_0 - \zeta_n}{N\sin \eta} \notag \\
   =& -\frac{e^{-i\eta}}{4N\sin \eta}\sum_{n=0}^{N-1}(\mu \zeta_n - 1)^2 Q_n + {\mathcal O}(\sqrt{E}), \label{eqn:sum-zeta}
\end{align}
provided \(N>E^{-1}\), so that \(N\sin \eta>E^{-1/2}\).

For \(0<E<E_0\), choose \(T\sim \log(E^{-1})\). Then, by \eqref{eqn:zeta-it-diff}, for \(n\ge T\),
\begin{align}\label{eqn:zetan-n-T-squ}
    \Big|(\mu \zeta_n - 1)^2-(\mu^{T+1} \zeta_{n-T} - 1)^2\Big|\le 4 \Big| \mu \zeta_n -\mu^{T+1} \zeta_{n-T}  \Big|\le CT\sqrt E.
\end{align}

We split the first sum in \eqref{eqn:sum-zeta} at the scale \(T\):
\begin{align}\label{eqn:sum-zeta-split}
    \frac{e^{-i\eta}}{4N\sin \eta}\sum_{n=0}^{T-1}(\mu \zeta_n - 1)^2 Q_n+\frac{e^{-i\eta}}{4\sin \eta} \frac{1}{N}\sum_{n=T}^{N-1}(\mu \zeta_n - 1)^2 Q_n.
\end{align}
The first term is estimated directly using \eqref{eqn:Q-asymp}:
\begin{align}
    \Big| \frac{e^{-i\eta}}{4N\sin \eta}\sum_{n=0}^{T-1}(\mu \zeta_n - 1)^2 Q_n\Big|\le \frac{CT\sqrt E}{N \sqrt E}=\frac{CT}{N}\le CT\sqrt E, \notag
\end{align}
provided \(N>E^{-1/2}\).

For the second term in \eqref{eqn:sum-zeta-split}, we use \eqref{eqn:zetan-n-T-squ} term by term to obtain
\begin{align}
 &\frac{e^{-i\eta}}{4\sin \eta} \frac{1}{N}\sum_{n=T}^{N-1}(\mu \zeta_n - 1)^2 Q_n \notag \\
=&\frac{e^{-i\eta}}{4\sin \eta} \frac{1}{N}\sum_{n=T}^{N-1}(\mu^{T+1} \zeta_{n-T} - 1)^2 Q_n +\frac{e^{-i\eta}}{4\sin \eta} \frac{1}{N}\sum_{n=T}^{N-1}{\mathcal O}(T\sqrt E) Q_n \notag \\
=&\frac{e^{-i\eta}}{4\sin \eta} \frac{1}{N}\sum_{n=T}^{N-1}(\mu^{T+1} \zeta_{n-T} - 1)^2 Q_n+ {\mathcal O}(T\sqrt E), \notag 
\end{align}
where we used \(|Q_n|/\sin \eta\le C\) in the last line. 

Taking expectations, we therefore get
\begin{align}
    \frac{1}{N}\sum_{n=0}^{N-1} \E[\zeta_n]=\frac{e^{-i\eta}}{4\sin \eta} \frac{1}{N}\sum_{n=T}^{N-1}\E\Big[(\mu^{T+1} \zeta_{n-T} - 1)^2 Q_n\Big]+ {\mathcal O}(T\sqrt E). \notag
\end{align}
By the correlation estimates \eqref{eqn:qn-zetan-T} and \eqref{eqn:qn-squ-zetan-T}, for \(T\le n\le N-1\),
\begin{align}
\Big| \E\big[(\mu^{T+1} \zeta_{n-T} - 1)^2 Q_n\big]\Big| \le \Big| \E\big[ \zeta^2_{n-T}  Q_n\big]\Big| +2\Big| \E\big[  \zeta_{n-T}  Q_n\big]\Big| \le CE2^{-T}.  \notag
\end{align}
Hence,
 \begin{align}
   \Big|\frac{1}{N}\sum_{n=0}^{N-1} \E[\zeta_n]\Big|\le \frac{1}{ \sqrt E}CE2^{-T}+{\mathcal O}(T\sqrt E)=\mathcal{O}\Big(  \sqrt E|\log E|\Big), \notag
 \end{align}
provided \(T\sim \log(E^{-1})\).

The estimate for \(\frac{1}{N}\sum_{n=0}^{N-1} \E[\zeta^2_n]\) follows by the same argument. Indeed, using
\begin{align} 
    \zeta_{n+1}^2 = \mu^2 \zeta_n^2 + i\mu \zeta_n(\mu \zeta_n - 1)^2 Q_n + {\mathcal O}(Q_n^2), \quad 0 \le n \le N-1, \notag
\end{align}
together with
\[
1 - \mu^2 = -2i\mu\sin(2\eta), \quad \text{and} \quad 
    |\zeta^2_n-\mu^{2T}\zeta^2_{n-T}|\le 2 |\zeta_n-\mu^{T}\zeta_{n-T}|\leq C T \sqrt E, \ \ n\ge T, 
\]
one obtains the same bound  
\[
    \frac{1}{N}\sum_{n=0}^{N-1} \E[\zeta^2_n]
    =\mathcal{O}\Big(\sqrt E |\log E|\Big).
\]
  \end{proof}


\section{Asymptotic Formulas for the Integrated Density of States}\label{sec:ids}
We now prove Theorem~\ref{thm:NE-root}. By Lemma~\ref{lem:ids-lyp}, the IDS is determined by the averaged Pr\"ufer phase \(\chi_N\). The leading contribution comes from the rotation term \(N\eta\), with \(\eta(E)\sim \sqrt{E/\kappa}\), while the error is governed by the averaged oscillatory correction involving \(Q_n\).

Let \(\eta\) and \(\kappa\) be as in \eqref{eqn:eta}. The following expansion follows from the phase iteration; see \cite[Eqs.~(C.20)--(C.23)]{li2026upper}:
\begin{align}
  \chi_{n+1} - (\chi_n + \eta) = \frac{1}{2}Q_n \cos 2(\chi_n + \eta) -\frac{1}{2}Q_n + {\mathcal O}(E). \label{eqn:chi-diff-expan}
\end{align}
We decompose the angle variable as
\begin{align}
  \chi_N - N\eta   =  \chi_0  + \sum_{n=0}^{N-1} \big[\chi_{n+1} - (\chi_n + \eta)\big].  \notag
\end{align}
Taking expectations on both sides and using \(\E Q_n=0\), we obtain
\begin{align}
  \E[\chi_N - N\eta]   =  \chi_0  + \frac{1}{2}\sum_{n=0}^{N-1}\E\big[Q_n\cos 2(\chi_n + \eta)\big]+N{\mathcal O}(E).  \notag
\end{align}
Combining this with \eqref{eqn:eta}, \eqref{eqn:IDS-chi}, and the fact that \(\chi_0 \in [0,\pi]\), we get
\begin{align}
    \mathcal N(E) = \lim_{N \to \infty}\frac{\E(\chi_N)}{\pi N} 
    =&\frac{\eta(E)}{\pi}+\lim_{N \to \infty}\frac{\E(\chi_N - N \eta)}{\pi N} \notag \\
    =&  \frac{1}{\pi \sqrt{\kappa}}\sqrt{E} +  \lim_{N \to \infty}\frac{1}{2\pi N}\sum_{n=0}^{N-1}\E\big[Q_n\cos 2(\chi_n + \eta)\big]+ {\mathcal O}(E).
\end{align}

Theorem~\ref{thm:NE-root}, with the \({\mathcal O}(E|\log E|)\) error term, follows from the following lemma.

\begin{lemma}\label{lem:Q-cos}
For \(0<E<E_0\) and \(N\ge E^{-1/2}\),
\begin{align}\label{eqn:Qncos-exp}
 \frac{1}{N}\sum_{n=0}^{N-1}\E\big[Q_n\cos 2(\chi_n + \eta)\big]=  {\mathcal O}(E|\log E|).
\end{align}
In addition, the following two estimates hold:
\begin{align}\label{eqn:Qn-squ-cos-exp}
 \frac{1}{N}\sum_{n=0}^{N-1}\E\big[Q^2_n\cos 2(\chi_n + \eta)\big]=  {\mathcal O}(E^{3/2}|\log E|),
\end{align}
and
\begin{align}\label{eqn:Qn-squ-cos4-exp}
 \frac{1}{N}\sum_{n=0}^{N-1}\E\big[Q^2_n\cos 4(\chi_n + \eta)\big]=  {\mathcal O}(E^{3/2}|\log E|).
\end{align}
\end{lemma}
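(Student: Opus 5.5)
The plan is to follow the same time-separation argument as in the proof of Lemma~\ref{lem:expZeta}, now applied directly to the correlations between \(Q_n\) and functions of \(\chi_n\). First write \(\cos 2(\chi_n+\eta)=\mathrm{Re}(\mu\zeta_n)\) and \(\cos 4(\chi_n+\eta)=\mathrm{Re}(\mu^2\zeta_n^2)\). Since \(Q_n\) and \(Q_n^2\) are real, all three averages in the statement reduce to controlling
\[
\frac1N\sum_{n=0}^{N-1}\E\big[Q_n\zeta_n^k\big],\qquad \frac1N\sum_{n=0}^{N-1}\E\big[Q_n^2\zeta_n^k\big],\qquad k=1,2 .
\]

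Next, fix a separation scale \(T\sim \log(E^{-1})\) and split each sum at \(n=T\).
\begin{itemize}
\item For \(n<T\), bound the terms trivially by \(|Q_n|\le C\sqrt E\) (respectively \(|Q_n|^2\le CE\)). Their contribution is at most \(CT\sqrt E/N\le CTE\) (respectively \(CTE^{3/2}\)). This uses \(N\ge E^{-1/2}\); for the \(Q^2\) sums one may instead assume \(N\) large, since only the limit \(N\to\infty\) is needed in the IDS formula.
\item For \(n\ge T\), replace \(\zeta_n^k\) by \(\mu^{kT}\zeta_{n-T}^k\) using \eqref{eqn:zeta-it-diff}. This costs \(CT\sqrt E\) per term, and after multiplying by \(|Q_n|\le C\sqrt E\) (respectively \(Q_n^2\le CE\)) it contributes \(\mathcal O(TE)\) (respectively \(\mathcal O(TE^{3/2})\)) to the average.
\item The remaining separated correlations \(\E[Q_n\zeta^k_{n-T}]\) and \(\E[Q_n^2\zeta^k_{n-T}]\) are handled by \eqref{eqn:qn-zetan-T} and \eqref{eqn:qn-squ-zetan-T}. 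These give \(CE\,2^{-T}\) and \(CE^{3/2}2^{-T}\), which are negligible once \(2^{-T}\le E\).
\end{itemize}
Summing the pieces yields \(\mathcal O(E|\log E|)\) for \eqref{eqn:Qncos-exp}.

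The main obstacle is the \(Q_n^2\) estimates. Here \(\E[Q_n^2]\neq0\), so the mixing bound \eqref{eqn:doubling-mixing-shifted-zero} does not apply to \(Q_n^2\) directly. One must write \(Q_n^2=\E[Q^2]+(Q_n^2-\E Q^2)\). The centered part is handled by \eqref{eqn:qn-squ-zetan-T}, or more precisely by the mixing estimate applied to the centered function, which again gives \(CE^{3/2}2^{-T}\). The constant part leaves
\[
\E[Q^2]\,\mu^{kT}\frac1N\sum_{n}\E[\zeta^k_{n-T}] .
\]
By Lemma~\ref{lem:expZeta} this is \(\mathcal O(E)\cdot\mathcal O(\sqrt E|\log E|)=\mathcal O(E^{3/2}|\log E|)\), which is the required order for both \eqref{eqn:Qn-squ-cos-exp} and \eqref{eqn:Qn-squ-cos4-exp}. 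Applying Lemma~\ref{lem:expZeta} to the shifted sum requires \(N-T\ge E^{-1}\), which holds for large \(N\); the shift itself changes the average by only \(\mathcal O(T/N)\).

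Finally, collect all the error terms. Each is \(\mathcal O(TE)\) or \(\mathcal O(TE^{3/2})\) plus exponentially small mixing terms, and the choice \(T\sim|\log E|\) converts these into the stated \(\mathcal O(E|\log E|)\) and \(\mathcal O(E^{3/2}|\log E|)\) bounds.
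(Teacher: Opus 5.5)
Your argument for \eqref{eqn:Qncos-exp} is the paper's: time shift by \(T\sim|\log E|\), the correlation bound \eqref{eqn:qn-zetan-T}, and a trivial bound on the first \(T\) terms. For the two \(Q_n^2\) estimates you depart from the paper, and your departure is the right one.

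The paper simply reruns the same argument with \eqref{eqn:qn-squ-zetan-T}. That correlation bound cannot hold as stated, for exactly the reason you identify: \(q^2\) is not mean-zero. For instance, at \(n=T\) the phase \(\zeta_0\) is constant in \(x\), so \(\E[Q_T^2\zeta_0^k]=\zeta_0^k\E[Q^2]\) has size of order \(E\), not \(E^{3/2}2^{-T}\). Your splitting \(Q_n^2=\E[Q^2]+(Q_n^2-\E[Q^2])\) is what actually proves the bounds: apply \eqref{eqn:doubling-mixing-shifted-zero} to the centered function, and Lemma~\ref{lem:expZeta} to the mean part. For the centered part, cite the mixing estimate directly rather than \eqref{eqn:qn-squ-zetan-T}.

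The price is that you need \(N\gtrsim E^{-1}\) in place of \(N\ge E^{-1/2}\). This is not a weakness of your method. Expanding \eqref{eqn:zetaIter} and using \(|\E[Q_n\zeta_n^k]|\le CE\) (see below) gives \(\E\zeta_{n+1}=\mu\E\zeta_n+\mathcal O(E)\). So for \(N\) of order \(E^{-1/2}\) the mean phase has turned through an angle of order one, and \(\E[Q^2]\,\frac1N\sum_{n<N}\E[\cos 2(\chi_n+\eta)]\) is, for generic \(\chi_0\), of exact order \(E\). Hence \eqref{eqn:Qn-squ-cos-exp} and \eqref{eqn:Qn-squ-cos4-exp} are only true for \(N\gg E^{-1/2}\), essentially \(N\gtrsim E^{-1}\). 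This is harmless for the paper. These two bounds are used only for \eqref{eqn:rho-2}--\eqref{eqn:rho-3} in the Lyapunov-exponent expansion, where \(N\to\infty\); they are not used in the IDS formula, as you wrote.

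Incidentally, the time separation is dispensable in this lemma. By Lemma~\ref{lem:zetan-prime}, \(|(\zeta_n^k)'|\le C\sqrt E\,2^n\), which exactly offsets the factor \(2^{-n}\) in \eqref{eqn:doubling-mixing-shifted-zero}. One therefore gets directly \(|\E[Q_n\zeta_n^k]|\le CE\) and \(|\E[(Q_n^2-\E[Q^2])\zeta_n^k]|\le CE^{3/2}\) for every \(n\ge0\). So \eqref{eqn:Qncos-exp} even holds as \(\mathcal O(E)\) for all \(N\), and the logarithm in the \(Q^2\) bounds comes solely from Lemma~\ref{lem:expZeta}.
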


\begin{remark}\label{rem:Qn-refine}
 We only need \eqref{eqn:Qncos-exp} for the IDS asymptotics. The latter two estimates are proved by the same method and will be used in the next section to study the asymptotic behavior of the Lyapunov exponent.

 In the next section, we obtain a more precise expansion of the averaged term involving \(Q_n\zeta_n\), which is needed for the Lyapunov-exponent asymptotics; see Lemma~\ref{lem:ave-zetanQn}. Such a refinement could also improve the corresponding oscillatory contribution in the IDS calculation to order \({\mathcal O}(E)\), although we will not pursue the sharper IDS remainder in Theorem~\ref{thm:NE-root}.
\end{remark}

\begin{proof}
Fix \(0<E<E_0\), and choose \(T\sim \log(E^{-1})\). By \eqref{eqn:zeta-it-diff}, for \(n\ge T\),
 \begin{align}\label{eqn:zetan-n-T}
    |\zeta_n-\mu^T\zeta_{n-T}|\leq C T \sqrt E . 
\end{align}
For such \(n\), we write
\begin{align}
    Q_n\mu\zeta_n= Q_n\mu^{T+1}\zeta_{n-T}+Q_n(\mu\zeta_n-\mu^{T+1}\zeta_{n-T}). \notag
\end{align}
Taking expectations of the real parts gives
\begin{align}
 \E\big[Q_n\cos 2(\chi_n + \eta)\big]
 =&   \E\big[ \operatorname{Re}(Q_n\mu\zeta_n)\big] \notag\\
 =&\operatorname{Re}\Big[ \mu^{T+1}\E\big[  Q_n\zeta_{n-T}\big]\Big]
   +\operatorname{Re}\E\big[Q_n(\mu\zeta_n-\mu^{T+1}\zeta_{n-T})\big]. \notag 
\end{align}
Combining this identity with \eqref{eqn:qn-zetan-T} and \eqref{eqn:zetan-n-T}, we obtain, for \(n\ge T\),
\begin{align}
  \Big| \E\big[Q_n\cos 2(\chi_n + \eta)\big]\Big|
  \le &\Big|\E\big[  Q_n \zeta_{n-T}\big]\Big|
  +\E\Big[\big|Q_n(\mu\zeta_n-\mu^{T+1}\zeta_{n-T})\big|\Big] \notag\\
  \le & CE 2^{-T}+C \sqrt E\, T\sqrt E
  \le CTE.  \notag
\end{align}
We now split the average at the scale \(T\). Using the trivial bound \(|Q_n|\le C\sqrt E\) for the initial segment and the preceding estimate for \(n\ge T\), we get
\begin{align}
    \frac{1}{N}\Big|\sum_{n=0}^{N-1}\E\big[Q_n\cos 2(\chi_n + \eta)\big]\Big|
    \le &    \frac{1}{N}\Big|\sum_{n=0}^{T-1}\E\big[Q_n\cos 2(\chi_n + \eta)\big]\Big| \notag\\
    &+   \frac{1}{N}\Big|\sum_{n=T}^{N-1}\E\big[Q_n\cos 2(\chi_n + \eta)\big]\Big| \notag \\
    \le & \frac{CT\sqrt E}{N}+CTE \notag\\
    \le & CTE={\mathcal O}(E|\log E|), \notag
\end{align}
provided \(N\ge E^{-1/2}\). This proves \eqref{eqn:Qncos-exp}.

The proof of \eqref{eqn:Qn-squ-cos-exp} is similar, with \(Q_n\) replaced by \(Q_n^2\). Indeed, for \(n\ge T\),
\begin{align}
  \Big| \E\big[Q^2_n\cos 2(\chi_n + \eta)\big]\Big|
  \le &\Big|\E\big[  Q^2_n \zeta_{n-T}\big]\Big|
  +\E\Big[\big|Q^2_n(\mu\zeta_n-\mu^{T+1}\zeta_{n-T})\big|\Big]  \notag \\
  \le & CE^{3/2} 2^{-T}+C E\, T\sqrt E
  \le CTE^{3/2}.  \notag
\end{align}
Splitting the average at \(T\) again, and using \(|Q_n|^2\le CE\) on the initial segment, gives
\begin{align}
    \frac{1}{N}\Big|\sum_{n=0}^{N-1}\E\big[Q^2_n\cos 2(\chi_n + \eta)\big]\Big|
    \le &    \frac{1}{N}\Big|\sum_{n=0}^{T-1}\E\big[Q^2_n\cos 2(\chi_n + \eta)\big]\Big| \notag\\
    &+   \frac{1}{N}\Big|\sum_{n=T}^{N-1}\E\big[Q^2_n\cos 2(\chi_n + \eta)\big]\Big|  \notag \\
    \le & \frac{CTE}{N}+CTE^{3/2} \notag\\
    \le & CTE^{3/2}={\mathcal O}(E^{3/2}|\log E|), \notag
\end{align}
provided \(N\ge E^{-1/2}\). This proves \eqref{eqn:Qn-squ-cos-exp}.

It remains to prove \eqref{eqn:Qn-squ-cos4-exp}. We use the identity
\begin{align}
    Q_n^2\cos4(\chi_n+\eta)=\operatorname{Re}\Big[ Q_n^2\mu^2\zeta_n^2\Big]. \notag
\end{align}
In place of \eqref{eqn:zetan-n-T}, we use
\begin{align}\label{eqn:zetan-squ-n-T}
    |\zeta^2_n-\mu^{2T}\zeta^2_{n-T}|\le 2 |\zeta_n-\mu^{T}\zeta_{n-T}|\leq C T \sqrt E, \quad n\ge T.  \notag
\end{align}
The same argument used for \eqref{eqn:Qn-squ-cos-exp}, together with the correlation estimate for \(Q_n^2\zeta_{n-T}^2\), then gives
\[
    \frac{1}{N}\sum_{n=0}^{N-1}\E\big[Q^2_n\cos 4(\chi_n + \eta)\big]
    ={\mathcal O}(E^{3/2}|\log E|),
\]
which proves \eqref{eqn:Qn-squ-cos4-exp}. 
\end{proof}


\section{Asymptotics of the Lyapunov exponent}\label{sec:lyp}

Let \(\kappa^{-1}=\E(a^{-1})\) and \(q(x)=\kappa^{-1}-a^{-1}(x)\) be as before.  The autocorrelation function associated with the process \(\{q(2^s x)\}_{s\ge0}\) is defined\footnote{Both \(R_s\) and \(\gamma\) depend on the choice of the sampling function \(q\). We suppress this dependence in the notation.} by
\begin{align}\label{eqn:Rs}
    r_s=\E\big[q(x)q(2^sx)\big], \qquad s\ge 0.
\end{align}
Formally, the associated \emph{spectral density} is given by
\begin{align}
   \gamma(\xi)=r_0+2\sum_{s=1}^{\infty}r_s\cos(s\xi), \qquad \xi\in\R. \label{eqn:gamma}
\end{align}
The next lemma justifies this definition and records the basic properties of \(\gamma\) needed below.

\begin{lemma}\label{lem:density-gamma}
Assume that \(q\in C^1(\T)\) is real valued and satisfies \(\E[q]=0\). Then \(r_s\in \ell^1(\Z_{\ge0})\), and the series in \eqref{eqn:gamma} converges absolutely. Moreover, \(\gamma\) is real-valued, even, and real analytic in \(\xi\). It satisfies \(\gamma(\xi)\ge0\) for all \(\xi\in\R\), and hence \(\gamma(0)\ge0\). If \(q\not\equiv0\), then there exists \(\xi_0>0\) such that
\begin{align}
    \gamma(\xi)>0, \qquad 0<|\xi|<\xi_0.
\end{align}
\end{lemma}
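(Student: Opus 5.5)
The plan is to derive every claimed property from exponential decay of the autocorrelations \(r_s\). That decay comes from the mixing estimate for the doubling map, Lemma~\ref{lem:doubling-mixing}, in the same form \eqref{eqn:doubling-mixing-shifted-zero} used in the proof of \eqref{eqn:qn-zetan-T}. Since \(\E[q]=0\) and \(q\in C^1(\T)\), that estimate with \(f=q\) and test function \(g=q\) gives
\begin{align*}
    |r_s|=\Big|\int_\T q(2^sx)\,q(x)\,dx\Big|\le 2^{-s}\,\|q\|_{L^1(\T)}\sup_{x\in\T}|q'(x)|\le C\,2^{-s}, \qquad s\ge 0 .
\end{align*}
From this bound the first group of claims follows directly:
\begin{itemize}
    \item \(r_s\in\ell^1(\Z_{\ge0})\), and the series \eqref{eqn:gamma} converges absolutely and uniformly.
    \item \(\gamma\) is real-valued and even, because \(q\) is real and \(\cos(s\xi)\) is even.
    \item \(\gamma\) is real analytic. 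Each term \(r_s\cos(s\xi)\) extends to complex \(\xi\) with \(|\operatorname{Im}\xi|<\log 2\), where \(|\cos(s\xi)|\le e^{s|\operatorname{Im}\xi|}\). The series is therefore dominated by \(C\sum_s (e^{|\operatorname{Im}\xi|}/2)^s\) on compact substrips. So it converges locally uniformly to a holomorphic function on the strip \(\{|\operatorname{Im}\xi|<\log 2\}\).
\end{itemize}

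For nonnegativity I will use the Fejér (Wiener--Khinchin) representation. Set \(S_N(x,\xi)=\sum_{s=0}^{N-1}q(2^sx)e^{is\xi}\). The Lebesgue measure is invariant under the doubling map, so \(\E[q(2^sx)q(2^tx)]=r_{|s-t|}\). Expanding the square then gives
\begin{align*}
    0\le \frac1N\E\big|S_N(x,\xi)\big|^2=\sum_{|s|<N}\Big(1-\frac{|s|}{N}\Big)r_{|s|}\,e^{is\xi}
    = r_0+2\sum_{s=1}^{N-1}\Big(1-\frac{s}{N}\Big)r_s\cos(s\xi).
\end{align*}
Since \(r_s\in\ell^1\), dominated convergence lets \(N\to\infty\), and the right-hand side tends to \(\gamma(\xi)\). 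Hence \(\gamma(\xi)\ge0\) for every \(\xi\), and in particular \(\gamma(0)\ge0\).

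For the final claim, suppose \(q\not\equiv0\). I first check that \(\gamma\not\equiv0\). By absolute convergence, \(\gamma\) is a \(2\pi\)-periodic cosine series, so its Fourier coefficients can be read off term by term:
\[
    r_0=\frac1{2\pi}\int_0^{2\pi}\gamma(\xi)\,d\xi .
\]
If \(\gamma\equiv0\), this would give \(r_0=\E[q^2]=0\). A continuous function with \(\E[q^2]=0\) vanishes identically, so \(q\equiv0\), a contradiction. Thus \(\gamma\) is a real analytic function on \(\R\) that is not identically zero, and its zeros are isolated. In particular there is \(\xi_0>0\) such that \(\gamma\) has no zeros in \((0,\xi_0)\); the point \(0\) itself may be a zero. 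Combined with \(\gamma\ge0\), this gives \(\gamma>0\) on \((0,\xi_0)\), and evenness extends it to \(0<|\xi|<\xi_0\).

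I do not expect a serious obstacle here. The only step needing care is the first one: checking that the hypotheses of Lemma~\ref{lem:doubling-mixing} are met. The mean-zero function must be the one composed with \(2^s\), and the \(C^1\) norm must fall on the unshifted factor. Once the bound on \(r_s\) holds, analyticity, positivity and nondegeneracy are routine.
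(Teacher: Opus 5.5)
Your proof is correct and follows the paper's route: exponential decay of $r_s$ from Lemma~\ref{lem:doubling-mixing}, real analyticity via convergence in a complex strip, nonnegativity via the Fej\'er identity for $\frac1N\E|S_N|^2$, and strict positivity near $0$ from analyticity together with $\gamma\ge0$. The differences are minor. You use isolated zeros where the paper uses the even Taylor expansion at $0$. You also justify $\gamma\not\equiv0$ through $\frac{1}{2\pi}\int_0^{2\pi}\gamma(\xi)\,d\xi=r_0=\E[q^2]>0$, a step the paper only asserts.
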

The result is elementary. We include the proof for the reader's convenience. 
\begin{proof}
 By the exponential mixing estimate for the doubling map, Lemma~\ref{lem:doubling-mixing}, and the mean-zero property \(\E(q)=0\), the correlations \(r_s\) decay exponentially. In particular, \(r_s\in \ell^1(\Z_{\ge0})\), so the series in \eqref{eqn:gamma} converges absolutely and uniformly for real \(\xi\). Moreover, the exponential decay of \(r_s\) implies normal convergence in a complex strip around the real axis. Therefore \(\gamma\) is real analytic. Since \(q\) is real valued and the series is written in terms of cosines, \(\gamma\) is real valued and even.

It remains to prove the nonnegativity of \(\gamma\). For \(N\ge1\) and \(\xi\in\R\), set
\[
    S_N(\xi,x)=\sum_{j=0}^{N-1}q(2^j x)e^{-ij\xi}.
\]
A standard Fej\'er-kernel computation gives
\begin{align}
    \frac{1}{N}\E\big[|S_N(\xi,\cdot)|^2\big]
    =
    r_0+2\sum_{s=1}^{N-1}\left(1-\frac{s}{N}\right)r_s\cos(s\xi)
    \longrightarrow \gamma(\xi), \quad N\to \infty.
\end{align}
Thus \(\gamma(\xi)\ge0\) for all \(\xi\in\R\).

Finally, since \(q\not\equiv0\), the function \(\gamma\) is not identically zero. Since \(\gamma\) is real analytic, even, and nonnegative, its Taylor expansion at \(0\) contains only even powers:
\[
    \gamma(\xi)=\gamma(0)+\frac{\gamma''(0)}{2}\xi^2+\cdots .
\]
Thus either \(\gamma(0)>0\), or \(\gamma(0)=0\) and \(\gamma(\xi)={\mathcal O}(\xi^2)\), with the first nonzero even-order coefficient positive. This proves the asserted strict positivity near \(0\).
\end{proof}

We now restate Theorem~\ref{thm:LE-linear-intro} as follows. 
\begin{theorem}\label{thm:LE-linear}
Suppose \(a\in C^1(\T)\) satisfies \eqref{eqn:sample-bound}. Let \(\gamma\) be as in \eqref{eqn:gamma}.
   There is \(E_0\) such that for \(0<E<E_0\), we have
    \begin{align}\label{eqn:LE-linear}
        L(E)=\frac{\kappa \gamma(0)}{8}E+{\mathcal O}(|\log E|^2E^{3/2}).
    \end{align}
\end{theorem}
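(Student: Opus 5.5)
We start from \eqref{eqn:lyp-rhon}, which gives \(L(E)=\lim_N \frac1N\E[\log(\rho_N/\rho_0)]\). We telescope \(\log(\rho_N/\rho_0)=\sum_{n=0}^{N-1}\log(\rho_{n+1}/\rho_n)\). From \eqref{eqn:rho-n-iter},
\[
\frac{\rho_{n+1}^2}{\rho_n^2}=1+Q_n\sin 2(\chi_n+\eta)+Q_n^2\sin^2(\chi_n+\eta).
\]
Expanding the logarithm to second order in \(Q_n=\mathcal O(\sqrt E)\) gives
\begin{align*}
\log\frac{\rho_{n+1}}{\rho_n}
=\frac12 Q_n\sin2(\chi_n+\eta)+\frac{Q_n^2}{8}\big(1-2\cos2(\chi_n+\eta)+\cos4(\chi_n+\eta)\big)+\mathcal O(E^{3/2}).
\end{align*}
We then average in \(n\) and take expectations. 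The quadratic terms are handled as follows. The constant part gives \(\frac18\E[Q^2]=\frac{\kappa E}{8}r_0+\mathcal O(E^2)\) by \eqref{eqn:Q-asymp}. The oscillating parts are \(\mathcal O(E^{3/2}|\log E|)\) by \eqref{eqn:Qn-squ-cos-exp} and \eqref{eqn:Qn-squ-cos4-exp} of Lemma~\ref{lem:Q-cos}. The whole task therefore reduces to computing the linear term \(\frac{1}{2N}\sum_n\E[Q_n\sin2(\chi_n+\eta)]=\frac1{2N}\sum_n\operatorname{Im}\E[Q_n\mu\zeta_n]\) to precision \(\mathcal O(|\log E|^2E^{3/2})\). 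Its leading part should equal \(\frac{\kappa E}{4}\sum_{s\ge1}r_s\), which combines with the \(r_0\) term to give \(\frac{\kappa E}{8}\gamma(0)\).

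This refined expansion is the main step, the analogue of Lemma~\ref{lem:ave-zetanQn}. The crude bound \(\mathcal O(E|\log E|)\) of Lemma~\ref{lem:Q-cos} is no longer enough, and the \(\mathcal O(E)\) contribution has to be identified exactly. We fix \(T\sim C|\log E|\) and, for \(n\ge T\), unroll the phase recursion \eqref{eqn:zetaIter} to first order:
\[
\zeta_n=\mu^T\zeta_{n-T}+\sum_{s=1}^{T}\mu^{s-1}\,\frac i2 Q_{n-s}(\mu\zeta_{n-s}-1)^2+\mathcal O(T^2E),
\]
where the \(\mathcal O(T^2E)\) term collects the quadratic remainders and the accumulated Lipschitz error \((1+C\sqrt E)^T\). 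Inside the sum we replace \(\zeta_{n-s}\) by \(\mu^{T-s}\zeta_{n-T}\), at cost \(\mathcal O(T\sqrt E)\) per term, which yields a total error \(\mathcal O(T^2E)\). After multiplying by \(Q_n=\mathcal O(\sqrt E)\), this gives \(\mathcal O(T^2E^{3/2})\), which is admissible. The remaining expressions have the form \(\E[Q_nQ_{n-s}\,g(\zeta_{n-T})]\) with \(g\) a polynomial in \(\zeta,\bar\zeta\) of degree at most \(2\).

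For these we decouple using the mixing estimate Lemma~\ref{lem:doubling-mixing} together with the derivative bound of Lemma~\ref{lem:zetan-prime}. The function \(Q_nQ_{n-s}\) is, up to composition with \(2^{n-s}\), the function \(Q\cdot Q(2^s\cdot)\). Its average is \(\frac{\kappa E}{\,}\!\!\cdot r_s\) up to \(\mathcal O(E^2)\), and its fluctuation against \(\zeta_{n-T}\) is bounded by \(CE\sqrt E\,2^{-(T-s)}\). This bound is small only when \(T-s\) is large. We therefore choose the cutoff \(s\le T/2\) and bound the tail \(s>T/2\) by the exponential decay \(|r_s|\le C2^{-s}\), which makes it negligible. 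We expect that keeping this bookkeeping uniform in \(E\) is the main obstacle; in particular, we must confirm that the decay of \(\E[Q_nQ_{n-s}\cdot]\) in \(s\) holds for \(C^1\) \(a\) with constants independent of \(E\).

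After decoupling, the main term is
\[
\frac i2\sum_{s\ge1}\mu^{s}\,\E[Q_nQ_{n-s}]\;\E\big[(\mu^{T-s+1}\zeta_{n-T}-1)^2\big]\quad\text{(inside \(\operatorname{Im}\))}.
\]
Expanding the square, the terms involving \(\zeta_{n-T}\) or \(\zeta^2_{n-T}\) average to \(\mathcal O(\sqrt E|\log E|)\) by Lemma~\ref{lem:expZeta}, once the average is taken over \(n\) (we take \(N\ge E^{-1}\)). Multiplied by \(E\), they contribute \(\mathcal O(E^{3/2}|\log E|)\). The constant term \(1\) survives. Using \(\mu^s=1+\mathcal O(s\sqrt E)\), it gives \(\operatorname{Im}\big(\frac i2\kappa E\sum_s r_s\big)=\frac{\kappa E}{2}\sum_{s\ge1}r_s\), and halving yields the claimed \(\frac{\kappa E}{4}\sum_{s\ge1}r_s\). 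The initial segment \(n<T\) contributes \(\mathcal O(T\sqrt E/N)\), which vanishes as \(N\to\infty\). Collecting all terms gives \eqref{eqn:LE-linear}, and \(c_a=\kappa\gamma(0)/8\ge0\) by Lemma~\ref{lem:density-gamma}.
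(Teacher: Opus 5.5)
Your proposal is correct and follows the paper's proof essentially step for step: the same expansion \eqref{eqn:rho-0}--\eqref{eqn:rho-4}, the same \(T\)-step unrolling of the phase with \(\zeta_{n-s}\) replaced by \(\mu^{T-s}\zeta_{n-T}\), the mixing-based decoupling of \(Q_nQ_{n-s}\) from \(\zeta_{n-T}\), and Lemma~\ref{lem:expZeta} for the surviving \(\zeta_{n-T}\)-averages. Two small points: you dropped the term \(\mu^{T+1}\E[\zeta_{n-T}Q_n]\), which is \(\mathcal O(E2^{-T})\) by \eqref{eqn:qn-zetan-T}; and the cutoff \(s\le T/2\) is unnecessary, because the decoupling error \(CE^{3/2}2^{s-T}\) already sums over \(1\le s\le T\) to \(\mathcal O(E^{3/2})\) (the bound \(|r_s|\le C2^{-s}\) by itself would not control the tail terms that still contain \(\zeta_{n-T}\)).
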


The linear coefficient is nonnegative
\( \frac{\kappa \gamma(0)}{8}\ge 0\) due to Lemma~\ref{lem:density-gamma}. A direct consequence is the positivity of the Lyapunov exponent near 0.
\begin{corollary}\label{cor:lyp-posi}
  Retain the assumptions of Theorem~\ref{thm:LE-linear}.  Suppose 
    \begin{align}\label{eqn:rho0-posi}
    c_a:= \frac{\kappa \gamma(0)}{8}>0   . 
    \end{align}
    Then there exists \(E_0>0\) such that for \(0<E<E_0\)
    \begin{align}\label{eqn:LE-ca-bound}
     0<   \frac{1}{2}c_a\, E<L(E)<2c_a\, E. 
    \end{align}
\end{corollary}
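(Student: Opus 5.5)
This follows directly from Theorem~\ref{thm:LE-linear}, which gives
\[
L(E)=c_aE+R(E),\qquad |R(E)|\le C|\log E|^2E^{3/2}\quad\text{for }0<E<E_1,
\]
with \(C\) and \(E_1\) depending only on \(a\). Since \(c_a>0\) by hypothesis, it suffices to show that the error is at most \(\tfrac12 c_aE\) in absolute value for small \(E\).

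For that, divide by \(E\): the bound becomes \(|R(E)|/E\le C|\log E|^2E^{1/2}\). Because \(|\log E|^2E^{1/2}\to0\) as \(E\to0^+\), we can choose \(E_0\le E_1\) such that
\[
C|\log E|^2E^{1/2}<\tfrac12 c_a\qquad\text{for } 0<E<E_0 .
\]
For such \(E\) we get
\[
\tfrac12 c_aE< c_aE-|R(E)|\le L(E)\le c_aE+|R(E)|<\tfrac32 c_aE<2c_aE.
\]
The left inequality \(0<\tfrac12 c_aE\) is immediate since \(c_a>0\) and \(E>0\). This proves \eqref{eqn:LE-ca-bound}.

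There is no real obstacle here. All the work is in Theorem~\ref{thm:LE-linear}; the only point to check is that its constant in \(\mathcal O(\cdot)\) is uniform in \(E\in(0,E_1)\), which is the convention stated in the paper.
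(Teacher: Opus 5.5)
Your proposal is correct and matches the paper, which presents the corollary as a direct consequence of Theorem~\ref{thm:LE-linear}: the $\mathcal{O}(|\log E|^2E^{3/2})$ remainder is $o(E)$, so for small $E$ it is below $\tfrac12 c_aE$ in absolute value. This gives $\tfrac12 c_aE<L(E)<\tfrac32 c_aE<2c_aE$, exactly as you wrote.
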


The following example shows that the condition \eqref{eqn:rho0-posi} is necessary for the Lyapunov to display linear asymptotic behavior near 0.
\begin{theorem}\label{thm:LE-counterexp}
    Suppose \begin{align}
        a(x)=\frac{1}{\,2-\cos(2\pi x)+\cos(4\pi x)\,}.
    \end{align}
    Let \(\gamma\) be as in \eqref{eqn:gamma}. Then 
    \begin{align}
        \gamma(0)=0,\quad {\rm and} \quad \gamma(\xi)=\frac{1}{2}\xi^2+{\mathcal O}(\xi^4)
    \end{align}
 As a consequence, there is \(E_0>0\) such that for  \(0<E<E_0\), 
    \begin{align}
   0\le  L(E)\le C |\log E|^2E^{3/2} . 
    \end{align}
\end{theorem}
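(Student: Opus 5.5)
Here \(1/a(x)=2-\cos(2\pi x)+\cos(4\pi x)\). The mean of this function over \(\T\) is \(2\), so \(\kappa^{-1}=\E[a^{-1}]=2\) and \(\kappa=1/2\). This gives
\[
q(x)=\kappa^{-1}-a^{-1}(x)=\cos(2\pi x)-\cos(4\pi x).
\]
Note that \(a\) is smooth and positive, since \(2-\cos(2\pi x)+\cos(4\pi x)\ge 2-1-1=0\). Equality would require \(\cos(2\pi x)=1\) and \(\cos(4\pi x)=-1\) simultaneously, which is impossible. Hence the denominator has a positive minimum and \eqref{eqn:sample-bound} holds. The plan is to compute the autocorrelations \(r_s\) in \eqref{eqn:Rs} exactly from Fourier orthogonality, read off \(\gamma\) in closed form, and then invoke Theorem~\ref{thm:LE-linear}.

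\textbf{Step 1: autocorrelations.} Write \(q(x)=c_1(x)-c_2(x)\) with \(c_k(x)=\cos(2\pi kx)\). Then \(q(2^sx)=c_{2^s}(x)-c_{2^{s+1}}(x)\). Using \(\E[c_jc_k]=\tfrac12\delta_{jk}\) for positive integers \(j,k\), we get:
\begin{itemize}
\item For \(s=0\): \(r_0=\tfrac12+\tfrac12=1\).
\item For \(s=1\): \(q(2x)=c_2-c_4\), and the only matching frequency is \(2\), appearing with coefficients \(-1\) and \(+1\). Hence \(r_1=-\tfrac12\).
\item For \(s\ge2\): the frequencies \(2^s,2^{s+1}\ge4\) are disjoint from \(\{1,2\}\), so \(r_s=0\).
\end{itemize}
The series \eqref{eqn:gamma} is therefore finite:
\[
\gamma(\xi)=1-\cos\xi=\tfrac12\xi^2+{\mathcal O}(\xi^4).
\]
In particular \(\gamma(0)=0\).

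\textbf{Step 2: the Lyapunov bound.} Since \(a\in C^1(\T)\) satisfies \eqref{eqn:sample-bound}, Theorem~\ref{thm:LE-linear} applies. It gives
\[
L(E)=\tfrac{\kappa\gamma(0)}{8}E+{\mathcal O}(|\log E|^2E^{3/2})={\mathcal O}(|\log E|^2E^{3/2}),
\]
so \(L(E)\le C|\log E|^2E^{3/2}\) for \(0<E<E_0\). Nonnegativity \(L\ge0\) is immediate from the definition \eqref{eqn:Lyp}, because \(\|T_n^E\|\ge1\) for \(SL(2,\R)\)-type matrices. Indeed, \(\det A_j^E=a_j^2/a_j^2=1\), so every \(T_n^E\) has determinant one.

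There is no real obstacle here: the only content is the bookkeeping of which dyadic frequencies overlap, and the rest is inherited from Theorem~\ref{thm:LE-linear}.
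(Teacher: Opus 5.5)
Your proposal is correct and follows the paper's proof essentially verbatim: compute \(\kappa^{-1}=2\) and \(q=\cos(2\pi x)-\cos(4\pi x)\), then the correlations \(r_0=1\), \(r_1=-\tfrac12\), \(r_s=0\) for \(s\ge2\), hence \(\gamma(\xi)=1-\cos\xi\), and invoke Theorem~\ref{thm:LE-linear}. Your two extra checks are correct details the paper leaves implicit: that the denominator stays positive (it is in fact \(2c^2-c+1\ge 7/8\) with \(c=\cos(2\pi x)\)), and that \(L\ge0\) because \(\det A_j^E=1\).
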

\begin{remark}
   In the case of \(\gamma(0)=0\), we do not know whether \(L(E)>0\) holds for small \(E\) nor a suitable lower bound. 
\end{remark}

\begin{remark}
Given an i.i.d. sequence \(a_n(\omega)\), let \(R^{\rm iid}_s\) and \(\gamma^{\rm iid}\) be generated in the same way as \eqref{eqn:Rs} and \eqref{eqn:gamma}. The same asymptotic formula \eqref{eqn:LE-linear} for the Lyapunov exponent holds, see \cite{pastur1992book} and \cite{li2026upper}. Moreover, in the random case, \(R^{\rm iid}_s=0\) for \(s\neq 0\), and 
\begin{align}
     \gamma^{\rm iid}(0)=R^{\rm iid}_0= \E\Big[\big(\kappa^{-1}-a_0^{-1}(\omega)\big)^2\Big]>0.
\end{align}
Hence, the positivity \eqref{eqn:LE-ca-bound} always holds for the random case and does not require any additional assumptions.     
\end{remark}

\begin{proof}
  For the sampling function in the statement, we have \(\kappa^{-1}=\E(a^{-1})=2\), and hence
\[
    q(x)=\kappa^{-1}-a^{-1}(x)=\cos(2\pi x)-\cos(4\pi x).
\]
We compute the correlations \(r_s=\E[q(x)q(2^s x)]\). Using  the orthogonality of the cosines, we first compute
\[
    r_0=\E[q^2]
    =
    \E\big[(\cos(2\pi x)-\cos(4\pi x))^2\big]
    =
     \E\big[(\cos^2(2\pi x)\big]+\E\big[\cos^2(4\pi x)\big]=1.
\]
A similar direct computation gives
\[
    r_1=\E \big[ q(x)q(2x) \big]=-\E\big[\cos^2(4\pi x)\big]=-\frac{1}{2}, 
\]
and \( r_s=0 \) for all \(s\ge2\). 
Thus the spectral density in \eqref{eqn:gamma} is
\[
    \gamma(\xi)=r_0+2r_1\cos\xi=1-\cos\xi.
\]
In particular,
\[
    \gamma(0)=0,
    \qquad
    \gamma(\xi)=\frac{1}{2}\xi^2+{\mathcal O}(\xi^4)
\]
as \(\xi\to0\). Hence the linear term in \eqref{eqn:LE-linear} vanishes, and
\[
    L(E)={\mathcal O}(|\log E|^2E^{3/2})
\]
as \(E\to0^+\).
\end{proof}

The rest of the section is devoted to the proof of Theorem~\ref{thm:LE-linear}. According to \eqref{eqn:lyp-rhon}, it suffices to study the iteration expansion of the radial variable \(\rho_n\) as defined in \eqref{eqn:modi-prufer}. The following expansion of \(\rho_n\) were obtained in \cite{li2026upper}, by directly iterating \eqref{eqn:rho-n-iter} and using \eqref{eqn:Q-asymp}. 
\begin{align} 
    \frac{1}{N}   \log \frac{\rho_N}{\rho_0}
    &= \frac{1}{8N} \sum_{n=0}^{N-1}   Q_n^2 \label{eqn:rho-0} \\
    &\quad + \frac{1}{2N} \sum_{n=0}^{N-1}   \big[Q_n \sin 2(\chi_n + \eta)\big] \label{eqn:rho-1} \\
    &\quad - \frac{1}{4N} \sum_{n=0}^{N-1}  \big[Q_n^2 \cos 2(\chi_n + \eta)\big] \label{eqn:rho-2}\\
    &\quad + \frac{1}{8N}\sum_{n=0}^{N-1} \big[Q_n^2 \cos 4(\chi_n + \eta)\big] \label{eqn:rho-3} \\
    &\quad + {\mathcal O}(E^{3/2}), \label{eqn:rho-4}
\end{align}

Our goal is show that \(\frac{1}{N} \E \log \frac{\rho_N}{\rho_0}\) is asymptotically of \(cE+o(E)\) order for an explicit positive constant \(c>0\). 
By the last two estimates in Lemma~\ref{lem:Q-cos}, we see that 
\begin{align}
    \big|\E\eqref{eqn:rho-2}+\E\eqref{eqn:rho-3}  \big|={\mathcal O}(E^{3/2}|\log E|), \label{eqn:5.17}
\end{align}
which, together with \eqref{eqn:rho-4}, is already of lower order. The expectation value of the first term \eqref{eqn:rho-0} can be explicitly expanded using \eqref{eqn:Q-asymp} as 
\begin{align}\label{eqn:rho-0-expan}
    \frac{1}{8N} \sum_{n=0}^{N-1} \E [Q_n^2 ]=\frac{E^2}{8\sin^2\eta(E)}\E [q^2(x)]=\frac{\kappa \E[q^2]}{8}E+{\mathcal O}(E^{3/2}).
\end{align}

In the i.i.d. case, \(Q_n\) and \(\zeta_n\) are independent which ensures that the expectation of  \eqref{eqn:rho-1} vanishes. Hence,  \eqref{eqn:rho-0-expan} contributes the leading linear term for the Lyapunov exponent. This is no longer true for a deterministic coefficient such as in the case of the doubling map case that we are considering now. The key ingredient is the following estimate for the expectation value of the term in \eqref{eqn:rho-1}. 

\begin{lemma}\label{lem:ave-zetanQn} 
There is \(E_0>0\) such that for \(0<E<E_0\),  and \(N\gtrsim E^{-1}\), we have
\begin{align}\label{eqn:double-ave-zeta-Q}
   \frac{1}{N}\sum_{n=0}^{N-1}\mathbb{E}\big[\zeta_nQ_n\big]=\frac{i\kappa E\mu^{-1}}{4}\Big(\gamma(0)-\mathbb{E} [q^2 ]\Big)+{\mathcal O}(|\log E|^2E^{3/2}), 
\end{align}
\end{lemma}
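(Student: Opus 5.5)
The plan is to expand \(\zeta_n\) backward over a window of length \(T\sim\log(E^{-1})\) using the phase iteration \eqref{eqn:zetaIter}, and then to evaluate \(\E[\zeta_nQ_n]\) term by term. The leading contribution will come from the pairs \(Q_nQ_{n-j}\), which produce the autocorrelations \(r_j\); all other contributions are either decorrelated by the doubling map or averaged out by Lemma~\ref{lem:expZeta}.

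\emph{Step 1 (backward expansion).} For \(n\ge T\), expanding \eqref{eqn:zetaIter} to first order in \(Q\) gives
\[
\zeta_n=\mu^T\zeta_{n-T}+\frac{i}{2}\sum_{j=1}^{T}\mu^{j-1}Q_{n-j}(\mu\zeta_{n-j}-1)^2+\mathcal O(TE).
\]
The remainder \(\mathcal O(TE)\) collects the \(\mathcal O(Q^2)=\mathcal O(E)\) terms over \(T\) steps; since \(|\mu|=1\), this error does not grow. Multiplying by \(Q_n=\mathcal O(\sqrt E)\), the remainder contributes \(\mathcal O(TE^{3/2})\). The first term contributes \(\mu^T\E[Q_n\zeta_{n-T}]=\mathcal O(E2^{-T})\) by \eqref{eqn:qn-zetan-T}, which is \(\mathcal O(E^{3/2})\) for this choice of \(T\). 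The indices \(n<T\) contribute \(\mathcal O(T\sqrt E/N)\), which is negligible since \(N\gtrsim E^{-1}\).

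\emph{Step 2 (main term).} Write \((\mu\zeta_{n-j}-1)^2=1-2\mu\zeta_{n-j}+\mu^2\zeta_{n-j}^2\). The constant part gives
\[
\frac{i}{2}\sum_{j=1}^T\mu^{j-1}\E[Q_nQ_{n-j}]=\frac{i}{2}\,\frac{E^2}{\sin^2\eta}\sum_{j=1}^T\mu^{j-1}r_j ,
\]
where \(E^2/\sin^2\eta=\kappa E+\mathcal O(E^2)\). Since \(\mu^{j-1}=\mu^{-1}+\mathcal O(j\sqrt E)\) and the \(r_j\) decay exponentially (Lemma~\ref{lem:density-gamma}), this equals
\[
\frac{i\kappa E\mu^{-1}}{2}\sum_{j\ge1}r_j+\mathcal O(E^{3/2})=\frac{i\kappa E\mu^{-1}}{4}\bigl(\gamma(0)-\E[q^2]\bigr)+\mathcal O(E^{3/2}).
\]
Here the tail \(j>T\) is \(\mathcal O(E2^{-T})\). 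This is exactly the claimed leading term.

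\emph{Step 3 (the oscillatory terms; main obstacle).} It remains to show that
\[
\frac1N\sum_{n}\sum_{j=1}^T\mu^{j-1}\E\bigl[Q_nQ_{n-j}\zeta_{n-j}^k\bigr]=\mathcal O\bigl(|\log E|^2E^{3/2}\bigr),\qquad k=1,2 .
\]
I will introduce a second separation \(S\sim\log(E^{-1})\) and replace \(\zeta_{n-j}^k\) by \(\mu^{kS}\zeta_{n-j-S}^k\), using \eqref{eqn:zeta-it-diff}. This costs \(CE\cdot S\sqrt E\) per \((n,j)\), hence \(\mathcal O(TSE^{3/2})=\mathcal O(|\log E|^2E^{3/2})\) after summing over \(j\le T\); this is the source of the squared logarithm.

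Next, set \(g_j(y)=Q(2^jy)Q(y)\), so that \(Q_nQ_{n-j}=g_j(2^{n-j}x)\), and split \(g_j=\E[g_j]+(g_j-\E[g_j])\). The centered part is decorrelated from \(\zeta_{n-j-S}^k\) by the mixing estimate \eqref{eqn:doubling-mixing-shifted-zero}, combined with the derivative bound of Lemma~\ref{lem:zetan-prime}, \(|\zeta_{n-j-S}'|\le C\sqrt E\,2^{n-j-S}\). This yields a contribution \(\mathcal O(E\sqrt E\,2^{-S})\). The point to verify carefully is that the \(L^1\) norm of the centered \(g_j\) is \(\mathcal O(E)\) uniformly in \(j\), so that no extra powers of \(E^{-1}\) appear.

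The mean part is \(\E[g_j]\,\E[\zeta_{n-j-S}^k]\approx\kappa E\,r_j\,\E[\zeta^k_{n-j-S}]\). After averaging over \(n\), Lemma~\ref{lem:expZeta} bounds the average of \(\E[\zeta^k_{n-j-S}]\) by \(\mathcal O(\sqrt E|\log E|)\); summing over \(j\) with the summable weights \(r_j\) gives \(\mathcal O(E^{3/2}|\log E|)\).

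Collecting the errors from all three steps gives \eqref{eqn:double-ave-zeta-Q}. The delicate part throughout is keeping every constant uniform in \(E\), particularly in the second-order remainder of Step 1 and in the mixing bound of Step 3.
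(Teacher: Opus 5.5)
Your proposal is correct and follows essentially the same route as the paper: backward iteration of the phase over a window $T\sim\log(E^{-1})$, the main term extracted from the autocorrelations $\E[Q_nQ_{n-s}]$, and the $\zeta$-dependent terms decorrelated via Lemma~\ref{lem:doubling-mixing}, Lemma~\ref{lem:zetan-prime}, and then averaged with Lemma~\ref{lem:expZeta}. The only difference is bookkeeping. The paper anchors every phase at the common point $\zeta_{n-T}$ before expanding the square, which costs $\mathcal O(T^2E^{3/2})$ and leads to summing the geometric factors $2^{s-T}$ in the mixing bound. You instead shift each $\zeta_{n-j}$ back by a fixed second gap $S$, and this produces the same $|\log E|^2E^{3/2}$ error.
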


We first use this lemma to complete the asymptotic expansion for the Lyapunov exponent. The proof of the lemma is left to the end of the section.

\begin{proof}[Proof of Theorem~\ref{thm:LE-linear}]
   Recall that \(\zeta_n\mu=e^{2i(\chi_n+\eta)}\), see \eqref{eqn:zeta-mu-def}. Hence, \( Q_n\sin(2\chi_n+\eta)=\mathrm{Im}(Q_n\zeta_n\mu)\). 
    Therefore, by Lemma \ref{lem:ave-zetanQn}, the contribution of \eqref{eqn:rho-1} is 
\begin{align}
    \E\eqref{eqn:rho-1}=\frac{1}{2N}\mathrm{Im}\left(\mu\sum_{n=0}^{N-1}\mathbb{E}\big[Q_n\zeta_n\big]\right)=\frac{\kappa E}{8}\big(\gamma(0)-\mathbb{E} [q^2 ]\big)+{\mathcal O}(|\log E|^2E^{\frac{3}{2}}). \label{eqn:lyp-main1}
\end{align}
     
  Adding   \eqref{eqn:rho-0}  to  \eqref{eqn:lyp-main1} cancels the term  \(\frac{1}{8}{\kappa \mathbb{E}[q^2]}E\) and proves \eqref{eqn:LE-linear}. 
\end{proof}

\begin{proof}[Proof of Lemma~\ref{lem:ave-zetanQn}]
For  \(0<E<1\), picking \(T\sim \log \frac{1}{E} \le  N\). \(T\) will be made sufficiently large by requiring \(E<E_0\) to be sufficiently small. The choice will be specified later.  Split the sum  on the left hand side of \eqref{eqn:double-ave-zeta-Q} into \(n<T\) and \(n\ge T\) as 
    \begin{align}\frac{1}{N}\sum_{n=0}^{N-1}\mathbb{E}\big[\zeta_nQ_n\big]=&\frac{1}{N}\sum_{n=0}^{T-1}\mathbb{E}\big[\zeta_nQ_n\big]+\frac{1}{N}\sum_{n=T}^{N-1}\mathbb{E}\big[\zeta_nQ_n\big]. \label{eqn:ave-zeta-Q}
    \end{align}
    The first term is bounded by \(CN^{-1}T\sqrt{E}\) for some constant \(C>0\). Taking \(N\sim 1/E\) will put it into \({\mathcal O}(\log(E^{-1})E^{\frac{3}{2}})\). Therefore, we will work with the second term.

It follows from \eqref{eqn:zetaIter} and \eqref{eqn:Q-asymp} that 
\begin{align} \notag 
    \zeta_{n}=\mu\zeta_{n-1}+\frac{i}{2}Q_{n-1}\big(\mu\zeta_{n-1}-1\big)^2+{\mathcal O}(E). 
\end{align}
For \(n\ge T\), iterating the above relation \(T\) times gives 
\begin{align}\label{eqn:zetan-T-iter}
    \zeta_n=\mu^T\zeta_{n-T}+\frac{i }{2}\sum_{s=1}^T\mu^{s-1}Q_{n-s}(\mu\zeta_{n-s}-1)^2+{\mathcal O}(TE).
\end{align}
Similarly to \eqref{eqn:zetan-n-T-squ}, 
by \eqref{eqn:zeta-it-diff}, we have for \(0\le s\le T\le n\), 
\begin{align}  \notag 
    \Big|(\mu \zeta_{n-s} - 1)^2-(\mu^{T+1-s} \zeta_{n-T} - 1)^2\Big|\le 2 \Big| \mu \zeta_{n-s} -\mu^{T+1-s} \zeta_{n-T}  \Big|  
    \le CT\sqrt E.
\end{align}
Then we   substitute \(\zeta_{n-s}\) in the sum  of \eqref{eqn:zetan-T-iter} by \(\zeta_{n-T}\) and recall \eqref{eqn:zeta-it-diff}
\begin{align}\label{eqn:zetaShiftT}
    \zeta_n=\mu^T\zeta_{n-T}+\frac{i }{2}\sum_{s=1}^T\mu^{s-1}Q_{n-s} \big(\mu^{T+1-s}\zeta_{n-T}-1\big)^2+{\mathcal O}(T^2E). 
\end{align}
Expanding the square in the sum  and  plugging into the second term of \eqref{eqn:ave-zeta-Q} imply 
  \begin{align}
    \frac{1}{N}\sum_{n=T}^{N-1}\mathbb{E}\big[\zeta_nQ_n\big]=& \mu^T\frac{1}{N}\sum_{n=T}^{N-1}\mathbb{E}\big[\zeta_{n-T}Q_n\big]\label{eqn:cor-est1}\\
    &+\frac{i }{2}\frac{1}{N}\sum_{n=T}^{N-1}\sum_{s=1}^T\mu^{s-1}\mathbb{E}\big[Q_{n-s}Q_n\big]\label{eqn:cor-est2}\\
    &+\frac{i }{2}\frac{1}{N}\sum_{n=T}^{N-1}\sum_{s=1}^T\mu^{s-1}\mathbb{E}\big[Q_{n-s}Q_n(\mu^{T+1-s}\zeta_{n-T})^2\big]\label{eqn:cor-est3}\\
    &-i \mu^{T}\frac{1}{N}\sum_{n=T}^{N-1}\sum_{s=1}^T\mathbb{E}\big[Q_{n-s}Q_n\zeta_{n-T}\big]\label{eqn:cor-est4}\\&+{\mathcal O}(T^2E^{\frac{3}{2}}).\end{align}

\noindent \(\bullet\) {\bf Estimate of \eqref{eqn:cor-est1}:} applying \eqref{eqn:qn-zetan-T} to each term in the sum   gives 
\begin{align} \notag 
    \Big|\mu^T\frac{1}{N}\sum_{n=T}^{N-1}\mathbb{E}\big[\zeta_{n-T}Q_n\big]\Big|\le    \frac{1}{N}\sum_{n=T}^{N-1} CE2^{-T}\le  CE2^{-T}. 
\end{align}
    
\noindent \(\bullet\) {\bf Estimate of \eqref{eqn:cor-est2}:} since  \(s\ge1\) and \(n-s\ge 0\), splitting \(\T\) into \(2^{n-s}\) dyadic intervals and applying the corresponding change of variables on each of them yield  \(\E[Q_nQ_{n-s}]=\E[Q_{s}Q_0]\)  lead  to 
\begin{align}  \notag 
    \sum_{s=1}^T\mu^{s-1 }\mathbb{E}\big[Q_{n-s}Q_n\big]=   \mu^{-1}\sum_{s=1}^\infty\mu^{s }\mathbb{E}\big[Q_{s}Q_0\big]-\sum_{s>T}  \mu^{s-1 }\mathbb{E}\big[Q_{s}Q_0\big].
\end{align}
The exponential decay of the correlation \eqref{eqn:doubling-mixing} implies that the tail is exponentially small in \(T\)
   \begin{align}
  \big|   \sum_{s>T}  \mu^{s-1 }\mathbb{E}\big[Q_{s}Q_0\big] \Big|\le \sum_{s>T}  2^{-s}\|Q_{ s}\|_{L^1}\sup_{x\in \T}|Q'(x)|\le CE2^{-T}, \label{eqn:424tail}
   \end{align}
where we used the \(\sqrt E\) order for both \(Q\) and \(Q'\) from \eqref{eqn:Qn-bound} and \eqref{eqn:Qn-prime}. 

Let \(r_s=\E[q_sq_0]\) be the correlation function and let \(\gamma\) be the spectral density of \(q(2^s x)\), as defined in \eqref{eqn:Rs} and \eqref{eqn:gamma}, respectively. Recall that \(\mu=e^{2i\eta}\) and
\[
    Q_s=\frac{E}{\sin\eta(E)}q_s,
\]
so that
\[
    \E[Q_sQ_0]=\frac{E^2}{\sin^2\eta}r_s.
\]
Then
\begin{align}
 \sum_{s=1}^\infty\mu^s\E[Q_sQ_0]
 &=
 \frac{E^2}{\sin^2\eta}
 \left[
     \sum_{s=1}^\infty\cos(2\eta s)r_s
     +i\sum_{s=1}^\infty\sin(2\eta s)r_s
 \right]
 \notag\\
 &=
 \frac{E^2}{\sin^2\eta}
 \sum_{s=1}^\infty\cos(2\eta s)r_s
 +\mathcal O(E^{3/2})\sum_{s=1}^\infty s|r_s|
 \notag\\
 &=
 \frac{E^2}{\sin^2\eta}
 \frac{1}{2}\bigl[\gamma(2\eta)-r_0\bigr]
 +\mathcal O(E^{3/2})
 \notag\\
 &=
 \frac{\kappa E}{2}
 \left[\gamma(0)-\E[q^2]\right]
 +\mathcal O(E^{3/2}).
 \label{eqn:424full}
\end{align}
Indeed, \(\sin(2\eta s)=\mathcal O(\eta s)\), and hence
\[
    \frac{E^2}{\sin^2\eta}
    \sum_{s=1}^\infty\sin(2\eta s)r_s
    =
    \mathcal O(E^{3/2})
    \sum_{s=1}^\infty s|r_s|.
\]
Here we used \(E/\sin\eta=\sqrt{\kappa E}+\mathcal O(E^{3/2})\) from \eqref{eqn:eta}, the bound \(\sum_{s\ge1}s|r_s|\le C\), and the analytic expansion
\[
    \gamma(2\eta)=\gamma(0)+\mathcal O(\eta^2)
    =\gamma(0)+\mathcal O(E)
\]
from Lemma~\ref{lem:density-gamma}.

Hence, combing \eqref{eqn:424tail} and \eqref{eqn:424full} gives 
  \begin{align} \notag 
    \sum_{s=1}^T\mu^{s-1}\mathbb{E}\big[Q_{n-s}Q_n\big]=\frac{\kappa E}{2}\mu^{-1} \left[\gamma(0)-\E[q^2]\right]+{\mathcal O}(E^{3/2}),
   \end{align}
provided \(2^{-T}\sim \sqrt E\). Plugging into \eqref{eqn:cor-est2} gives 
  \begin{align}
    \frac{i }{2}\frac{1}{N}\sum_{n=T}^{N-1}   \sum_{s=1}^T\mu^{s-1}\mathbb{E}\big[Q_{n-s}Q_n\big]= \frac{i }{2} \frac{\kappa E}{2}\mu^{-1} \left[\gamma(0)-\E[q^2]\right]+{\mathcal O}(E^{3/2}).     \label{eqn:ave-zetaQ-main-contri}
   \end{align}

\noindent \(\bullet\) {\bf Estimate of \eqref{eqn:cor-est3} and \eqref{eqn:cor-est4}:} the treatment of these two terms are similar,  we will only work with \eqref{eqn:cor-est4}.

For \(1\le s\le T\), 
we apply \eqref{eqn:doubling-mixing} to \(Q_{n-s}Q_n\) and \(\zeta_{n-T}\). Together with \eqref{eqn:zeta-prime-bound}, we have 
   \begin{align*}
  \Big|     \E\big[\zeta_{n-T}Q_{n-s}Q_n\big]- \E\big[\zeta_{n-T}\big]\E \big[Q_{n-s}Q_n\big]\Big|\le & 2^{-(n-s)}\|Q_{n-s}Q_n\|_{L^1}\sup_{x\in \T}|\zeta_{n-T}'(x)| \\
       \le & 2^{-(n-s)} \cdot (CE)\cdot (C\sqrt E 2^{n-T}) \\
        \le & C2^{s-T} E^{3/2} \\
        \le & C  E^{3/2}.
   \end{align*}
Plugged into the sum gives 
\begin{align*}
 \frac{1}{N}\sum_{n=T}^{N-1}\sum_{s=0}^{T }\mathbb{E}\big[\zeta_{n-T}Q_{n-s}Q_n\big]=&\frac{1}{N}\sum_{n=T}^{N-1}\sum_{s=0}^{T }\mathbb{E}\big[\zeta_{n-T}\big] \mathbb{E}\big[Q_{n-s}Q_n\big]+{\mathcal O}(T  E^{3/2})\\
 =&\Big(\frac{1}{N}\sum_{n=T}^{N-1}\mathbb{E}\big[\zeta_{n-T}\big]\Big) \Big(\sum_{s=0}^{T} \mathbb{E}\big[Q_{ s}Q_0\big]\Big)+{\mathcal O}(T  E^{3/2})
\end{align*}

By Lemma \ref{lem:expZeta}, for \(N>1/E\), the first sum is of order \({\mathcal O}\Big( \sqrt E |\log E|\Big)\). Hence,
\begin{align*}
    \frac{1}{N}\sum_{n=T}^{N-1}\sum_{s=0}^{T}\mathbb{E}\big[\zeta_{n-T}Q_{n-s}Q_n\big]=&{\mathcal O}\Big( \sqrt E |\log E|\Big){\mathcal O}\Big( TE\Big)+{\mathcal O}(T  E^{3/2})\\
    =&{\mathcal O}\Big(   E^{3/2} |\log E|^2\Big).
\end{align*}

Combing all estimates for \eqref{eqn:cor-est1}-\eqref{eqn:cor-est4} together proves \eqref{eqn:double-ave-zeta-Q}. Notice that the main contribution is from the estimate \eqref{eqn:ave-zetaQ-main-contri}. 
\end{proof}


\section{Large Deviation Estimates}\label{sec:ldt}

In this section, we prove the large-deviation estimate stated in Theorem~\ref{thm:ldt-intro}. The main result holds under the positivity assumption in Corollary~\ref{cor:lyp-posi}. More precisely, let \(\gamma\) be as in \eqref{eqn:gamma}, and assume that
\begin{align}\label{eqn:ca-ass}
    c_a=\frac{\kappa\gamma(0)}{8}>0,
    \qquad {\rm and}\qquad
    \frac{1}{2}c_a E<L(E)<2c_aE,
    \quad 0<E<E_0,
\end{align}
for some \(E_0>0\).

The deviation estimate for \(\|T_N^E\|\) in Theorem~\ref{thm:ldt-intro} follows from the corresponding estimate for the Pr\"ufer amplitude ratio \(\rho_N/\rho_0\).

\begin{theorem}\label{thm:ldt-rhon}
Assume \(a\in C^1(\T)\) and suppose that \eqref{eqn:ca-ass} holds for some \(E_0>0\). Then, for every \(\varepsilon>0\), after possibly decreasing \(E_0=E_0(\varepsilon)\), there exist constants \(c=c(\varepsilon)>0\) and, for each \(0<E<E_0\), a number \(N_0=N_0(E,\varepsilon)\) such that, for all \(N\ge N_0\),
\begin{align}
      \P\Big(\, x\in \T\, :\, 
      \Big|\frac{1}{N}\log\frac{\rho_N}{\rho_0}-c_aE\Big|
      >\varepsilon c_aE
      \Big)
      \le
      \exp\Big(-c\frac{E}{|\log E|^3}N\Big).
\label{eqn:ldt-rhon}
\end{align}
\end{theorem}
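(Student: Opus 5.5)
We follow the Bourgain--Schlag martingale strategy, applied to the expansion \eqref{eqn:rho-0}--\eqref{eqn:rho-4} of \(\frac1N\log\frac{\rho_N}{\rho_0}\), which holds pointwise in \(x\) with a deterministic \({\mathcal O}(E^{3/2})\) remainder. The \({\mathcal O}(E^{3/2})\) remainder is smaller than \(\varepsilon c_aE/10\) once \(E_0\) is small. It therefore suffices to show that each of the sums \eqref{eqn:rho-0}--\eqref{eqn:rho-3} is within \(\varepsilon c_aE/10\) of its mean, off a set of measure \(\exp(-cEN/|\log E|^3)\). The means were already computed in Section~\ref{sec:lyp}: the mean of \eqref{eqn:rho-0} plus the mean of \eqref{eqn:rho-1} equals \(c_aE+{\mathcal O}(|\log E|^2E^{3/2})\), and \eqref{eqn:rho-2}, \eqref{eqn:rho-3} have negligible means. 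Each sum has the form \(\frac1N\sum_n f_n(x)\) with \(f_n\) a function of \(2^nx\) and of the phase \(\zeta_n\). By Lemma~\ref{lem:zetan-prime}, \(\zeta_n\) is essentially determined by the first \(n+O(|\log E|)\) binary digits of \(x\).

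\textbf{Step 1: removing the long memory.} Fix \(T\sim|\log E|\) and use \eqref{eqn:zetaShiftT} to replace \(\zeta_n\) by \(\mu^T\zeta_{n-T}\) plus a sum of the form \(\sum_{s\le T}\mu^{s-1}Q_{n-s}(\cdots)\). The cost is \({\mathcal O}(T^2E)\) per term, which becomes \({\mathcal O}(T^2E^{3/2})\) after multiplication by \(Q_n\); this is acceptable. The term \(Q_n\sin2(\chi_n+\eta)\) then splits into three pieces. The first is \(\operatorname{Im}(\mu^{T+1}Q_n\zeta_{n-T})\), in which \(\zeta_{n-T}\) is ``almost measurable'' with respect to an earlier dyadic \(\sigma\)-algebra while \(Q_n\) is mean zero and decorrelated from it. The second is the stationary quadratic sum \(\sum_s\mu^{s-1}Q_{n-s}Q_n\), whose mean produces the \(\gamma(0)\) contribution. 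The third collects cubic terms \(Q_{n-s}Q_n\zeta_{n-T}\) of size \(E\).

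\textbf{Step 2: martingale decomposition.} Let \(\mathcal F_m\) be the \(\sigma\)-algebra generated by the dyadic intervals of length \(2^{-m}\), that is, by the first \(m\) binary digits. For a term \(g_n(x)\) depending on \(2^{n-T}x,\dots,2^nx\), we write \(g_n-\E g_n=\sum_m d_{n,m}\) with martingale differences \(d_{n,m}=\E[g_n|\mathcal F_{m}]-\E[g_n|\mathcal F_{m-1}]\). The key quantitative input is that \(\E[Q_n\,|\,\mathcal F_{m}]\) is \({\mathcal O}(\sqrt E\,2^{m-n})\) for \(m\le n\), by the \(C^1\) bound \eqref{eqn:Qn-prime}. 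Similarly, \(\zeta_{n-T}-\E[\zeta_{n-T}|\mathcal F_m]={\mathcal O}(\sqrt E\,2^{n-T-m})\) by Lemma~\ref{lem:zetan-prime}. These are the conditional-expectation errors that are explicit in the cosine case and here must be tracked using only \(C^1\) regularity. Regrouping, \(\sum_{n<N}(g_n-\E g_n)\) becomes a sum over \(m\) of martingale differences \(D_m=\sum_n d_{n,m}\), up to geometrically small tails. Each \(D_m\) is bounded by \(C\sqrt E\,T\) for the linear-in-\(Q\) part and by \(CE\,T\) for the quadratic and cubic parts. The Azuma--Hoeffding inequality then gives
\[
\P\Big(\Big|\sum_m D_m\Big|>\varepsilon c_aEN\Big)\le 2\exp\Big(-\frac{c\,\varepsilon^2E^2N^2}{N\,E\,T^2}\Big)=2\exp\Big(-c\frac{EN}{|\log E|^2}\Big).
\]
An extra factor of \(|\log E|\) is lost in the blocking needed to handle dependence across the window of length \(T\) (done by splitting into \(T\) interleaved subsequences, or by bounding the martingale-difference norms with a \(T\)-fold overlap). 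This yields the stated rate \(E/|\log E|^3\).

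\textbf{Step 3: assembling the estimate and choosing \(N_0\).} We combine the deviation bounds for the four sums. We choose \(N_0(E,\varepsilon)\ge E^{-1}\) large enough that the finite-\(N\) mean errors from Lemmas~\ref{lem:expZeta}, \ref{lem:Q-cos} and \ref{lem:ave-zetanQn}, together with the boundary terms \(CT\sqrt E/N\), are below \(\varepsilon c_aE/10\). This gives \eqref{eqn:ldt-rhon}.

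\textbf{Main obstacle.} The main obstacle is Step 2 for the linear term \(Q_n\zeta_{n-T}\). Its martingale increments are of size \(\sqrt E\), not \(E\), so the Azuma variance is \(NET^2\). This is only barely sufficient, and there is no room to lose powers of \(E\) when controlling the non-measurability of \(\zeta_{n-T}\) with respect to \(\mathcal F_{n-T}\). We would first try conditioning on \(\mathcal F_{n-T+K}\) with \(K\sim|\log E|\), so that the approximation error is \({\mathcal O}(E)\) uniformly. This should keep all constants polynomial in \(|\log E|\).
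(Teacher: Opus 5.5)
There is a genuine gap in Step~2: your martingale decomposition does not handle the terms in which the phase appears with a non-oscillating coefficient. In \eqref{eqn:rho-2}--\eqref{eqn:rho-3} write \(Q_n^2=\E[Q^2]+(Q_n^2-\E[Q^2])\). In the cubic pieces \eqref{eqn:564}--\eqref{eqn:565} write \(Q_{n-s}Q_n=F_{n,s}+R_s\) with \(R_s=\E[Q_sQ_0]\). The centered parts are fine, but you are left with \(\E[Q^2]\,\frac1N\sum_n\zeta_n^k\) and \(\sum_{s\le T}R_s\,\frac1N\sum_n\zeta_{n-T}\), plus the \(\zeta^2\) analogue of the latter.

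Both prefactors are of order \(E\). So you need the absolute bound \(|\frac1N\sum_n\zeta_n^k|\le c\varepsilon\) off an exponentially small set. For \(g_n=\zeta_{n-T}\), nothing you cite makes the Doob differences \((\E_m-\E_{m-1})[\zeta_{n-T}]\) decay in \(n-T-m\). The phase map only satisfies \(|\partial F/\partial\zeta|\le 1+C\sqrt E\), so the influence of early digits is never damped. Hence neither your ``geometrically small tails'' nor \(|D_m|\le CET\) is justified for these pieces.

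Your remedy of conditioning \(\zeta_{n-T}\) on \(\mathcal F_{n-T+K}\) works only when \(\zeta_{n-T}\) is multiplied by a factor such as \(Q_n\) or \(F_{n,s}\). Such a factor has conditional expectation that vanishes below the window: exactly by \eqref{eqn:doubling-conditional-expectation}, or up to \(\mathcal O(E^2)\) by Lemma~\ref{lem:Fns-conditional-error}. A bare phase has no such factor. A deterministic bound does not rescue this either: summing \(\zeta_{n+1}-\mu\zeta_n=\mathcal O(\sqrt E)\) only gives \(|\frac1N\sum_n\zeta_n|\lesssim \sqrt E/|1-\mu|+1/(N|1-\mu|)=\mathcal O(1)\).

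The missing idea is the summation-by-parts identity \eqref{eqn:sum-zeta}, which exploits the rotation \(\mu\). For \(N\gtrsim E^{-1}\) it gives \(\frac1N\sum_n\zeta_n=-\frac{e^{-i\eta}}{4N\sin\eta}\sum_n(\mu\zeta_n-1)^2Q_n+\mathcal O(\sqrt E)\), and there is an analogous identity for \(\zeta_n^2\) using \(1-\mu^2\). This trades the long-memory average for a sum carrying the centered factor \(Q_n\). After the \(T\)-shift and your conditioning, \(\E_{n+T/2}[Q_n]\,\E_{n-T/2}[(\mu^{T+1}\zeta_{n-T}-1)^2]\), that sum becomes a sum of \(T\) martingales with increments \(\mathcal O(\sqrt E)\). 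Because of the prefactor \(1/\sin\eta\sim E^{-1/2}\), Azuma must be run at a threshold of order \(\varepsilon\sqrt E\) rather than \(\varepsilon E\). This gives a rate of order \(1/|\log E|\); it is the content of \eqref{eqn:523}--\eqref{eqn:ldt3} and \eqref{eqn:ldt6.79} in the paper.

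With this step added, the rest of your outline agrees with the paper's proof: the \(T\)-shift, the \(\gamma(0)\) extraction from the stationary quadratic sum, and conditioning with \(K=T/2\). Your full Doob decomposition is an acceptable variant of the paper's residue-class martingales along \(s_m^r=n+T/2\). One small correction: \(\E[Q_n\mid\mathcal F_m]=0\) exactly for \(m\le n\); the \(C^1\) bound is what you need for \(m>n\).
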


\begin{remark}\label{rem:ldt-equivalent-centers}
The large-deviation estimate in Theorem~\ref{thm:ldt-rhon} will be used below in several equivalent forms. By \eqref{eqn:Fn-rhon} and \eqref{eqn:Tn-rhon},
\[
   \log \frac{\rho_N}{\rho_0}
   =
   \log\|T_N^E\|+{\mathcal O}(|\log E|).
\]
After decreasing \(E_0\) and increasing \(N_0(E,\varepsilon)\) if necessary, this deterministic error can be absorbed into the deviation threshold. Hence, the deviation estimate for \(\|T_N^E\|\) in Theorem~\ref{thm:ldt-intro} follows from the corresponding estimate for the Pr\"ufer amplitude ratio \(\rho_N/\rho_0\) in Theorem~\ref{thm:ldt-rhon}.

Moreover, by Theorem~\ref{thm:LE-linear},
\begin{align}\label{eqn:L-Ln-asym}
    L(E)=c_aE+{\mathcal O}(|\log E|^2E^{3/2}),
    \qquad
    L_N(E)=c_aE+{\mathcal O}(|\log E|^2E^{3/2}).
\end{align}

Thus, after taking \(E_0\) smaller and \(N_0(E,\varepsilon)\) larger if necessary, the center \(c_aE\) may be replaced by either \(L(E)\) or \(L_N(E)\). Consequently, for the applications below, we may use the forms
\begin{align}
    \P\Big(
    x\in\T:
    \Big|
    \frac{1}{N}\log\|T_N^E(x)\|-L(E)
    \Big|
    >
    \varepsilon L(E)
    \Big)
    &\le
    \exp\Big(-c\frac{E}{|\log E|^3}N\Big),
    \notag
\end{align}
and
\begin{align}
    \P\Big(
    x\in\T:
    \Big|
    \frac{1}{N}\log\|T_N^E(x)\|-L_N(E)
    \Big|
    >
    \varepsilon L_N(E)
    \Big)
    &\le
    \exp\Big(-c\frac{E}{|\log E|^3}N\Big),
    \notag
\end{align}
with constants depending on \(\varepsilon\).
\end{remark}

In the proof of \eqref{eqn:ldt-rhon} below, we establish the estimate with the fixed tolerance \(\varepsilon=1/100\), which is sufficient for Theorem~\ref{thm:holder}. This keeps the exposition readable and avoids tracking several inessential numerical constants. The full statement with arbitrary \(\varepsilon>0\), which will be useful in the proof of Theorem~\ref{thm:AL}, follows from the same argument after replacing the numerical constants below by sufficiently small multiples of \(\varepsilon\), and after taking \(E_0\) smaller, \(N_0(E,\varepsilon)\) larger, and \(c(\varepsilon)>0\) smaller if necessary.

In the previous section, we derived the averaged asymptotic expansion \eqref{eqn:rho-0}--\eqref{eqn:rho-4} for \(\log(\rho_N/\rho_0)\). We now refine that analysis by controlling the finite-scale fluctuations of each term in the expansion and proving large-deviation bounds for their tails. The terms have different levels of difficulty. The leading \(Q_n^2\)-term in \eqref{eqn:rho-0} is the most direct one and can be estimated by applying the dyadic large-deviation lemma to a normalized \(C^1\) function. The oscillatory terms \eqref{eqn:rho-2} and \eqref{eqn:rho-3} are naturally of higher order on average, as reflected for example in \eqref{eqn:5.17}; after subtracting these higher-order averages, the corresponding deviation estimates are comparatively coarse. The most delicate contribution is the first-order term \eqref{eqn:rho-1}: it fluctuates at the linear scale in \(E\), and its mean must be extracted carefully by iterating the phase variable once more and separating the resulting correlation terms. This term ultimately determines the weakest large-deviation rate, of order \(E/|\log E|^3\), in the final estimate.

The main probabilistic input is the martingale method developed by Bourgain--Schlag \cite{BourgainSchlag2000CMP} for processes generated by the doubling map. In that approach, suitable dyadic conditional expectations are used to realize the relevant sums as martingales, and Azuma's inequality then yields exponential tail bounds. Although the original argument is written for the cosine sampling function, the underlying martingale mechanism extends naturally to general \(C^1\) sampling functions; the only additional work is to keep track of the conditional-expectation errors that are simpler in the cosine case.

As usual, constants denoted by \(C\), \(c\), \(C_i\), and \(c_i\) may change from line to line, while remaining independent of \(E\) and \(N\). We also allow \(E_0\) to be decreased and \(N_0\) to be increased, without changing the notation, whenever this is needed to absorb deterministic lower-order errors into the deviation thresholds or to absorb polynomial and logarithmic prefactors into the exponential bounds.

We first review the martingale preliminaries needed below. Readers familiar with dyadic martingales for the doubling map may skip to the next subsection.


\subsection{Preliminaries on martingales}

For each \(r\ge 0\), let
\begin{align}
\mathcal I_r:=\{I_{r,k}:0\le k<2^r\},
\qquad
I_{r,k}:=\Big[\frac{k}{2^r},\frac{k+1}{2^r}\Big),
\notag  
\end{align}
and let
\begin{align}
\mathcal F_r
:=\sigma(\mathcal I_r).
\notag  
\end{align}
Thus \(\mathcal F_r\) is the finite \(\sigma\)-algebra generated by the dyadic partition of
\([0,1)\) into intervals of length \(2^{-r}\). Moreover,
\begin{align}
\mathcal F_0\subset \mathcal F_1\subset \mathcal F_2\subset \cdots,
\notag  
\end{align}
so \((\mathcal F_r)_{r\ge 0}\) is a filtration.

If \(f\in L^1([0,1))\), then the conditional expectation of \(f\) with respect to
\(\mathcal F_r\) is given by averaging \(f\) on the dyadic cell containing \(x\), which we denote as 
\begin{align}
\mathbb E_r[f](x)
:=
\E[f\mid \mathcal F_r](x)
=
\sum_{I\in \mathcal I_r}
\left(
\frac{1}{|I|}\int_I f(t)\,dt
\right)\chi_I(x).
 \label{eqn:conditional-expectation-dyadic-sum}
\end{align}
In particular, \(\mathbb{E}_r \big[f\big](x)\) is constant on each interval \(I_{r,k}\), and 
therefore \(\mathcal F_r\)-measurable.

Recall that \((M_r)_{r\ge 0}\) is a martingale with respect to
\((\mathcal F_r)_{r\ge 0}\) if \(M_r\in L^1\) and \(M_r\) is \(\mathcal F_r\)-measurable
for every \(r\), and if for all \(s<r\),
\begin{align}
\E[M_r\mid \mathcal F_s]=M_s.
\notag  
\end{align}

Then \((\mathbb E_r[f])_{r\ge 0}\) is a martingale with respect to
\((\mathcal F_r)_{r\ge 0}\). Indeed, \(\mathbb E_r[f]\) is \(\mathcal F_r\)-measurable by
construction and belongs to \(L^1\). Moreover, for \(s<r\), the tower property gives
\begin{align}
\E_s\big[\mathbb E_r[f]\big] =
\E\bigl(\E[f\mid \mathcal F_r]\mid \mathcal F_s\bigr)=
\mathbb E_s[f].
\label{eqn:tower-martingale}
\end{align}
This is the standard Doob martingale associated with \(f\). In the dyadic setting,
\(\mathbb E_r[f]\) is the step function obtained by averaging \(f\) over intervals of
length \(2^{-r}\). The identity \eqref{eqn:tower-martingale} says that averaging this finer
approximation again over a coarser dyadic partition recovers the coarser dyadic
approximation. 

There is another useful observation for sequences generated by the doubling map. If \(f\in L^1(\T)\) and \(0\le m\le n\), then
\begin{align}
\E_m\bigl[f(2^n x) \bigr]=\E\bigl[f(2^n x)\mid \mathcal F_m\bigr]
=
\E\bigl[f(2^n x)\bigr]
=
\E\bigl[f\bigr].
\label{eqn:doubling-conditional-expectation}
\end{align}
Indeed, \(f(2^n x)\) is \(2^{-n}\)-periodic, and each dyadic interval of length \(2^{-m}\), with \(m\le n\), contains exactly \(2^{n-m}\) full periods. Hence the average of \(f(2^n x)\) over any atom of \(\mathcal F_m\) equals its global average on \([0,1)\), which gives \eqref{eqn:doubling-conditional-expectation}.

\begin{lemma}\label{lem:subsequence-martingale-transform}
Let \((\mathcal F_r)_{r\ge0}\) be the dyadic filtration defined above, and let \(0\le s_0<s_1<s_2<\cdots\) be a strictly increasing sequence of integers. Suppose that, for each \(j\ge1\), \(d_j\in L^1\) is \(\mathcal F_{s_j}\)-measurable and satisfies
\begin{align}
    \E\big[d_j\mid \mathcal F_{s_{j-1}}\big]=0. \notag
\end{align}
Then
\begin{align}
    M_n:=\sum_{j=1}^n d_j,\qquad n\ge1, \notag
\end{align}
is a martingale with respect to the sampled filtration \((\mathcal F_{s_j})_{j\ge0}\).

In particular, suppose that \(F_{s_j}\in L^1\) is \(\mathcal F_{s_j}\)-measurable, \(H_{s_{j-1}}\in L^1\) is \(\mathcal F_{s_{j-1}}\)-measurable, and
\begin{align}
    \E\big[F_{s_j}\mid \mathcal F_{s_{j-1}}\big]=H_{s_{j-1}}.
    \label{eqn:conditional-mean-H}
\end{align}
If \(G_{s_{j-1}}\) is bounded and \(\mathcal F_{s_{j-1}}\)-measurable, then
\begin{align}
    \sum_{j=1}^n
    \bigl(F_{s_j}-H_{s_{j-1}}\bigr)G_{s_{j-1}},
    \qquad n\ge1, \notag
\end{align}
is a martingale with respect to the sampled filtration \((\mathcal F_{s_j})_{j\ge0}\).
\end{lemma}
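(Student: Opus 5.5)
The plan is to verify the martingale property directly from its definition and then reduce the second claim to the first. Let \(\mathcal G_j:=\mathcal F_{s_j}\). Since \(s_j\) is strictly increasing and \((\mathcal F_r)\) is a filtration, \((\mathcal G_j)_{j\ge0}\) is again a filtration.

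\textbf{First claim.} We check the three martingale conditions for \(M_n=\sum_{j=1}^n d_j\).

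1. \emph{Adaptedness.} For \(j\le n\), we have \(\mathcal F_{s_j}\subset\mathcal F_{s_n}\), so each summand \(d_j\) is \(\mathcal G_n\)-measurable, and hence so is \(M_n\).

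2. \emph{Integrability.} \(M_n\in L^1\) as a finite sum of \(L^1\) functions.

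3. \emph{Martingale identity.} It suffices to prove the one-step identity \(\E[M_{n}\mid\mathcal G_{n-1}]=M_{n-1}\). The case \(m<n\) then follows by the tower property \eqref{eqn:tower-martingale}, iterating down one step at a time. For the one-step identity, write \(M_n=M_{n-1}+d_n\). The term \(M_{n-1}\) is \(\mathcal G_{n-1}\)-measurable, so it is unchanged by conditioning. By hypothesis, \(\E[d_n\mid\mathcal F_{s_{n-1}}]=0\).

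If one wants a martingale indexed from \(j=0\), set \(M_0=0\). For the one-step identity with \(n=1\), the same argument applies with \(M_0=0\) and \(\E[d_1\mid\mathcal F_{s_0}]=0\).

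\textbf{Second claim.} Set
\[
d_j:=\bigl(F_{s_j}-H_{s_{j-1}}\bigr)G_{s_{j-1}},
\]
and check that \(d_j\) satisfies the hypotheses of the first part.

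1. \emph{Measurability.} \(F_{s_j}\) is \(\mathcal F_{s_j}\)-measurable. Both \(H_{s_{j-1}}\) and \(G_{s_{j-1}}\) are \(\mathcal F_{s_{j-1}}\subset\mathcal F_{s_j}\)-measurable. Hence \(d_j\) is \(\mathcal F_{s_j}\)-measurable.

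2. \emph{Integrability.} \(F_{s_j}-H_{s_{j-1}}\in L^1\), and \(G_{s_{j-1}}\) is bounded, so \(d_j\in L^1\).

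3. \emph{Vanishing conditional mean.} Boundedness of \(G_{s_{j-1}}\) lets us pull out what is known: it is \(\mathcal F_{s_{j-1}}\)-measurable, and the product with the integrable factor is integrable. This gives
\[
\E[d_j\mid\mathcal F_{s_{j-1}}]=G_{s_{j-1}}\bigl(\E[F_{s_j}\mid\mathcal F_{s_{j-1}}]-H_{s_{j-1}}\bigr).
\]
Here we used that \(H_{s_{j-1}}\) is \(\mathcal F_{s_{j-1}}\)-measurable and integrable, so \(\E[H_{s_{j-1}}\mid\mathcal F_{s_{j-1}}]=H_{s_{j-1}}\). By \eqref{eqn:conditional-mean-H}, the bracket vanishes.

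The first part then applies and gives the second claim.

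\textbf{Expected obstacle.} The only point needing care is the justification for pulling \(G_{s_{j-1}}\) out of the conditional expectation. This is exactly why boundedness of \(G\) is assumed. In the dyadic setting there is also a concrete check: by \eqref{eqn:conditional-expectation-dyadic-sum}, \(G_{s_{j-1}}\) is constant on each atom of \(\mathcal F_{s_{j-1}}\), so it factors out of each cell average. Everything else is bookkeeping, and I expect no genuine obstacle.
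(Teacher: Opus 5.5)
Your proposal is correct and follows the same route as the paper: you verify the one-step identity \(\E[M_n\mid\mathcal F_{s_{n-1}}]=M_{n-1}\) directly from the definition, then obtain the second claim from the first by pulling the \(\mathcal F_{s_{j-1}}\)-measurable factor \(G_{s_{j-1}}\) out of the conditional expectation. You supply the adaptedness, integrability, and tower-property details that the paper leaves out.
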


This is the standard martingale-difference construction, together with its martingale-transform version along the subsequence \((s_j)\). The first statement follows directly from the definition of a martingale, by verifying that
\[
    \E[M_n\mid \mathcal F_{s_{n-1}}]=M_{n-1}.
\]
For the second statement, one only uses that \(G_{s_{j-1}}\) is \(\mathcal F_{s_{j-1}}\)-measurable, and hence can be pulled out of the conditional expectation; the conclusion then follows from the first statement. We omit the details.

\begin{remark}\label{rem:varying-samples-martingale-transform}
The second part of Lemma~\ref{lem:subsequence-martingale-transform} is slightly more flexible than the usual martingale-transform construction. If \(F_{s_j}\) and \(H_{s_{j-1}}\) come from the same martingale, then \(F_{s_j}-H_{s_{j-1}}\) is simply a martingale difference. However, the second part allows the input functions to vary with \(j\), as long as the conditional-mean relation \eqref{eqn:conditional-mean-H} holds. The most frequent construction is obtained by taking
\begin{align}
    F_{s_j}=\E_{s_j}[f_j],
    \qquad
    H_{s_{j-1}}=\E_{s_{j-1}}[f_j].
   \notag 
\end{align}
Then the tower property \eqref{eqn:tower-martingale} gives
\begin{align}
    \E_{s_{j-1}}\big[\E_{s_j}[f_j]\big]
    =
    \E_{s_{j-1}}[f_j], \notag
\end{align}
which implies \eqref{eqn:conditional-mean-H}. This is the form of the martingale construction used in the conditional-expectation replacements below.
\end{remark}


The following is a standard large-deviation bound for sums of martingale differences with bounded increments; see, e.g., \cite[Chapter 7, Theorem 7.2.1 ]{alon92book}.
\begin{theorem}[Azuma's inequality]
      Let $0=X_0,\cdots,X_n$ be a martingale with $|X_{i+1}-X_i|\le 1$ for all $0\le i<n$. 
    Then for arbitrary $\delta>0$, 
    \begin{align}\label{eqn:azuma}
        \P\big( | X_n| >\delta \sqrt n\big)< 2e^{-\delta^2/2}.
    \end{align}
\end{theorem}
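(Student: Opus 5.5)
We would prove Azuma's inequality by the standard exponential-moment (Chernoff) method. Write \(d_i=X_i-X_{i-1}\) for \(1\le i\le n\), and let \(\mathcal G_i\) denote the \(\sigma\)-algebra with respect to which \((X_i)\) is a martingale. Then \(|d_i|\le 1\), \(d_i\) is \(\mathcal G_i\)-measurable, and \(\E[d_i\mid\mathcal G_{i-1}]=0\). By the symmetry \(X\mapsto -X\), which preserves all hypotheses, it suffices to prove
\[
\P\big(X_n>\delta\sqrt n\big)\le e^{-\delta^2/2}.
\]
A union bound over the two one-sided events then gives the factor \(2\).

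\textbf{Step 1: a conditional Hoeffding bound.} Fix \(\lambda>0\). Since \(t\mapsto e^{\lambda t}\) is convex on \([-1,1]\), we have pointwise
\[
e^{\lambda d}\le \frac{1+d}{2}e^{\lambda}+\frac{1-d}{2}e^{-\lambda}, \qquad |d|\le 1 .
\]
Take conditional expectations given \(\mathcal G_{i-1}\) and use the mean-zero property. This gives
\[
\E\big[e^{\lambda d_i}\mid\mathcal G_{i-1}\big]\le\cosh\lambda\le e^{\lambda^2/2}.
\]
The last inequality follows by comparing Taylor series termwise, since \((2k)!\ge 2^k k!\). For \(\lambda>0\) it is in fact strict.

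\textbf{Step 2: iteration via the tower property.} Since \(e^{\lambda X_{i-1}}\) is \(\mathcal G_{i-1}\)-measurable, it can be pulled out of the conditional expectation:
\[
\E\big[e^{\lambda X_i}\big]=\E\Big[e^{\lambda X_{i-1}}\,\E\big[e^{\lambda d_i}\mid \mathcal G_{i-1}\big]\Big]\le e^{\lambda^2/2}\,\E\big[e^{\lambda X_{i-1}}\big].
\]
Inducting from \(X_0=0\) yields \(\E[e^{\lambda X_n}]\le e^{n\lambda^2/2}\).

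\textbf{Step 3: Markov's inequality and optimization.} Markov's inequality applied to \(e^{\lambda X_n}\) gives
\[
\P(X_n>\delta\sqrt n)\le e^{-\lambda\delta\sqrt n+n\lambda^2/2}.
\]
The choice \(\lambda=\delta/\sqrt n\) makes the exponent equal to \(-\delta^2/2\). Strictness of \(\cosh\lambda<e^{\lambda^2/2}\) for \(\lambda>0\), together with \(n\ge1\), gives the strict inequality in \eqref{eqn:azuma}.

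The only nonroutine point is Step 1. There one must use the mean-zero condition conditionally, not merely in expectation; the convexity bound is what makes this possible without any further distributional information on \(d_i\). Everything else is bookkeeping.
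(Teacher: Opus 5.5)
Your proof is correct and is the standard exponential-moment argument: the convexity bound $\E[e^{\lambda d_i}\mid\mathcal G_{i-1}]\le\cosh\lambda\le e^{\lambda^2/2}$, iteration by the tower property, Markov's inequality with $\lambda=\delta/\sqrt n$, and a union bound over $\pm X_n$, with strictness coming from $\cosh\lambda<e^{\lambda^2/2}$. The paper gives no proof of its own and simply cites Alon--Spencer (Chapter 7, Theorem 7.2.1), whose proof is this same argument.
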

\begin{remark}
  Martingales may be viewed as conditional-mean analogues of sums of independent mean-zero random variables. They retain the essential mean-zero structure of i.i.d.\ sums through conditional expectations, while allowing dependence and non-identical distributions. In the i.i.d.\ setting, the corresponding exponential tail estimate is Hoeffding's inequality \cite{chernoff1952measure,hoeffding1963probability}. Azuma's inequality \eqref{eqn:azuma}, also known as the Azuma--Hoeffding inequality, extends this type of large-deviation bound to martingales with bounded increments.
\end{remark}

 The martingale structure described above leads to the following large-deviation estimate for dyadic averages. This result, proved in \cite{BourgainSchlag2000CMP}, is obtained by applying Azuma's inequality to the martingales naturally associated with the doubling map.
\begin{lemma}[{\cite[Lemma 8.1]{BourgainSchlag2000CMP}}]\label{lem:large-deviation-dyadic-average}
Let \(K>1\), and assume that \(F\) is a function on \(\mathbb T\) satisfying \(|F|\le 1\) and \(|F'|\le K\). Then there exists \(N_0=N_0(\delta)\) such that, for all \(N\ge N_0\),
\begin{align}
\mathbb P\Bigg[
x\in \mathbb T
\;\bigg|\;
\E[F]
-\frac{1}{N}\sum_{j=1}^{N} F\bigl(2^{j}x\bigr)
\bigg|
>
\delta
\Bigg]
<
\exp\!\left(
- \,
\frac{ \delta^{2}}{20\log(K\delta^{-1}) }N
\right).
\label{eqn:large-deviation-dyadic-average}
\end{align}
\end{lemma}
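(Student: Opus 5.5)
The plan is to approximate \(F(2^jx)\) by its dyadic conditional expectations, so that the sum becomes a martingale plus a small deterministic error, and then apply Azuma's inequality \eqref{eqn:azuma} to sparse subsequences. Fix an integer lag \(m\ge1\) with \(m\approx \log_2(K\delta^{-1})+C\), so that \(K2^{-m}\le \delta/4\). Since \(F(2^jx)\) depends only on the position of \(2^jx\) in \(\T\), its conditional expectation \(\E_{j+m}[F(2^j\cdot)]\) is its average over a dyadic cell of length \(2^{-(j+m)}\). On such a cell, \(2^jx\) ranges over an interval of length \(2^{-m}\). The derivative bound \(|F'|\le K\) therefore gives
\[
\big|F(2^jx)-\E_{j+m}[F(2^j\cdot)](x)\big|\le K2^{-m}\le \delta/4 .
\]
By \eqref{eqn:doubling-conditional-expectation}, \(\E_{j}[F(2^j\cdot)]=\E[F]\). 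This reduces the problem to controlling
\[
\frac1N\sum_{j=1}^N\Big(\E_{j+m}[F(2^j\cdot)]-\E_j[F(2^j\cdot)]\Big)
\]
beyond \(3\delta/4\).

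Each summand is \(\mathcal F_{j+m}\)-measurable, but it has zero conditional mean only with respect to \(\mathcal F_j\), so the summands overlap over a window of length \(m\). I would telescope each one as
\[
\sum_{\ell=1}^{m}\Big(\E_{j+\ell}[F(2^j\cdot)]-\E_{j+\ell-1}[F(2^j\cdot)]\Big)
\]
and regroup by the level \(r=j+\ell\). This gives \(\sum_r D_r\) with \(D_r=\sum_{\ell=1}^m(\E_r-\E_{r-1})[F(2^{r-\ell}\cdot)]\), a genuine martingale difference sequence for \((\mathcal F_r)\). The edge terms near \(r\le m\) and \(r>N\) contribute \(O(m/N)\), which I absorb for \(N\ge N_0(\delta)\).

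The main obstacle is the increment bound. The naive bound \(|D_r|\le 2m\) loses a factor \(m^2\) in the Azuma exponent, whereas the claimed rate is \(\delta^2 N/(20\log(K\delta^{-1}))\), which is linear in \(1/m\). To avoid this I would instead split the indices \(j\) into \(m\) residue classes modulo \(m\) (as in Lemma~\ref{lem:subsequence-martingale-transform} with \(s_i=j_0+im\)). Along a single class, \(d_i=\E_{s_i+m}[F(2^{s_i}\cdot)]-\E[F]\) is \(\mathcal F_{s_{i+1}}\)-measurable, has zero \(\mathcal F_{s_i}\)-conditional mean, and satisfies \(|d_i|\le 2\). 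Azuma's inequality then gives, for each class of size about \(N/m\),
\[
\P\Big(\Big|\sum_i d_i\Big|>\tfrac{3\delta}{4}\tfrac Nm\Big)\le 2\exp\Big(-\tfrac{9\delta^2 N}{128\, m}\Big).
\]
A union bound over the \(m\) classes costs a factor \(m\). Substituting \(m\le 1.5\log(K\delta^{-1})\) (note \(\log_2=\log/\log2\); if needed, enlarge the \(\delta/4\) slack to match the constant \(20\)) gives an exponent of at least \(\delta^2N/(20\log(K\delta^{-1}))\). The prefactor \(2m\) is absorbed by taking \(N\ge N_0(\delta)\). Since \(K>1\) is arbitrary, \(N_0\) may a priori also depend on \(K\); I would check whether \(2m\) is dominated by an extra factor \(e^{-c\delta^2N/\log(K/\delta)}\), and otherwise accept \(N_0=N_0(\delta,K)\), as in \cite{BourgainSchlag2000CMP}.
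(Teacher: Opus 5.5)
Your argument is the paper's proof: the lag $m\approx\log_2(K\delta^{-1})$ (the paper's $T$), the replacement of $F(2^jx)$ by $\E_{j+m}[F(2^j\cdot)]$ with error $K2^{-m}$, the splitting into $m$ residue classes, the martingales $d_i=\E_{s_i+m}[F(2^{s_i}\cdot)]-\E[F]$ along $s_i=j_0+im$, Azuma on each class, and a union bound in which the $2m$ prefactor and the $O(m/N)$ tail are absorbed for large $N$. The remaining caveats are bookkeeping issues that the paper shares: since $9/(128\cdot 1.5)<1/20$, you do need the slack adjustment you mention (and $\log(K\delta^{-1})$ not too small) to reach the constant $20$; and, as you suspect, absorbing the prefactor and the tail makes $N_0$ depend on $K$ through $m$, exactly as the paper's own requirements $N\ge 5T$ and $N>4T/\delta$ do.
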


\begin{remark}
We include a self-contained proof below, in the present notation, to illustrate the martingale construction and the use of Azuma's inequality in the dyadic setting. The logarithmic factor in \eqref{eqn:large-deviation-dyadic-average} comes from grouping the orbit into martingales along dyadic subsequences with spacing \(T\sim \log(K/\delta)\). In the original proof of \cite[Lemma~8.1]{BourgainSchlag2000CMP}, a closely related estimate is obtained with a different logarithmic loss. For the applications considered here and in \cite{BourgainSchlag2000CMP}, the precise power of this logarithmic correction is not essential. The logarithmic correction can be removed for special sampling functions with suitable Fourier structure, for example \(F(x)=\cos(2\pi x)\). For a general \(C^1\) sampling function, however, it is not clear whether such a loss can be avoided.

The same martingale construction also applies to sparse dyadic averages, namely averages taken along a dyadic subsequence of the original orbit, provided the sampling gap is at least of order \(\log(K/\delta)\). We will use this sparse version in the proof of the Green's function estimates for Anderson localization; see Lemma~\ref{lem:large-deviation-sparse-dyadic-average}.
\end{remark}

\begin{proof}
    It suffices to assume \(\E[f]=0\) and estimate 
\begin{align}
\mathbb P\big(\, 
x\in \mathbb T\, :\, 
\;\big|\; \frac{1}{N}\sum_{n=1}^{N} F\bigl(2^{n}x\bigr)
\big|
>\delta\, 
\big)    .  \notag 
\end{align}
Denote by \(F_n=F(2^n x)\). 
For \(r\le n\),  \eqref{eqn:doubling-conditional-expectation} implies that 
\begin{align}
    \E_r[F_n]=\E[f]=0.  \notag
\end{align}
For \(r>n\), it follows from \(|\frac{d}{dx}F(2^nx)|\le K2^n\), the definition of \(\E_r\) in \eqref{eqn:conditional-expectation-dyadic-sum}, and the mean value theorem   that 
\begin{align}\label{eqn:Er-appro-F}
   \big|F(2^nx) -\E_r[F_n]\big|\le   K2^{n-r}.
\end{align}
For \(\delta>0\), we pick \(T\) so that \(K2^{- {T}}\sim \delta\). Then for \(n\ge T\), we have 
\begin{align}
   \Big|F(2^nx) - \E_{n+T}[F_n]  \Big|  \le K2^{- T }\le \delta/4. \notag
\end{align}
Assume \(N=mT+r_\ast, 1\le r_\ast \le T \).  Hence, 
\begin{align}
   \Big| \frac{1}{N}\sum_{n=1}^{mT} F_n(x)
  -\frac{1}{N}\sum_{n= 1}^{mT} \E_{n+ T }[F_n]  \Big|\le \frac{1}{4}\delta. \label{eqn:finite-block-conditional-approx}
\end{align}
It remains to control the conditional-expectation sum. We do this by grouping the indices according to their residue class modulo \(T\), which decomposes the sum into \(T\) martingales.

For \(1\le n\le mT\), write
\begin{align}
 n=(j-1)T+r,\qquad 1\le r\le T,\qquad 1\le j\le m. \notag
\end{align}

For each fixed residue class \(r\), define
\begin{align}
 s_j^r=jT+r,\qquad 0\le j\le m, \notag
\end{align}
and
\begin{align}
 d_j^r=\E_{s_j^r}\big[F_{s_{j-1}^r}\big],
 \qquad 1\le j\le m. \notag
\end{align}
Then set
\begin{align}
    X^r_0=0,\qquad
    X_k^r=\sum_{j=1}^{k}d_j^r,\qquad 1\le k\le m. \notag
\end{align}
By the first statement of Lemma~\ref{lem:subsequence-martingale-transform}, for each fixed \(r\), \((X_k^r)_{k=0}^{m}\) is a martingale with respect to the sampled filtration \((\mathcal F_{s_j^r})_{j\ge0}\). Indeed, \(d_j^r\) is \(\mathcal F_{s_j^r}\)-measurable, and
\begin{align}
    \E_{s_{j-1}^r}[d_j^r]
    =
    \E_{s_{j-1}^r}\Big[\E_{s_j^r}\big[F_{s_{j-1}^r}\big]\Big]
    =
    \E_{s_{j-1}^r}\big[F_{s_{j-1}^r}\big]
    =
    \E[F]
    =
    0. \notag
\end{align}
Here we used the tower property \eqref{eqn:tower-martingale} and \eqref{eqn:doubling-conditional-expectation}. Moreover, since \(|F|\le1\), the increments satisfy \( |X^r_{k+1}-X_k^r|\le 1.\)

Applying Azuma's inequality \eqref{eqn:azuma} to the martingale \(X_k^r\), with
\[
    \delta'=\frac{1}{2}\delta\sqrt m,
\]
gives
\begin{align}
\P\Big(\big|\sum_{j=1}^{m} d_j^r \big|>  \frac{\delta m}{2}  \Big)
  &<2\exp \Big(-\frac{\delta^2}{8}m\Big) 
\le 2\exp \Big(- \frac{\delta^2}{10T} N\Big),
\label{eqn:azuma-residue-class}
\end{align}
where in the last step we used \(N=mT+r_\ast\), with \(1\le r_\ast\le T\), and took \(N\ge 5T\).

Therefore,
\begin{align}
\P\Big(\Big|\sum_{n= 1}^{mT} \E_{n+ T }[F_n]   \Big|>  \frac{1}{2}\delta  N   \Big)
= \P\Big(\big|\sum_{r=1}^{T }\sum_{j=1}^{m}d_j^r\big|> \frac{1}{2}\delta N  \Big) 
\le& \sum_{r=1}^{T }\P\Big(\big|\sum_{j=1}^{m}d_j^r\big|> \frac{\delta N }{2T}  \Big) \notag \\ 
<&2T \exp \Big(-\frac{\delta^2}{10T} N\Big) \notag \\
<& \exp \Big(-\frac{\delta^2}{20\log(K/\delta)} N\Big),
\label{eqn:azuma-sum-residue-classes}
\end{align}
where we used \(T\sim \log(K/\delta)\) and absorbed the prefactor \(2T\) into the exponential by taking \(N\) sufficiently large.

It remains to account for the tail terms from \(mT+1\) to \(N\). Since this tail has length at most \(T\) and \(|F|\le 1\), choosing \(N>4T/\delta\) gives
\begin{align}
   \bigg| \frac{1}{N}\sum_{n=mT+1}^{N} F_n(x)
\bigg|\le \frac{T}{N}< \frac{1}{4}\delta .
\label{eqn:azuma-tail-block}
\end{align}

Combining \eqref{eqn:azuma-sum-residue-classes}, \eqref{eqn:azuma-tail-block}, and \eqref{eqn:finite-block-conditional-approx}, we obtain
\begin{align}
\P\Big(\Big|\frac{1}{N}\sum_{n=1}^{N} F_n(x)\Big|>  \delta \Big)
 \le
\P\Big(\frac{1}{N}\Big|\sum_{n=1}^{mT} \E_{n+ T }[F_n]  \Big|>  \frac{1}{2}\delta     \Big)  
 &<
\exp \Big(-\frac{\delta^2}{20\log(K/\delta)} N\Big).
\notag
\end{align}
This proves \eqref{eqn:large-deviation-dyadic-average}.
\end{proof}

\subsection{Deviation of \eqref{eqn:rho-0}}
We first estimate the fluctuation of the leading quadratic term \eqref{eqn:rho-0}. Recall that \(Q_n=Q(2^n x)\in C^1(\T)\) satisfies \eqref{eqn:Qn-bound} and \eqref{eqn:Qn-prime}. Choose \(C>0\) large enough so that, for \(F=Q^2/(CE)\), one has \(|F|\le 1\) and \(|F'|\le 2\).

Applying \eqref{eqn:large-deviation-dyadic-average} to this function \(F\), with \(\delta=8c_a/(10C)\), where \(c_a>0\) is as in \eqref{eqn:ca-ass}, gives, after reindexing,
\begin{align}
 \P\Big[x\in \T\,    : \Big|\frac{1}{ N} \sum_{n=0}^{N-1}   \frac{Q_n^2}{CE}-\frac{1}{CE}\E[Q^2]\Big|>\frac{8c_a}{1000C}  \Big]
 < e^{-c_1N}, \notag 
\end{align}
for all \(N\ge N_0\), with some constant \(c_1>0\). Here \(N_0\) and \(c_1\) are independent of \(E\). Hence
\begin{align}
    \P\Big[x\in \T\,    : \Big|\frac{1}{8N} \sum_{n=0}^{N-1}   Q_n^2-\frac{1}{8}\E[Q^2]\Big|>\frac{c_a}{1000}E \Big] 
 <  e^{-c_1N}.
 \label{eqn:ldt-rho0}
\end{align}
This is the desired deviation estimate for the term \eqref{eqn:rho-0}.

 \subsection{Deviation estimates of \eqref{eqn:rho-2} and \eqref{eqn:rho-3}}
 
In view of \eqref{eqn:zeta-mu-def}, the sums in \eqref{eqn:rho-2} and \eqref{eqn:rho-3} can be controlled by the corresponding complex sums involving \(Q_n^2\zeta_n\) and \(Q_n^2\zeta_n^2\). The goal of this subsection is to prove that, for \(E<E_0\), \(N\ge N_0(E)\), and both \(k=1,2\),
\begin{align}
    \P\Big[x\in \T\,  \, : \Big| \frac{1}{N}\sum_{n=0}^{N-1}  Q_n^2 \zeta_n^k\Big|>\frac{c_a}{300}E \Big]
    < 2\exp\Big(-\frac{c_2}{|\log E|}N\Big),
    \label{eqn:ldt-Q2-zeta-k-goal}
\end{align}
for some constant \(c_2>0\) independent of \(E\) and \(N\). Indeed, by \eqref{eqn:zeta-mu-def},
\[
    \frac{1}{N}\sum_{n=0}^{N-1} Q_n^2 \cos\bigl(2k(\chi_n+\eta)\bigr)
    =
    \operatorname{Re}\Bigl( \mu^k \frac{1}{N}\sum_{n=0}^{N-1} Q_n^2\zeta_n^k\Bigr),
    \quad k=1,2.
\]
Since \(|\mu|=1\) and \(|\operatorname{Re} z|\le |z|\), \eqref{eqn:ldt-Q2-zeta-k-goal} implies
\begin{align}
    \P\Big[x\in \T\,  \, : \Big|
    \frac{1}{N}\sum_{n=0}^{N-1} Q_n^2 \cos\bigl(2k(\chi_n+\eta)\bigr)
    \Big|>\frac{c_a}{300}E \Big]
    < 2\exp\Big(-\frac{c_2}{|\log E|}N\Big),
    \quad k=1,2.
    \label{eqn:ldt-Q2-cos-k}
\end{align}
Thus \eqref{eqn:ldt-Q2-cos-k} gives the required large-deviation bounds for \eqref{eqn:rho-2} and \eqref{eqn:rho-3}.

It suffices to prove \eqref{eqn:ldt-Q2-zeta-k-goal} for \(k=1\); the case \(k=2\) is identical. We first subtract the average \(\E[Q^2]\) from \(Q_n^2\) and write
\begin{align} 
  \frac{1}{N}\sum_{n=0}^{N-1}  Q_n^2 \zeta_n =& \frac{1}{N}\sum_{n=0}^{N-1} ( Q_n^2-\E[Q^2]) \zeta_n+ \ \E[Q^2] \frac{1}{N}\sum_{n=0}^{N-1} \zeta_n .\notag 
\end{align}
Applying \eqref{eqn:sum-zeta} to the second sum on the right-hand side, and using \(\E[Q^2]\sim E\), we obtain, for \(N\gtrsim E^{-1}\),
\begin{align}
    \frac{1}{N}\sum_{n=0}^{N-1}  Q_n^2 \zeta_n =& \frac{1}{N}\sum_{n=0}^{N-1}   (Q_n^2-\E[Q^2]) \zeta_n -\frac{\E[Q^2]e^{-i\eta}}{4\sin \eta}\frac{1}{N}\sum_{n=0}^{N-1}Q_n(\mu \zeta_n - 1)^2   + {\mathcal O}(E^{3/2})  \label{eqn:523}
\end{align}

Next, choose \(T\sim \log(E^{-1})\). Using \eqref{eqn:zeta-it-diff}, we replace \(\zeta_n\) by the delayed phase \(\mu^T\zeta_{n-T}\) in both sums. This gives
\begin{align}
    \frac{1}{N}\sum_{n=0}^{N-1}  Q_n^2 \zeta_n =& \frac{1}{N}\sum_{n=T}^{N-1} ( Q_n^2-\E[Q^2]) \zeta_{n-T}\label{eqn:521}\\
    &-\frac{\E[Q^2]e^{-i\eta}}{4\sin \eta}\frac{1}{N}\sum_{n=T}^{N-1}Q_n\big(\mu^{T+1} \zeta_{n-T} - 1\big)^2  \label{eqn:522} \\
    &+ {\mathcal O}(E^{3/2}|\log E|).  
\end{align}

We will estimate the two sums in \eqref{eqn:521} and \eqref{eqn:522} by rewriting them as martingale sums and applying Azuma's inequality \eqref{eqn:azuma}. To do this, we first replace the relevant factors by dyadic conditional expectations. The following lemma controls the resulting error.
\begin{lemma}\label{lem:ExpError}
 There exists \(C>0\) such that  for any \(r,n\ge 0\), 
    \begin{align}\label{eqn:zeta-constant}
      \left|\zeta_n-\mathbb{E}_r \big[\zeta_n\big]\right|\leq C\sqrt E 2^{n-r}
    \end{align}
\end{lemma}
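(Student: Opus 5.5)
The proof is a mean value theorem estimate built on the derivative bound of Lemma~\ref{lem:zetan-prime}. By \eqref{eqn:conditional-expectation-dyadic-sum}, for \(x\in I\in\mathcal I_r\) we have
\[
\zeta_n(x)-\mathbb E_r[\zeta_n](x)=\frac{1}{|I|}\int_I\big(\zeta_n(x)-\zeta_n(t)\big)\,dt .
\]
Since \(\zeta_n\in C^1(\T)\), as shown in the proof of Lemma~\ref{lem:zetan-prime}, for \(x,t\in I\) we get
\[
|\zeta_n(x)-\zeta_n(t)|\le |x-t|\,\sup|\zeta_n'|\le 2^{-r}\cdot C\sqrt E\,2^n .
\]
Here we apply the mean value theorem to the real and imaginary parts separately, or equivalently use \(|\zeta_n(x)-\zeta_n(t)|\le\int_t^x|\zeta_n'|\). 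Averaging over \(t\in I\) then gives \(|\zeta_n-\mathbb E_r[\zeta_n]|\le C\sqrt E\,2^{n-r}\) at every point, with \(C\) the constant of \eqref{eqn:zeta-prime-bound}.

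This argument is only informative when \(r\ge n\). When \(r<n\) the right-hand side exceeds \(C\sqrt E\), and we must still check that the bound holds. Since \(|\zeta_n|=1\), we always have \(|\zeta_n-\mathbb E_r[\zeta_n]|\le 2\). So the inequality is trivial whenever \(C\sqrt E\,2^{n-r}\ge 2\), and otherwise the mean value argument above applies verbatim for any \(r\), since it never used \(r\ge n\). In fact that argument holds for all \(r,n\ge0\) directly, so no case split is really needed.

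The one point needing care is the range of \(E\). Lemma~\ref{lem:zetan-prime} is stated for \(0<E<E_0\), so the statement should be read in that regime, which is the standing assumption of the section. The constant \(C\) is the one from \eqref{eqn:zeta-prime-bound} and is independent of \(n\), \(r\) and \(E\). One should also keep in mind that \(\zeta_n\) is viewed as a function on \([0,1)\). Each dyadic interval \(I_{r,k}\) is a genuine interval of length \(2^{-r}\) and does not wrap around the torus, so the bound \(|x-t|\le 2^{-r}\) is valid. I expect no real obstacle here; the substantive input is already contained in Lemma~\ref{lem:zetan-prime}.
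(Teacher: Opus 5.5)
Your proposal is correct and is essentially the paper's proof: for $x$ in a dyadic cell $I$ of length $2^{-r}$, you write $\zeta_n(x)-\mathbb{E}_r[\zeta_n](x)$ as the cell average of $\zeta_n(x)-\zeta_n(t)$ and bound it by the mean value theorem together with \eqref{eqn:zeta-prime-bound}, which gives $C\sqrt E\,2^{n}|I|=C\sqrt E\,2^{n-r}$. Your side remarks are accurate but add nothing the argument needs: no case split is required, the estimate is read in the standing regime $0<E<E_0$, and dyadic cells do not wrap around the torus.
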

\begin{proof}
   
By Lemma~\ref{lem:zetan-prime}, \(\zeta_n\) is Lipschitz with
\[
       \|\zeta_n'\|_\infty\le C\sqrt E\,2^n .
\]
It follows from the definition of \(\E_r[\cdot]\) in \eqref{eqn:conditional-expectation-dyadic-sum} and the mean value theorem that, for a dyadic interval \(I\) of length \(2^{-r}\) and any \(x\in I\),
\begin{align} 
  \left|\zeta_n(x)-\frac{1}{|I|}\int_I  \zeta_n(t) \,dt \right|
 \le
\frac{1}{|I|}\int_I |\zeta_n(x)-\zeta_n(t)|\,dt   
 \le
  C\sqrt E\,2^n |I|
= C\sqrt E\,2^{n-r}. \notag 
    \end{align}
Since \(\mathbb E_r[\zeta_n]\) is precisely the average of \(\zeta_n\) over the dyadic interval containing \(x\), this proves \eqref{eqn:zeta-constant}.

\end{proof}

\noindent \(\bullet\) {\bf Estimate of \eqref{eqn:521}.}  Let \(F(x)=Q^2(x)-\E[Q^2]\) and \(F_n=F(2^n x)\). Then \(\E[F_n]=0\), \(|F_n|\le CE\), and \(|F_n'|\le CE 2^n\) for some constant \(C>0\). By \eqref{eqn:Er-appro-F}, if \(2^{-T/2}\sim E\), then
\begin{align} \notag
  |F_n-\E_{n+\frac{T}{2}}[F_n]|\le CE  2^{-\frac{T}{2}}={\mathcal O}(E^2).
\end{align}
Similarly, \eqref{eqn:zeta-constant} gives
\begin{align}\notag  
  |\zeta_{n-T}-\E_{n-\frac{T}{2}}[\zeta_{n-T}]|\le C\sqrt E  2^{-\frac{T}{2}}={\mathcal O}(E^{3/2}).
\end{align}
Therefore, \eqref{eqn:521} can be written as
\begin{align}
    \frac{1}{N}\sum_{n=T}^{N-1} ( Q_n^2-\E[Q^2]) \zeta_{n-T}
    =
    \frac{1}{N}\sum_{n=T}^{N-1} \E_{n+\frac{T}{2}}[F_n]\,  \E_{n-\frac{T}{2}}[\zeta_{n-T}]
    + {\mathcal O}(E^{3/2}) .
    \label{eqn:rho2-first-replacement}
\end{align}

The strategy is similar to the proof of Lemma~\ref{lem:large-deviation-dyadic-average}. As in the treatment of the terminal block in \eqref{eqn:azuma-tail-block}, the contribution of the leftover indices is of lower order once \(N\) is chosen sufficiently large. Thus it suffices to consider the case \(N=N_0T\). We group the indices according to their residue class modulo \(T\), so that the sum in \eqref{eqn:rho2-first-replacement} is decomposed into \(T\) sums. For each fixed residue class, the corresponding sum will be realized as a martingale with respect to a sampled dyadic filtration, and Azuma's inequality will then be applied to each martingale.

More precisely, choose \(T\) so that \(T/2\) is an integer and \(2^{-T/2}\sim E\). For \(T\le n\le N_0T\), write
\begin{align}\notag
 n=mT+r,\qquad 0\le r\le T-1,\qquad 1\le m\le N_0.
\end{align}
For each fixed residue class \(r\), consider the sampled dyadic filtration along the subsequence
\begin{align}
 s_m^r=n+\frac{T}{2}=\Big(m+\frac{1}{2}\Big)T+r. \notag
\end{align}
Define
\begin{align}
    d_m^r
    =\E_{n+\frac{T}{2}}[F_n]\,\E_{n-\frac{T}{2}}[\zeta_{n-T}]
    =\E_{s_m^r}[F_n]\,\E_{s_{m-1}^r}[\zeta_{n-T}],
    \qquad 1\le m\le N_0, \label{eqn:541}
\end{align}
and set
\begin{align}
    X^r_0=0,\qquad
    X^r_k=\sum_{m=1}^k d_m^r,\qquad 1\le k\le N_0. \notag
\end{align}
Again, by the first statement of Lemma~\ref{lem:subsequence-martingale-transform}, it suffices to verify the conditional mean-zero property. Clearly, \(d_m^r\) is \(\mathcal F_{s_m^r}\)-measurable, and
\begin{align}
    \E_{s_{m-1}^r}[d_m^r]
    =
    \E_{s_{m-1}^r}[F_n]\,
    \E_{s_{m-1}^r}[\zeta_{n-T}]
    =
    0. \notag 
\end{align}
Here we used that \(\E_{s_{m-1}^r}[\zeta_{n-T}]\) is \(\mathcal F_{s_{m-1}^r}\)-measurable, the tower property \eqref{eqn:tower-martingale}, and \eqref{eqn:doubling-conditional-expectation}.

Moreover, the martingale increments are uniformly bounded:
\[
    |X^r_{k+1}-X_k^r|=|d_{k+1}^r|\le CE,
\]
since \(|F_n|\le CE\) and \(|\zeta_n|=1\). Applying Azuma's inequality \eqref{eqn:azuma} to the rescaled martingale \(X_k^r/(CE)\), with
\begin{align}
  \delta =\frac{c_a}{800C}  \sqrt {N_0},\notag
\end{align}
where \(c_a\) is as in \eqref{eqn:ca-ass}, gives
\begin{align}
\P\Big(\big|\sum_{m=1}^{N_0} \frac{d_m^r}{CE} \big|>  \delta  \sqrt {N_0}  \Big)
&=
\P\Big(\big|\sum_{m=1}^{N_0}d_m^r\big|>\frac{ c_aE N }{800T}\Big) \notag\\
&<2\exp \Big(-\frac{1}{2}\delta ^2\Big)
=2\exp \Big(-\frac{c}{ T} N\Big), \notag
\end{align}
for some constant \(c>0\) independent of \(E\) and \(N\).

Summing over the \(T\) residue classes, and using the fact that the contribution of the leftover indices is lower order once \(N\) is sufficiently large, we obtain
\begin{align}
\P\Big(\Big|\sum_{n=T}^{N-1}\E_{n+\frac{T}{2}}[F_n]\,
\E_{n-\frac{T}{2}}[\zeta_{n-T}] \Big|>  \frac{c_aE}{800}   N   \Big)
&\le   \sum_{r=0}^{T-1}\P\Big(\big|\sum_{m=1}^{N_0}d_m^r\big|> \frac{c_aE N }{800T}  \Big) \notag\\
&< 2T \exp \Big(-\frac{c}{ T} N\Big).
\label{eqn:547}
\end{align}

Plugging \eqref{eqn:547} into \eqref{eqn:rho2-first-replacement}, and using \(T\sim |\log E|\), gives that for \(E<E_0\) and \(N\ge N_0(E)\),
\begin{align}\label{eqn:ldt2}
    \P\left(x\in \T\,  \, : \Big|\frac{1}{N}\sum_{n=T}^{N-1} \left( Q_n^2-\E[Q^2]\right) \zeta_{n-T}\Big|>\frac{c_a}{900}E \right)
    < \exp \Big(-\frac{c}{ |\log E|} N\Big).
\end{align}
Together with the replacement formula \eqref{eqn:rho2-first-replacement}, this gives the desired deviation estimate for \eqref{eqn:521}.

\noindent \(\bullet\) {\bf Estimate of \eqref{eqn:522}.} 
The treatment of \eqref{eqn:522} is similar. We first replace \(Q_n\) and \(\big(\mu^{T+1}\zeta_{n-T}-1\big)^2\) by suitable dyadic conditional expectations. By \eqref{eqn:Er-appro-F}, we have
\begin{align}
  |Q_n-\E_{n+\frac{T}{2}}[Q_n]|
  \le C \sqrt E\, 2^{-\frac{T}{2}}
  ={\mathcal O}(E^{3/2}),  \label{eqn:549}
\end{align}
where \(2^{-T/2}\sim E\). Also, using \eqref{eqn:zeta-constant} for both \(\zeta_{n-T}\) and \(\zeta_{n-T}^2\), we obtain
\begin{align}
 \Big|\big(\mu^{T+1} \zeta_{n-T} - 1\big)^2
&- \E_{n-\frac{T}{2}}\big[\big(\mu^{T+1} \zeta_{n-T} - 1\big)^2\big]\Big| \notag\\
&\le C |\zeta_{n-T}-\E_{n-\frac{T}{2}}[\zeta_{n-T}]|
   +C |\zeta^2_{n-T}-\E_{n-\frac{T}{2}}[\zeta^2_{n-T}]| \notag\\
&\le C\sqrt E\,2^{-\frac{T}{2}}
={\mathcal O}(E^{3/2}).    \label{eqn:552}
\end{align}
Thus, in the sum appearing in \eqref{eqn:522}, we may insert these conditional expectations and get
\begin{align}
 \frac{1}{N}\sum_{n=T}^{N-1}Q_n\big(\mu^{T+1} \zeta_{n-T} - 1\big)^2
=  \frac{1}{N}\sum_{n=T}^{N-1}
\E_{n+\frac{T}{2}}[Q_n]\,
\E_{n-\frac{T}{2}}\big[\big(\mu^{T+1} \zeta_{n-T} - 1\big)^2\big]  +  {\mathcal O}(E^{3/2}). \label{eqn:553}
\end{align}

By \eqref{eqn:eta} and the definition of \(Q\) in \eqref{eqn:Q-samp}, there exists a constant \(C_1>0\) such that
\begin{align}\label{eqn:Q2-prefactor-bound}
    \left|\frac{\E[Q^2]e^{-i\eta}}{4\sin \eta}\right|\le C_1\sqrt E.
\end{align}

The remaining argument again rewrites the main sum in \eqref{eqn:553} as a sum of \(T\) martingales and applies Azuma's inequality, following the same structure as in \eqref{eqn:541}--\eqref{eqn:547}. The difference is that here the martingale increments are of order \(\sqrt E\), rather than \(E\):
\begin{align}
    |X^r_{k+1}-X^r_k|
    \le
    \Big|\E_{n+\frac{T}{2}}[Q_n]\,
    \E_{n-\frac{T}{2}}\big[\big(\mu^{T+1}\zeta_{n-T}-1\big)^2\big]\Big|
    \le C\sqrt E. \notag 
\end{align}
Thus we apply Azuma's inequality \eqref{eqn:azuma} to the rescaled martingales \(X_k^r/(C\sqrt E)\), with
\begin{align}\label{eqn:azuma-delta-rho2-second}
    \delta=\frac{c_a}{1000CC_1}\sqrt{N_0},
\end{align}
where \(c_a>0\) is as in \eqref{eqn:ca-ass}. As in \eqref{eqn:547}, applying Azuma to each residue class and then summing over the \(T\) classes gives
\begin{align}
 \P\Bigg(
 \Big|\frac{1}{C\sqrt E}
 \sum_{n=T}^{N-1}Q_n\big(\mu^{T+1} \zeta_{n-T} - 1\big)^2\Big|
 > \delta\sqrt{N_0}\,T
 \Bigg)
 \le 2T\exp\left(-\frac{1}{2}\delta^2\right).
\end{align}
With \(\delta\) chosen as in \eqref{eqn:azuma-delta-rho2-second} and \(N_0\sim N/T\), this yields
\begin{align}
 \P\Big(\Big|\sum_{n=T}^{N-1}Q_n\big(\mu^{T+1} \zeta_{n-T} - 1\big)^2\Big|
 > \frac{c_a}{1000C_1} \sqrt E\, N \Big)
 \le 2T\exp\left(-\frac{c}{T}  N \right).
 \label{eqn:556}
\end{align}

Using \eqref{eqn:556} together with the prefactor estimate \eqref{eqn:Q2-prefactor-bound} and shrinking $c$ if necessary, we obtain that, for \(E<E_0\) and \(N\ge N_0(E)\),
\begin{align} 
    \P\Big[x\in \T\,  \, : \Big|
    \frac{\E[Q^2]e^{-i\eta}}{4\sin \eta}
    \frac{1}{N}\sum_{n=T}^{N-1}Q_n\big(\mu^{T+1} \zeta_{n-T} - 1\big)^2
    \Big|>\frac{c_a}{900}E \Big]
    < \exp\Big(-\frac{c}{|\log E|}N\Big). \label{eqn:ldt3}
\end{align}

Together with \eqref{eqn:ldt2}, and through the decomposition \eqref{eqn:523}, this proves \eqref{eqn:ldt-Q2-zeta-k-goal} for \(k=1\). The proof for \(k=2\) is identical, replacing \(\zeta_n\) by \(\zeta_n^2\). This completes the deviation estimates for \eqref{eqn:rho-2} and \eqref{eqn:rho-3}.


\subsection{Deviation of \eqref{eqn:rho-1}}
It remains to estimate the term in \eqref{eqn:rho-1}, which is controlled through the sum \(\sum Q_n\zeta_n\). This is the most delicate contribution in the expansion \eqref{eqn:rho-0}--\eqref{eqn:rho-4}. Unlike the terms treated above, its leading contribution is not obtained from a direct large-deviation estimate; instead, it must be extracted by iterating the phase variable once more and separating the resulting correlation terms.

Recall that \(c_a=\kappa\gamma(0)/8>0\) by the standing assumption \eqref{eqn:ca-ass}. The goal of this subsection is to prove that, for \(E<E_0\) and \(N\ge N_0(E)\),
\begin{align}
     \P\Bigg[
     x\in\T:
     \Big|
     \frac{\mu}{N}\sum_{n=T}^{N-1}\zeta_nQ_n
     -
     i\frac{\kappa E}{4}
     \left(\gamma(0)-\E[q^2]\right)
     \Big|
     >\frac{c_a}{200}E
     \Bigg]
     <
     \exp\left(-\frac{cE}{|\log E|^3}N\right),
     \label{eqn:ldt-rho1-goal}
\end{align}
for some constant \(c>0\) independent of \(E\) and \(N\).

Once the deviation estimate in terms of \(\zeta_n\) is established, the sine term in \eqref{eqn:rho-1} is recovered by taking imaginary parts, as in \eqref{eqn:lyp-main1}. Indeed, recall that 
\[
    \frac{1}{N}\sum_{n=T}^{N-1}Q_n\sin 2(\chi_n+\eta)
    =
    \operatorname{Im}\left(
    \mu\,\frac{1}{N}\sum_{n=T}^{N-1}Q_n\zeta_n
    \right),
\]
and
\[
    \operatorname{Im}\left(
      i\frac{\kappa E}{4}
    \left(\gamma(0)-\E[q^2]\right)
    \right)
    =
    \frac{\kappa E}{4}\left(\gamma(0)-\E[q^2]\right).
\]
Therefore, \eqref{eqn:ldt-rho1-goal} implies the corresponding deviation estimate for the sine average,
\begin{align}
     \P\Bigg[
     x\in\T:
     \Big|
      \frac{1}{N}\sum_{n=T}^{N-1}Q_n\sin 2(\chi_n+\eta)
     -
     \frac{\kappa E}{4}\left(\gamma(0)-\E[q^2]\right)
     \Big|
     >\frac{c_a}{200}E
     \Bigg]
     <
     \exp\left(-\frac{cE}{|\log E|^3}N\right).
     \label{eqn:ldt-rho1-sin}
\end{align}

In order to prove \eqref{eqn:ldt-rho1-goal},
as before, choose \(T\) so that \(T/2\) is an integer and \(2^{-T/2}\sim E\), so in particular \(T\sim |\log E|\). 
Recall that the iteration formulas \eqref{eqn:zetan-T-iter} and \eqref{eqn:zetaShiftT} give
\begin{align}\notag 
    \zeta_n=\mu^T\zeta_{n-T}+\frac{i }{2}\sum_{s=1}^T\mu^{s-1}Q_{n-s} (\mu^{T+1-s}\zeta_{n-T}-1)^2+{\mathcal O}(T^2E). 
\end{align}
Multiplying by \(Q_n\) and averaging over \(T\le n\le N-1\), we obtain the decomposition
 \begin{align}
    \frac{1}{N}\sum_{n=T}^{N-1} \zeta_nQ_n
    =& \mu^T\frac{1}{N}\sum_{n=T}^{N-1} \zeta_{n-T}Q_n \label{eqn:562}\\
    &+\frac{i }{2}\frac{1}{N}\sum_{n=T}^{N-1}\sum_{s=1}^T\mu^{s-1} Q_{n-s}Q_n \label{eqn:563}\\
    &+\frac{i }{2}\frac{1}{N}\sum_{n=T}^{N-1}\sum_{s=1}^T\mu^{s-1} Q_{n-s}Q_n(\mu^{T+1-s}\zeta_{n-T})^2 \label{eqn:564}\\
    &-i \mu^{T}\frac{1}{N}\sum_{n=T}^{N-1}\sum_{s=1}^T Q_{n-s}Q_n\zeta_{n-T}  \label{eqn:565}\\
    &+{\mathcal O}(T^2E^{\frac{3}{2}}).
\end{align}

The key idea is the same as in the estimates of \eqref{eqn:521} and \eqref{eqn:522}: after replacing the relevant factors by dyadic conditional expectations, we convert the resulting sums into martingale sums and apply Azuma's inequality. Throughout this part, we use the same sampled dyadic filtration as above. More precisely, choose \(T\) so that \(T/2\) is an integer. For \(T\le n\le N_0T\), write
\begin{align}\label{eqn:nmTr}
 n=mT+r,\qquad 0\le r\le T-1,\qquad 1\le m\le N_0.
\end{align}
For each fixed residue class \(r\), set
\begin{align}\label{eqn:smr}
 s_m^r=n+\frac{T}{2}=\Big(m+\frac{1}{2}\Big)T+r.
\end{align}

\noindent \(\bullet\) {\bf Estimate of \eqref{eqn:562}.} 
We use \eqref{eqn:549} and \eqref{eqn:552} to replace \(Q_n\) and \(\zeta_{n-T}\) by their conditional expectations \(\E_{n+\frac{T}{2}}[Q_n]\) and \(\E_{n-\frac{T}{2}}[\zeta_{n-T}]\), respectively. This gives
\begin{align}
  \frac{1}{N}\sum_{n=T}^{N-1} \zeta_{n-T}Q_n
  =
  \frac{1}{N}\sum_{n=T}^{N-1}
  \E_{n+\frac{T}{2}}[Q_n]\,
  \E_{n-\frac{T}{2}}\big[ \zeta_{n-T}\big]
  +{\mathcal O}(E^{3/2}). \notag
\end{align}

This sum has the same martingale structure as the sums treated in \eqref{eqn:522} and \eqref{eqn:rho2-first-replacement}. The important difference is that the present term does not carry the \({\mathcal O}(\sqrt E)\) prefactor from \eqref{eqn:Q2-prefactor-bound}.  Therefore, to obtain a deviation estimate at the Lyapunov scale \(c_aE\), we choose a smaller Azuma parameter than in \eqref{eqn:azuma-delta-rho2-second}, with an additional factor of \(\sqrt E\). Namely, we take
\begin{align}  
    \delta=c\sqrt{\frac{E N}{T}}, \notag
\end{align}
where \(c>0\) is chosen sufficiently small depending only on the constants in the preceding bounds and on \(c_a\). Applying Azuma's inequality \eqref{eqn:azuma} with this choice of \(\delta\), then summing over the \(T\) residue classes and using \(|\mu^T|=1\), gives that for \(E<E_0\) and \(N\ge N_0(E)\),
\begin{align} 
    \P\Big[x\in \T\,  \, : \Big|\mu^T\frac{1}{N}\sum_{n=T}^{N-1}Q_n  \zeta_{n-T} \Big|>\frac{c_a}{1000 }  E \Big]
    \le \exp\left(-\frac{c E}{|\log E|} N\right).
    \label{eqn:ldt6}
\end{align}

\noindent \(\bullet\) {\bf Conditional expectation of the term \(Q_nQ_{n-s}\) in \eqref{eqn:563}--\eqref{eqn:565}.} 

The remaining terms \eqref{eqn:563}--\eqref{eqn:565} all contain the product \(Q_nQ_{n-s}\). Since Lebesgue measure is invariant under the doubling map, we have
\[
    \E[Q_nQ_{n-s}]=\E[Q_sQ_0],
    \qquad n\ge s.
\]
We denote
\begin{align}
    R_s:=\E[Q_sQ_0], \qquad s\ge0, \notag
\end{align}
and define the centered product
\begin{align}\label{eqn:Fns}
    F_{n,s}(x):=Q_n(x)Q_{n-s}(x)-R_s,\qquad n\ge s.
\end{align}
Then, for some \(C>0\), all \(E<E_0\), and all \(n\ge s\ge 0\),
\begin{align} \label{eqn:6.58}
 |R_s|\le CE,\qquad   \E[F_{n,s}]=0,\qquad  |F_{n,s}|\le CE. 
\end{align}
The next lemma allows us to replace \(F_{n,s}\) by a dyadic martingale difference, up to an error of order \(E^2\).
\begin{lemma}\label{lem:Fns-conditional-error}
For \(1\le s\le T\le n\), with \(2^{-T/2}\sim E\), one has
\begin{align}
    \big|F_{n,s}-\mathbb{E}_{n+\frac{T}{2}}\big[F_{n,s}\big]\big|
    \le {\mathcal O}(E^2), \label{eqn:577}
\end{align}
and
\begin{align}
   \Big| \mathbb{E}_{n-\frac{T}{2}}\big[F_{n,s}\big]\Big|
   \le  {\mathcal O}(E^2). \label{eqn:591}
\end{align}
\end{lemma}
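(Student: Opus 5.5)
For \eqref{eqn:577} the plan is to bound the derivative of \(F_{n,s}\) and use the mean value theorem on dyadic cells, as in \eqref{eqn:Er-appro-F}. For \eqref{eqn:591} we will exploit that \(Q_{n-s}\) is nearly \(\mathcal F_{n-T/2}\)-measurable while \(Q_n\) has conditional mean zero at that level, by \eqref{eqn:doubling-conditional-expectation}.

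\emph{Step 1: proof of \eqref{eqn:577}.} Since \(R_s\) is a constant, \(F_{n,s}-\E_{n+\frac T2}[F_{n,s}]=G-\E_{n+\frac T2}[G]\) with \(G=Q_nQ_{n-s}\). By \eqref{eqn:Qn-bound} and \eqref{eqn:Qn-prime},
\[
|G'|\le |Q_n'||Q_{n-s}|+|Q_n||Q_{n-s}'|\le C\sqrt E\,2^n\cdot C\sqrt E+C\sqrt E\cdot C\sqrt E\,2^{n-s}\le CE\,2^n .
\]
On a dyadic cell of length \(2^{-(n+T/2)}\), the mean value theorem then gives
\[
|G-\E_{n+\frac T2}[G]|\le CE\,2^{n}2^{-n-T/2}=CE\,2^{-T/2}={\mathcal O}(E^2),
\]
using \(2^{-T/2}\sim E\).

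\emph{Step 2: proof of \eqref{eqn:591}.} Set \(r=n-\frac T2\). Since \(s\le T\), we have \(n-s\le n-\frac T2+\frac T2\), so \(Q_{n-s}\) is not \(\mathcal F_r\)-measurable in general. Its oscillation on \(\mathcal F_r\)-cells is only controlled when \(s\ge T/2\). We therefore split into two cases.

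\textbf{Case \(s\ge T/2\).} Write \(Q_{n-s}=\E_r[Q_{n-s}]+e\). By the mean value theorem, \(|e|\le C\sqrt E\,2^{n-s-r}=C\sqrt E\,2^{T/2-s}\le C\sqrt E\). This is too weak when \(s\) is close to \(T/2\). Instead we use the exponential mixing Lemma~\ref{lem:doubling-mixing} in its localized form on each dyadic cell. On a cell \(I\) of length \(2^{-r}\), rescale by \(y=2^r x\). Then \(Q_n\) becomes \(Q(2^{T/2}y)\) with mean zero, and \(Q_{n-s}\) becomes a function of \(y\) with derivative at most \(C\sqrt E\,2^{T/2-s}\). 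The mixing estimate gives
\[
\Big|\frac1{|I|}\int_I Q_nQ_{n-s}\Big|\le 2^{-T/2}\,\|Q\|_{L^1}\,C\sqrt E\,2^{T/2-s}\le CE\,2^{-s}.
\]

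\textbf{Case \(s<T/2\).} Here \(n-s>r\). Conjugating by the doubling map, \(Q_nQ_{n-s}=g(2^{n-s}x)\) with \(g=Q(2^s\cdot)\,Q\). By \eqref{eqn:doubling-conditional-expectation}, \(\E_r[Q_nQ_{n-s}]=\E[g]=R_s\), so \(\E_r[F_{n,s}]=0\) exactly.

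\emph{Main obstacle.} Combining the two cases, \eqref{eqn:591} reduces to bounding \(|\E_r[Q_nQ_{n-s}]-R_s|\) for \(s\ge T/2\). The obstacle is that the mixing bound \(CE\,2^{-s}\le CE\,2^{-T/2}={\mathcal O}(E^2)\) controls only \(\E_r[Q_nQ_{n-s}]\), not the difference. For the difference it suffices to note \(|R_s|\le CE\,2^{-s}={\mathcal O}(E^2)\), again by Lemma~\ref{lem:doubling-mixing}, as in \eqref{eqn:424tail}. The one point to verify carefully is the rescaled version of the mixing lemma on a dyadic cell, namely \eqref{eqn:doubling-mixing-shifted-zero} applied with a cell-localized Lipschitz factor. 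This should follow by the same change of variables used for \(\E[Q_nQ_{n-s}]=\E[Q_sQ_0]\).
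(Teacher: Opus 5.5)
Your proposal is correct and follows essentially the same route as the paper. It uses the same derivative bound $|(Q_nQ_{n-s})'|\le CE\,2^n$ for \eqref{eqn:577}. For \eqref{eqn:591} it uses the same split: for $s\le T/2$, $\E_{n-\frac T2}[F_{n,s}]=0$ exactly by invariance; for $s\ge T/2$, $\E_{n-\frac T2}[Q_nQ_{n-s}]$ and $R_s$ are each shown to be $\mathcal O(E^2)$ by mixing.

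The cell-localized mixing estimate you flag is exactly Lemma~\ref{lem:interval-mixing-by-parts}. Its boundary term contributes at most $CE\,2^{-T/2}=\mathcal O(E^2)$. On dyadic cells it in fact vanishes, because $2^n$ times each cell endpoint is an integer, where the mean-zero periodic primitive of $Q$ is zero. So your step goes through.
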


\begin{proof}
First, for \(n\ge T\ge s\), we have
\[
    |(F_{n,s})'|
    =
    |(Q_nQ_{n-s})'|
    \le CE2^n.  
\]
Therefore, by \eqref{eqn:Er-appro-F},
\begin{align}
    \big|F_{n,s}-\mathbb{E}_{n+\frac{T}{2}}\big[F_{n,s}\big]\big|
    \le CE2^{-\frac{T}{2}}
    ={\mathcal O}(E^2), \notag 
\end{align}
which proves \eqref{eqn:577}.

If \(1\le s\le \frac{T}{2}\), then \(n>n-s\ge n-\frac{T}{2}\). Hence, by a change of variables on each dyadic interval of length \(2^{-(n-\frac{T}{2})}\),
\begin{align}
    \mathbb{E}_{n-\frac{T}{2}}\big[Q_nQ_{n-s}\big]
    =
    \E[Q_0Q_s]. \notag 
\end{align}
It follows that
\begin{align}
    \mathbb{E}_{n-\frac{T}{2}}\big[F_{n,s}\big]=0. \notag 
\end{align}

It remains to consider the range \(\frac{T}{2}\le s\le T\). The exponential mixing of the doubling map provides decay of correlations on intervals as well. More precisely, if \(f\in C^1(\T)\) has zero mean, then for every interval \(I\subset\T\), every \(n\ge0\), and every \(g\in C^1(\T)\),
\begin{align}\label{eqn:interval-mixing-by-parts-applied}
\frac{1}{|I|}   \Bigl| \int_I f(2^n x)g(x)\,dx \Bigr|
   \le \frac{2^{1-n}}{|I|}\|f\|_{L^1(\T)}\|g\|_\infty
      +2^{-n} \,\|f\|_{L^1(\T)}\|g'\|_\infty;
\end{align}
see Lemma~\ref{lem:interval-mixing-by-parts} in the appendix.

We apply \eqref{eqn:interval-mixing-by-parts-applied} on each dyadic interval \(I\) with
\[
    |I|=2^{-(n-\frac{T}{2})},\qquad f=Q,\qquad g(x)=Q(2^{n-s}x).
\]

Since \(\E(Q)=0\) and \(Q,Q'={\mathcal O}(\sqrt E)\) by \eqref{eqn:Q-samp}, we have
\[
    \|f\|_{L^1(\T)}\le C\sqrt E,\qquad
    \|g\|_\infty\le C\sqrt E,\qquad
    \|g'\|_\infty\le C\sqrt E\,2^{n-s},
\]
for some constant \(C>0\). Applying \eqref{eqn:interval-mixing-by-parts-applied}, we obtain
\begin{align}
     \frac{1}{|I|}\Bigl| \int_I Q(2^n x)Q(2^{n-s}x)\,dx \Bigr|
      \le C E\left(\frac{2^{-n}}{|I|}+2^{-n}2^{n-s}\right)  
     &\le C E\left(2^{-\frac{T}{2}}+2^{-s}\right) \notag\\
     &\le C E\,2^{-\frac{T}{2}}
     =
     {\mathcal O}(E^2), \notag 
\end{align}
where we used \(|I|=2^{-(n-\frac{T}{2})}\), \(s\ge T/2\), and \(2^{-T/2}\sim E\).

Meanwhile, since \(R_s\) is constant, Lemma~\ref{lem:doubling-mixing} gives
\begin{align}
   \Big| \mathbb{E}_{n-\frac{T}{2}} [R_s]\Big|
   =
   |R_s|
   =
   |\E[Q(2^s x)Q(x)]|
   \le CE2^{-s}
   \le CE2^{-\frac{T}{2}}
   =
   {\mathcal O}(E^2). \notag 
\end{align}
Therefore, for \(\frac{T}{2}\le s\le T\),
\begin{align}
     \Big| \mathbb{E}_{n-\frac{T}{2}} [F_{n,s}]\Big|
     &\le
     \Big| \mathbb{E}_{n-\frac{T}{2}} [Q_nQ_{n-s}]\Big|
     +
     \Big| \mathbb{E}_{n-\frac{T}{2}} [R_s]\Big|
     \le {\mathcal O}(E^2). \notag 
\end{align}

Combining the estimates for \(1\le s\le T/2\) and \(T/2\le s\le T\), we obtain \eqref{eqn:591} for all \(1\le s\le T\).
\end{proof}

\noindent \(\bullet\) {\bf Estimate of \eqref{eqn:563}.} 
We first split \(Q_nQ_{n-s}\) into its mean and centered parts. Recalling the definition of \(F_{n,s}\) in \eqref{eqn:Fns}, we write
\begin{align}
    \frac{1 }{2}\frac{1}{N}\sum_{n=T}^{N-1}\sum_{s=1}^T\mu^{s-1}  Q_{n-s}Q_n
     &=
    \frac{1 }{2}\sum_{s=1}^T\mu^{s-1} \frac{1}{N}\sum_{n=T}^{N-1}F_{n,s}   + \frac{1 }{2}\frac{1}{N}\sum_{n=T}^{N-1}\sum_{s=1}^T\mu^{s-1} R_s .
    \label{eqn:592}
\end{align}

Fix \(1\le s\le T\). The estimates \eqref{eqn:577} and \eqref{eqn:591} allow us to replace \(F_{n,s}\) by the martingale difference
\[
    \big(\E_{n+\frac{T}{2}}-\E_{n-\frac{T}{2}}\big)[F_{n,s}]
\]
up to an error of order \({\mathcal O}(E^2)\). Thus
\begin{align}
 \frac{1}{N}\sum_{n=T}^{N-1} F_{n,s}
 &=
 \frac{1}{N}\sum_{n=T}^{N-1}
 \big(\E_{n+\frac{T}{2}}-\E_{n-\frac{T}{2}}\big)[F_{n,s}]
 +{\mathcal O}(E^2) \notag\\
 &=
 \frac{1}{N}\sum_{r=0}^{T-1}\sum_{m=1}^{N_0}
 \big(\E_{s_m^r}-\E_{s_{m-1}^r}\big)[F_{n,s}]
 +{\mathcal O}(E^2), \label{eqn:6.63}
\end{align}
where \(s_m^r\) is given in \eqref{eqn:smr}, and the range of \(m\) is chosen so that \(n=mT+r\) runs over the relevant indices \(T\le n\le N-1\), with \(N_0\sim N/T\) increments.

For fixed \(s\) and fixed residue class \(r\), the interior sum over \(m\) is a martingale by the second statement of Lemma~\ref{lem:subsequence-martingale-transform}, in the form described in Remark~\ref{rem:varying-samples-martingale-transform}, with \(G=1\). Indeed, the summand is \(\mathcal F_{s_m^r}\)-measurable, and by the tower property \eqref{eqn:tower-martingale},
\begin{align}
    \E_{s_{m-1}^r}\Big[
    \big(\E_{s_m^r}-\E_{s_{m-1}^r}\big)[F_{n,s}]
    \Big]
    =
    \E_{s_{m-1}^r}[F_{n,s}]
    -
    \E_{s_{m-1}^r}[F_{n,s}]
    =
    0.
    \label{eqn:595}
\end{align}
By \eqref{eqn:6.58}, the martingale increments are bounded by \(CE\).

We choose \(\delta=c_a(2000CT)^{-1}\sqrt{N/T}\), where \(c_a\) is as in \eqref{eqn:ca-ass}. The reason for the extra factor \(T^{-1}\) will become clear when we sum over the two exterior indices \(r\) and \(s\). Applying Azuma's inequality \eqref{eqn:azuma} to the rescaled martingale, which has \(N_0\sim N/T\) increments, and normalizing as in the previous application of Azuma, we obtain
\begin{align}
    \P\Big(
    \Big|\sum_m\big(\E_{s_m^r}-\E_{s_{m-1}^r}\big)[F_{n,s}]\Big|
    \ge \frac{c_aE}{2000T^2}N
    \Big)
    \le 2\exp\left(-\frac{c}{T^3}N\right).
    \label{eqn:596}
\end{align}
Summing over the \(T\) residue classes and over \(1\le s\le T\), and using \(|\mu|=1\), the factor \(T^2\) from the double exterior sum cancels the \(T^{-2}\) in the deviation threshold. Hence
\begin{align}
    \P\Big(
    \Big|  \sum_{s=1}^T\mu^{s-1}\sum_{r=0}^{T-1}\sum_m
    \big(\E_{s_m^r}-\E_{s_{m-1}^r}\big)[F_{n,s}]\Big|
    \ge \frac{c_aE}{2000}N
    \Big)
    \le 2T^2\exp\left(-\frac{c}{T^3}N\right).\label{eqn:6.66}
\end{align}
Plugging this into \eqref{eqn:6.63}, and using \(T\sim |\log E|\), we obtain, after taking \(E_0>0\) smaller if necessary so that the accumulated \({\mathcal O}(E^2)\)-error is bounded by \(c_aE/4000\),
\begin{align}
     \P\Big(
     \Big| \frac{1 }{2N}\sum_{s=1}^T\mu^{s-1}  \sum_{n=T}^{N-1}F_{n,s}\Big|
     \ge \frac{c_a}{2000}E
     \Big)
     <  \exp\left(-\frac{c'}{|\log E|^3}N\right). \notag 
\end{align}

It remains to add back the deterministic correlation term in \eqref{eqn:592}. By \eqref{eqn:ave-zetaQ-main-contri},
\begin{align}
    \frac{1 }{2}\frac{1}{N}\sum_{n=T}^{N-1}\sum_{s=1}^T\mu^{s-1}R_s
    =
    \frac{\kappa E}{4}\mu^{-1} \left[\gamma(0)-\E(q^2)\right]
    +{\mathcal O}(E^{3/2}). \notag 
\end{align}
For \(E<E_0\), the error term \({\mathcal O}(E^{3/2})\) is bounded by \(c_aE/2000\). Combining this with the preceding deviation estimate for the centered term gives
\begin{align}
     \P\Bigg(
     \Big|
     \frac{1}{2N}
     \sum_{n=T}^{N-1}\sum_{s=1}^T\mu^{s-1}  Q_{n-s}Q_n
     -
     \frac{\kappa E\mu^{-1}}{4}
     \left[\gamma(0)-\E(q^2)\right]
     \Big|
     \ge \frac{c_a}{1000}E
     \Bigg) 
    <  \exp\left(-\frac{c'}{|\log E|^3}N\right).
    \label{eqn:ldt7}
\end{align}


\noindent \(\bullet\) {\bf Estimate of \eqref{eqn:564} and \eqref{eqn:565}.} 
The terms \eqref{eqn:564} and \eqref{eqn:565} are treated in the same way and satisfy deviation estimates of the same order. We discuss \eqref{eqn:565}, which contains the first power of \(\zeta_{n-T}\); the term \eqref{eqn:564}, containing the second power, is estimated by the same argument.

Using \(Q_{n-s}Q_n=F_{n,s}+R_s\), we decompose
\begin{align}
\frac{1}{N}\sum_{n=T}^{N-1}\sum_{s=1}^T  Q_{n-s}Q_n\zeta_{n-T}
&=
\sum_{s=1}^T\frac{1}{N}\sum_{n=T}^{N-1} F_{n,s}\zeta_{n-T} \label{eqn:599}\\
&\quad+\sum_{s=1}^T R_s\frac{1}{N}\sum_{n=T}^{N-1} \zeta_{n-T}. \label{eqn:5100}
\end{align}

In \eqref{eqn:599}, we use Lemma~\ref{lem:Fns-conditional-error} to replace \(F_{n,s}\) by the martingale difference
\(\big(\E_{n+\frac{T}{2}}-\E_{n-\frac{T}{2}}\big)[F_{n,s}]\), and we replace \(\zeta_{n-T}\) by \(\E_{n-\frac{T}{2}}[\zeta_{n-T}]\). By \eqref{eqn:577}, \eqref{eqn:591}, \eqref{eqn:zeta-constant}, and the bound \(|F_{n,s}|\le CE\) from \eqref{eqn:6.58}, this gives, for each fixed \(1\le s\le T\),
\begin{align}
   \frac{1}{N}\sum_{n=T}^{N-1}   F_{n,s} \zeta_{n-T} 
   =
   \frac{1}{N}\sum_{n=T}^{N-1}
   \big(\E_{n+\frac{T}{2}}-\E_{n-\frac{T}{2}}\big)[F_{n,s}]\,
   \E_{n-\frac{T}{2}} [\zeta_{n-T}]
   +{\mathcal O}(E^2).
   \label{eqn:5101}
\end{align}

With respect to the sampled dyadic filtration \((\mathcal F_{s_m^r})_{m\ge0}\) given by \eqref{eqn:nmTr} and \eqref{eqn:smr}, define, for fixed \(s\) and \(r\),
\begin{align}
    d_m^r
   =
    \big(\E_{n+\frac{T}{2}}-\E_{n-\frac{T}{2}}\big)[F_{n,s}]\,
    \E_{n-\frac{T}{2}}[\zeta_{n-T}] =
    \big(\E_{s_m^r}-\E_{s_{m-1}^r}\big)[F_{n,s}]\,
    \E_{s_{m-1}^r}[\zeta_{n-T}]. \notag 
\end{align}
The partial sums of \(d_m^r\) form a martingale by Lemma~\ref{lem:subsequence-martingale-transform}, in the form described in Remark~\ref{rem:varying-samples-martingale-transform}. Indeed, \(d_m^r\) is \(\mathcal F_{s_m^r}\)-measurable. Moreover, since \(\E_{s_{m-1}^r}[\zeta_{n-T}]\) is \(\mathcal F_{s_{m-1}^r}\)-measurable, it can be pulled out of the conditional expectation. Together with \eqref{eqn:595}, this gives
\begin{align}
    \E_{s_{m-1}^r}[d_m^r]
    =
    \Big(\E_{s_{m-1}^r}
    \big[\big(\E_{s_m^r}-\E_{s_{m-1}^r}\big)[F_{n,s}]\big]\Big)
    \E_{s_{m-1}^r}[\zeta_{n-T}] =
    0. \notag 
\end{align}

 By \eqref{eqn:6.58}, its increments are bounded by \(CE\). Applying Azuma's inequality \eqref{eqn:azuma} as in the estimate leading to \eqref{eqn:596}, with the same choice \(\delta=c_a(4000CT)^{-1}\sqrt{N/T}\), gives
\begin{align}\notag 
\P\Big(
\Big|\sum_m d_m^r\Big|>\frac{c_aE}{4000T^2}N
\Big)
<2\exp\Big(-\frac{c}{T^3}N\Big).
\end{align}

Applying the same summation over the \(T\) residue classes and over \(1\le s\le T\) as in \eqref{eqn:6.66}, the factor \(T^2\) from the double exterior sum cancels the \(T^{-2}\) in the deviation threshold. Together with \eqref{eqn:5101}, and after taking \(E_0>0\) smaller if necessary so that the accumulated \({\mathcal O}(E^2)\)-error is bounded by \(c_aE/4000\), the centered contribution \eqref{eqn:599} satisfies
\begin{align} 
\P\Big(\Big|\sum_{s=1}^T\frac{1}{N}\sum_{n=T}^{N-1} F_{n,s}\zeta_{n-T}\Big|>\frac{c_a}{2000}E\Big)
 <2T^2 \exp \Big(-\frac{c}{ T^3} N\Big)  \le  \exp \Big(-\frac{c'}{ |\log E|^3} N\Big).
\label{eqn:5117}
\end{align}

For the term in \eqref{eqn:5100}, we use the identity \eqref{eqn:sum-zeta} to write
\begin{align}
    \frac{1}{N}\sum_{n=T}^{N-1} \zeta_{n-T}
    =
    \frac{1}{N}\sum_{n=0}^{N-T-1} \zeta_n
    =
    -\frac{e^{-i\eta}}{4N\sin \eta}
    \sum_{n=0}^{N-1}(\mu \zeta_n - 1)^2 Q_n
    +{\mathcal O}(\sqrt E), \notag 
\end{align}
provided \(N>E^{-1}\). The sum on the right-hand side is of the same type as the one estimated in \eqref{eqn:553}--\eqref{eqn:556}, after replacing \(\zeta_n\) by \(\zeta_{n-T}\) using \eqref{eqn:zeta-it-diff}. Choosing the corresponding Azuma parameter and using \(\sin\eta\sim \sqrt E\), we obtain, for \(E<E_0\) and \(N\ge N_0(E)\),
\begin{align} 
    \P\Big[x\in \T\,  \, : \Big|
    -\frac{e^{-i\eta}}{4N\sin \eta}
    \sum_{n=0}^{N-1}(\mu \zeta_n - 1)^2 Q_n
    \Big|>\frac{c_a}{4000CT} \Big]
    < 2T \exp\Big(-\frac{c}{T}N\Big).\notag 
\end{align}
Together with \(|R_s|\le CE\) and the fact that the error term \(T{\mathcal O}(E^{3/2})\) is bounded by \(c_aE/4000\) for \(E<E_0\), this yields
\begin{align} 
    \P\Big[x\in \T\,  \, : \Big|
    \sum_{s=1}^T R_s\frac{1}{N}\sum_{n=T}^{N-1} \zeta_{n-T}
    \Big|>\frac{c_a}{2000}E \Big]
    <
    \exp\left( -\frac{c'}{ |\log E|} N\right).
    \label{eqn:ldt6.79}
\end{align}

Finally, using the decomposition of \eqref{eqn:565} into \eqref{eqn:599} and \eqref{eqn:5100}, together with the estimates \eqref{eqn:5117} for \eqref{eqn:599} and \eqref{eqn:ldt6.79} for \eqref{eqn:5100}, we obtain
\begin{align}
    \P\Big[x\in\T:\Big|
    i\mu^T\frac{1}{N}\sum_{n=T}^{N-1}\sum_{s=1}^T
    Q_{n-s}Q_n\zeta_{n-T}
    \Big|>\frac{c_a}{1000}E\Big]
    <
    \exp\left(-\frac{c'}{|\log E|^3}N\right).\label{eqn:ldt680}
\end{align}
The same argument, with \(\zeta_{n-T}\) replaced by \((\mu^{T+1-s}\zeta_{n-T})^2\), gives the corresponding deviation estimate for \eqref{eqn:564}.

Combining the estimates for \eqref{eqn:562}--\eqref{eqn:565}, multiplying the decomposition by \(\mu\) and using \(|\mu|=1\), and taking \(E_0>0\) smaller and \(N_0(E)\) larger if necessary, all preceding estimates hold and the error term satisfies
\({\mathcal O}(T^2E^{3/2})\le c_aE/1000\). After decreasing \(c>0\) if necessary, this proves \eqref{eqn:ldt-rho1-goal}.


\subsection{All deviations together}

Under the positivity assumption \(c_a>0\) in \eqref{eqn:ca-ass}, we now combine the deviation estimates for the terms in \eqref{eqn:rho-0}--\eqref{eqn:rho-4} and prove Theorem~\ref{thm:ldt-rhon}.

By the decomposition \eqref{eqn:rho-0}--\eqref{eqn:rho-4}, and after taking \(E_0>0\) smaller if necessary so that the remainder term \({\mathcal O}(E^{3/2})\) in \eqref{eqn:rho-4} is bounded by \(c_aE/1000\), the probability of the event
\[
\Bigg\{
x\in\T:
\Big|
\frac{1}{N}\log\frac{\rho_N}{\rho_0}
-\frac{1}{8}\E[Q^2]
-\frac{\kappa E}{8}\left(\gamma(0)-\E[q^2]\right)
\Big|
>5\frac{c_a}{1000}E
\Bigg\}
\]
is controlled by the sum of the deviation estimates in \eqref{eqn:ldt-rho0}, \eqref{eqn:ldt-Q2-cos-k}, and \eqref{eqn:ldt-rho1-sin}. Therefore, after increasing \(N_0(E)\) if necessary and decreasing the constant \(c>0\), we obtain
\begin{align}
\P\Bigg[
x\in\T:
\Big|
\frac{1}{N}\log\frac{\rho_N}{\rho_0}
-\frac{1}{8}\E[Q^2]
-\frac{\kappa E}{8}\left(\gamma(0)-\E[q^2]\right)
\Big|
>5\frac{c_a}{1000}E
\Bigg]
<
\exp\Big(-c\frac{E}{|\log E|^3}N\Big).
\label{eqn:ldt-comb1}
\end{align}

On the other hand, using the definition and asymptotic expansion of \(Q\) in \eqref{eqn:Q-samp}, we have
\begin{align}
    \frac{1}{8}\E[Q^2]
    + \frac{\kappa E}{8}\left(\gamma(0)-\E[q^2]\right)
    =
    \frac{\kappa\gamma(0)}{8}E
    +{\mathcal O}(E^2)
    =
    c_aE+{\mathcal O}(E^2). \notag 
\end{align}
Taking \(E_0>0\) smaller if necessary so that the error term \({\mathcal O}(E^2)\) is bounded by \(c_aE/1000\), and combining this with \eqref{eqn:ldt-comb1}, we obtain
\begin{align}
\P\Bigg[
x\in\T:
\Big|
\frac{1}{N}\log\frac{\rho_N}{\rho_0}
-c_aE
\Big|
>6\frac{c_a}{1000}E
\Bigg]
<
\exp\Big(-c\frac{E}{|\log E|^3}N\Big).
\notag 
\end{align}
This implies \eqref{eqn:ldt-rhon}, after shrinking \(c>0\) if necessary. This completes the proof of Theorem~\ref{thm:ldt-rhon}.

\begin{remark}
    Notice that the final rate function is \(E/|\log E|^3\), which tends to zero as \(E\to0\). This weakest rate comes from the deviation estimate for the first-order term \(Q_n\zeta_n\) in \eqref{eqn:ldt-rho1-goal}. By comparison, the leading \(Q_n^2\)-term in \eqref{eqn:rho-0} has an \(E\)-independent exponential rate, as shown in \eqref{eqn:ldt-rho0}, while the estimates for the oscillatory \(Q_n^2\zeta_n^k\)-terms, \(k=1,2\), have the stronger rate \(1/|\log E|\), as shown in \eqref{eqn:ldt-Q2-zeta-k-goal}.
\end{remark}




\section{Applications of the Large-Deviation Estimate}\label{sec:applications}

In this section, we discuss applications of the large-deviation estimate proved in the previous section. Throughout the section, we work under the positivity assumption \eqref{eqn:ca-ass}; in particular, \(c_a>0\) and \(L(E)\sim c_aE\) for \(0<E<E_0\). The systematic use of large-deviation estimates in this context goes back to Bourgain--Goldstein \cite{bourgain2000nonperturbative} and Goldstein--Schlag \cite{goldstein2001holder} for quasi-periodic Schr\"odinger cocycles, and has since been extended to many other models. We do not attempt to survey these developments here; see the monograph of Bourgain \cite{bourgain2005greens} for a comprehensive account.

Our goal is to explain how the large-deviation estimate \eqref{eqn:ldt-main} leads to the H\"older regularity of the IDS and the Lyapunov exponent in Theorem~\ref{thm:holder}, and to Anderson localization at small energies in Theorem~\ref{thm:AL}. We give only the main outline in this section, keeping the presentation focused on the role of the LDT. For completeness, self-contained proofs are provided in Appendix~\ref{sec:holder} and Appendix~\ref{sec:AL}. The arguments are close in spirit to those for Schr\"odinger operators generated by the doubling map in \cite{BourgainSchlag2000CMP}.

\subsection{H\"older regularity of the Lyapunov exponent and the IDS}\label{sec:applications-holder}

We use the large-deviation estimate in the fixed-tolerance form \(\varepsilon=1/100\), as explained in Remark~\ref{rem:ldt-equivalent-centers}. This fixed tolerance is sufficient for the Avalanche Principle, see  Proposition~\ref{prop:avalanche-principle}, which is used to prove the finite-scale estimate \eqref{eqn:L-Ln-L2n}; the details are given in Appendix~\ref{sec:holder}.

Recall the notation
\begin{align} 
    t_n(x)=t_n(x;E)=\frac{1}{n}\log\|T_n^E(x)\|,
    \qquad
    L_n(E)=\E[t_n]. \notag 
\end{align}

For \(0<E<E_0\) and sufficiently large \(n\), the LDT implies that both \(t_n(x;E)\) and \(t_{2n}(x;E)\) are close to \(c_aE\), apart from exceptional sets whose measures are exponentially small in \(n\). Applying the LDT along the subsequence of the doubling orbit
\[
    x,\ 2^n x,\ 2^{2n}x,\ldots,
\]
one obtains a large-measure set on which the blocked transfer matrices
\[
    B_j=T_n^E(2^{jn}x),\qquad j=0,\ldots,m-1,
\]
satisfy the hypotheses of the Avalanche Principle. Here \(m\) is chosen exponentially large in the LDT scale.
Roughly speaking, the Avalanche Principle allows one to control the logarithmic norm of a long product of \(SL(2,\R)\) matrices in terms of the norms of the individual matrices and their adjacent products, provided that the individual norms are large and that consecutive factors do not exhibit significant cancellation. More precisely, the LDT gives exponential lower bounds on \(\|B_j\|\) and shows that the adjacent products \(B_{j+1}B_j\) are nearly multiplicative in logarithmic norm. The Avalanche Principle then implies that the logarithmic norm of the long product \(B_{m-1}\cdots B_0\) is determined, up to a controlled error, by these local quantities.

Integrating the resulting pointwise estimate for the long product \(B_{m-1}(x)\cdots B_0(x)\) over the large-measure set and controlling its complement by the LDT gives the finite-scale relation
\begin{align}
\bigl|L_{n_1}(E)+L_n(E)-2L_{2n}(E)\bigr|
\le
C\exp\Big(-c\frac{E}{|\log E|^3}n\Big), \notag 
\end{align}
where \(n_1\) is a larger scale chosen exponentially in the LDT scale. Iterating this scale construction yields
\begin{align}
\bigl|L(E)+L_n(E)-2L_{2n}(E)\bigr|
\le
C\exp\Big(-c\frac{E}{|\log E|^3}n\Big).
\label{eqn:L-Ln-L2n}
\end{align}
The exceptional-set estimates and the verification of the Avalanche Principle hypotheses are carried out in \eqref{eqn:m612}--\eqref{eqn:Fn1-Fn-629}, while the induction on scales is given in \eqref{eqn:ns-choice}--\eqref{eqn:L2ns-Lns}.

It remains to convert \eqref{eqn:L-Ln-L2n} into continuity in \(E\). By \eqref{eqn:rho-0}--\eqref{eqn:rho-4} and \eqref{eqn:Tn-rhon}, there exists \(C>0\) such that, after decreasing \(E_0\) if necessary, for all \(0<E<E_0\) and all \(n\ge n_0(E)\),
\begin{align}
    \frac{1}{n}\log\|T_n^E(x)\|
    =
    \frac{1}{n}\log\frac{\rho_n}{\rho_0}
    +{\mathcal O}\Big(\frac{|\log E|}{n}\Big)
    \le C\sqrt E,
    \notag
\end{align}
uniformly in \(x\in\T\). Consequently,
\begin{align}
    \partial_E \log \|T_n^E(x)\|
    \le n e^{C n\sqrt E}. \label{eqn:tn-prime}
\end{align}
Therefore, for \(0<E<E'<E_0\),
\begin{align}
    |L_n(E)-L_n(E')|
    \le
    n e^{C n\sqrt E}|E-E'|.
    \notag
\end{align}
The same estimate holds for \(L_{2n}\), with \(n\) replaced by \(2n\); after enlarging \(C>0\), we keep the same form of the bound. Combining this finite-scale Lipschitz bound with \eqref{eqn:L-Ln-L2n}, we obtain, for some constants \(C,c>0\),
\begin{align}
      |L(E)-L(E')|
      \le
      2n e^{C n\sqrt E}|E-E'|
      +
      C\exp\Big(-c\frac{E}{|\log E|^3}n\Big).
      \notag
\end{align}
We now choose \(n\) so as to balance the two terms. This choice has order
\[
    n\sim \frac{\big|\log |E-E'|\big|}{\sqrt E},
\]
up to harmless logarithmic corrections. In particular, as \(|E-E'|\to0\), this choice satisfies \(n\to\infty\); hence, for \(|E-E'|\) sufficiently small, we have \(n\ge n_0(E)\). With this choice of \(n\), after decreasing \(c>0\) if necessary, we obtain
\begin{align}
      |L(E)-L(E')|
      \le
      C|E-E'|^{\tau(E)},
      \qquad
      \tau(E)=c\frac{\sqrt E}{|\log E|^3}.
      \notag
\end{align}
Thus \(L(E)\) is locally H\"older continuous on \((0,E_0)\), with a H\"older exponent depending on the energy scale.

The H\"older regularity of the IDS follows from the Thouless formula and the Hilbert transform. More precisely, by the argument in \cite[Proposition~10.2 and Lemma~10.3]{goldstein2001holder}, the IDS inherits the same order of local H\"older regularity as the Lyapunov exponent. This proves Theorem~\ref{thm:holder}.

\begin{remark}\label{rem:holder-exponent-degenerates}
The argument above yields a H\"older exponent of order
\[
    \tau(E)=c\frac{\sqrt E}{|\log E|^3},
\]
which tends to zero as \(E\to0^+\) and may not be optimal. It remains unclear whether this deterioration is merely a limitation of the present method or whether one can obtain an exponent that is uniform as \(E\to0^+\). If a uniform exponent is not available, it would be natural to ask what the optimal dependence on \(E\) should be. A similar issue already appears for Schr\"odinger cocycles generated by the doubling map at small coupling in the Bourgain--Schlag setting \cite{BourgainSchlag2000CMP}, where the available H\"older exponent also deteriorates as the coupling tends to zero.
\end{remark}

\subsection{Anderson localization}

We now turn to Anderson localization. In this subsection, we use the \(\varepsilon\)-dependent form of the large-deviation estimate described in Remark~\ref{rem:ldt-equivalent-centers}. Together with positivity of the Lyapunov exponent on \((0,E_0)\), this yields finite-volume Green's function decay on suitable long intervals. The resulting Green's function estimate is then used, through the standard generalized-eigenfunction expansion, to prove exponential decay of eigenfunctions.

Let \(H=H(x)\) be as in \eqref{eqn:div-grad}. For \(\Lambda\subseteq\Z_{\ge0}\), denote by \(H_\Lambda\) the coordinate restriction of \(H\) to \(\Lambda\). In particular, for \(\Lambda=[0,n-1]\), we write
\begin{align}
    H_n=H_n(x):=H_{[0,n-1]}.
    \label{eqn:Hn-def}
\end{align}
For \(E\notin\sigma(H_\Lambda)\), the finite-volume Green's function is
\begin{align}
    G_\Lambda(x;E)=\big(H_\Lambda-E\big)^{-1}.
    \label{eqn:G-Lambda-def}
\end{align}
When \(\Lambda=[0,n-1]\), we write \(G_n(x;E)=G_{[0,n-1]}(x;E)\). For \(n_1,n_2\in[0,n-1]\), its kernel is denoted by
\begin{align}
    G_n(x;E)(n_1,n_2)
    =\big\langle {n_1}\big|{\big(H_n-E\big)^{-1}}\big|{n_2}  \big\rangle.
    \label{eqn:Gn-kernel-def}
\end{align}

The passage from LDT estimates to Green's function decay is standard in this setting. For Schr\"odinger operators generated by the doubling map, this was carried out in \cite[\S8,\S9]{BourgainSchlag2000CMP}. The same strategy applies to the present div-grad model, with minor but necessary modifications coming from the form of the transfer matrices and the corresponding Cramer's rule. We record the resulting Green's function estimate here and give the proof in Appendix~\ref{sec:AL}.
 
\begin{theorem}\label{thm:ldt-green1}
Let \((0,E_0)\) be an interval on which the large-deviation estimate \eqref{eqn:ldt-main} holds and \(L(E)>0\) for \(E\in(0,E_0)\). Fix a compact interval \(I\subsetneq(0,E_0)\) and \(0<\varepsilon<\varepsilon_\ast\), where \(\varepsilon_\ast>0\) is an absolute constant. Then there exists a full-measure set \(\Omega\subset\T\) such that, for every \(x\in\Omega\) and for spectral-a.e. \(E\in I\), there exists \(N_0=N_0(x,E,I,\varepsilon)\) with the following property. For every \(N\ge N_0\), set
\[
    \bar N=\lfloor e^{(\log N)^2}\rfloor .
\]
Then, for every \(n_1,n_2\in[\bar N,2\bar N]\) with \(|n_1-n_2|\ge \bar N/2\), one has
\begin{align}
    \big|G_{[\bar N,2\bar N]}(x;E)(n_1,n_2)\big|
    \le
    \exp\Big(-(1-\varepsilon)L(E)|n_1-n_2|\Big).
    \label{eqn:green-decay}
\end{align}
\end{theorem}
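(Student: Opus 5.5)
We would follow the Bourgain--Schlag scheme \cite[\S8,\S9]{BourgainSchlag2000CMP}, adapted to the div-grad Cramer's rule. First, express the Green's function through transfer matrices. For \(\Lambda=[\bar N,2\bar N]\) and \(n_1\le n_2\), the solutions of \eqref{eqn:hu=eu} satisfying the left and right Dirichlet conditions give
\[
G_\Lambda(n_1,n_2)=\frac{\psi^-(n_1)\psi^+(n_2)}{a_{n_2}\,W(\psi^-,\psi^+)}.
\]
Here \(\psi^\pm\) are built from \(T^E\) on the subintervals \([\bar N,n_1]\) and \([n_2,2\bar N]\), and the Wronskian equals, up to factors \(a_\pm^{\pm1}\) and the bounded frame change \(W_n\), an entry of \(T^E_{|\Lambda|}(2^{\bar N}x)\), i.e.\ \(\det(H_\Lambda-E)\). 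Thus
\[
|G_\Lambda(n_1,n_2)|\lesssim \frac{\|T_{n_1-\bar N}^E(2^{\bar N}x)\|\,\|T^E_{2\bar N-n_2}(2^{n_2}x)\|}{|\det(H_\Lambda-E)|}.
\]
By \eqref{eqn:Tn-rhon} and the uniform bound \(\frac1n\log\|T_n^E\|\le C\sqrt E\), the numerator is at most \(\exp((1+\varepsilon/4)L(E)(|\Lambda|-|n_1-n_2|))\), for \(x\) outside an exceptional set controlled by the LDT in the \(\varepsilon\)-form of Remark~\ref{rem:ldt-equivalent-centers}. Since \(|n_1-n_2|\ge\bar N/2\), all relevant lengths are comparable to \(\bar N\).

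Second, we need a lower bound \(|\det(H_\Lambda-E)|\ge \exp((1-\varepsilon/4)L(E)|\Lambda|)\). The LDT gives this for \(\|T_{|\Lambda|}\|\), but not automatically for one matrix entry. To pass from norm to entry, we would use the standard trick of also considering the restrictions to \(\Lambda\) shrunk by one site at either end. At least one of the four entries is comparable to the norm, so at least one of the nearby intervals \([\bar N+i,2\bar N-j]\), \(i,j\in\{0,1\}\), is ``good''. Since the choice must be \(\Lambda\) itself, we instead follow Bourgain--Schlag: work with a double resonance argument and allow a nearby good interval in the final statement. Concretely, we would formulate the good event on \([\bar N,2\bar N]\) directly and absorb the single-site shift into the \(\varepsilon\) and into the choice of \(N_0\).

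Third, and this is the main obstacle, we must remove the spectral-a.e.\ \(E\) dependence, i.e.\ exclude double resonances. Suppose \(E\) is a generalized eigenvalue of \(H(x)\) with a polynomially bounded eigenfunction. Near the origin, at scale \(N\), Green's function decay fails only if \(E\) lies within \(e^{-cL N}\) of \(\sigma(H_{[0,N]}(x))\); there are at most \(N+1\) such eigenvalues \(E_j(x)\). We then show that for a.e.\ \(x\), no \(E_j(x)\) is resonant, with \(\log|\det(H_{[\bar N,2\bar N]}(x)-E_j(x))|<(1-\varepsilon/4)L|\Lambda|\), at the far scale \(\bar N=e^{(\log N)^2}\). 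The key is decoupling. The eigenvalues \(E_j(x)\) depend only on \(x\) modulo the dyadic grid of scale \(2^{-N}\) (up to an \(\mathcal O(2^{-N}\cdot)\) error via Lipschitz bounds), while the event at scale \(\bar N\) concerns \(2^{\bar N}x\). By \eqref{eqn:doubling-conditional-expectation}, conditional on \(\mathcal F_{N}\), the point \(2^{\bar N}x\) is Lebesgue distributed. The LDT bound \(\exp(-cE\bar N/|\log E|^3)\), combined with a union over \(N+1\) eigenvalues, over \(E\)-discretization of mesh \(e^{-C\bar N}\) (using \eqref{eqn:tn-prime} for Lipschitz dependence in \(E\)), and over \(n_1,n_2\), is summable in \(N\). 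The compactness of \(I\subsetneq(0,E_0)\) keeps \(L(E)\) and the LDT rate uniformly bounded below. Borel--Cantelli then yields the full-measure set \(\Omega\) and \(N_0(x,E,I,\varepsilon)\). The delicate point is making the discretization mesh compatible with the weak rate \(E/|\log E|^3\). Since \(\bar N\) is superpolynomial in \(N\), the loss from the \(e^{C\bar N\sqrt E}\) Lipschitz constant must be handled by using semialgebraic-free counting: the number of relevant intervals is \(\mathcal O(N\bar N^2)\), polynomial in \(\bar N\), which the exponential LDT rate absorbs.
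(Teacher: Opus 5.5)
Your double-resonance step has the scales in the wrong order, and this is a genuine gap. The resonance you detect near the origin lives at scale $N$, so it only gives $|E-E_j(x)|\le e^{-cLN}$. The event you then exclude lives at scale $\bar N$, and both $\det(H_{[\bar N,2\bar N]}(x)-E)$ and $\frac{1}{\bar N}\log\|T^E_{\bar N}\|$ are stable only under energy perturbations of size roughly $e^{-C\bar N}$ (compare \eqref{eqn:tn-prime}). Since $e^{-cLN}\gg e^{-C\bar N}$, knowing that $E_j(x)$ is non-resonant at scale $\bar N$ tells you nothing about $E$.

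The decoupling also fails as stated. $E_j(x)$ has $x$-Lipschitz constant of order $2^N$, so it is not determined modulo $2^{-N}$. Freezing it to the precision needed at scale $\bar N$ requires an $x$-mesh of size about $e^{-C\bar N}2^{-N}$, which need not be coarser than $2^{-\bar N}$, so \eqref{eqn:doubling-conditional-expectation} cannot be applied on the mesh cells.

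The same problem affects your numerator bound. An energy net at scale $\bar N$ has about $e^{C\sqrt E\,\bar N}$ points, and for small $E$ this dominates the LDT gain $e^{-cE\bar N/|\log E|^3}$, since $\sqrt E\gg E/|\log E|^3$. Counting the $\mathcal O(N\bar N^2)$ pairs $(n_1,n_2)$ does not address this.

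Two further points are missing. First, the LDT controls $\|T^E_{|\Lambda|}\|$, hence only one of four determinants. The statement concerns the exact interval $[\bar N,2\bar N]$, so the single-site shift cannot be ``absorbed into $\varepsilon$'': $G_{[\bar N,2\bar N]}$ can be huge while a shifted box is fine. Second, you never actually force a resonance near the origin. That requires a box on which the Shnol eigenfunction is provably exponentially small; polynomial bounds alone give no lower bound on $\|G\|$.

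The missing idea is the three-scale hierarchy $N\ll N_1\in[N^{12},3N^{12}]\ll \bar N=\lfloor e^{(\log N)^2}\rfloor$, with the LDT used only at the smallest scale $N$. The paper proceeds as follows.

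\begin{itemize}
\item The sampled lower bound \eqref{eqn:Tn-lower-nonunif} gives a good box of size $N^3$ near $N^{12}$ on which $\xi$ is tiny. This forces $\|G_{N_1}(x;E)\|>e^{N^2}$ (Lemma~\ref{lem:large-green-some-scale}).
\item The resulting proximity $e^{-N^2}$ beats the $e^{CN}$ energy sensitivity of $\frac1N\log\|T_N^E(2^kx)\|$. The eigenvalues are frozen on an $x$-mesh of size $e^{-N_1^2}\gg 2^{-k}$, so the substitution $t=2^kx$ decouples, and the number of cells cancels against $\sum_\ell|J_\ell|\le1$. No energy net is needed (Lemma~\ref{lem:bad-set-BN}).
\item Consequently every length-$N$ block starting at $k\in[\bar N,2\bar N]$ is good (Corollary~\ref{cor:Tn-lower}).
\item The uniform upper bound comes from sparse averages of $t_k$ at a small scale $k$, where the rate $e^{-ck^4}$ beats the $e^{Ck}$ energy net, combined with subadditivity (Lemma~\ref{lem:Tn-upp-ldt714}).
\item Cramer's rule is applied only to these short boxes, with the one-of-four shift (Lemma~\ref{lem:green-bound-transfer}).
\item Resolvent-identity paving then bounds $\|G_{[\bar N,2\bar N]}\|$ and gives the off-diagonal decay on the exact interval. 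No lower bound on $\det(H_{[\bar N,2\bar N]}-E)$ is ever needed.
\end{itemize}
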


Once \eqref{eqn:green-decay} is available, Anderson localization follows by the standard Green's function expansion argument. Indeed, let \(\xi^E=\{\xi_n^E\}\) be a polynomially bounded generalized eigenfunction associated to an energy \(E\in I\subsetneq(0,E_0)\). Applying the Green's function representation on intervals \([\bar N,2\bar N]\), with \(n\) placed well inside the interval, expresses \(\xi_n^E\) in terms of boundary values of \(\xi^E\). The off-diagonal decay in \eqref{eqn:green-decay} then dominates the polynomial growth allowed by Shnol's theorem, and hence \(\xi^E\) decays exponentially. This gives pure-point spectrum with exponentially decaying eigenfunctions on each compact \(I\subsetneq(0,E_0)\); the passage to \([0,E_0]\) follows by the compact-interval reduction in Appendix~\ref{sec:AL}. Thus Theorem~\ref{thm:AL} follows.


\appendix

 \section{Exponential mixing and decay of correlations}

We record two elementary decay-of-correlation estimates for the doubling map. The first is a local estimate on subintervals, and the second is its global consequence on the torus. These estimates provide the exponential mixing input used in the large-deviation analysis, especially for controlling correlation terms such as \(\E[\zeta_{n-T}Q_n]\).

\begin{lemma}\label{lem:interval-mixing-by-parts}
Let \(f\in C^1(\T)\) satisfy \(\int_\T f(x)\,dx=0\). Then, for every interval \(I\subset \T\), every \(n\ge0\), and every \(g\in C^1(\T)\),
\begin{align}\label{eqn:interval-mixing-by-parts}
   \Bigl| \int_I f(2^n x)g(x)\,dx \Bigr|
   \le 2^{1-n}\|f\|_{L^1(\T)}\|g\|_\infty
      +2^{-n}|I|\,\|f\|_{L^1(\T)}\|g'\|_\infty.
\end{align}
\end{lemma}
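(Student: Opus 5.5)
The plan is to integrate by parts against a primitive of the rescaled function \(f(2^n\cdot)\). Since \(\int_\T f=0\), the function
\[
F(y)=\int_0^y f(t)\,dt
\]
is well defined and \(1\)-periodic on \(\T\). Moreover \(\sup_y|F(y)|\le \|f\|_{L^1(\T)}\); in fact one can subtract a constant to get the sharper \(\|F\|_\infty\le\frac12\|f\|_{L^1}\), but the cruder bound suffices. Then \(\Phi(x):=2^{-n}F(2^nx)\) satisfies \(\Phi'(x)=f(2^nx)\) and
\[
\|\Phi\|_\infty\le 2^{-n}\|f\|_{L^1(\T)} .
\]

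Write \(I=[\alpha,\beta]\), regarding \(I\) as an interval in \(\R\) of length \(|I|\le1\) via a lift. Integrating by parts gives
\[
\int_I f(2^nx)g(x)\,dx=\Phi(\beta)g(\beta)-\Phi(\alpha)g(\alpha)-\int_I \Phi(x)g'(x)\,dx .
\]
I will estimate the two pieces separately. The boundary terms are each bounded by \(\|\Phi\|_\infty\|g\|_\infty\), so together they contribute at most \(2\cdot 2^{-n}\|f\|_{L^1}\|g\|_\infty=2^{1-n}\|f\|_{L^1}\|g\|_\infty\). The remaining integral is bounded by \(|I|\,\|\Phi\|_\infty\|g'\|_\infty\le 2^{-n}|I|\,\|f\|_{L^1}\|g'\|_\infty\). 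Adding the two bounds yields exactly \eqref{eqn:interval-mixing-by-parts}.

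There is no real obstacle; the only points needing care are these:
\begin{itemize}
\item The mean-zero hypothesis is what makes \(F\) periodic, and hence bounded by \(\|f\|_{L^1}\) uniformly in \(y\in\R\) rather than growing linearly.
\item The interval must be handled through its lift, so that the fundamental theorem of calculus applies.
\end{itemize}
The global version on \(\T\) used earlier (Lemma~\ref{lem:doubling-mixing}) follows by taking \(I=\T\). In that case the boundary terms cancel by periodicity of \(\Phi g\), leaving \(2^{-n}\|f\|_{L^1}\|g'\|_\infty\).
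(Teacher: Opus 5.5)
Your proposal is correct and follows essentially the same route as the paper: you take the bounded \(1\)-periodic primitive \(\int_0^y f\), made periodic by the mean-zero hypothesis and bounded by \(\|f\|_{L^1(\T)}\), integrate by parts on \(I\), and bound the two boundary terms and the remaining integral separately. Your remark that the boundary terms cancel when \(I=\T\) is exactly how the paper derives Lemma~\ref{lem:doubling-mixing}.
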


\begin{proof}
Since \(f\) has zero mean, the function
\[
    P(x):=\int_0^x f(t)\,dt
\]
is a bounded \(1\)-periodic primitive of \(f\). Indeed,
\[
    P'=f,\qquad
    P(x+1)-P(x)=\int_x^{x+1}f(t)\,dt=\int_\T f(t)\,dt=0,
\]
and
\[
    \|P\|_\infty
    \le \int_0^1 |f(t)|\,dt
    =
    \|f\|_{L^1(\T)}.
\]
In particular, since \(P\) is \(1\)-periodic,
\[
    \sup_{n\ge0}\sup_{x\in\T}|P(2^n x)|
    \le
    \|P\|_\infty
    \le
    \|f\|_{L^1(\T)}.
\]
Now let \(I=[a,b]\subset\T\). Since
\[
    \frac{d}{dx}P(2^n x)=2^n f(2^n x),
\]
integration by parts gives
\begin{align}
   \int_I f(2^n x)g(x)\,dx
   &=
   2^{-n}\int_I \frac{d}{dx}P(2^n x)\,g(x)\,dx \notag\\
   &=
   2^{-n}\bigl[P(2^n x)g(x)\bigr]\big|_a^b
   -2^{-n}\int_I P(2^n x)g'(x)\,dx .  \notag
\end{align}
Taking absolute values and using the bound on \(\|P\|_\infty\), we obtain
\begin{align}
   \Bigl| \int_I f(2^n x)g(x)\,dx \Bigr|
   &\le 2^{1-n}\|P\|_\infty\|g\|_\infty
      +2^{-n}|I|\,\|P\|_\infty\|g'\|_\infty \notag\\
   &\le 2^{1-n}\|f\|_{L^1(\T)}\|g\|_\infty
      +2^{-n}|I|\,\|f\|_{L^1(\T)}\|g'\|_\infty.  \notag
\end{align}
This proves \eqref{eqn:interval-mixing-by-parts}.
\end{proof}

The preceding local estimate immediately yields the following global decay-of-correlation bound on the torus.

\begin{lemma}\label{lem:doubling-mixing}
Let \(f,g\in C^1(\T)\). If
\begin{align}\label{eqn:condi-mean-zero}
    \int_{\T} f(x)\,dx=0,
\end{align}
then, for every \(n\in\Z_{\ge0}\),
\begin{align}\label{eqn:doubling-mixing-shifted-zero}
\abs{ \int_{\T} f(2^n x)\,g(x)\,dx }
\le
2^{-n}\norm{f}_{L^1(\T)}\sup_{x\in\T}|g'(x)|.
\end{align}
Consequently, for general \(f,g\in C^1(\T)\), one has
\begin{align}\label{eqn:doubling-mixing}
\abs{
\int_{\T} f(2^n x)\,g(x)\,dx
-
\int_{\T} f(x)\,dx \int_{\T} g(x)\,dx
}
\le
2^{1-n}\norm{f}_{L^1(\T)}\sup_{x\in\T}|g'(x)|.
\end{align}
\end{lemma}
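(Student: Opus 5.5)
The plan is to deduce the global statement directly from Lemma~\ref{lem:interval-mixing-by-parts} by taking \(I\) to be the whole torus, realized as the interval \([0,1]\). The boundary term in the integration by parts will then vanish by periodicity. That vanishing is the only point where the global estimate \eqref{eqn:doubling-mixing-shifted-zero} improves on the local bound \eqref{eqn:interval-mixing-by-parts}.

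First I treat the mean-zero case \eqref{eqn:condi-mean-zero}. As in the proof of Lemma~\ref{lem:interval-mixing-by-parts}, set \(P(x)=\int_0^x f(t)\,dt\). Then \(P\) is \(1\)-periodic and \(\|P\|_\infty\le\|f\|_{L^1(\T)}\). Integration by parts on \([0,1]\) gives
\[
\int_\T f(2^nx)g(x)\,dx=2^{-n}\bigl[P(2^nx)g(x)\bigr]_0^1-2^{-n}\int_0^1P(2^nx)g'(x)\,dx .
\]
Both \(P(2^n\cdot)\) and \(g\) are \(1\)-periodic, because \(2^n\) is an integer. Hence the boundary term equals \(P(2^n)g(1)-P(0)g(0)=0\). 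Bounding the remaining integral by \(\|P\|_\infty\sup|g'|\) yields \eqref{eqn:doubling-mixing-shifted-zero}.

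For general \(f\), I apply this to \(\tilde f=f-\int_\T f\), which has mean zero. By invariance of Lebesgue measure under \(x\mapsto 2^nx\) (or simply because the constant splits off), we have \(\int \tilde f(2^nx)g(x)\,dx=\int f(2^nx)g\,dx-\int f\int g\). Therefore the left side of \eqref{eqn:doubling-mixing} is at most \(2^{-n}\|\tilde f\|_{L^1}\sup|g'|\). Finally, \(\|\tilde f\|_{L^1}\le\|f\|_{L^1}+|\int f|\le 2\|f\|_{L^1}\), which gives the factor \(2^{1-n}\).

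There is no real obstacle here. The only point needing care is to check the periodicity that kills the boundary term, and to note that \(g\in C^1(\T)\) means \(g\) is periodic, so that \(g(0)=g(1)\).
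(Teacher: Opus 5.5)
Your proposal is correct and matches the paper's proof essentially step for step: take \(I=\T\) in the integration-by-parts estimate of Lemma~\ref{lem:interval-mixing-by-parts}, note that the boundary term vanishes by the \(1\)-periodicity of \(P(2^n\cdot)\) and \(g\), and then handle general \(f\) by centering it and using \(\|f-\int_\T f\|_{L^1(\T)}\le 2\|f\|_{L^1(\T)}\).
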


\begin{proof}
Assume first that \(\int_\T f(x)\,dx=0\). Applying Lemma~\ref{lem:interval-mixing-by-parts} with \(I=\T\), the boundary term vanishes by the periodicity of \(P\) and \(g\). Hence
\begin{align}
\abs{ \int_{\T} f(2^n x)\,g(x)\,dx }
&\le
2^{-n}\|f\|_{L^1(\T)}\|g'\|_\infty,  \notag
\end{align}
which proves \eqref{eqn:doubling-mixing-shifted-zero}.

For the general case, set \(F=f-\int_\T f(x)\,dx\). Then \(\int_\T F(x)\,dx=0\) and \(\|F\|_{L^1(\T)}\le 2\|f\|_{L^1(\T)}\). Also,
\begin{align}
\int_\T f(2^n x)g(x)\,dx
-
\int_\T f(x)\,dx\int_\T g(x)\,dx
=
\int_\T F(2^n x)g(x)\,dx.  \notag
\end{align}
Applying \eqref{eqn:doubling-mixing-shifted-zero} to \(F\) gives \eqref{eqn:doubling-mixing}.
\end{proof}

\begin{remark}
We state and prove the preceding estimates under \(C^1\) assumptions, which are sufficient for the applications to \(C^1\) sampling functions in the div-grad model. These regularity assumptions are not optimal. In Lemma~\ref{lem:interval-mixing-by-parts}, the assumption \(f\in C^1(\T)\) can be relaxed to \(f\in L^1(\T)\) with zero mean: the function \(P\) in the proof is then an absolutely continuous \(1\)-periodic primitive of \(f\), and the same integration-by-parts argument remains valid. On the other hand, the regularity of \(g\) in the global correlation estimate can also be weakened to H\"older continuity, with \(\|g'\|_\infty\) replaced by the corresponding \(C^\alpha\) seminorm. In that case the integration-by-parts argument is no longer available; instead, one uses the standard inverse-branch argument for the doubling map, or equivalently the spectral properties of the Ruelle--Perron--Frobenius transfer operator. For the general theory and broader results on decay of correlations for uniformly expanding maps, we refer the reader to \cite{baladi2000positive,viana2016foundations}.
\end{remark}


\section{H\"older regularity of the Lyapunov exponent and the IDS}\label{sec:holder}

In this appendix, we provide the details of the Avalanche Principle argument outlined in Section~\ref{sec:applications-holder}. The purpose is to prove the finite-scale estimate \eqref{eqn:L-Ln-L2n}, which is used there to establish the H\"older regularity of the Lyapunov exponent and the IDS.

We will use the following form of the Avalanche Principle, which can be found in \cite[Proposition~2.2]{goldstein2001holder}; see also \cite[Proposition~6.1]{bourgain2005greens}.
\begin{proposition}[Avalanche Principle]\label{prop:avalanche-principle}
Let \(B_1,\dots,B_n\) be a sequence in \(\mathrm{SL}_2(\mathbb R)\) satisfying
\begin{align}
\min_{1\le j\le n}\|B_j\|
\ge \mu
\ge n,
\label{eqn:avalanche-hyp-1}
\end{align}
and
\begin{align}
\max_{1\le j<n}
\left|
\log \|B_{j+1}\|
+\log \|B_j\|
-\log \|B_{j+1}B_j\|
\right|
<
\frac{1}{2}\log \mu.
\label{eqn:avalanche-hyp-2}
\end{align}
Then there is an absolute constant \(C>0\), such that 
\begin{align}
\left|
\log \|B_n\cdots B_1\|
+
\sum_{j=2}^{n-1}\log \|B_j\|
-
\sum_{j=1}^{n-1}\log \|B_{j+1}B_j\|
\right|
\le
C\frac{n}{\mu}.
\end{align}
\end{proposition}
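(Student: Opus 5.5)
The statement is the Avalanche Principle (Proposition~\ref{prop:avalanche-principle}), quoted from \cite{goldstein2001holder}. I plan to reproduce the standard proof, which works with the singular value decomposition of each \(B_j\) and tracks most expanding and most contracted directions. For \(g\in\mathrm{SL}_2(\R)\) with \(\|g\|\ge\mu\gg1\), write \(g=\|g\|\,u_g^+ (v_g^+)^{T}+\|g\|^{-1}u_g^-(v_g^-)^T\), with orthonormal pairs \(u_g^\pm\) and \(v_g^\pm\). Then \(g\) is a rank-one matrix up to an error of relative size \(\|g\|^{-2}\le\mu^{-2}\).

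\textbf{Step 1 (two-matrix lemma).} For consecutive factors, one computes
\[
\|B_{j+1}B_j\|=\|B_{j+1}\|\,\|B_j\|\big(|\langle v^+_{j+1},u^+_j\rangle|+{\mathcal O}(\mu^{-2})\big).
\]
Hypothesis \eqref{eqn:avalanche-hyp-2} then says exactly that the angle cosines \(c_j:=|\langle v^+_{j+1},u^+_j\rangle|\) satisfy \(c_j\gtrsim\mu^{-1/2}\). So no adjacent pair is nearly degenerate, and the relative errors \(\mu^{-2}/c_j\lesssim\mu^{-3/2}\) are small.

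\textbf{Step 2 (induction on the product).} Let \(g_k=B_k\cdots B_1\). I prove inductively that the most expanded output direction \(u^+(g_k)\) stays within angle \({\mathcal O}(\mu^{-3/2}/c)\)-close to \(u^+_k\). I also prove that
\[
\log\|g_k\|=\log\|g_{k-1}\|+\log\|B_k\|+\log c_{k-1}+{\mathcal O}(\mu^{-1}).
\]
The reason is that \(B_k\) maps the vector \(u^+(g_{k-1})\) to size \(\|B_k\|\,(c_{k-1}+\text{small})\). The error in the output direction does not accumulate: \(B_k\) contracts all directions outside a cone of aperture \(\sim\mu^{-1}\) around \(v_k^-\) toward \(u_k^+\), and the lower bound on \(c_{k-1}\) keeps the incoming direction outside that cone. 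Hence the direction estimate resets at each step rather than adding up.

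\textbf{Step 3 (assembly).} Summing the increments gives
\[
\log\|g_n\|=\sum_{j=1}^n\log\|B_j\|+\sum_{j=1}^{n-1}\log c_j+{\mathcal O}(n/\mu).
\]
Substituting \(\log c_j=\log\|B_{j+1}B_j\|-\log\|B_{j+1}\|-\log\|B_j\|+{\mathcal O}(\mu^{-1})\) from Step~1, the telescoping leaves the interior norms \(\log\|B_j\|\) for \(2\le j\le n-1\) with coefficient \(-1\). This yields the stated inequality with error \(Cn/\mu\). The hypothesis \(\mu\ge n\) ensures that the total error is \({\mathcal O}(1)\) and keeps the induction consistent.

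The main obstacle is Step 2, the non-accumulation of directional errors. The per-step perturbation must be shown to be \({\mathcal O}(\mu^{-1})\) uniformly, and this uses the quantitative contraction of projective distance under \(B_k\) away from \(v_k^-\), which holds with factor \(\sim(\|B_k\|^2c)^{-1}\). I would verify this by an explicit projective-metric computation on the singular value decomposition.
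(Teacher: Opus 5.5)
The paper does not prove this proposition; it quotes it from \cite[Proposition~2.2]{goldstein2001holder}. Your outline is the standard singular-value/angle-tracking argument, and Steps~1 and~3 are correct. Step~2, however, gets the powers of \(c\) wrong in two places. Since \(c_j\) can be as small as \(\mu^{-1/2}\), these powers decide whether the error is \(n/\mu\).

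\begin{itemize}
\item Suppose you only know \(\angle(u^+(g_{k-1}),u^+_{k-1})=\mathcal O(\mu^{-3/2}/c)\). Then \(\bigl|\log|\langle v_k^+,u^+(g_{k-1})\rangle|-\log c_{k-1}\bigr|\) is only \(\mathcal O(\mu^{-3/2}/c^2)=\mathcal O(\mu^{-1/2})\). Summing gives \(n\mu^{-1/2}\), not the per-step \(\mathcal O(\mu^{-1})\) you assert, and this is not even bounded when \(\mu\sim n\).
\item The projective derivative of \(w\mapsto B_kw/\|B_kw\|\) for \(B_k\in\mathrm{SL}_2(\R)\) is \(\|B_kw\|^{-2}\approx(\|B_k\|c)^{-2}\lesssim\mu^{-1}\), not \((\|B_k\|^2c)^{-1}\).
\end{itemize}

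The correct bound comes from the exact form of your reset.
\begin{itemize}
\item For a unit vector \(w=\alpha v_k^++\beta v_k^-\), one has \(\tan\angle(B_kw,u_k^+)=\|B_k\|^{-2}|\beta|/|\alpha|\).
\item \(u^+(g_k)\) is the direction of \(B_kw\), where \(w\) is the normalization of \(g_{k-1}v^+(g_k)\).
\item Compare \(\|g_k\|\le\|g_{k-1}\|\,\|B_kw\|\) with \(\|g_k\|\ge\|g_{k-1}\|\,\|B_ku^+(g_{k-1})\|\). This gives \(|\alpha|\ge|\langle v_k^+,u^+(g_{k-1})\rangle|-\|B_k\|^{-2}\gtrsim c_{k-1}\), using \(\theta_{k-1}\ll c_{k-1}\) from the previous step.
\item Hence \(\theta_k:=\angle(u^+(g_k),u_k^+)\lesssim\mu^{-2}/c_{k-1}\lesssim\mu^{-3/2}\) for every \(k\), with no accumulation.
\end{itemize}

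The step-\(k\) error in your recursion for \(\log\|g_k\|\) is then \(\mathcal O(\theta_{k-1}/c_{k-1})=\mathcal O\bigl(\mu^{-2}/(c_{k-2}c_{k-1})\bigr)=\mathcal O(\mu^{-1})\). This uses the lower bound \(c_j\gtrsim\mu^{-1/2}\) from \eqref{eqn:avalanche-hyp-2} twice, which is exactly where the constant \(\tfrac12\) is sharp.

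The other contributions come from the rank-one approximations of \(B_k\) and of \(g_{k-1}\). They are of lower order, \(\mathcal O(\mu^{-4}c_{k-1}^{-2})\), because \(\|g_{k-1}\|\ge\mu\) by induction.

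With these corrections your argument closes with total error \(Cn/\mu\). Also, since the angle bound resets at every step, your induction never uses \(\mu\ge n\). That hypothesis only serves to make the right-hand side \(\mathcal O(1)\).
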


Recall the notation
\begin{align}\label{eqn:tn-Ln}
    t_n(x)=t_n(x;E)=\frac{1}{n}\log\|T_n^E(x)\|,
    \qquad
    L_n(E)=\E[t_n].
\end{align}
By the asymptotic formula \eqref{eqn:LE-linear}, we have
\begin{align}
    L_n(E)=c_aE+{\mathcal O}(E^{3/2}|\log E|^2).
    \notag
\end{align}
Since \(c_a>0\) by \eqref{eqn:ca-ass}, after decreasing \(E_0\) if necessary this implies
\begin{align}
    \big|L_n(E)-c_aE\big|
    \le \frac{1}{100}c_aE.
    \label{eqn:Ln-crho66}
\end{align}

For \(0<E<E_0\), \(n\ge n_0(E)\), and \(j\ge0\), define
\begin{align}
\Omega^1_n(j)
&=
\left\{
x\in \mathbb T :
\left|t_n\bigl(2^{jn}x\bigr)-c_aE\right|
>
\frac{c_aE}{100}
\right\}, \qquad
\Omega^2_n(j)
=
\left\{
x\in \mathbb T :
\left|t_{2n}\bigl(2^{jn}x\bigr)-c_aE\right|
>
\frac{c_aE}{100}
\right\}. \notag
\end{align}
For \(m\ge1\), set
\begin{align}
\Omega^0_m(n)
=
\bigcup_{j=0}^{m-1}\Omega^1_n(j)
\cup
\bigcup_{j=0}^{m-1}\Omega^2_n(j).
\notag
\end{align}

It follows from Theorem~\ref{thm:ldt-rhon} and Remark~\ref{rem:ldt-equivalent-centers}, with the fixed tolerance \(\varepsilon=1/100\), that, after replacing \(c>0\) by a smaller constant to absorb the factor \(2\) in the \(2n\)-scale estimate,
\begin{align}
\P\big[\Omega^1_n(j)\big]
\le
\exp\Big(-c\frac{E}{|\log E|^3}n\Big),
\qquad
\P\big[\Omega^2_n(j)\big]
\le
\exp\Big(-c\frac{E}{|\log E|^3}n\Big).
\notag
\end{align}

Take
\begin{align}
m=\Big\lfloor \frac{1}{2n}\exp \Big(\frac{1}{2}c\frac{E}{|\log E|^3} n\Big) \Big\rfloor,
\qquad
n_1=mn. \notag 
\end{align}
Then, for \(0<E<E_0\) and \(n\ge n_0(E)\),
\begin{align}
    m<n_1
    < \frac{1}{2}\exp \Big(\frac{1}{2}c\frac{E}{|\log E|^3} n\Big)
    < \exp \Big(\frac{99}{100}c_a E n\Big).
    \label{eqn:m612}
\end{align}
Here and below, we allow \(E_0>0\) to be decreased and \(n_0(E)\) to be increased, without changing the notation, whenever this is needed to absorb lower-order terms or polynomial and logarithmic prefactors into the exponential estimates. Similarly,
\begin{align}
    m
    >
    \frac{1}{4n}\exp \Big(\frac{1}{2}c\frac{E}{|\log E|^3} n\Big)
    >
    \exp \Big(\frac{1}{4}c\frac{E}{|\log E|^3} n\Big).
    \label{eqn:m613}
\end{align}
Therefore, using the estimates for \(\Omega_n^1(j)\) and \(\Omega_n^2(j)\),
\begin{align}
\P\big[\Omega^0_m(n)\big]
\le
2m\exp \Big(-c\frac{E}{|\log E|^3} n\Big)
<
\exp \Big(-\frac{1}{2}c\frac{E}{|\log E|^3} n\Big). \notag
\end{align}

To apply the Avalanche Principle, let
\begin{align}
   B_j=T^E_n\bigl(2^{jn}x\bigr), \qquad j=0,\ldots,m-1. \notag
\end{align}
Then
\begin{align}
t_n
=
\frac{1}{n}\log \|B_j\|, \qquad 
t_{2n}\bigl(2^{jn}x\bigr)
=
\frac{1}{2n}\log \|B_{j+1}B_j\|.
\notag
\end{align}

For \(x\notin \Omega_m^0(n)\) and \(j=0,\ldots,m-1\), we have
\[
\left|t_n\bigl(2^{jn} x \bigr)-c_aE\right|
<
\frac{1}{100}c_aE,
\qquad
\left|t_{2n}\bigl(2^{jn} x \bigr)-c_aE\right|
<
\frac{1}{100}c_aE.
\]
Equivalently,
\begin{align}
\frac{99}{100}c_aE\,n
<
\log\|B_j\|
<
\frac{101}{100}c_aE\,n,
\label{eqn:fn617}
\end{align}
and
\begin{align}
\frac{99}{100}c_aE\,2n
<
\log\|B_{j+1}B_j\|
<
\frac{101}{100}c_aE\,2n.
\label{eqn:f2n619}
\end{align}

It follows from \eqref{eqn:m612} and \eqref{eqn:fn617} that, for \(j=0,\ldots,m-1\),
\begin{align}
    \|B_j\|
    >
    \exp\Big(\frac{99}{100}c_a E n\Big)
    =:\mu
    >
    m. \notag
\end{align}
Thus \eqref{eqn:avalanche-hyp-1} holds. Moreover, for \(j=0,\ldots,m-2\), \eqref{eqn:fn617} and \eqref{eqn:f2n619} give
\begin{align}
&\Bigl|
\log \|B_{j+1}\|
+
\log \|B_j\|
-
\log \|B_{j+1}B_j\|
\Bigr| \notag\\
&\qquad\le
\bigl|\log \|B_{j+1}\|-c_a E n\bigr|
+
\bigl|\log \|B_j\|-c_a E n\bigr|
+
\bigl|c_a E\,2n-\log \|B_{j+1}B_j\|\bigr| \notag\\
&\qquad<
\frac{1}{100}c_aEn
+
\frac{1}{100}c_aEn
+
\frac{1}{100}c_aE\,2n
=
\frac{4}{100}c_aEn
<
\frac{1}{2}\frac{99}{100}c_aEn
=
\frac{1}{2}\log\mu. \notag
\end{align}
Hence \eqref{eqn:avalanche-hyp-2} also holds. Applying the Avalanche Principle, Proposition~\ref{prop:avalanche-principle}, to the sequence \(B_0,\ldots,B_{m-1}\), we obtain
\begin{align}
\Bigl|
\log \bigl\|B_{m-1}\cdots B_0\bigr\|
+
\sum_{j=1}^{m-2}\log \|B_j\|
-
\sum_{j=0}^{m-2}\log \bigl\|B_{j+1}B_j\bigr\|
\Bigr|
\le
C\frac{m}{\mu}. \notag
\end{align}

Dividing by \(n_1=mn\), and using the definitions of \(B_j\), \(t_n\), and \(t_{2n}\), this gives
\begin{align}
\Biggl|
\frac{1}{n_1}\log \bigl\|T^E_{n_1}(x)\bigr\|
+
\frac{1}{m}\sum_{j=1}^{m-2} t_n\bigl(2^{jn}x\bigr)
-
\frac{2}{m}\sum_{j=0}^{m-2} t_{2n}\bigl(2^{jn}x\bigr)
\Biggr|
\le
C\frac{m}{n_1\mu}
\le
\frac{C}{\mu}.
\label{eqn:Fn1-Fn-629}
\end{align}


For \(x\in \Omega_m^0(n)\) and \(0<E<E_0<1\), the left-hand side of \eqref{eqn:Fn1-Fn-629} is bounded trivially by a constant \(C_1>0\), depending only on bound \(a_+,a_-\) in \eqref{eqn:sample-bound}. Integrating \eqref{eqn:Fn1-Fn-629} over \(\T\setminus \Omega_m^0(n)\) and using this trivial bound on \(\Omega_m^0(n)\), we obtain
\begin{align}
\Biggl|
L_{n_1}(E)
+
\frac{m-2}{m}L_n(E)
-
\frac{2(m-1)}{m}L_{2n}(E)
\Biggr|
&\le
\frac{C}{\mu}
+
C_1 \P\big[\Omega_m^0(n)\big] \notag\\
&\le
C\exp \Big(-\frac{99}{100}c_aEn\Big)
+
C_1\exp \Big(-\frac{1}{2}c\frac{E}{|\log E|^3}n\Big) \notag\\
&\le
C\exp \Big(-\frac{1}{2}c\frac{E}{|\log E|^3}n\Big),
\label{eqn:Ln1-Ln2n-pre}
\end{align}
where \(C>0\) has been enlarged so that the first exponential term is absorbed into the second.

Combining \eqref{eqn:Ln1-Ln2n-pre} with \eqref{eqn:Ln-crho66} and \eqref{eqn:m613}, we get
\begin{align}
\bigl|L_{n_1}(E)+L_n(E)-2L_{2n}(E)\bigr|
&\le
\frac{2}{m}\,\bigl|L_n(E)-L_{2n}(E)\bigr|
+
C\exp \Big(-\frac{1}{2}c\frac{E}{|\log E|^3}n\Big) \notag\\
&\le
2\exp \Big(-\frac{1}{4}c\frac{E}{|\log E|^3}n\Big)
\frac{2c_aE}{100}
+
C\exp \Big(-\frac{1}{2}c\frac{E}{|\log E|^3}n\Big) \notag\\
&\le
C\exp \Big(-\frac{1}{4}c\frac{E}{|\log E|^3}n\Big).
\label{eqn:Ln1-Ln}
\end{align}


Applying the same argument to the scale \(2n_1=2mn\), with the same choice of \(m\), gives
\begin{align}
\bigl|L_{2n_1}(E)+L_n(E)-2L_{2n}(E)\bigr|
\le
C\exp \Big(-c\frac{E}{|\log E|^3} n\Big).
\label{eqn:L2n1-Ln}
\end{align}
Combining \eqref{eqn:Ln1-Ln} and \eqref{eqn:L2n1-Ln}, we obtain
\begin{align}
\bigl|L_{2n_1}(E)-L_{n_1}(E)\bigr|
\le
C\exp \Big(-c\frac{E}{|\log E|^3} n\Big).
\label{eqn:L2n1-Ln1}
\end{align}

Starting from \(n_0=n\), we now construct inductively a sequence \((n_s)_{s\ge0}\) by
\begin{align}
    n_{s+1}
    =
    n_s
    \Big\lfloor
    \frac{1}{2n_s}
    \exp \Big(\frac{1}{2}c\frac{E}{|\log E|^3} n_s\Big)
    \Big\rfloor .
    \label{eqn:ns-choice}
\end{align}
By the convention above, \(n_{s+1}\ge n_s+n\), and hence \(n_s\ge sn\). Applying \eqref{eqn:L2n1-Ln1} at scale \(n_s\) gives
\begin{align}
\bigl|L_{2n_{s+1}}(E)-L_{n_{s+1}}(E)\bigr|
\le
C\exp \Big(-c\frac{E}{|\log E|^3} n_s\Big) \le
C\exp \Big(-c\frac{E}{|\log E|^3} sn\Big).
\label{eqn:L2ns-Lns}
\end{align}
Summing \eqref{eqn:L2ns-Lns} over \(s\ge1\), we obtain
\begin{align}
  \bigl|L(E)-L_{n_1}(E)\bigr|
 \le
  \sum_{s=1}^{\infty}
  \bigl|L_{2n_{s+1}}(E)-L_{n_{s+1}}(E)\bigr| 
  &\le
  C\sum_{s=1}^{\infty}
  \exp \Big(-c\frac{E}{|\log E|^3} sn\Big) \notag\\
  &\le
  C\exp \Big(-c\frac{E}{|\log E|^3} n\Big), \notag 
\end{align}
after decreasing \(c>0\) if necessary. Combining this with \eqref{eqn:Ln1-Ln}, we conclude that
\begin{align}
\bigl|L(E)+L_n(E)-2L_{2n}(E)\bigr|
\le
C\exp \Big(-c\frac{E}{|\log E|^3} n\Big). \notag 
\end{align}
This is the finite-scale estimate stated in \eqref{eqn:L-Ln-L2n} in the main text.


\section{Green's Function Estimate and Anderson Localization near zero}\label{sec:AL}

In this appendix, we provide the details of the Green's function estimates used to prove Anderson localization near the bottom of the spectrum. The argument follows the strategy of \cite[\S8,\S9]{BourgainSchlag2000CMP} for Schr\"odinger operators generated by the doubling map. The modifications needed for the present div-grad Jacobi model mainly concern the relation between transfer matrices and finite-volume determinants, and the corresponding form of Cramer's rule.

The goal is to prove the Green's function decay estimate \eqref{eqn:green-decay}. The proof proceeds in three steps. First, we relate finite-volume determinants to transfer matrices. Second, we use the LDT to obtain uniform upper bounds and suitable lower bounds for transfer-matrix growth. Finally, Cramer's rule and a resolvent expansion yield exponential off-diagonal decay of Green's functions, which implies Anderson localization by the standard Shnol argument.

We recall the finite-volume boundary convention in more detail. For an interval \(\Lambda=[c,d]\subseteq\Z_{\ge0}\), \(H_\Lambda\) is the restriction of \(H\) to \(\Lambda\) with Dirichlet boundary conditions on the exterior boundary
\[
    \partial^+\Lambda=\{c-1,d+1\},
    \qquad
    u_{c-1}=u_{d+1}=0.
\]
Thus, for \(n\in\Lambda\),
\begin{align}
    (H_\Lambda u)_n
    =
    -a_{n+1}u_{n+1}
    +(a_n+a_{n+1})u_n
    -a_nu_{n-1}.
   \notag
\end{align}
For \(E\notin\sigma(H_\Lambda)\), the finite-volume Green's function is
\begin{align}
    G_\Lambda(x;E)=\big(H_\Lambda(x)-E\big)^{-1}. \notag
\end{align}
For \(n_1,n_2\in\Lambda\), its kernel is denoted by
\begin{align}
    G_\Lambda(x;E)(n_1,n_2)
    =
    \braket{n_1}{\big(H_\Lambda(x)-E\big)^{-1}}{n_2}. \notag
\end{align}
In particular, when \(\Lambda=[0,n-1]\), we write \(H_n=H_\Lambda\) and \(G_n=G_\Lambda\), as in \eqref{eqn:Hn-def}--\eqref{eqn:Gn-kernel-def}.

We will use the following standard spectral-theoretic reduction.

\begin{lemma}\label{lem:AL-compact-reduction}
Let \(0<E_1<E_2<E_0\), and set \(I=[E_1,E_2]\subsetneq(0,E_0)\). Suppose that, for a.e. \(x\in\T\), the operator \(H(x)\) has pure-point spectrum in \(I\cap\sigma(H(x))\), with exponentially decaying eigenfunctions. If this holds for every compact interval \(I\subsetneq(0,E_0)\), then, for a.e. \(x\in\T\), the operator \(H(x)\) has pure-point spectrum in \([0,E_0]\cap\sigma(H(x))\), with exponentially decaying eigenfunctions.
\end{lemma}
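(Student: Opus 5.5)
The plan is to write \([0,E_0]\) as a countable union of compact subintervals together with the endpoints, and then to use that a countable intersection of full-measure sets still has full measure. Choose an increasing sequence \(E_1^{(k)}\downarrow 0\) and \(E_2^{(k)}\uparrow E_0\) with \(0<E_1^{(k)}<E_2^{(k)}<E_0\), and put \(I_k=[E_1^{(k)},E_2^{(k)}]\). Then \(\bigcup_k I_k=(0,E_0)\). For each \(k\), the hypothesis supplies a full-measure set \(\Omega_k\subset\T\). On \(\Omega_k\), the spectral measures of \(H(x)\) restricted to \(I_k\) are pure point, and every eigenfunction with eigenvalue in \(I_k\) decays exponentially. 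We then set \(\Omega=\bigcap_k\Omega_k\), which again has full measure.

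Next, fix \(x\in\Omega\) and a vector \(\phi\in\ell^2(\Z_{\ge0})\), and let \(\mu_\phi\) be its spectral measure. For each \(k\), the restriction \(\mu_\phi|_{I_k}\) is pure point. The continuous part of \(\mu_\phi\) restricted to \((0,E_0)\) is bounded by the sum of the continuous parts on the sets \(I_k\), all of which vanish. So it vanishes by countable subadditivity. Every eigenvalue in \((0,E_0)\) lies in some \(I_k\), and therefore has an exponentially decaying eigenfunction.

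It remains to treat the two endpoints \(\{0\}\) and \(\{E_0\}\). A single point carries no continuous spectral measure, so \(\mu_\phi|_{[0,E_0]}\) is pure point. This step needs only the observation that the continuous part of any measure gives zero mass to points. For the exponential decay of eigenfunctions at the endpoints, the argument runs as follows. At \(E=0\), any solution of \(Hu=0\) with \(u_{-1}=0\) satisfies \(a_{n+1}(u_{n+1}-u_n)=a_n(u_n-u_{n-1})=\cdots=0\), so \(u\) is constant. A nonzero constant is not in \(\ell^2\), hence \(0\) is not an eigenvalue and there is nothing to check there. At \(E_0\), there is the freedom to shrink \(E_0\) slightly, which the paper's conventions allow. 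Alternatively, if \(E_0\) is an eigenvalue, one applies the argument with a slightly larger \(E_0'\) for which the hypotheses still hold, so that \(E_0\) falls inside some compact \(I\subsetneq(0,E_0')\).

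The main obstacle is this endpoint \(E_0\). My first attempt would be to note that the results feeding this lemma are proved on an open interval \((0,E_0)\) whose right end can be enlarged slightly, or else that \(E_0\) can be decreased as in the paper's stated conventions. The remaining steps are routine measure theory.
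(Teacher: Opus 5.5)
Your exhaustion argument is sound as far as it goes, but the proposal has a genuine gap at the endpoint $E_0$, and your treatment of $E=0$ contains a computational slip. The paper gives no proof of this lemma; it calls it a standard reduction.

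\textbf{What works.} Intersect the countably many full-measure sets attached to $I_k$. Use countable subadditivity for the continuous parts, and note that an atomless measure gives no mass to $\{0\}$ or $\{E_0\}$. This correctly shows that, for a.e.\ $x$, every spectral measure restricted to $[0,E_0]$ is pure point, and every eigenvalue in $(0,E_0)$ has an exponentially decaying eigenfunction.

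\textbf{The slip at $E=0$.} Your conclusion is right, but the computation is wrong. With $u_{-1}=0$ the telescoped flux is $a_n(u_n-u_{n-1})=a_0(u_0-u_{-1})=a_0u_0$, not $0$. So $u$ is not constant: $u_n=u_0\bigl(1+a_0\sum_{j=1}^n a_j^{-1}\bigr)$, which grows linearly unless $u_0=0$. A cleaner route is the identity $\langle u,Hu\rangle=\sum_{n\ge0}a_n|u_n-u_{n-1}|^2$ (with $u_{-1}=0$). For an $\ell^2$ solution of $Hu=0$ this vanishes only if $u\equiv0$.

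\textbf{The gap at $E_0$.} You have not shown exponential decay of eigenfunctions at $E=E_0$, and neither of your remedies proves the lemma as stated.
\begin{enumerate}[(i)]
\item Shrinking $E_0$ replaces the conclusion on $[0,E_0]$ by one on $[0,E_0']$ with $E_0'<E_0$. That weaker statement needs no further argument, since $[E_1,E_0']$ is already a compact subinterval of $(0,E_0)$.
\item Enlarging $E_0$ uses hypotheses at energies $\ge E_0$ that the lemma does not grant. The paper's LDT is only available on an open range $0<E<E_0(\varepsilon)$.
\end{enumerate}
The hypothesis concerns only compact subsets of the open interval $(0,E_0)$, so by itself it says nothing about the single energy $E_0$. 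It does not exclude that $E_0$ is an eigenvalue of $H(x)$ for $x$ in a set of positive measure, with an eigenfunction that fails to decay exponentially.

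To close the argument you need an additional, model-specific input: for this fixed energy, $\P\{x: E_0\in\sigma_p(H(x))\}=0$. This does not come for free. The usual proof of such a statement is Pastur's argument, which combines continuity of the IDS, the identity $\E\langle\delta_0,\one_{\{E\}}(H)\delta_0\rangle=d\mathcal N(\{E\})$, and covariance. That argument is for whole-line covariant families. Here the operator lives on the half-line over the non-invertible doubling map, so it does not transfer verbatim.

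Without such an input, what your argument actually proves is the conclusion on $[0,E_0)$. That is all Theorem~\ref{thm:AL} needs, because $E_0$ may be decreased there. You should state the result in that form, rather than appeal to the paper's convention of shrinking $E_0$ inside a lemma whose $E_0$ is fixed.
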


By Lemma~\ref{lem:AL-compact-reduction}, it is enough to prove pure-point spectrum with exponentially decaying eigenfunctions on a fixed compact interval
\[
    I=[E_1,E_2]\subsetneq(0,E_0).
\]
Throughout the rest of this appendix, we fix such an interval \(I\). We work under the positivity assumption \eqref{eqn:ca-ass} and use the \(\varepsilon\)-dependent large-deviation estimates from Theorem~\ref{thm:ldt-rhon} and Remark~\ref{rem:ldt-equivalent-centers}. 

By \eqref{eqn:ca-ass}, \(L(E)\ge \frac12 c_aE\) for \(0<E<E_0\). Since \(I\subsetneq(0,E_0)\) is compact, there exists \(c_I>0\), depending only on \(I\), such that
\begin{align}
    L(E)\ge c_I,\qquad E\in I.
    \label{eqn:cI-ass}
\end{align}


To bound the Green's function from above, we use Cramer's rule. Set
\begin{align}
    D_n^E(x):=\det\bigl(H_n(x)-E\bigr),
    \qquad
    D_0^E:=1.  \label{eqn:Dn}
\end{align}
Recall that we use the \(0\)-based convention \(H_n=H_{[0,n-1]}\), so the kernel indices of \(G_n(x;E)\) range from \(0\) to \(n-1\). Thus, for \(0\le n_1\le n_2\le n-1\), Cramer's rule gives
\begin{align}\label{eqn:cramer}
    G_n(x;E)(n_1,n_2)
    =
    \frac{
    \left(\prod_{j=n_1+1}^{n_2}a_j(x)\right)
    D_{n_1}^E(x)\,
    D_{n-n_2-1}^E(2^{n_2+1}x)
    }
    {
    D_n^E(x)
    }.
\end{align}
Here an empty product is interpreted as \(1\).

The following identity links the transfer matrix to the finite-volume determinants. It will be the main bridge between Cramer's rule and transfer-matrix growth estimates:
\begin{align}\label{eqn:Tn-full-det-formula}
    T_n^E(x)
    =
    \frac{1}{a_0a_1\cdots a_{n-1}}
    \begin{pmatrix}
        D_n^E(x) & -a_0^2 D_{n-1}^E(2x) \\
        D_{n-1}^E(x) & -a_0^2 D_{n-2}^E(2x)
    \end{pmatrix},
    \qquad n\ge 1.
\end{align}
The proof is a straightforward induction; we include the details for completeness.

\begin{proof}[Proof of \eqref{eqn:Tn-full-det-formula}]
    
By definition,
\[
    D_1^E(x)=a_0+a_1-E.
\]
For \(n\ge2\), expanding the determinant along the last row gives the recursion
\begin{align} 
    D_n^E(x)
    =
    \bigl(a_{n-1}+a_n-E\bigr)D_{n-1}^E(x)
    -
    a_{n-1}^2D_{n-2}^E(x).   \notag 
\end{align}
This recursion implies
\begin{align}
    \begin{pmatrix}
        D_n^E(x) \\ D_{n-1}^E(x)
    \end{pmatrix}
    =
    \bigl(a_0a_1\cdots a_{n-1}\bigr)\,
    T_n^E(x)
    \begin{pmatrix}
        1 \\ 0
    \end{pmatrix},
    \qquad n\ge1.  \notag 
\end{align}
Equivalently,
\begin{align}\label{eqn:Tn-det-relation}
    T_n^E(x)
    \begin{pmatrix}
        1 \\ 0
    \end{pmatrix}
    =
    \frac{1}{a_0a_1\cdots a_{n-1}}
    \begin{pmatrix}
        D_n^E(x) \\ D_{n-1}^E(x)
    \end{pmatrix}.
\end{align}

It remains to compute the second column of \(T_n^E(x)\). Since
\begin{align}
    A_0^E(x)\begin{pmatrix}0\\1\end{pmatrix}
    =
    \frac1{a_0}
    \begin{pmatrix}
        a_1+a_0-E & -a_0^2\\
        1 & 0
    \end{pmatrix}
    \begin{pmatrix}
        0\\1
    \end{pmatrix}
    =
    -a_0
    \begin{pmatrix}
        1\\0
    \end{pmatrix}, \notag 
\end{align}
we have
\begin{align}
    T_n^E(x)\begin{pmatrix}0\\1\end{pmatrix}
    =
    -a_0\,
    A_{n-1}^E(x)\cdots A_1^E(x)
    \begin{pmatrix}
        1\\0
    \end{pmatrix}. \notag 
\end{align}
Applying \eqref{eqn:Tn-det-relation} to the shifted cocycle gives
\begin{align}
    A_{n-1}^E(x)\cdots A_1^E(x)
    \begin{pmatrix}
        1\\0
    \end{pmatrix}
    =
    \frac{1}{a_1a_2\cdots a_{n-1}}
    \begin{pmatrix}
         D_{n-1}^E(2x)\\
         D_{n-2}^E(2x)
    \end{pmatrix}. \notag 
\end{align}
Therefore,
\begin{align}
    T_n^E(x)
    \begin{pmatrix}
        0\\1
    \end{pmatrix}
    =
    \frac{1}{a_0a_1\cdots a_{n-1}}
    \begin{pmatrix}
        -a_0^2D_{n-1}^E(2x)\\
        -a_0^2D_{n-2}^E(2x)
    \end{pmatrix}. \notag 
\end{align}
Together with \eqref{eqn:Tn-det-relation}, this proves \eqref{eqn:Tn-full-det-formula}.

\end{proof}

\subsection{A uniform upper bound and a sampled lower bound for transfer matrices}

To bound the Green's function through Cramer's rule \eqref{eqn:cramer} and the determinant-transfer relation \eqref{eqn:Tn-full-det-formula}, we need information on both upper and lower bounds of the transfer-matrix growth. Because \(\log\|T_n^E(x)\|\) is subadditive in scale, upper bounds are more naturally stable under blocking. Pointwise lower bounds are more delicate. In this subsection we prove a uniform upper bound and a weakened, sampled lower bound. The strengthening of the lower bound along a denser part of the orbit is left to the next subsection. The proof uses dyadic large-deviation estimates, starting with the following sparse-orbit version of Lemma~\ref{lem:large-deviation-dyadic-average}.

\begin{lemma}[Sparse dyadic averages]\label{lem:large-deviation-sparse-dyadic-average}
Let \(K>1\), and assume that \(F\in C^1(\T)\) satisfies \(|F|\le 1\) and \(|F'|\le K\). Then there exist absolute constants \(C,c>0\) such that, for every \(\delta>0\) and every \(k\in\Z_+\), whenever
\begin{align}
    m\ge C\delta^{-2}
    \max\left\{
        1,\frac{\log(K/\delta)}{k}
    \right\},
    \label{eqn:sparse-average-parameter-condition}
\end{align}
one has
\begin{align}
\mathbb P\Bigg[
x\in\mathbb T:
\left|
\E[F]
-
\frac{1}{m}\sum_{j=0}^{m-1}F\bigl(2^{jk}x\bigr)
\right|
>
\delta
\Bigg]
\le
\exp\left(
-c\,
\min\left\{
1,\frac{k}{\log(K/\delta)}
\right\}\delta^2 m
\right).
\label{eqn:large-deviation-sparse-dyadic-average}
\end{align}
\end{lemma}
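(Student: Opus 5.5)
We adapt the proof of Lemma~\ref{lem:large-deviation-dyadic-average}, now along the sparse orbit \(x,2^kx,2^{2k}x,\dots\). First reduce to \(\E[F]=0\) by replacing \(F\) with \(F-\E[F]\); this at most doubles \(|F|\) and leaves \(F'\) unchanged, so the constants are only rescaled. Write \(F_j:=F(2^{jk}x)\). By \eqref{eqn:doubling-conditional-expectation}, \(\E_r[F_j]=0\) whenever \(r\le jk\). By \eqref{eqn:Er-appro-F}, \(|F_j-\E_{jk+T}[F_j]|\le K2^{-T}\) for any \(T\ge0\). We choose the lag \(T\) to be the least integer with \(K2^{-T}\le\delta/4\), so that \(T\sim\log(K/\delta)\). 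Replacing each \(F_j\) by \(\E_{jk+T}[F_j]\) then costs at most \(\delta/4\) in the average.

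Next we group indices so that the conditional expectations become martingale differences. Let \(p:=\lceil T/k\rceil\), the number of sparse steps needed to cover a lag of \(T\). If \(j\equiv r \pmod p\) and \(j'=j+p\), then \(j'k\ge jk+T\). Hence \(d_j:=\E_{jk+T}[F_j]\) is \(\mathcal F_{j'k}\)-measurable, while \(\E_{jk}[d_j]=\E_{jk}[F_j]=0\).

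For a fixed residue class \(r\in\{0,\dots,p-1\}\), take the sampled filtration \(\mathcal F_{s_i}\) with \(s_i=(r+ip)k\), interleaved with the shifts \(s_i+T\le s_{i+1}\). By the first part of Lemma~\ref{lem:subsequence-martingale-transform}, applied to the filtration indexed by \(\mathcal F_{(r+ip)k}\) with \(d_{r+ip}\) measurable at the next index, the partial sums \(\sum_i d_{r+ip}\) form a martingale. Its increments are bounded by \(1\) and it has about \(m/p\) terms.

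Azuma's inequality \eqref{eqn:azuma}, applied to each class with deviation \(\frac{\delta}{2}\cdot\frac{m}{p}\), gives the bound \(2\exp(-c\delta^2 m/p)\). A union bound over the \(p\) classes then yields
\[
\P\Big(\Big|\frac1m\sum_{j}\E_{jk+T}[F_j]\Big|>\frac{\delta}{2}\Big)\le 2p\exp\big(-c\delta^2 m/p\big).
\]
The leftover incomplete block has fewer than \(p\) terms, each with \(|F|\le1\). It contributes at most \(p/m\le\delta/4\) once \(m\ge 4p/\delta\).

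Finally we translate to the stated form. Since \(1/p\gtrsim\min\{1,k/\log(K/\delta)\}\), the exponent is of the claimed order \(c\min\{1,k/\log(K/\delta)\}\delta^2m\). The main technical point, and the step requiring the most care, is absorbing the prefactor \(2p\) and the tail requirement \(m\ge4p/\delta\) into the hypothesis \eqref{eqn:sparse-average-parameter-condition}. For the tail, we have \(p/m\le C^{-1}\delta^2\cdot\frac{p}{\max\{1,T/k\}}\lesssim C^{-1}\delta^2\le\delta/4\) for \(C\) large, since \(\delta\) may be assumed \(\le2\) (otherwise the probability is zero). For the prefactor, the exponent is \(\ge cC\) under \eqref{eqn:sparse-average-parameter-condition}. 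However, \(\log(2p)\) may grow like \(\log\log(K/\delta)\), so it cannot be absorbed by a constant alone. To handle this, we use that \(\delta^2m/p\ge C\), so \(\delta^2 m/p\) dominates \(\log p\) up to the constant, provided we allow \(C\) to absorb it via \(x e^{-cx}\le e^{-cx/2}\) applied with \(x\ge p\)-independent size. If that fails, we weaken \(c\) by a factor \(2\) and strengthen the hypothesis with the same absolute constants. Since \(p\le x:=\delta^2m/p\cdot(p^2/(\delta^2 m))\), one checks that \(2p\,e^{-cx}\le e^{-cx/2}\) whenever \(x\ge C\log p\), which follows from the hypothesis because \(\delta^{-2}\ge1\) and \(m/p\ge C\delta^{-2}\).
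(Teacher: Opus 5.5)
Your route is the paper's: lag-$T$ conditional expectations with $T\sim\log(K/\delta)$, residue classes modulo $p=\lceil T/k\rceil$ (the paper's $q$), Azuma on each class, and a union bound. The centering, the measurability check $jk+T\le (j+p)k$, and the tail bookkeeping are fine; the leftover block is not even needed, since the residue classes already exhaust $\{0,\dots,m-1\}$.

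The gap is the last step, absorbing the prefactor $2p$. You correctly note that $\log(2p)$ cannot be absorbed by a constant, but your fix does not work. Hypothesis \eqref{eqn:sparse-average-parameter-condition} gives only $y:=\delta^2m/p\ge C'$. The absorption $2p\,e^{-cy}\le e^{-cy/2}$ requires $y\ge (2/c)\log(2p)$, and $p\sim\max\{1,\log(K/\delta)/k\}$ is unbounded. The quantity $\delta^2m/p\cdot(p^2/(\delta^2m))$ that you call $x$ is identically $p$, so ``$p\le x$'' is vacuous. You then reuse the same letter for the exponent $\delta^2m/p$. The inequality $\delta^2m/p\ge C\log p$ does not follow from $m/p\ge C\delta^{-2}$. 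Concretely, take $k=1$, $\delta$ fixed, $K\to\infty$, and $m$ at the threshold in \eqref{eqn:sparse-average-parameter-condition}. Then $y$ stays bounded while $p\to\infty$, so the union bound $2p\,e^{-cy}$ eventually exceeds $1$ and gives nothing. ``Strengthening the hypothesis'' would prove a different lemma.

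The paper's proof meets the same obstruction. It settles it by requiring $m>\max\{2q,4q\log(2q)/(c\delta^2)\}$ before \eqref{eqn:sparse-average-general-q}, a hypothesis stronger than \eqref{eqn:sparse-average-parameter-condition} by a factor $\log(2q)$. That is harmless where the lemma is used (Lemma~\ref{lem:Tn-upp-ldt714}, with $m=k^4$).

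To get the lemma exactly as stated, replace the union bound by exponential moments:
\begin{itemize}
\item For each class martingale $X^r$ with $m_r$ increments bounded by $1$, the proof of Azuma's inequality gives $\E e^{\theta X^r}\le e^{\theta^2 m_r/2}$.
\item Write the martingale part of the average as $S=\sum_r w_rA_r$, with $A_r=X^r/m_r$ and $w_r=m_r/m$.
\item Convexity of the exponential yields $\E e^{\lambda S}\le \sum_r w_r\,\E e^{\lambda A_r}\le e^{\lambda^2/(2\min_r m_r)}\le e^{\lambda^2 p/m}$ for $m\ge 2p$.
\item The exponential Markov inequality with $\lambda=\delta m/(4p)$ then gives $\P(|S|>\delta/2)\le 2e^{-\delta^2 m/(16p)}$.
\end{itemize}
There is no factor $p$, and the remaining factor $2$ is absorbed because $\delta^2m/p\ge C'$.
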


\begin{remark}
The proof uses the same martingale construction as in Lemma~\ref{lem:large-deviation-dyadic-average}, applied now to a sampled dyadic orbit. More precisely, when the sampling gap \(k\) is smaller than the smoothing scale \(\log(K/\delta)\), one groups the sampled orbit into residue classes so that each class has an effective spacing comparable to \(\log(K/\delta)\).

The estimate interpolates between the original dyadic average and the genuinely sparse case. When \(k\lesssim \log(K/\delta)\), the rate function is of order
\[
    \min\left\{
    1,\frac{k}{\log(K/\delta)}
    \right\}
    \sim
    \frac{k}{\log(K/\delta)},
\]
which is consistent with the logarithmic loss in \eqref{eqn:large-deviation-dyadic-average} when \(k=1\), up to constants. When \(k\gtrsim \log(K/\delta)\), the sampled dyadic map \(x\mapsto 2^k x\) has stronger effective hyperbolicity and mixing properties, and the estimate has no logarithmic loss. In this range, the right-hand side of \eqref{eqn:large-deviation-sparse-dyadic-average} reduces to \(\exp(-c\delta^2m)\).
\end{remark}




\begin{proof}
The proof follows the same martingale construction as in the proof of Lemma~\ref{lem:large-deviation-dyadic-average}. Similarly, we may assume \(\E(F)=0\). Choose \(T\sim \log(K/\delta)\) so that \(K2^{-T}<\delta/2\), and set
\[
    q=\left\lceil \frac{T}{k}\right\rceil .
\]
Then \(qk\ge T\). We split the average into residue classes modulo \(q\). Writing
\[
    j=\ell q+r,\qquad 0\le r\le q-1,
\]
we have
\begin{align}
    \frac{1}{m}\sum_{j=0}^{m-1}F(2^{jk}x)
    =
    \sum_{r=0}^{q-1}\frac{m_r}{m}
    \left(
    \frac{1}{m_r}
    \sum_{\ell=0}^{m_r-1}F\bigl(2^{(\ell q+r)k}x\bigr)
    \right),
    \notag
\end{align}
where \(m_r\) is the number of indices \(0\le j\le m-1\) of the form \(j=\ell q+r\). In particular,
\[
    \frac{m}{q}-1\le m_r\le \frac{m}{q}+1
\]
for each residue class that occurs.

For each fixed \(r\), we have
\[
    (\ell q+r)k=rk+\ell(qk),
\]
so the corresponding sampled orbit has effective spacing \(qk\ge T\). Therefore, by the same conditional-expectation approximation used in \eqref{eqn:Er-appro-F}--\eqref{eqn:finite-block-conditional-approx},
\begin{align}
    \left|
    \frac{1}{m_r}\sum_{\ell=0}^{m_r-1}F\bigl(2^{(\ell q+r)k}x\bigr)
    -
    \frac{1}{m_r}\sum_{\ell=0}^{m_r-1}
    \E_{rk+(\ell+1)qk}
    \bigl[
    F\bigl(2^{rk+\ell qk}x\bigr)
    \bigr]
    \right|
    &\le
    K2^{-qk}
    \le
    K2^{-T}
    <
    \frac{\delta}{2}.
    \notag
\end{align}

For this fixed residue class \(r\), set
\[
    d_\ell^{\,r}
    :=
    \E_{rk+(\ell+1)qk}
    \bigl[
    F\bigl(2^{rk+\ell qk}x\bigr)
    \bigr],
    \qquad 0\le \ell\le m_r-1.
\]
By the tower property and \eqref{eqn:doubling-conditional-expectation},
\[
    \E_{rk+\ell qk}\big[d_\ell^{\,r}\big]=0.
\]
Thus the partial sums of \(d_\ell^{\,r}\) form a martingale with respect to the sampled filtration, with increments bounded by \(1\).

Applying Azuma's inequality \eqref{eqn:azuma} to the rescaled martingale sum over this residue class, with the choice
\[
    \delta'=\frac{\delta}{2}\sqrt{m_r},
\]
gives
\begin{align}
    \P\Bigg(
    \left|
    \frac{1}{m_r}\sum_{\ell=0}^{m_r-1}
    \E_{rk+(\ell+1)qk}
    \bigl[
    F\bigl(2^{rk+\ell qk}x\bigr)
    \bigr]
    \right|
    >
    \frac{\delta}{2}
    \Bigg)
    \le
    2\exp(-c\delta^2 m_r).
    \notag
\end{align}
Combining this with the conditional-expectation approximation gives
\begin{align}
    \P\Bigg(
    \left|
    \frac{1}{m_r}
    \sum_{\ell=0}^{m_r-1}F\bigl(2^{(\ell q+r)k}x\bigr)
    \right|
    >
    \delta
    \Bigg)
    \le
    2\exp(-c\delta^2 m_r).
    \notag
\end{align}
Taking a union bound over \(0\le r\le q-1\), we obtain
\begin{align}
    \P\Bigg[
    x\in\mathbb T:
    \left|
    \frac{1}{m}\sum_{j=0}^{m-1}F(2^{jk}x)
    \right|
    >
    \delta
    \Bigg]
    \le
    \sum_{r=0}^{q-1}2\exp(-c\delta^2 m_r).
    \notag
\end{align}
Since \(m_r\ge m/q-1\), if
\[
    m>\max\left\{2q,\frac{4q\log(2q)}{c\delta^2}\right\},
\]
then \(m_r\ge m/(2q)\) for all \(r\), and the prefactors are absorbed into the exponential with a relaxed exponent. Therefore,
\begin{align}
    \P\Bigg[
    x\in\mathbb T:
    \left|
    \frac{1}{m}\sum_{j=0}^{m-1}F(2^{jk}x)
    \right|
    >
    \delta
    \Bigg]
    \le
    \exp\left(-c\delta^2\frac{m}{4q}\right).
    \label{eqn:sparse-average-general-q}
\end{align}
If \(T\ge k\), then \(q\le 2T/k\le C\log(K/\delta)/k\), and hence \eqref{eqn:sparse-average-general-q} implies
\begin{align}
    \P\Bigg[
    x\in\mathbb T:
    \left|
    \frac{1}{m}\sum_{j=0}^{m-1}F(2^{jk}x)
    \right|
    >
    \delta
    \Bigg]
    \le
    \exp\left(
    -c'\delta^2\frac{km}{\log(K/\delta)}
    \right).
    \notag
\end{align}

In the range \(k\ge T\), we have \(q=1\). Thus no residue-class decomposition is needed, and \eqref{eqn:sparse-average-general-q} reduces to
\begin{align}
    \P\Bigg[
    x\in\mathbb T:
    \left|
    \frac{1}{m}\sum_{j=0}^{m-1}F(2^{jk}x)
    \right|
    >
    \delta
    \Bigg]
    \le
    \exp\left(-c\delta^2m\right),
    \notag
\end{align}
after increasing the lower bound on \(m\) and decreasing \(c>0\) if necessary. This proves \eqref{eqn:large-deviation-sparse-dyadic-average}.
\end{proof}

\begin{lemma}\label{lem:Tn-upp-ldt714}
Let \(I\) and \(c_I\) be as in \eqref{eqn:cI-ass}. There is a full-measure set
\(\Omega_1=\Omega_1(I)\subset\T\), independent of \(\eps\), such that, for every \(x\in\Omega_1\) and every \(\eps>0\), there exists \(n_0=n_0(x,I,\eps)\) such that, for all \(E\in I\) and all \(n\ge n_0\), the following hold:
\begin{enumerate}[(a)]
    \item For every \(0\le j\le n^2\),
    \begin{align}
        \frac{1}{n}\log\|T_n^E(2^jx)\|
        \le
        L(E)+4\eps c_I.
        \label{eqn:Tn-upper717}
    \end{align}

    \item There exists \(p\in[n^4,2n^4-1]\) such that
    \begin{align}
        \frac{1}{n}\log\|T_n^E(2^p x)\|
        \ge
        L_n(E)-3\eps c_I.
        \label{eqn:Tn-lower-nonunif}
    \end{align}
\end{enumerate}
\end{lemma}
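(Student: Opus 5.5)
Both parts follow the Bourgain--Schlag pattern: for a fixed energy, the LDT (Theorem~\ref{thm:ldt-rhon} in the centered forms of Remark~\ref{rem:ldt-equivalent-centers}) bounds exceptional sets. We then make the bounds uniform in \(E\in I\) by a polynomial energy net combined with Lipschitz continuity in \(E\). Finally, Borel--Cantelli over \(n\) produces the full-measure set \(\Omega_1\). To make \(\Omega_1\) independent of \(\eps\), we run the construction for \(\eps=1/k\), \(k\in\Z_+\), and intersect the countably many full-measure sets; since the conclusions are monotone in \(\eps\), this gives all \(\eps>0\).

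\emph{Part (a).} Since \(I\) is compact in \((0,E_0)\), the LDT rate \(cE/|\log E|^3\) is bounded below on \(I\) by some \(c'_I>0\). For fixed \(E\), the LDT with tolerance \(\eps\) gives
\[
\P\Big(\tfrac1n\log\|T_n^E(x)\|>L(E)+\eps L(E)\Big)\le e^{-c'_In}.
\]
Invariance of Lebesgue measure under \(x\mapsto 2^jx\) gives the same bound for each \(2^jx\), so a union bound over \(0\le j\le n^2\) costs only a factor \(n^2+1\).

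Next take an energy net \(\{E_\ell\}\subset I\) of mesh \(e^{-Cn}\), with \(C\) large. By the derivative bound \eqref{eqn:tn-prime}, \(|\partial_E\log\|T_n^E\||\le ne^{Cn}\), so \(\frac1n\log\|T_n^E\|\) moves by at most \(\eps c_I\) between neighbouring net points. The continuity of \(L\) from Theorem~\ref{thm:holder}, or uniform continuity on \(I\), controls the variation of \(L(E)\) across a net cell. The net has \(e^{Cn}\) points, and this is the \textbf{main obstacle}: the net cardinality has to be beaten by the LDT exponent. If a crude Lipschitz constant \(e^{Cn}\) forces the net to be too fine, I would instead use a net of size \(e^{c'_In/2}\), or bound \(\partial_E\) through the upper LDT itself as in Bourgain--Schlag. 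Alternatively, the uniform upper bound can be derived via subadditivity at a smaller scale \(n'\ll n\): apply the LDT at scale \(n'\) along the orbit, use an energy net adapted to scale \(n'\), and pay only a subexponential loss. This is the route I would actually follow. Blocking \(\log\|T_n\|\le\sum\log\|T_{n'}(2^{in'}\cdot)\|\) and using a sparse LDT (Lemma~\ref{lem:large-deviation-sparse-dyadic-average}) for the block averages gives the upper bound \(L(E)+4\eps c_I\) uniformly in \(E\), with summable exceptional measure in \(n\). The slack \(4\eps c_I\) absorbs the LDT tolerance \(\eps L(E)\) (after rescaling \(\eps\)), the net/continuity errors, and the boundary blocks. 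Borel--Cantelli then finishes.

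\emph{Part (b).} Here only one good \(p\) is needed for each \(E\). Consider the events
\[
B_p(E)=\Big\{\tfrac1n\log\|T_n^E(2^px)\|<L_n(E)-2\eps c_I\Big\}.
\]
By the LDT, each has measure at most \(e^{-c'_In}\). Among the \(n^4\) values of \(p\in[n^4,2n^4-1]\), take those along a sparse subsequence with spacing \(\ge n\), so that the indicator functions (or smooth approximations) are separated in dyadic time. Lemma~\ref{lem:large-deviation-sparse-dyadic-average} then shows that, outside a set of measure \(e^{-cn^2}\), at most a small fraction of these \(p\) are bad. Uniformity over \(E\in I\) again comes from an energy net of size \(e^{Cn}\), which is beaten by \(e^{-cn^2}\), plus the Lipschitz bound: since only a \emph{lower} bound at a single \(p\) is required, perturbing \(E\) by \(e^{-Cn}\) costs at most \(\eps c_I\), degrading the threshold from \(2\eps c_I\) to \(3\eps c_I\). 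Here \(L_n\) is continuous in \(E\) by the same derivative bound. Borel--Cantelli in \(n\) gives (b) on a full-measure set, and intersecting with the set from (a) gives \(\Omega_1\).
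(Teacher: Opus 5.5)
Your proposal is correct. For part (a), the route you finally commit to is the paper's. You block $T_n^E$ into products of $T_k^E$ with $k\ll n$, use subadditivity, apply Lemma~\ref{lem:large-deviation-sparse-dyadic-average} to $F=t_k(\cdot;E)$ with $K=e^{Ck}$ and gap $k$, put the energy net at scale $k$, and finish with Borel--Cantelli. The paper takes $m=k^4$ blocks and $n=k^5$.

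Be precise about why this works. What beats the net of size $e^{Ck}$ is that the sparse dyadic LDT is exponential in the number of blocks $m$. The transfer-matrix LDT of Theorem~\ref{thm:ldt-rhon} does not control the blocks; using it blockwise with a union bound just reproduces the obstacle you identified. The dyadic LDT is centered at $L_k(E)$, so you also need $L_k(E)\le L(E)+\eps c_I$ uniformly on $I$. That is the step turning $3\eps c_I$ into $4\eps c_I$.

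Part (b) is where you genuinely differ. You use Theorem~\ref{thm:ldt-rhon} to make each $B_p(E)$ exponentially small. Then you apply the sparse lemma to a smoothed indicator $\phi\circ t_n$ along $p$'s spaced by at least $n$, to show that most sampled $p$ are good. The paper instead runs the dyadic LDT with gap $1$ on $t_k$ itself over the consecutive shifts $s\in[k^4,2k^4-1]$. Its rate is of order $\delta^2k^4/\log(K/\delta)\sim k^3$, which beats the net and the union over $j_0\le k^{10}$. So the average of $t_k(2^sx;E)$ is at least $L_k(E)-3\eps c_I$, and one $p$ must be at least as large as the average.

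The paper's version is more elementary. It uses only the $x$- and $E$-Lipschitz bounds and never the transfer-matrix LDT. Your version tacitly needs $N_0(E,\eps)$ and the rate in Theorem~\ref{thm:ldt-rhon} to be uniform over $E\in I$. This follows from that theorem's proof on compact $I$, but not from its statement. In return, yours proves more: a positive proportion of the spaced $p$'s are good.

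Two features of your plan improve on the written proof. Intersecting over $\eps=1/k$ makes $\Omega_1$ genuinely independent of $\eps$; the paper's exceptional sets $\mathcal E_k$ depend on $\eps$. Your remainder block also covers all $n\ge n_0$, whereas the paper writes (a) only along $n=k^5$.
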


\begin{proof}
Following the notation in \eqref{eqn:tn-Ln}, denote
\begin{align} \notag 
    t_k(x;E)=\frac{1}{k}\log\|T_k^E(x)\|,
    \qquad
    L_k(E)=\E[t_k].
\end{align}
As in \eqref{eqn:tn-prime}, there exists \(C>0\) such that, for all \(x\) and \(E\),
\begin{align}
    |\partial_E t_k(x;E)|\le e^{Ck},
    \qquad
    |\partial_x t_k(x;E)|\le 2^k e^{Ck}\le e^{Ck},
    \label{eqn:tk-partial-E-x}
\end{align}
after increasing \(C\) if necessary.

\noindent\(\bullet\) {\bf The uniform upper bound \eqref{eqn:Tn-upper717}.}

Applying \eqref{eqn:large-deviation-sparse-dyadic-average} to a rescaled version of \(F(x)=t_k(x;E)\), we take \(K=e^{Ck}\). The deviation threshold is \(\delta=\eps c_I\), with \(\eps>0\). Hence, for \(k>\log((\eps c_I)^{-1})\),
\[
    \log(K/\delta)\le Ck+k=(C+1)k,
\]
and therefore
\[
    \min\left\{
    1,\frac{k}{\log(K/\delta)}
    \right\}
    \ge c>0.
\]
Thus, for \(m\ge C_{\eps,I}\), \eqref{eqn:large-deviation-sparse-dyadic-average} gives, for each fixed \(j_0\),
\begin{align}
&\mathbb P\Biggl[
x\in\mathbb T:
\left|
L_k(E)
-
\frac{1}{m}\sum_{s=0}^{m-1}
t_k\bigl(2^{sk+j_0}x;E\bigr)
\right|
>
\eps c_I
\Biggr] \notag\\
&\qquad =
\mathbb P\Biggl[
x\in\mathbb T:
\left|
L_k(E)
-
\frac{1}{m}\sum_{s=0}^{m-1}
t_k\bigl(2^{sk}x;E\bigr)
\right|
>
\eps c_I
\Biggr] \notag\\
&\qquad \le
\exp\bigl(-c(\eps c_I)^2m\bigr)
=
\exp(-c_{\eps,I}m).
\notag
\end{align}

In particular, taking \(m=k^4\), we obtain
\begin{align}
\mathbb P\Biggl[
x\in\mathbb T:
\left|
L_k(E)
-
\frac{1}{k^4}\sum_{s=0}^{k^4-1}
t_k\bigl(2^{sk+j_0}x;E\bigr)
\right|
>
\eps c_I
\Biggr]
\le
\exp(-c_{\eps,I}k^4),
\label{eqn:sparse-tk-average-ldt}
\end{align}
where \(c_{\eps,I}>0\) is independent of \(k\), \(E\), and \(j_0\).

We now make the estimate uniform in \(E\in I\). Split \(I\) into intervals of length \(\eps c_I e^{-Ck}\), where \(e^{Ck}\) is the bound for \(\partial_E t_k\) in \eqref{eqn:tk-partial-E-x}, and denote their left endpoints by \(e_1,\ldots,e_M\), with \(M\le Ce^{Ck}/(\eps c_I)\).

Then, for any \(E\in I\), let \(e_\alpha\) be the closest endpoint, so that \(|E-e_\alpha|\le \eps c_I e^{-Ck}\). If
\[
\left|
L_k(e_\alpha)
-
\frac{1}{k^4}\sum_{s=0}^{k^4-1}
t_k\bigl(2^{sk+j_0}x;e_\alpha\bigr)
\right|
<
\eps c_I,
\]
then, by the derivative bound for \(\partial_E t_k\),
\begin{align}
\Bigg|
L_k(E)
-
\frac{1}{k^4}\sum_{s=0}^{k^4-1}
t_k\bigl(2^{sk+j_0}x;E\bigr)
\Bigg|
&<
\eps c_I
+
\big|L_k(E)-L_k(e_\alpha)\big| \notag\\
&\quad
+
\frac{1}{k^4}\sum_{s=0}^{k^4-1}
\left|
t_k\bigl(2^{sk+j_0}x;E\bigr)
-
t_k\bigl(2^{sk+j_0}x;e_\alpha\bigr)
\right| \notag\\
&\le
\eps c_I+2e^{Ck}|E-e_\alpha| \notag\\
&\le
3\eps c_I.
\notag
\end{align}

Hence, for each fixed \(j_0\),
\begin{align}
&\Biggl\{
x\in\T:
\sup_{E\in I}
\left|
L_k(E)
-
\frac{1}{k^4}\sum_{s=0}^{k^4-1}
t_k\bigl(2^{sk+j_0}x;E\bigr)
\right|
\ge
3\eps c_I
\Biggr\} \notag\\
&\qquad\subset
\bigcup_{\alpha=1}^{M}
\Biggl\{
x\in\T:
\left|
L_k(e_\alpha)
-
\frac{1}{k^4}\sum_{s=0}^{k^4-1}
t_k\bigl(2^{sk+j_0}x;e_\alpha\bigr)
\right|
>
\eps c_I
\Biggr\}.
\notag
\end{align}

Now define the exceptional set
\begin{align}
\mathcal E_k
=
\Biggl\{
x\in\T:
\sup_{E\in I}
\max_{0\le j_0\le k^{10}}
\left|
L_k(E)
-
\frac{1}{k^4}\sum_{s=0}^{k^4-1}
t_k\bigl(2^{sk+j_0}x;E\bigr)
\right|
\ge
3\eps c_I
\Biggr\}. \notag 
\end{align}
Using \eqref{eqn:sparse-tk-average-ldt}, the union bound over \(0\le j_0\le k^{10}\), and the energy net above, we obtain
\begin{align}
    \P(\mathcal E_k)
    &\le
    (k^{10}+1)
    \frac{C}{\eps c_I}e^{Ck}
    \exp(-c_{\eps,I}k^4) \le
    e^{-k} \notag 
\end{align}
for all \(k\ge k_0(\eps,I)\), uniformly in \(E\in I\). In particular, \(\sum_k \P(\mathcal E_k)<\infty\).

Then
\begin{align}
    \Omega_1=\T\setminus \limsup_{k\ge k_0(\eps,I)} \mathcal E_k  \notag 
\end{align}
has full measure. Hence, for any \(x\in \Omega_1\), there exists \(k_0=k_0(x,\eps,I)\) such that, for all \(k\ge k_0\), all \(E\in I\), and all \(0\le j_0\le k^{10}\),
\begin{align}
 \Bigg|
 L_k(E)
 -
 \frac{1}{k^4}\sum_{s=0}^{k^4-1}
 t_k\bigl(2^{sk+j_0}x;E\bigr)
 \Bigg|
 \le 3\eps c_I.
 \label{eqn:729}
\end{align}

Using \eqref{eqn:L-Ln-asym}, after increasing \(k_0\) if necessary, we may also assume that
\[
    L_k(E)\le L(E)+\eps c_I,
    \qquad E\in I.
\]
Therefore, for any \(0\le j_0\le k^{10}\),
\begin{align}
 \frac{1}{k^4}\sum_{s=0}^{k^4-1}
 t_k\bigl(2^{sk+j_0}x;E\bigr)
 \le
 L_k(E)+3\eps c_I
 \le
 L(E)+4\eps c_I.
 \label{eqn:sparse-average-upper}
\end{align}

For any \(n=mk\) and any \(0\le j_0\le n^2\), submultiplicativity gives
\begin{align}
    t_n(2^{j_0}x;E)
    &=
    \frac{1}{n}\log \|T_n^E(2^{j_0}x)\| \notag\\
    &=
    \frac{1}{n}
    \log \left\|
    \prod_{s=0}^{m-1}
    T_k^E(2^{sk+j_0}x)
    \right\| \notag\\
    &\le
    \frac{1}{m}\sum_{s=0}^{m-1}
    \frac{1}{k}\log \|T_k^E(2^{sk+j_0}x)\| \notag\\
    &=
    \frac{1}{m}\sum_{s=0}^{m-1}
    t_k(2^{sk+j_0}x;E).
    \label{eqn:tN-tn}
\end{align}

Now take \(m=k^4\) and \(n=mk=k^5\). Then \(n^2=k^{10}\), and combining \eqref{eqn:tN-tn} with \eqref{eqn:sparse-average-upper} gives, for every \(0\le j_0\le n^2\), every \(x\in\Omega_1\), and every \(k\ge k_0(x,\eps,I)\),
\begin{align}
    \frac{1}{n}\log\|T_n^E(2^{j_0}x)\|
    =
    t_n(2^{j_0}x;E)
    \le
    L(E)+4\eps c_I. \notag
\end{align}

\noindent\(\bullet\) {\bf The sampled lower bound \eqref{eqn:Tn-lower-nonunif}.}

For the lower bound, we use the same argument with sampling gap \(1\). Applying \eqref{eqn:large-deviation-sparse-dyadic-average} to a rescaled version of \(F(x)=t_k(x;E)\), with sampling gap \(1\) and sample length \(m\), we estimate the deviation set
\begin{align}
    \mathbb P\Biggl[
x\in\mathbb T:
\left|
L_k(E)
-
\frac{1}{m}\sum_{s=0}^{m-1}
t_k\bigl(2^{s+j_0}x;E\bigr)
\right|
>
\eps c_I
\Biggr].
\notag
\end{align}
Taking \(m=k^4\), and then repeating the same energy-net and Borel--Cantelli argument as above, we may take the same full-measure set \(\Omega_1\), after intersecting with the previous full-measure set if necessary, so that the following also holds. For any \(x\in\Omega_1\), there exists \(k_0=k_0(x,\eps,I)\) such that, for all \(k\ge k_0\), all \(E\in I\), and all \(0\le j_0\le k^{10}\),
\begin{align}
 \Bigg|
 L_k(E)
 -
 \frac{1}{k^4}\sum_{s=0}^{k^4-1}
 t_k\bigl(2^{s+j_0}x;E\bigr)
 \Bigg|
 \le 3\eps c_I.
 \label{eqn:consecutive-average-tk}
\end{align}
In particular, using only the lower side of \eqref{eqn:consecutive-average-tk}, we have
\begin{align}
    L_k(E)
    -
    \frac{1}{k^4}\sum_{s=0}^{k^4-1}
    t_k\bigl(2^{s+j_0}x;E\bigr)
    \le
    3\eps c_I.
    \notag
\end{align}
Taking \(j_0=k^4\), this gives
\begin{align}
    L_k(E)
    -
    \frac{1}{k^4}\sum_{s=k^4}^{2k^4-1}
    t_k\bigl(2^{s}x;E\bigr)
    \le
    3\eps c_I.
    \notag
\end{align}
Therefore, there exists \(p\in[k^4,2k^4-1]\) such that
\begin{align}
    t_k\bigl(2^p x;E\bigr)
    \ge
    L_k(E)-3\eps c_I.
    \notag
\end{align}
This proves \eqref{eqn:Tn-lower-nonunif}, with \(k\) in place of \(n\), after increasing \(n_0=k_0\) if necessary.
\end{proof}


A consequence of Cramer's rule and the uniform upper bound in
\eqref{eqn:Tn-upper717} is the following Green's function estimate.

\begin{lemma}\label{lem:green-bound-transfer}
Let \(I\) and \(c_I\) be as in \eqref{eqn:cI-ass}, and let \(\Omega_1\) be the full-measure set in Lemma~\ref{lem:Tn-upp-ldt714}. For every \(x\in\Omega_1\) and every \(\eps>0\), there exists \(n_0=n_0(x,I,\eps)\) such that, for all \(n\ge n_0\), all \(E\in I\), and all \(0\le n_1\le n_2\le n-1\),
\begin{align}
    \bigl|G_n(x;E)(n_1,n_2)\bigr|
    \le
    \frac{|P_n(x)|}{|D_n^E(x)|}
    \exp\!\Bigl(
        nL(E)-|n_2-n_1|L(E)+10\eps c_I n
    \Bigr),
    \label{eqn:green-numer}
\end{align}
where \(D_n^E\) is as in \eqref{eqn:Dn} and
\[
    P_n(x):=\prod_{j=0}^{n-1}a_j(x).
\]
As a consequence, for one of the pairs
\begin{align}
(\widetilde n,\widetilde x)\in \Lambda \overset{\text{def}}{=}
\Big\{
(n,x),\ (n-1,x),\ (n-1,2x),\ (n-2,2x)
\Big\},
\label{eqn:ntildextilde-choice}
\end{align}
we have
\begin{align}
\bigl|G_{\widetilde n}(\widetilde x;E)(n_1,n_2)\bigr|
\le
\exp\!\Bigl(
-|n_2-n_1|L(E)
+
\bigl[nL(E)-\log\|T_n^E(x)\|\bigr]
+
11\eps c_I n
\Bigr),
\label{eqn:green-bound-tn}
\end{align}
for all admissible \(n_1,n_2\) for the Green's function on the corresponding interval.
\end{lemma}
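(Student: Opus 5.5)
Begin from Cramer's rule \eqref{eqn:cramer}, and multiply numerator and denominator by \(P_n(x)\) so that the determinants are expressed through transfer matrices via \eqref{eqn:Tn-full-det-formula}. The formula shows that \(|D_k^E(y)|\le a_0(y)\cdots a_{k-1}(y)\,\|T_k^E(y)\|\), since \(D_k^E(y)\) is an entry of \((a_0\cdots a_{k-1})T_k^E(y)\). Applying this to \(D_{n_1}^E(x)\) and to \(D_{n-n_2-1}^E(2^{n_2+1}x)\) bounds the numerator of \eqref{eqn:cramer}. The products of \(a_j\) that appear are \(\prod_{j<n_1}a_j\), \(\prod_{n_1<j\le n_2}a_j\) and \(\prod_{n_2+1\le j\le n-1}a_j\), which assemble to \(P_n(x)\) up to the single factor \(a_{n_1}\) and boundary factors, all bounded by constants in \(a_\pm\). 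This yields
\[
|G_n(x;E)(n_1,n_2)|\le C\frac{|P_n(x)|}{|D_n^E(x)|}\,\|T_{n_1}^E(x)\|\,\|T_{n-n_2-1}^E(2^{n_2+1}x)\|.
\]

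Next I would invoke the uniform upper bound \eqref{eqn:Tn-upper717}. The starting points are \(x\) and \(2^{n_2+1}x\) with \(n_2+1\le n\le n^2\), so they are admissible. The difficulty is that \eqref{eqn:Tn-upper717} applies at scale \(n\), while the lengths \(n_1\) and \(n-n_2-1\) vary. To handle this, I would apply Lemma~\ref{lem:Tn-upp-ldt714}(a) at scales \(\ell\in[n_0',n]\). For a short length \(\ell< \eps n\), I would use the trivial bound \(\|T_\ell^E\|\le e^{C\ell}\), which costs at most \(e^{C\eps n}\); after shrinking \(\eps\) this gives the \(\eps c_I n\) loss. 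For \(\ell\ge \eps n\), the range \(0\le j\le \ell^2\) contains \(j\le n\le \ell^2/\eps^2\) once \(n\) is large. If necessary, I would re-run the Borel--Cantelli argument in Lemma~\ref{lem:Tn-upp-ldt714} with \(j_0\le k^{10}\) enlarged to cover shifts up to \(n\) for all scales \(\ell\in[\eps n,n]\); this is harmless because the exceptional sets are summable. The result is \(\log\|T_\ell^E(y)\|\le \ell(L(E)+4\eps c_I)+C\eps n\). Adding the two lengths gives \(n_1+(n-n_2-1)\le n-|n_2-n_1|\), so the exponent is \(nL(E)-|n_2-n_1|L(E)+8\eps c_I n+O(\eps n)\). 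Absorbing the constants by relabeling \(\eps\) gives \eqref{eqn:green-numer}.

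For \eqref{eqn:green-bound-tn}, I would use \eqref{eqn:Tn-full-det-formula} again. The norm \(\|T_n^E(x)\|\) is comparable, up to a factor \(4\) and \(a_0^2\le a_+^2\), to the largest of the four entries, namely \(|D_n^E(x)|\), \(|D_{n-1}^E(x)|\), \(|D_{n-1}^E(2x)|\) and \(|D_{n-2}^E(2x)|\), each divided by \(P_n(x)\). I would choose \((\widetilde n,\widetilde x)\in\Lambda\) to be the pair realizing the maximal entry. Then \(|D_{\widetilde n}^E(\widetilde x)|\ge c\,|P_n(x)|\,\|T_n^E(x)\|\). Next I would apply \eqref{eqn:green-numer} to \((\widetilde n,\widetilde x)\) in place of \((n,x)\). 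For \(\widetilde x=2x\), the shifts \(2^{j}\cdot 2x\) remain within the admissible range, so the upper bounds still apply. The ratio \(P_{\widetilde n}(\widetilde x)/P_n(x)\) involves at most two factors \(a_j\), so it is bounded. Likewise \(\widetilde nL(E)\le nL(E)\). Hence
\[
|G_{\widetilde n}(\widetilde x;E)(n_1,n_2)|\le C\exp\bigl(nL(E)-\log\|T_n^E(x)\|-|n_2-n_1|L(E)+10\eps c_In\bigr),
\]
and the constant \(C\) is absorbed into \(\eps c_I n\) for large \(n\).

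The main obstacle I expect is the uniformity in the lengths \(n_1\) and \(n-n_2-1\) and in the shifts. Lemma~\ref{lem:Tn-upp-ldt714} as stated gives the bound only along the scales \(n=k^5\) and for shifts \(j\le n^2\). To get intermediate lengths, I would first try subadditivity: write \(\ell=qn'+r\) with \(n'\) a good scale of size about \(\eps n\), bound each block of length \(n'\) by \eqref{eqn:Tn-upper717} at shifted points (which requires the shift range \(n'^2\gtrsim n\), true for large \(n\)), and bound the remainder \(r<n'\) trivially by \(e^{C\eps n}\).
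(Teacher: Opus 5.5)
Your proposal is correct and follows the paper's proof: Cramer's rule \eqref{eqn:cramer} with $|D_k^E(y)|\le P_k(y)\|T_k^E(y)\|$ from \eqref{eqn:Tn-full-det-formula}, then the uniform upper bound \eqref{eqn:Tn-upper717} (with trivial bounds for short lengths) for the two numerator determinants, and finally choosing the pair in $\Lambda$ with the largest determinant, using that $P_{\widetilde n}(\widetilde x)/P_n(x)$ is bounded. The one real difference is how you meet the shift constraint $j\le \ell^2$ for $T_\ell^E(2^{n_2+1}x)$: the paper lengthens to $T^E_{\ell+\sqrt n}(2^{n_2+1}x)$ and peels off $T^E_{\sqrt n}(2^n x)$ through its inverse, whereas you split at $\ell\sim\eps n$, or block by a good scale of size $\sim\eps n$; your version needs \eqref{eqn:Tn-upper717} only along the scales $k^5$ that the proof of Lemma~\ref{lem:Tn-upp-ldt714} actually produces, and it covers $\widetilde x=2x$ without assuming $2x\in\Omega_1$.
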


\begin{proof}
Throughout the proof, we consider \(x\in\Omega_1\) and \(n\ge n_0\), where \(n_0\) is large enough so that \eqref{eqn:Tn-upper717} holds at all scales used below. We use the maximum matrix norm; the equivalence with other matrix norms only changes constants, which are absorbed into the error terms by increasing \(n_0\) if necessary.

It suffices to consider \(0\le n_1\le n_2\le n-1\). Recall that we use the \(0\)-based convention \(H_n=H_{[0,n-1]}\). We first estimate the two determinants in the numerator of Cramer's rule \eqref{eqn:cramer}. By \eqref{eqn:Tn-full-det-formula}, we have, for any \(k>0\),
\[
    |D_k^E(y)|
    \le
    |P_k(y)|\,\|T_k^E(y)\|.
\]

If \(n_1\ge n_0\), then by \eqref{eqn:Tn-upper717},
\begin{align}
    |D_{n_1}^E(x)|
    \le
    |P_{n_1}(x)|\,\|T_{n_1}^E(x)\|
    \le
    |P_{n_1}(x)|
    \exp\!\Bigl(n_1L(E)+4\eps c_I n\Bigr).
    \notag
\end{align}
If \(n_1<n_0\), then
\[
    \|T_{n_1}^E(x)\|\le e^{Cn_0}.
\]
By enlarging the final threshold for \(n\) so that \(n\ge Cn_0/(\eps c_I)\), we have \(e^{Cn_0}\le e^{\eps c_I n}\), and hence the desired bound for \(D_{n_1}^E(x)\) follows.

Hence, for all \(0\le n_1\le n-1\),
\begin{align}
    |D_{n_1}^E(x)|
    \le
    |P_{n_1}(x)|
    \exp\!\Bigl(n_1L(E)+4\eps c_I n\Bigr).
    \label{eqn:left-det-green-bound}
\end{align}

Next set
\[
    \ell=n-n_2-1.
\]
If \(\ell\ge n_0\), then, assuming for simplicity that \(\sqrt n\) is an integer,
\[
    (\ell+\sqrt n)^2\ge n\ge n_2+1.
\]
If \(\sqrt n\) is not an integer, we replace it by \(\lceil\sqrt n\rceil\), which does not affect the estimates below. Thus \eqref{eqn:Tn-upper717} may be applied to \(T_{\ell+\sqrt n}^E(2^{n_2+1}x)\). By the cocycle identity,
\[
    T_{\ell+\sqrt n}^E(2^{n_2+1}x)
    =
    T_{\sqrt n}^E(2^n x)\,
    T_\ell^E(2^{n_2+1}x),
\]
and hence
\[
    \|T_\ell^E(2^{n_2+1}x)\|
    \le
    \bigl\|[T_{\sqrt n}^E(2^n x)]^{-1}\bigr\|
    \,
    \|T_{\ell+\sqrt n}^E(2^{n_2+1}x)\|.
\]
Since \(T_{\sqrt n}^E(\cdot)\in{\rm SL}_2(\mathbb R)\) and the one-step matrices are uniformly bounded for \(E\in I\), we have uniformly in the phase
\[
    \bigl\|[T_{\sqrt n}^E(2^n x)]^{-1}\bigr\|
    \le
    e^{C\sqrt n}.
\]
Therefore,
\begin{align}
    \|T_\ell^E(2^{n_2+1}x)\|
    &\le
    e^{C\sqrt n}
    \exp\!\Bigl((\ell+\sqrt n)\bigl(L(E)+4\eps c_I\bigr)\Bigr) \notag\\
    &\le
    \exp\!\Bigl(\ell L(E)+5\eps c_I n\Bigr),
    \notag
\end{align}
after increasing \(n_0\) if necessary. If \(\ell<n_0\), the same bound follows from the crude estimate
\[
    \|T_\ell^E(2^{n_2+1}x)\|\le e^{Cn_0}\le e^{\eps c_I n}.
\]
Hence
\begin{align}
    |D_{n-n_2-1}^E(2^{n_2+1}x)|
    \le
    |P_{n-n_2-1}(2^{n_2+1}x)|
    \exp\!\Bigl((n-n_2-1)L(E)+5\eps c_I n\Bigr).
    \label{eqn:right-det-green-bound}
\end{align}

Combining \eqref{eqn:cramer}, \eqref{eqn:left-det-green-bound}, and \eqref{eqn:right-det-green-bound}, and using \(a_-\le a_j(x)\le a_+\), give
\begin{align}
    \bigl|G_n(x;E)(n_1,n_2)\bigr|
    &\le
    \frac{|P_n(x)|}{|D_n^E(x)|}
    \exp\!\Bigl(
        nL(E)-|n_2-n_1|L(E)+10\eps c_I n
    \Bigr),
    \notag
\end{align}
where the harmless constants coming from the missing boundary factors in the products have been absorbed into \(e^{\eps c_I n}\) for all sufficiently large \(n\). This proves \eqref{eqn:green-numer}.

It remains to derive \eqref{eqn:green-bound-tn}. From \eqref{eqn:Tn-full-det-formula}, at least one of the four determinants
\[
    D_n^E(x),\qquad
    D_{n-1}^E(x),\qquad
    D_{n-1}^E(2x),\qquad
    D_{n-2}^E(2x)
\]
has size comparable to \(|P_n(x)|\,\|T_n^E(x)\|\). More precisely,
\begin{align}
\max_{(\widetilde n,\widetilde x)\in
 \Lambda}
\bigl|D_{\widetilde n}^E(\widetilde x)\bigr|
\ge \min\{1,
 a_+^{-2}\}\,|P_n(x)|\,\|T_n^E(x)\|.
\label{eqn:det-max-lower}
\end{align}
Moreover,
\begin{align}
P_{n-1}(x)&=\frac{P_n(x)}{a_{n-1}(x)}, \notag\\
P_{n-1}(2x)&=\prod_{j=0}^{n-2}a_j(2x)=\prod_{j=1}^{n-1}a_j(x)=\frac{P_n(x)}{a_0(x)}, \notag\\
P_{n-2}(2x)&=\prod_{j=0}^{n-3}a_j(2x)=\prod_{j=1}^{n-2}a_j(x)=\frac{P_n(x)}{a_0(x)a_{n-1}(x)}.
\label{eqn:P-shift-relations}
\end{align}
Therefore, for any \((\widetilde n,\widetilde x)\in\Lambda\),
\begin{align}
\frac{|P_{\widetilde n}(\widetilde x)|}{|P_n(x)|}
\le
a_-^{-2}.
\label{eqn:P-ratio-wt}
\end{align}

Choose \((\widetilde n,\widetilde x)\in\Lambda\) for which \eqref{eqn:det-max-lower} holds. Applying \eqref{eqn:green-numer} at the scale \(\widetilde n\), and using \eqref{eqn:det-max-lower} and \eqref{eqn:P-ratio-wt}, we obtain
\begin{align}
\bigl|G_{\widetilde n}(\widetilde x;E)(n_1,n_2)\bigr|
&\le
\max\{1,a_+^2\}a_-^{-2}
\|T_n^E(x)\|^{-1}
\exp\!\Bigl(
\widetilde nL(E)-|n_2-n_1|L(E)+10\eps c_I n
\Bigr) \notag\\
&\le
\exp\!\Bigl(
-|n_2-n_1|L(E)
+
\bigl[nL(E)-\log\|T_n^E(x)\|\bigr]
+
11\eps c_I n
\Bigr),
\notag
\end{align}
after absorbing the fixed constant \(\max\{1,a_+^2\}a_-^{-2}\) into \(e^{\eps c_I n}\). This proves \eqref{eqn:green-bound-tn}.
\end{proof}

\subsection{A strengthened Lower bound of the transfer matrices}

The previous subsection gives two complementary estimates: the uniform upper bound \eqref{eqn:Tn-upper717} and the sampled lower bound \eqref{eqn:Tn-lower-nonunif}. The sampled lower bound gives good transfer-matrix growth at one suitably chosen point along the orbit. For the Green's function estimates, we need a stronger form of this information along a consecutive block of shifts, so that good finite-volume boxes can be placed throughout the interval under consideration. In this subsection we obtain this blockwise lower bound by ruling out the exceptional situation in which a finite-volume Green's function is very large while the corresponding transfer matrix remains too small at a later shift.

\begin{lemma}\label{lem:bad-set-BN}
Let \(I\) and \(c_I\) be as in \eqref{eqn:cI-ass}. Fix $\epsilon>0$ and set \(\bar N=\lfloor e^{(\log N)^2}\rfloor\). Define \(\mathcal B_N\) to be the collection of \(x\in\T\) such that there exist
\[
    N_1\in [N^{12},3N^{12}],\qquad
    k\in[\bar N,2\bar N],
    \qquad
    E\in I,
\]
for which
\begin{align}
    \|G_{N_1}(x;E)\|>e^{N^2}
    \label{eqn:Gn1744}
\end{align}
and
\begin{align}
    \frac{1}{N}\log \|T_N^E(2^k x)\|
    <
    L(E)-4\eps c_I.
    \label{eqn:TnE745}
\end{align}
Then there exist \(N_0=N_0(I,\eps)\) and \(c=c(I,\eps)>0\) such that, for all \(N\ge N_0\),
\begin{align}
    \P(\mathcal B_N)\le e^{-cN}.
    \label{eqn:mes-Bn}
\end{align}
\end{lemma}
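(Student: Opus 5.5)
This is the Bourgain--Schlag elimination-of-energy argument adapted to the present setting. The first step turns condition \eqref{eqn:Gn1744} into a semialgebraic-type condition on the energy. By Cramer's rule, \(\|G_{N_1}(x;E)\|>e^{N^2}\) means \(\dist(E,\sigma(H_{N_1}(x)))<e^{-N^2}\). Hence, for each fixed \(x\), the admissible energies lie in at most \(\sum_{N_1}N_1\le CN^{24}\) intervals of length \(2e^{-N^2}\), each centered at an eigenvalue \(E_j^{N_1}(x)\).

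The second step replaces \(E\) by \(E_j^{N_1}(x)\) in \eqref{eqn:TnE745}. By the derivative bound \eqref{eqn:tk-partial-E-x}, \(|\partial_E t_N|\le e^{CN}\), so moving \(E\) by \(e^{-N^2}\) changes \(t_N(2^kx;E)\) by at most \(e^{CN-N^2}\). Likewise \(L(E)\) changes by \(o(1)\), using Theorem~\ref{thm:holder} on \(I\) or a crude continuity bound. Therefore \(x\in\mathcal B_N\) forces the following: for some \(N_1\), some \(j\le N_1\) and some \(k\in[\bar N,2\bar N]\),
\[
t_N\big(2^kx;E_j^{N_1}(x)\big)<L\big(E_j^{N_1}(x)\big)-3\eps c_I .
\]

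The key point is that \(E_j^{N_1}(x)\) depends only on \(a_0(x),\dots,a_{N_1}(x)\), that is, on \(x\) at resolution about \(2^{-N_1}\). The function \(y\mapsto t_N(y;E)\) is evaluated at \(y=2^kx\) with \(k\ge\bar N\gg N_1\). I would partition \(\T\) into dyadic intervals \(J\) of length \(2^{-M}\), with \(M=N_1+CN\) (and \(M\ll\bar N\)). On each \(J\), the eigenvalues \(E_j^{N_1}\) vary by at most \(2^{-CN}\cdot C\), since \(|\partial_x a_n|\le C2^n\) and the eigenvalues are Lipschitz in the entries. By the \(E\)-derivative bound, this again costs only \(e^{CN}2^{-CN}\ll\eps c_I\) once the constant \(C\) is chosen large. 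So on \(J\) we may freeze the energy at \(E_j:=E_j^{N_1}(x_J)\) for a reference point \(x_J\in J\).

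Next, on \(J\) the map \(x\mapsto 2^kx\) has \(k\ge\bar N\ge M\), so \(2^kx\) is equidistributed as \(x\) ranges over \(J\). This is exactly \eqref{eqn:doubling-conditional-expectation}. Hence
\[
\frac{|\{x\in J:\ t_N(2^kx;E_j)<L(E_j)-3\eps c_I\}|}{|J|}=\P\big(t_N(\cdot;E_j)<L(E_j)-3\eps c_I\big).
\]
For the parameter range at hand, the right-hand side is at most \(e^{-c(E)N}\) by Theorem~\ref{thm:ldt-rhon} and Remark~\ref{rem:ldt-equivalent-centers}, with \(\eps\) fixed; for \(E_j\) outside \(I\) we discard the case after a small widening. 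Uniformity over \(E\in I\) holds because \(I\) is compact in \((0,E_0)\), so the rate \(cE/|\log E|^3\) is bounded below by \(c(I)\), and \(N_0(E,\eps)\) can be taken uniform over a fine energy net, using the same derivative bound again.

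Finally, summing over \(J\), over \(j\le N_1\), over \(N_1\) and over \(k\) gives
\[
\P(\mathcal B_N)\le CN^{24}\cdot \bar N\cdot e^{-c(I)N}.
\]
Here lies the main obstacle: \(\bar N=e^{(\log N)^2}\) is superpolynomial, and it still loses to \(e^{-cN}\) since \((\log N)^2=o(N)\). So the bound \(\P(\mathcal B_N)\le e^{-cN/2}\) holds for \(N\ge N_0(I,\eps)\).

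I expect the delicate step to be the freezing on \(J\): checking that the eigenvalue Lipschitz constant \(2^{N_1}\), multiplied by \(2^{-M}\), beats \(e^{CN}\), while keeping \(M\le\bar N\). The choice \(M=N_1+C'N\) with \(C'\log2>C\) achieves this, and \(N^{12}\ll e^{(\log N)^2}\) ensures \(M\le \bar N\).
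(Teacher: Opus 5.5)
Your proposal is correct and follows essentially the same route as the paper: you find an eigenvalue of \(H_{N_1}(x)\) within \(e^{-N^2}\) of \(E\), and move \eqref{eqn:TnE745} onto it using the \(E\)-derivative bound. You then freeze that eigenvalue on an \(x\)-mesh still coarser than \(2^{-k}\), use equidistribution of \(2^kx\) on mesh cells to reduce to the uniform large-deviation bound, and finish with the union bound \(CN^{24}\bar N e^{-cN}\). The differences are minor: your dyadic mesh of length \(2^{-(N_1+C'N)}\) is more economical than the paper's \(e^{-N_1^2}\) mesh and gives exact rather than factor-\(2\) equidistribution, and you treat eigenvalues just outside \(I\) explicitly; one caveat is that uniformity of \(N_0(E,\varepsilon)\) over \(I\) comes from the explicit \(E\)-dependence (through \(E^{-1}\) and \(|\log E|\)) of the thresholds in the proof of Theorem~\ref{thm:ldt-rhon}, which the paper also uses tacitly, and not from an \(N\)-dependent energy net.
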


The argument follows the corresponding elimination step in \cite[\S9.2]{BourgainSchlag2000CMP} for Schr\"odinger operators generated by the doubling map. We rewrite the proof that fits in our context, with the scale choices and large-deviation estimates used here, and include  more details for the reader's convenience.

In particular, we use the same order of scales \(N_1\sim N^{12}\) and \(\bar N\sim e^{(\log N)^2}\) as in \cite[Proposition~9.2]{BourgainSchlag2000CMP}. The particular power \(N^{12}\) is not essential. In the applications below, any sufficiently large fixed power greater than \(3\) would be enough, while still having \(N_1^2\ll \bar N\), so that the \(x\)-mesh of size \(e^{-N_1^2}\) is much coarser than the dyadic scale \(2^{-k}\) for \(k\in[\bar N,2\bar N]\).

\begin{proof}
Suppose \(x\in\mathcal B_N\). Then there exist \(N_1\in[N^{12},2N^{12}]\), \(k\in[\bar N,2\bar N]\), and \(E\in I\) such that \eqref{eqn:Gn1744} and \eqref{eqn:TnE745} hold. Write
\[
    \sigma(H_{N_1}(x))
    =
    \big\{\ \mu_1(x)\le\mu_2(x)\le\cdots\le\mu_{N_1}(x)\ \big\}.
\]
Since
\[
    \|G_{N_1}(x;E)\|^{-1}
    =
    \dist(E,\sigma(H_{N_1}(x))),
\]
the bound \eqref{eqn:Gn1744} implies that, for some \(1\le j\le N_1\),
\begin{align}
    |E-\mu_j(x)|\le e^{-N^2}.
    \label{eqn:mu-j-E}
\end{align}

By the finite-scale energy derivative bound \eqref{eqn:tn-prime}, for \(N\) sufficiently large,
\begin{align}
    \left|
    \frac{1}{N}\log\|T_N^E(2^kx)\|
    -
    \frac{1}{N}\log\|T_N^{\mu_j(x)}(2^kx)\|
    \right|
    \le e^{CN}|E-\mu_j(x)|
    \le
    \frac{1}{4}\eps c_I.
    \notag
\end{align}
Similarly, \( \left|
    L_N(E)-L_N(\mu_j(x))
    \right|
    \le
     \eps c_I/4.\) 
Moreover, using \eqref{eqn:L-Ln-asym}, after increasing \(N_0(I,\eps)\) if necessary, we may assume uniformly for all energies  \(E'\in I\) that \(|L_N(E')-L(E')|\le  \eps c_I/4\). 
Consequently, 
\[
    \left|
    L(E)-L(\mu_j(x))
    \right|
    \le
    \frac{3}{4}\eps c_I.
\]
Equivalently, this estimate may also be obtained from the H\"older continuity of \(L(\cdot)\), after choosing uniform H\"older parameters on the compact interval \(I\).

Thus, from \eqref{eqn:TnE745}, we obtain
\begin{align}
    \frac{1}{N}\log\|T_N^{\mu_j(x)}(2^kx)\|
    &<
    L(E)-4\eps c_I+\frac{1}{4}\eps c_I \notag\\
    &\le
    L(\mu_j(x))-3\eps c_I, \label{eqn:Tn-muj-2k-x}
\end{align}
after increasing \(N_0(I,\eps)\) if necessary.

Next we freeze the \(x\)-dependent eigenvalue on a sufficiently fine \(x\)-mesh. For each fixed \(N_1\in[N^{12},2N^{12}]\), divide \(\T\) into intervals
\[
    J_\ell=[x_{\ell-1},x_\ell],
    \qquad
    |J_\ell|\sim e^{-N_1^2},
    \qquad
    \ell=1,\ldots,M,
\]
where \(M\sim e^{N_1^2}\). A finer mesh would also work, as long as it beats the exponential growth in \(N_1\) and remains coarser than the dyadic scale \(2^{-k}\) used below. Note that the mesh size \(|J_{\ell}|\) and the number of intervals \(M\) cancel in the final estimate, since
\[
    \sum_{\ell=1}^{M}|J_\ell|\le 1.
\]
Thus this fine mesh does not introduce any additional loss in the final bound for \(\P(\mathcal B_N)\).

Let \(x_\ell\) denote the left endpoint of \(J_\ell\). If \(x\in J_\ell\), then, since \(a(\cdot)\in C^1\) and \(N_1\sim N^{12}\), a direct estimate of the matrix norm gives, for all sufficiently large \(N\),
\[
    \|H_{N_1}(x)-H_{N_1}(x_\ell)\|
    \le
    C2^{N_1}|x-x_\ell|
    \le
    C2^{N_1}e^{-N_1^2}
    \le
    e^{-N^2}.
\]
Therefore, by spectral continuity, there exists an eigenvalue
\(\mu_{j'}(x_\ell)\in\sigma(H_{N_1}(x_\ell))\) such that
\[
    |\mu_j(x)-\mu_{j'}(x_\ell)|
    \le
    \|H_{N_1}(x)-H_{N_1}(x_\ell)\|
    \le
    e^{-N^2}.
\]

Using again the finite-scale energy derivative bound and the finite-scale comparison \eqref{eqn:L-Ln-asym}, in the same way as in \eqref{eqn:mu-j-E}--\eqref{eqn:Tn-muj-2k-x}, we may replace \(\mu_j(x)\) by \(\mu_{j'}(x_\ell)\), losing at most another harmless fraction of \(\eps c_I\). Hence, for \(N\ge N_0(I,\eps)\),
\[
    2^kx\in S_N(\mu_{j'}(x_\ell)),
\]
where
\begin{align}
    S_N(E')
    :=
    \left\{
    \widetilde x\in\T:
    \frac{1}{N}\log\|T_N^{E'}(\widetilde x)\|
    <
    L(E')-2\eps c_I
    \right\}.
    \label{eqn:SN-def}
\end{align}
By the large deviation estimate in Theorem~\ref{thm:ldt-rhon} and Remark~\ref{rem:ldt-equivalent-centers}, there exists \(c=c(\eps,I)>0\) such that, uniformly for \(E'\in I\) and \(N\ge N_0(I,\eps)\),
\begin{align}
    \P(S_N(E'))\le e^{-cN}.
    \label{eqn:754}
\end{align}

Consequently,
\begin{align}
\P(\mathcal B_N)
&\le
\sum_{k=\bar N}^{2\bar N}
\sum_{N_1=N^{12}}^{3N^{12}}
\sum_{\ell=1}^{M}
\sum_{\mu_j(x_\ell)\in\sigma(H_{N_1}(x_\ell))\cap I}
\P\bigl(
x\in J_\ell:\ 2^kx\in S_N(\mu_j(x_\ell))
\bigr).
\label{eqn:BN-union-frozen}
\end{align}

We now estimate each term in the last sum. Since
\[
    k\sim \bar N=\lfloor e^{(\log N)^2}\rfloor>N_1^3
\]
for all sufficiently large \(N\), we have
\[
    2^{-k}\le e^{-N_1^2}\sim |J_\ell|.
\]
Thus we can cover \(J_\ell\) by \(\widetilde M\) dyadic intervals \(J_{\ell,j}\) of length \(2^{-k}\), where
\[
    \widetilde M
    \le
    |J_\ell|2^k+2
    \le
    |J_\ell|2^{k+1}.
\]
For a fixed energy \(E'\), by a change of variable \(t=2^kx\) on each \(J_{\ell,j}\), we have
\begin{align}
    \P\bigl(x\in J_{\ell,j}: 2^kx\in S_N(E')\bigr)
    =
    \int_{J_{\ell,j}}\one_{S_N(E')}(2^kx)\,dx
    &=
    2^{-k}\int_{\T}\one_{S_N(E')}(t)\,dt \notag\\
    &=
    2^{-k}\P(S_N(E')).
    \notag
\end{align}
Hence,
\begin{align}
 \P\bigl(x\in J_\ell: 2^kx\in S_N(E')\bigr)
 &\le
 \sum_{j=1}^{\widetilde M}
 \P\bigl(x\in J_{\ell,j}: 2^kx\in S_N(E')\bigr) \notag\\
 &\le
 (|J_\ell|2^k+2)\,2^{-k}\P(S_N(E')) \notag\\
 &\le
 2|J_\ell|\P(S_N(E')).
 \label{eqn:local-change-variable}
\end{align}

For each fixed \(N_1\) and \(x_\ell\), the number of eigenvalues
\(\mu_j(x_\ell)\in\sigma(H_{N_1}(x_\ell))\cap I\) is trivially bounded by \(N_1\). Combining \eqref{eqn:BN-union-frozen}, \eqref{eqn:754}, and \eqref{eqn:local-change-variable}, we obtain
\begin{align}
\P(\mathcal B_N)
&\le
2
\sum_{k=\bar N}^{2\bar N}
\sum_{N_1=N^{12}}^{3N^{12}}
\sum_{\ell=1}^{M}
\sum_{\mu_j(x_\ell)\in\sigma(H_{N_1}(x_\ell))\cap I}
|J_\ell|\,e^{-cN} \notag\\
&\le
2
\sum_{k=\bar N}^{2\bar N}
\sum_{N_1=N^{12}}^{3N^{12}}
N_1
\left(\sum_{\ell=1}^{M}|J_\ell|\right)
e^{-cN} \notag\\
&\le
C\bar N\,N^{24}e^{-cN}.
\notag
\end{align}
Since \(\bar N=e^{(\log N)^2}=e^{o(N)}\), the last expression is bounded by \(e^{-c'N}\) for some \(c'=c'(\eps,I)>0\) and all sufficiently large \(N\). This proves \eqref{eqn:mes-Bn}.
\end{proof}

 We next show that the large-Green-function condition \eqref{eqn:Gn1744} occurs at suitable scales for spectral-a.e. energy. Once this is established, the exceptional-set estimate in Lemma~\ref{lem:bad-set-BN} will force the small-transfer condition \eqref{eqn:TnE745} to fail eventually on a full-measure set.

By a standard Shnol-type theorem, for spectral-measure a.e. \(E\in\sigma(H(x))\), there exists a generalized eigenfunction
\(\xi=\xi^E=(\xi_n)_{n\ge -1}\) satisfying
\begin{align}
    \xi_{-1}=0,\qquad \xi_0=1,
    \label{eqn:shnol-init}
\end{align}
and
\begin{align}
    (H_x-E)\xi=0,
    \label{eqn:shnol-eqn}
\end{align}
and there exist constants \(A_E>0\) such that
\begin{align}
    |\xi_n|\le A_E(1+n),
    \qquad n\ge0.
    \label{eqn:shnol-poly}
\end{align}
See \cite{shnol1954behavior,simon1981spectrum} for the original arguments in the differential setting, and \cite[Theorem~2.4.2]{DamFil2022ESO1} for a modern proof in the one-dimensional discrete setting.  In the present half-line setting, \(\delta_0\) is cyclic, so the relevant spectral measure can be taken to be the one associated with \(\delta_0\), and the generalized eigenfunction can be chosen as the canonical polynomial solution \(\xi_n^E=p_n(E)\), where \(p_n\) is defined by the cyclicity relation \(\delta_n=p_n(H(x))\delta_0\). The polynomial bound in \eqref{eqn:shnol-poly} follows from the standard argument. The polynomial power may be chosen to be any fixed number strictly larger than \(1/2\); we take it to be \(1\) for simplicity. With the normalization \(\xi_0=1\) in \eqref{eqn:shnol-init}, the prefactor \(A_E\) is in general \(E\)-dependent.

We now combine this normalized polynomially bounded solution with the finite-scale Green function estimates to obtain the large Green function lower bound needed below.
\begin{lemma}\label{lem:large-green-some-scale}
Let \(I\) and \(\Omega_1\) be as in Lemma~\ref{lem:Tn-upp-ldt714},, and let
\begin{align}
    \Omega_2=\bigcap_{m\ge0}2^{-m}\Omega_1 .
\end{align}
Then \(\Omega_2\) also has full measure. For every \(x\in\Omega_2\) and for spectral-a.e. \(E\in I\), there exists \(N_0=N_0(x,E,I)\) such that, for all \(N\ge N_0\), there exists \(N_1\in[N^{12},3N^{12}]\) satisfying
\begin{align}
    \|G_{N_1}(x;E)\|>e^{N^2}.
    \label{eqn:Gn-norm-lower}
\end{align}
\end{lemma}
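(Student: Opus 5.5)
The plan has two parts: first the measure statement for \(\Omega_2\), then the large-Green-function bound obtained from the Shnol solution \(\xi^E\) via the Poisson formula.

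\emph{Full measure of \(\Omega_2\).} Since the doubling map preserves Lebesgue measure, each preimage \(2^{-m}\Omega_1=\{x:2^mx\in\Omega_1\}\) has full measure. A countable intersection of full-measure sets has full measure, so \(\Omega_2\) does too. We record that \(x\in\Omega_2\) implies \(2^mx\in\Omega_1\) for every \(m\). We will only use this through \(x\in\Omega_1\) and \(2x\in\Omega_1\), which is what Lemma~\ref{lem:green-bound-transfer} requires at the shifted phases appearing in \eqref{eqn:ntildextilde-choice}.

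\emph{Argument by contradiction.} Fix \(x\in\Omega_2\) and an energy \(E\in I\) for which the normalized Shnol solution \(\xi=\xi^E\) from \eqref{eqn:shnol-init}--\eqref{eqn:shnol-poly} exists; this holds for spectral-a.e. \(E\). Suppose \(\|G_{N_1}(x;E)\|\le e^{N^2}\) for every \(N_1\in[N^{12},3N^{12}]\). Because \(\xi\) solves the equation with \(\xi_{-1}=0\), the Poisson formula on \([0,N_1-1]\) gives
\[
\xi_n=a_{N_1}(x)\,G_{N_1}(x;E)(n,N_1-1)\,\xi_{N_1}.
\]
Taking \(n=0\) and using \(\xi_0=1\) yields
\[
1\le a_+\,|G_{N_1}(x;E)(0,N_1-1)|\,A_E(1+N_1).
\]
So the corner entry of the Green's function is at least of size \(N_1^{-1}\), which is far too large for an exponentially decaying bound.

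\emph{Obtaining the contradiction.} We use Lemma~\ref{lem:green-bound-transfer} with \(\eps\) small relative to \(c_I\). For \(n=N_1\) it produces a pair \((\widetilde n,\widetilde x)\in\Lambda\) with
\[
|G_{\widetilde n}(\widetilde x;E)(n_1,n_2)|\le \exp\!\big(-|n_2-n_1|L(E)+[N_1L(E)-\log\|T_{N_1}^E(x)\|]+11\eps c_IN_1\big).
\]
\begin{enumerate}[(i)]
\item If \(\log\|T_{N_1}^E(x)\|\ge N_1(L(E)-\eps c_I)\) for some admissible \(N_1\), then the corner entry \((0,\widetilde n-1)\) decays like \(e^{-(c_I-12\eps c_I)N_1}\). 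This contradicts the lower bound \(N_1^{-1}\) when \(\widetilde x=x\).
\item When \(\widetilde x=2x\), we run the same Poisson argument on the shifted operator. Its solution \(\xi_{n+1}\) is still polynomially bounded, and the normalization comes from \(\xi_0,\xi_1\); at least one of \(\xi_1,\xi_0\) is nonzero at a fixed size depending only on \(E\), so the lower bound of order \(N_1^{-1}\) still holds up to an \(E\)-dependent constant.
\end{enumerate}

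\emph{Main obstacle.} The hard step is to guarantee the needed pointwise transfer-matrix lower bound at some scale \(N_1\in[N^{12},3N^{12}]\) from the base phase \(x\) itself; Lemma~\ref{lem:Tn-upp-ldt714}(b) only supplies one at a shifted phase \(2^px\). My first attempt bypasses this. Using \(\xi\) directly: \(\|T_{n}^E(x)\|\ge \|T_n^E(x)(1,0)^t\|\sim|\xi_{n}|+|\xi_{n-1}|\) up to \(P_n\) factors. Then use the fact that \(\det\)-formula \eqref{eqn:Tn-full-det-formula} plus the upper bound \eqref{eqn:Tn-upper717} at \(2x\) and \(x\) lets the Green's function entries be written as ratios of determinants. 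If every \(|D_{N_1}^E(x)|/|P_{N_1}(x)|\) were \(\ge e^{-N^2}\) times the numerator bound, the Cramer formula \eqref{eqn:cramer} for entry \((0,N_1-1)\), whose numerator is \(\prod a_j\) with no exponential growth, would force \(|D_{N_1}^E(x)|\lesssim e^{N^2}N_1\prod a_j\). In that case \(\|T_{N_1}^E(x)\|\le e^{2N^2}\) for all \(N_1\) in the range, or the corresponding neighboring determinants are similarly bounded.

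Since \(N^2\ll L(E)N^{12}\), this says \(\|T_{N_1}^E(x)\|\) is subexponential on a whole window of scales. By the cocycle property, the determinant recursion, and \(\eqref{eqn:Tn-upper717}\) applied at \(2^{N^{12}}x\in\Omega_1\), this contradicts the sampled lower bound of Lemma~\ref{lem:Tn-upp-ldt714}(b) placed inside the window. Choosing \(N_0\) so that all thresholds of Lemma~\ref{lem:Tn-upp-ldt714} hold at the scales involved completes the argument.
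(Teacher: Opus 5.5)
Your proof that \(\Omega_2\) has full measure is correct, and so is your corner-entry bound \(|G_{N_1}(x;E)(0,N_1-1)|\ge (a_+A_E(1+N_1))^{-1}\). The case analysis (i)--(ii), however, does not yield a contradiction.

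Note that (i) never uses the assumption \(\|G_{N_1}(x;E)\|\le e^{N^2}\). Since \(\xi_n=a_0D_n^E(x)/(a_nP_n(x))\), the first column of \(T_{N_1}^E(x)\) in \eqref{eqn:Tn-full-det-formula} is just \((a_{N_1}\xi_{N_1},\xi_{N_1-1})^t/a_0\). So whenever \(\|T^E_{N_1}(x)\|\) is exponentially large, polynomial boundedness of \(\xi\) alone forces the large determinant in \eqref{eqn:det-max-lower} to be \(D^E_{N_1-1}(2x)\) or \(D^E_{N_1-2}(2x)\). Thus \(\widetilde x=2x\) is the generic case, and there (ii) is false. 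The corner entry of \(G_{N_1-1}(2x;E)\) equals \(\prod_{j=2}^{N_1-1}a_j/D^E_{N_1-1}(2x)\), and similarly for \((N_1-2,2x)\). It is therefore governed by exactly the growing column and is exponentially small. Correspondingly, the Poisson formula on \([1,N_1-1]\) contains the non-small left forcing term \(a_1\xi_0=a_1\), so no lower bound of order \(N_1^{-1}\) follows.

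The fallback in your last paragraph is a workable route, different from the paper's, but its key step is unjustified as written. A bound on \(|D^E_{N_1}(x)|\) controls only the \((1,1)\) entry of \(T^E_{N_1}(x)\), which is polynomially bounded anyway. So the hypothesis is never used where it matters, and ``the neighboring determinants are similarly bounded'' is precisely what has to be shown. Here is the missing step.
\begin{enumerate}[(1)]
\item Cramer's rule \eqref{eqn:cramer} with \(n_1=n_2=0\) gives \(G_{N_1}(x;E)(0,0)=D^E_{N_1-1}(2x)/D^E_{N_1}(x)\). Hence \(|D^E_{N_1-1}(2x)|\le e^{N^2}|D^E_{N_1}(x)|\le C_EN_1e^{N^2}P_{N_1}(x)\), which controls the \((1,2)\) entry.
\item The \((2,2)\) entry follows from \(\det T^E_{N_1}=1\) together with \(|D^E_{N_1}(x)|\ge e^{-N^2}\prod_{j=1}^{N_1-1}a_j\), which is the hypothesis applied to the corner entry.
\item Together these give \(\|T^E_{N_1}(x)\|\le C_EN_1e^{N^2}\) for each \(N_1\) in the window.
\item Take \(p\in[N^{12},2N^{12}-1]\) from Lemma~\ref{lem:Tn-upp-ldt714}(b) at \(n=N^3\). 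The identity \(T^E_{N^3}(2^px)=T^E_{p+N^3}(x)\,T^E_p(x)^{-1}\) then gives \(e^{c_IN^3/2}\le C_EN^{24}e^{2N^2}\), a contradiction for large \(N\).
\end{enumerate}
For this you need neither \eqref{eqn:Tn-upper717} nor \(2^{N^{12}}x\in\Omega_1\), only \(x\in\Omega_1\).

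The paper instead argues directly. It applies Lemma~\ref{lem:green-bound-transfer} at the base point \(2^px\), which is why \(\Omega_2\) is introduced, to get a good box of length about \(N^3\). The Poisson formula on that box shows \(|\xi_{N_1}|\le e^{-2N^2}\) at its centre \(N_1\). It then reads off \(\|G_{N_1}(x;E)\|\ge (a_+|\xi_{N_1}|)^{-1}\) from the Poisson formula on \([0,N_1-1]\). That route locates \(N_1\) explicitly and reuses the good-box machinery needed later. The repaired contradiction argument avoids Green's function decay entirely, but it is non-constructive.
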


\begin{proof}
Fix \(x\in\Omega_2\). Then \(2^m x\in\Omega_1\) for every \(m\ge0\). We also fix \(\eps>0\) sufficiently small, to be chosen below.

We first apply the sampled lower bound \eqref{eqn:Tn-lower-nonunif} of Lemma~\ref{lem:Tn-upp-ldt714} with \(n=N^3\). Thus, for all sufficiently large \(N\), there exists
\[
    p\in[N^{12},2N^{12}-1]
\]
such that
\begin{align}
    \frac{1}{N^3}\log\|T_{N^3}^E(2^p x)\|
    \ge
    L_{N^3}(E)-3\eps c_I.
    \label{eqn:sampled-lower-p}
\end{align}
Using \eqref{eqn:L-Ln-asym}, after increasing \(N_0(x,I,\eps)\) if necessary, we also have
\[
    |L_{N^3}(E)-L(E)|\le \eps c_I.
\]
Therefore,
\begin{align}
    N^3L(E)-\log\|T_{N^3}^E(2^p x)\|
    \le
    4\eps c_I N^3.
    \label{eqn:TN3-large-at-p}
\end{align}

Since \(x\in\Omega_2\), we have \(2^p x\in\Omega_1\). Applying \eqref{eqn:green-bound-tn} of Lemma~\ref{lem:green-bound-transfer} with scale \(N^3\) and base point \(2^p x\), we obtain one pair
\begin{align}
(N_2,p')
\in
\Big\{
(N^3,p),\ (N^3-1,p),\ (N^3-1,p+1),\ (N^3-2,p+1)
\Big\}
\notag
\end{align}
such that, for local indices \(0\le r,s\le N_2-1\),
\begin{align}
\bigl|G_{N_2}(2^{p'}x;E)(r,s)\bigr|
\le
\exp\!\Bigl(
-|r-s|L(E)
+
\bigl[N^3L(E)-\log\|T_{N^3}^E(2^p x)\|\bigr]
+
11\eps c_I N^3
\Bigr).
\notag
\end{align}
In global coordinates, \(G_{N_2}(2^{p'}x;E)\) is the Green's function on the interval
\[
    \Lambda=[p',p'+N_2-1]=[p',p''].
\]
Hence, for all \(r,s\in\Lambda\),
\begin{align}
\bigl|G_\Lambda(x;E)(r,s)\bigr|
\le
\exp\!\Bigl(
-|r-s|L(E)
+
\bigl[N^3L(E)-\log\|T_{N^3}^E(2^p x)\|\bigr]
+
11\eps c_I N^3
\Bigr).
\label{eqn:green-global-before-lower}
\end{align}
Combining \eqref{eqn:green-global-before-lower} with \eqref{eqn:TN3-large-at-p}, and using \(L(E)\ge c_I\), give
\begin{align}
\bigl|G_\Lambda(x;E)(r,s)\bigr|
\le
\exp\!\Bigl(
-c_I|r-s|+15\eps c_I N^3
\Bigr).
\label{eqn:green-global-decay}
\end{align}

Let
\[
    N_1=p'+\frac{N^3}{2}.
\]
For simplicity, we assume that \(N^3/2\) is an integer; otherwise we replace it by its integer part, which only changes the estimates below by harmless constants. Since \(p'\in\{p,p+1\}\), we have
\[
    N_1=p+\frac{N^3}{2}+O(1).
\]
In particular, for all sufficiently large \(N\),
\[
    N_1\in[N^{12},3N^{12}].
\]
Moreover, \(N_1\in\Lambda\), and
\[
    |N_1-p'|\ge \frac{N^3}{2}-1,
    \qquad
    |N_1-p''|\ge \frac{N^3}{2}-3.
\]
Choosing \(\eps>0\) sufficiently small and then taking \(N\) large, \eqref{eqn:green-global-decay} implies
\begin{align}
    \bigl|G_\Lambda(x;E)(N_1,p')\bigr|
    \le e^{-3N^2},
    \qquad
    \bigl|G_\Lambda(x;E)(N_1,p'')\bigr|
    \le e^{-3N^2}.
    \label{eqn:boundary-green-small}
\end{align}

Fix a spectral-a.e. \(E\in I\) for which the generalized eigenfunction \(\xi=\xi^E\) in \eqref{eqn:shnol-init}--\eqref{eqn:shnol-poly} exists. Restricting \eqref{eqn:shnol-eqn} to \(\Lambda=[p',p'']\), we obtain
\begin{align}
     (H_\Lambda(x)-E)
     \begin{pmatrix}
         \xi_{p'}\\
         \xi_{p'+1}\\
         \vdots\\
         \xi_{p''}
     \end{pmatrix}
     =
     \begin{pmatrix}
         a_{p'}\xi_{p'-1}\\
         0\\
         \vdots\\
         a_{p''+1}\xi_{p''+1}
     \end{pmatrix},
     \label{eqn:restricted-eigen-eqn-Lambda}
\end{align}
up to signs that are immaterial for the estimates. Solving for the \(N_1\)-th component gives
\begin{align}
    \xi_{N_1}
    =
    G_\Lambda(x;E)(N_1,p')a_{p'}\xi_{p'-1}
    +
    G_\Lambda(x;E)(N_1,p'')a_{p''+1}\xi_{p''+1}.
    \label{eqn:xi-N1-boundary}
\end{align}
Combining \(|a|\le a_+\), \eqref{eqn:boundary-green-small}, and \eqref{eqn:shnol-poly}, we get
\begin{align}
    |\xi_{N_1}|
    \le
    A_E(1+3N^{12})e^{-3N^2}
    \le
    e^{-2N^2}
    \label{eqn:xi-N1-small}
\end{align}
for all sufficiently large \(N\), where the lower threshold may depend on \(E\).

Finally, restrict \eqref{eqn:shnol-eqn} to \([0,N_1-1]\). Since \(\xi_{-1}=0\), the only boundary forcing comes from the right endpoint, and we get
\begin{align}
    \begin{pmatrix}
         \xi_0\\
         \xi_1\\
         \vdots\\
         \xi_{N_1-1}
    \end{pmatrix}
    =
    G_{N_1}(x;E)
    \begin{pmatrix}
         0\\
         0\\
         \vdots\\
         a_{N_1}\xi_{N_1}
    \end{pmatrix},
    \notag
\end{align}
again up to an irrelevant sign. Since \(\xi_0=1\), the norm of the vector on the left is at least \(1\). Hence
\begin{align}
    1
    \le
    a_+\|G_{N_1}(x;E)\|\,|\xi_{N_1}|. \notag
\end{align}
Using \eqref{eqn:xi-N1-small}, and increasing \(N_0(x,E,I)\) if necessary, this gives
\begin{align}
    \|G_{N_1}(x;E)\|\ge e^{N^2}. \notag
\end{align}
This proves \eqref{eqn:Gn-norm-lower}.
\end{proof}

By Lemma~\ref{lem:bad-set-BN}, the sets \(\mathcal B_N\) are summable. Hence, by the Borel--Cantelli lemma,
\begin{align}
    \Omega_3
    :=
    \Omega_2\setminus \limsup_{N\to\infty}\mathcal B_N
\end{align}
has full measure. Combining this with Lemma~\ref{lem:large-green-some-scale}, we obtain the desired strengthened lower bound for transfer matrices along the whole block of shifts.

\begin{corollary}\label{cor:Tn-lower}
Let \(I\) and \(c_I\) be as in \eqref{eqn:cI-ass}, and let \(\Omega_3\) be defined as above. For every \(x\in\Omega_3\) and for spectral-a.e. \(E\in I\), there exists \(N_0=N_0(x,E,I,\eps)\) such that, for all \(N\ge N_0\), with
\[
    \bar N=\lfloor e^{(\log N)^2}\rfloor
\]
as in Lemma~\ref{lem:bad-set-BN}, one has, for every \(k\in[\bar N,2\bar N]\),
\begin{align}
    \frac{1}{N}\log \|T_N^E(2^k x)\|
    \ge
    L(E)-4\eps c_I
    \ge
    (1-4\eps)L(E).
    \label{eqn:7.103}
\end{align}
\end{corollary}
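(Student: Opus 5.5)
The plan is to combine Lemma~\ref{lem:large-green-some-scale} with the Borel--Cantelli conclusion drawn from Lemma~\ref{lem:bad-set-BN}, arguing by contradiction along the definition of \(\mathcal B_N\).

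First, fix \(\eps>0\) and \(x\in\Omega_3\). By construction, \(x\in\Omega_2\) and \(x\notin\limsup_N\mathcal B_N\). The second fact means there is \(N'_0(x,I,\eps)\) such that \(x\notin\mathcal B_N\) for all \(N\ge N'_0\).

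Next, fix a spectral-a.e.\ energy \(E\in I\) for which Lemma~\ref{lem:large-green-some-scale} applies. That lemma gives \(N''_0(x,E,I)\) such that for every \(N\ge N''_0\) there is \(N_1\in[N^{12},3N^{12}]\) with \(\|G_{N_1}(x;E)\|>e^{N^2}\). Set \(N_0=\max\{N'_0,N''_0\}\).

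Now take \(N\ge N_0\) and suppose some \(k\in[\bar N,2\bar N]\) violated the first inequality in \eqref{eqn:7.103}, i.e.
\[
\frac{1}{N}\log\|T_N^E(2^kx)\|<L(E)-4\eps c_I .
\]
Together with the \(N_1\) above and this \(E\in I\), both \eqref{eqn:Gn1744} and \eqref{eqn:TnE745} would hold, so \(x\in\mathcal B_N\). This contradicts \(N\ge N'_0\). Hence the lower bound holds for every \(k\) in the block.

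The second inequality in \eqref{eqn:7.103} is immediate. By \eqref{eqn:cI-ass} we have \(c_I\le L(E)\) on \(I\), so \(L(E)-4\eps c_I\ge(1-4\eps)L(E)\).

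The only point needing care is making sure the ranges match. Lemma~\ref{lem:bad-set-BN} is stated with \(N_1\in[N^{12},3N^{12}]\), although its proof writes \([N^{12},2N^{12}]\) at one place. Lemma~\ref{lem:large-green-some-scale} produces \(N_1\in[N^{12},3N^{12}]\), so I would check that the summation in \eqref{eqn:BN-union-frozen} covers the full range up to \(3N^{12}\). It does, and the measure bound is unaffected.

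A second point is that \(\Omega_3\) must not depend on \(\eps\). To arrange this, I would intersect the sets \(\Omega_3\) over \(\eps=1/j\), \(j\in\Z_+\). This is a countable intersection, so full measure is preserved, and any given \(\eps\) can be handled by using a smaller \(1/j\).
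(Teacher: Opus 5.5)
Your argument is correct and matches the paper's proof: the contradiction works because a failure of the lower bound at some \(k\in[\bar N,2\bar N]\), combined with the large Green's function from Lemma~\ref{lem:large-green-some-scale}, would place \(x\) in \(\mathcal B_N\), and the second inequality follows from \(L(E)\ge c_I\) on \(I\). Your check that the union bound in \eqref{eqn:BN-union-frozen} covers \(N_1\) up to \(3N^{12}\) is accurate, and the countable intersection over \(\eps=1/j\) is a harmless strengthening rather than something the corollary needs, since the paper allows \(\Omega_3=\Omega_3(I,\eps)\).
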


\begin{proof}
Since \(x\in\Omega_3\), there exists \(N_0'=N_0'(x,I,\eps)\) such that
\[
    x\notin\mathcal B_N
\]
for all \(N\ge N_0'\). On the other hand, by Lemma~\ref{lem:large-green-some-scale}, for spectral-a.e. \(E\in I\), there exists \(N_0''=N_0''(x,E,I)\) such that, for every \(N\ge N_0''\), there is \(N_1\in[N^{12},3N^{12}]\) satisfying
\begin{align}
    \|G_{N_1}(x;E)\|>e^{N^2}. \notag
\end{align}

Set \(N_0=\max\{N_0',N_0''\}\). Suppose that, for some \(N\ge N_0\) and some \(k\in[\bar N,2\bar N]\),
\begin{align}
    \frac{1}{N}\log\|T_N^E(2^k x)\|
    &<
    L(E)-4\eps c_I.
    \label{eqn:Tn-lower-contradiction}
\end{align}
Then \eqref{eqn:Tn-lower-contradiction}, together with the large-Green-function condition above, implies that \(x\in\mathcal B_N\). This contradicts the choice of \(N_0'\), since \(N\ge N_0\ge N_0'\). Therefore, the first inequality in \eqref{eqn:7.103} holds for all \(N\ge N_0\) and all \(k\in[\bar N,2\bar N]\). The final inequality in \eqref{eqn:7.103} follows from the definition of \(c_I\) in \eqref{eqn:cI-ass}, which gives \(L(E)\ge c_I\) for all \(E\in I\).
\end{proof}


\subsection{Decay of the Green's function and the generalized eigenfunction}

We now complete the proof of Anderson localization on a fixed compact interval \(I\subsetneq(0,E_0)\). The first step is to prove the finite-volume Green's function decay estimate stated in Theorem~\ref{thm:ldt-green1}. This estimate is obtained by combining the strengthened transfer-matrix lower bound from Corollary~\ref{cor:Tn-lower} with the resolvent expansion on suitably placed good boxes.

\begin{proof}[Proof of Theorem~\ref{thm:ldt-green1}]
Fix \(0<\eps<1/100\), and let \(\Omega_3=\Omega_3(I,\eps)\) be the full-measure set from Corollary~\ref{cor:Tn-lower}. Fix \(x\in\Omega_3\). For spectral-a.e. \(E\in I\), Corollary~\ref{cor:Tn-lower} gives \(N_0=N_0(x,E,I,\eps)\) such that, for all \(N\ge N_0\), with
\[
    \bar N=\lfloor e^{(\log N)^2}\rfloor,
\]
one has
\begin{align}
    \frac{1}{N}\log \|T_N^E(2^k x)\|
    \ge
    L(E)-4\eps c_I
    \label{eqn:green-proof-transfer-lower}
\end{align}
for every \(k\in[\bar N,2\bar N]\). We will increase \(N_0\) several times below, without changing the notation.

We first record the local consequence of \eqref{eqn:green-proof-transfer-lower}. Let \(k\in[\bar N,2\bar N]\). Applying Lemma~\ref{lem:green-bound-transfer} at scale \(N\) and base point \(2^k x\), together with \eqref{eqn:green-proof-transfer-lower}, gives one pair
\begin{align}
(\widetilde N,\widetilde k)
\in
\Big\{
(N,k),\ (N-1,k),\ (N-1,k+1),\ (N-2,k+1)
\Big\}
\label{eqn:good-box-choice}
\end{align}
such that, in global coordinates, for
\[
    \widetilde \Lambda
    =
    [\widetilde k,\widetilde k+\widetilde N-1],
\]
one has
\begin{align}
\bigl|G_{\widetilde \Lambda}(x;E)(r,s)\bigr|
&\le
\exp\!\Bigl(
-|r-s|L(E)
+
\bigl[N L(E)-\log \|T_N^E(2^k x)\|\bigr]
+
11\eps c_I N
\Bigr) \notag\\
&\le
\exp\!\Bigl(
-|r-s|L(E)+15\eps L(E)N
\Bigr)
\label{eqn:local-green-good-box}
\end{align}
for all \(r,s\in\widetilde\Lambda\), where we also used \(L(E)\ge c_I\) on \(I\) in the last line.

We will repeatedly use the following elementary placement observation. For every \(n\in[\bar N,2\bar N]\), one can choose \(k\in[\bar N,2\bar N]\), and hence a corresponding interval \(\widetilde\Lambda\) as above, such that
\[
    n\in\widetilde\Lambda
    \qquad\text{and}\qquad
    \widetilde\Lambda\subset[\bar N,2\bar N].
\]
Moreover, every boundary point of \(\widetilde\Lambda\) that lies in \([\bar N,2\bar N]\) is separated from \(n\) by at least \(N/2-3\), i.e., 
\begin{align}
    \operatorname{dist}\!\left(
        n,\partial\widetilde\Lambda\cap[\bar N,2\bar N]
    \right)
    \ge \frac{N}{2}-3.
    \label{eqn:good-box-boundary-distance}
\end{align}
Indeed, in the bulk one chooses \(k\) with \(k= n-N/2+\mathcal O(1)\), while near the two endpoints one uses the one-sided choices \(k=\bar N\) or \(k=2\bar N-N+1\). The possible shifts in \eqref{eqn:good-box-choice} change these distances only by an absolute constant.

We next prove a preliminary bound on the norm of the Green's function on the whole interval \([\bar N,2\bar N]\): for \(x\in\Omega_3\), for spectral-a.e. \(E\in I\), and for all sufficiently large \(N\), depending on \(x,E,I,\eps\), one has
\begin{align}
    \bigl\|G_{[\bar N,2\bar N]}(x;E)\bigr\|
    \le
    \exp(21\eps c_I N).
    \label{eqn:global-green-norm-bound}
\end{align}
To prove this bound, we first apply the resolvent identity at
\[
    z=E+i\eta,\qquad \eta>0,
\]
so that all finite-volume resolvents are well defined. Fix \(x\in\Omega_3\),  spectral-a.e. \(E\in I\), and a sufficiently large \(N\). Since the local Green's function estimate \eqref{eqn:local-green-good-box} holds at the real energy \(E\), the corresponding local resolvents are finite at \(E\). Hence, by finite-dimensional continuity, and since only finitely many good intervals and matrix entries are used at this fixed scale, there exists
\[
    \eta_0=\eta_0(x,E,N,I,\eps)>0
\]
such that, for every \(0<\eta<\eta_0\), every good interval \(\widetilde\Lambda\) used below, and all \(r,s\in\widetilde\Lambda\),
\begin{align}
\bigl|G_{\widetilde \Lambda}(x;z)(r,s)\bigr|
&\le
\exp\!\Bigl(
-|r-s|L(E)+16\eps c_I N
\Bigr),
\label{eqn:local-green-good-box-z}
\end{align}
where the harmless loss has been absorbed into the coefficient of \(\eps c_I N\). In particular,
\begin{align}
    \bigl|G_{\widetilde\Lambda}(x;z)(r,s)\bigr|
    \le
    e^{16\eps c_I N}.
    \label{eqn:local-crude-bound-z}
\end{align}

Let \(n,m\in[\bar N,2\bar N]\). Choose a good interval
\(\widetilde\Lambda=[\ell,r]\subset[\bar N,2\bar N]\) associated with \(n\) as above and satisfying \eqref{eqn:good-box-boundary-distance}. 
Applying the resolvent identity to \(\widetilde\Lambda\subset[\bar N,2\bar N]\) at the energy \(z=E+i\eta\), we have
\begin{align}
G_{[\bar N,2\bar N]}(x;z)(n,m)
&=
\one_{\{m\in\widetilde\Lambda\}}
G_{\widetilde\Lambda}(x;z)(n,m) \notag\\
&\quad
-
a_{\ell}(x)\,
G_{\widetilde\Lambda}(x;z)(n,\ell)\,
G_{[\bar N,2\bar N]}(x;z)(\ell-1,m) \notag\\
&\quad
-
a_{r+1}(x)\,
G_{\widetilde\Lambda}(x;z)(n,r)\,
G_{[\bar N,2\bar N]}(x;z)(r+1,m).
\label{eqn:resolvent-identity-z}
\end{align}
If \(\widetilde\Lambda\) touches one endpoint of \([\bar N,2\bar N]\), then the corresponding exterior boundary term is omitted.

Applying \eqref{eqn:local-green-good-box-z} to the boundary entries of the good box and using the distance estimate \eqref{eqn:good-box-boundary-distance}, we obtain
\begin{align}
    \bigl|G_{\widetilde\Lambda}(x;z)(n,\ell)\bigr|
    +
    \bigl|G_{\widetilde\Lambda}(x;z)(n,r)\bigr|
     \le
    2\exp\!\Bigl(
    -\Big(\frac{N}{2}-3\Big)L(E)+16\eps c_I N
    \Bigr)  
    \le
    2e^{-\frac14 c_I N}.
    \label{eqn:local-boundary-small-z}
\end{align}
Here, as before, the missing boundary term is omitted in the one-sided case. In the second inequality we used \(L(E)\ge c_I\), and we increased \(N_0\) to absorb the fixed endpoint error.

Taking absolute values in \eqref{eqn:resolvent-identity-z}, using \(|a_j(x)|\le a_+\), and applying \eqref{eqn:local-crude-bound-z} together with \eqref{eqn:local-boundary-small-z}, we obtain, uniformly in \(n,m\in[\bar N,2\bar N]\),
\begin{align}
    \bigl|G_{[\bar N,2\bar N]}(x;z)(n,m)\bigr|
    \le
    e^{16\eps c_I N}
    +
    2a_+e^{-\frac14 c_I N}
    \bigl\|G_{[\bar N,2\bar N]}(x;z)\bigr\|.
    \label{eqn:global-entry-bound-prelim-z}
\end{align}
Taking the maximum over \(n,m\) and using
\[
    \bigl\|G_{[\bar N,2\bar N]}(x;z)\bigr\|
    \le
    (2\bar N+1)
    \max_{n,m\in[\bar N,2\bar N]}
    \bigl|G_{[\bar N,2\bar N]}(x;z)(n,m)\bigr|,
\]
we get
\begin{align}
    \bigl\|G_{[\bar N,2\bar N]}(x;z)\bigr\|
    \le
    (2\bar N+1)e^{16\eps c_I N}
    +
    2a_+(2\bar N+1)e^{-\frac14 c_I N}
    \bigl\|G_{[\bar N,2\bar N]}(x;z)\bigr\|. \notag
\end{align}
Since \(\bar N=e^{(\log N)^2}=e^{o(N)}\), after increasing \(N_0\) we have
\[
    2a_+(2\bar N+1)e^{-\frac14 c_I N}\le \frac12, \qquad {\rm and} \qquad 
    2(2\bar N+1)e^{16\eps c_I N}\le e^{21\eps c_I N}.
\]
Therefore, for every \(0<\eta<\eta_0\),
\begin{align}
    \bigl\|G_{[\bar N,2\bar N]}(x;E+i\eta)\bigr\|
    \le
    e^{21\eps c_I N}.
    \label{eqn:global-green-norm-bound-z}
\end{align}
This estimate is uniform for \(0<\eta<\eta_0\). Letting \(\eta\downarrow0\), we obtain
\begin{align}
    \bigl\|G_{[\bar N,2\bar N]}(x;E)\bigr\|
    \le
    e^{21\eps c_I N}. \notag 
\end{align}
This proves \eqref{eqn:global-green-norm-bound}. In the remaining applications of the resolvent identity, we use the same \(E+i\eta\) limiting argument implicitly and write the Green's functions directly at the real energy \(E\).


We now prove the off-diagonal decay. Let \(n,m\in[\bar N,2\bar N]\) satisfy
\[
    d:=|n-m|\ge \frac{\bar N}{2}.
\]
We assume \(n<m\); the case \(m<n\) is identical.

At each step below, whenever the current point \(y\) satisfies \(|y-m|\ge N+10\), we choose a good interval
\(\widetilde\Lambda_y\subset[\bar N,2\bar N]\) containing \(y\), not containing \(m\), and such that each relevant boundary point of \(\widetilde\Lambda_y\) is at distance at least \(N/2-3\) from \(y\). Using the same \(E+i\eta\) limiting argument as above, we apply the resolvent identity at the real energy \(E\). Since \(m\notin\widetilde\Lambda_y\), the interior term vanishes. Applying \eqref{eqn:local-boundary-small-z} at the real energy, and using \(|a_j(x)|\le a_+\), we obtain
\begin{align}
    \bigl|G_{[\bar N,2\bar N]}(x;E)(y,m)\bigr|
    \le
    2a_+
    \exp\!\Bigl(
    -\Big(\frac12-17\eps\Big)L(E)N
    \Bigr)
    \max_{y'\in\partial\widetilde\Lambda_y}
    \bigl|G_{[\bar N,2\bar N]}(x;E)(y',m)\bigr|.
    \label{eqn:one-step-patching}
\end{align}
Here \(\partial\widetilde\Lambda_y\) denotes the relevant boundary points of \(\widetilde\Lambda_y\) inside \([\bar N,2\bar N]\); in the one-sided case, there is only one such term.

We iterate \eqref{eqn:one-step-patching}
\[
    t:=\left\lfloor\frac{2(1-2\eps)d}{N}\right\rfloor
\]
times. Every terminal point \(y_t\) arising after these iterations satisfies
\[
    |y_t-m|
    \ge
    d-t\left(\frac{N}{2}+3\right).
\]
Since \(d\ge\bar N/2\) and \(\bar N\gg N\), after increasing \(N_0\) if necessary, we have
\[
    |y_t-m|\ge \eps d\ge N+10.
\]
Thus, the iteration is valid for at least \(t\) steps. Using \eqref{eqn:global-green-norm-bound} to control the Green's-function factors remaining after the final iteration, we obtain
\begin{align}
    \bigl|G_{[\bar N,2\bar N]}(x;E)(n,m)\bigr|
    &\le
    \left[
        2a_+
        \exp\!\Bigl(
            -\Bigl(\frac12-17\eps\Bigr)L(E)N
        \Bigr)
    \right]^t
    e^{21\eps c_I N}.
    \label{eqn:after-t-patching}
\end{align}

The intuition behind the choice of \(t\) is as follows. Each application of the resolvent identity moves the first argument by roughly \(N/2\), while the factor appearing at each step in \eqref{eqn:after-t-patching} is of order
\[
    \exp\!\Bigl(-\bigl(\frac12+o(1)\bigr)L(E)N\Bigr).
\]
Thus, after about \(t\sim 2|n-m|/N\) iterations, one expects a total decay of order
\[
    \left[
    \exp\!\Bigl(-\bigl(\frac12+o(1)\bigr)L(E)N\Bigr)
    \right]^t
    \sim
    \exp\bigl(-(1+o(1))L(E)|n-m|\bigr).
\]
We now make this computation precise. For \(N\) sufficiently large,
\[
    \frac{2(1-3\eps)d}{N}\le t\le \frac{2d}{N}.
\]
Therefore, \eqref{eqn:after-t-patching} gives
\begin{align}
\log \bigl|G_{[\bar N,2\bar N]}(x;E)(n,m)\bigr|
&\le
\frac{2d}{N}\log(2a_+)
-
2\Big(\frac12-17\eps\Big)(1-3\eps)L(E)d
+
21\eps c_I N.
\notag
\end{align}
After increasing \(N_0\), the first and last terms are both bounded by \(\eps L(E)d\), since \(d\gg N\) and \(L(E)\ge c_I\). Finally, for \(0<\eps<1/100\),
\[
    2\Big(\frac12-17\eps\Big)(1-3\eps)
    \ge
    1-42\eps.
\]
Hence
\begin{align}
    \bigl|G_{[\bar N,2\bar N]}(x;E)(n,m)\bigr|
    \le
    \exp\Bigl(-(1-44\eps)L(E)|n-m|\Bigr).\notag 
\end{align}
This proves the Green's function decay estimate in the form needed below. Equivalently, after replacing \(\eps\) by a sufficiently small multiple of the tolerance in the statement, this gives \eqref{eqn:green-decay}.
\end{proof}

We now use \eqref{eqn:green-decay} to complete the proof of Anderson localization. The argument is the standard final step: the Green's function expansion controls a generalized eigenfunction inside a large interval by its boundary values, and the decay in \eqref{eqn:green-decay} dominates the polynomial growth allowed by Shnol's theorem.

\begin{proof}[Proof of Theorem~\ref{thm:AL}]
By Lemma~\ref{lem:AL-compact-reduction}, it suffices to prove the statement on an arbitrary compact interval \(I\subsetneq(0,E_0)\). Fix such an interval \(I\), and let \(\Omega\) be the full-measure set given by Theorem~\ref{thm:ldt-green1}.

Fix \(x\in\Omega\). For spectral-a.e. \(E\in I\), let \(\xi=\{\xi_n\}_{n\ge -1}\) be a polynomially bounded generalized eigenfunction satisfying \eqref{eqn:shnol-init}--\eqref{eqn:shnol-poly}. We show that \(\xi\) decays exponentially.

Let \(n\) be sufficiently large. Choose \(N\) so that, with
\[
    \bar N=\lfloor e^{(\log N)^2}\rfloor,
\]
one has
\[
    n\in[\bar N,2\bar N],
    \qquad
    \dist\bigl(n,\partial[\bar N,2\bar N]\bigr)\ge \frac{\bar N}{3}.
\]
For instance, one may choose \(\bar N\sim \frac23 n\). Restricting the generalized eigenvalue equation \((H(x)-E)\xi=0\) to the interval \([\bar N,2\bar N]\), we obtain, up to signs that are immaterial for the estimates,
\begin{align}
    \xi_n
    =
    G_{[\bar N,2\bar N]}(x;E)(n,\bar N)a_{\bar N}\xi_{\bar N-1}
    +
    G_{[\bar N,2\bar N]}(x;E)(n,2\bar N)a_{2\bar N+1}\xi_{2\bar N+1}.
    \label{eqn:xi-green-expansion}
\end{align}
Using \(|a_j(x)|\le a_+\), the Green's function estimate \eqref{eqn:green-decay}, and the polynomial bound \eqref{eqn:shnol-poly}, we get
\begin{align}
    |\xi_n|
    &\le
    C_E(1+2\bar N)
    \exp\!\Bigl(
    -(1-\varepsilon)L(E)\dist\bigl(n,\partial[\bar N,2\bar N]\bigr)
    \Bigr) \notag\\
    &\le
    C_E(1+2\bar N)
    \exp\!\Bigl(
    -(1-\varepsilon)L(E)\frac{\bar N}{3}
    \Bigr).
    \notag
\end{align}
Since \(\bar N\sim \frac23 n\), the polynomial prefactor is absorbed into the exponential for all sufficiently large \(n\). Taking \(\varepsilon>0\) sufficiently small in Theorem~\ref{thm:ldt-green1}, we obtain
\begin{align}
    |\xi_n|
    \le
    \exp\!\Bigl(-\frac14 L(E)n\Bigr)
    \notag 
\end{align}
for all sufficiently large \(n\). Thus the polynomially bounded generalized eigenfunction \(\xi\) is exponentially decaying.

It follows that, for spectral-a.e. \(E\in I\), the corresponding generalized eigenfunction is an \(\ell^2(\Z_{\ge0})\) eigenfunction with exponential decay. Hence the spectral measure on \(I\) is supported on exponentially decaying eigenfunctions, and \(H(x)\) has pure-point spectrum in \(I\). Since \(I\subsetneq(0,E_0)\) was arbitrary, Lemma~\ref{lem:AL-compact-reduction} gives pure-point spectrum with exponentially decaying eigenfunctions on \([0,E_0]\). This proves Theorem~\ref{thm:AL}.
\end{proof}


\vspace{2cm}

\noindent\textbf{Acknowledgments.}
\phantomsection
\addcontentsline{toc}{section}{Acknowledgments}
W. Wang was supported by the National Key R\&D Program of China under grants 2025YFA1016600 and 2025YFA1016601.  L. Li was supported by AMS-Simons Travel Grant 2024-2026. S. Zhang was supported by the NSF grant DMS-2418611.  We thank Jake Fillman for many useful discussions and suggestions, especially for kindly sharing the proof of Theorem~\ref{thm:ess-spectrum} with us.


{
  \bigskip
  \vskip 0.08in \noindent --------------------------------------

\footnotesize
\medskip

L.~Li, {Department of Mathematics, 
Texas A\&M University, 155 Ireland Street,
College Station, TX 77843}\par\nopagebreak
    \textit{E-mail address}:  \href{mailto:longli@tamu.edu }{longli@tamu.edu }

\vskip 0.4cm

 W. ~Wang, {SKLMS, Academy of Mathematics and Systems Science, Chinese Academy of Sciences, Beijing 100190, China}\par\nopagebreak
  \textit{E-mail address}: \href{mailto:ww@lsec.cc.ac.cn}{ww@lsec.cc.ac.cn}
  
\vskip 0.4cm

S.~Zhang, {Department of Mathematics and Statistics, University of Massachusetts Lowell, 
Southwick Hall, 
11 University Ave.
Lowell, MA 01854
 }\par\nopagebreak
  \textit{E-mail address}: \href{mailto:shiwen\_zhang@uml.edu}{shiwen\_zhang@uml.edu}
}

\end{document}